\documentclass[11pt]{article}
\usepackage{amsmath}
\usepackage{amssymb,amsbsy,amsthm}
\theoremstyle{plain}
\usepackage{graphicx}
\usepackage[dvipsnames]{xcolor}
\usepackage{algorithm}
\usepackage{algorithmic}
\usepackage{bm}
\usepackage{subcaption}
\usepackage{enumerate}
\usepackage{cases}
\usepackage{thm-restate}

\usepackage[margin=1in]{geometry}
\usepackage{pdfsync}
\usepackage[colorlinks,linkcolor=blue,citecolor=blue]{hyperref}

\allowdisplaybreaks[1]

\numberwithin{equation}{section}



\newcommand{\p}{\mathbb{P}} 
\renewcommand{\P}{\mathbb{P}} 
\newcommand{\E}{\mathbb{E}} 
\newcommand{\maj}{\mathsf{maj}}
\newcommand{\eps}{\varepsilon} 
\DeclareMathOperator{\Var}{Var} 
\DeclareMathOperator{\Cov}{Cov} 

\DeclareMathOperator{\Bin}{Bin} 





\newcommand{\wh}{\widehat}  


\newcommand{\SBM}{\mathrm{SBM}} 
\newcommand{\CSBM}{\mathrm{CSBM}} 






\newcommand{\overlap}{\mathsf{ov}}

\newcommand{\cut}[1]{}
\newcommand{\Luczak}{{\L}uczak~}

\newcommand{\dch}{\mathrm{D_{+}}}
\newcommand{\dchab}{\dch(a,b)} 
\newcommand{\dconn}{\mathrm{T_{c}}}
\newcommand{\dconnab}{\dconn(a,b)} 



\numberwithin{equation}{section}
\newtheorem{theorem}{Theorem}
\newtheorem*{theorem*}{Theorem}
\newtheorem{lemma}[equation]{Lemma}

\newtheorem{corollary}[equation]{Corollary}

\newtheorem{definition}[equation]{Definition}

\title{Harnessing Multiple Correlated Networks\\ for Exact Community Recovery}
\author{Mikl\'os Z. R\'acz\thanks{Department of Statistics and Data Science and Department of Computer Science, Northwestern University; \url{miklos.racz@northwestern.edu}}
\and
Jifan Zhang\thanks{Department of Statistics and Data Science, Northwestern University; \url{jifanzhang2026@u.northwestern.edu}}}
\date{\today}

\begin{document}

\maketitle

\begin{abstract}
We study the problem of learning latent community structure from multiple correlated networks, focusing on edge-correlated stochastic block models with two balanced communities. Recent work of Gaudio, R\'acz, and Sridhar (COLT 2022) determined the precise information-theoretic threshold for exact community recovery using two correlated graphs; in particular, this showcased the subtle interplay between community recovery and graph matching. Here we study the natural setting of more than two graphs. The main challenge lies in understanding how to aggregate information across several graphs when none of the pairwise latent vertex correspondences can be exactly recovered. Our main result derives the precise information-theoretic threshold for exact community recovery using any constant number of correlated graphs, answering a question of Gaudio, R\'acz, and Sridhar (COLT 2022). In particular, for every $K \geq 3$ we uncover and characterize a region of the parameter space where exact community recovery is possible using~$K$ correlated graphs, even though (1) this is information-theoretically impossible using any $K-1$ of them and (2) none of the latent matchings can be exactly recovered.
\end{abstract}



\section{Introduction}\label{sec:intro}

Finding communities in networks---that is, groups of nodes that are similar---is one of the fundamental problems in machine learning.
This task is crucially important for understanding the underlying structure and function of networks across diverse
applications, including sociology and biology~\cite{girvan2002community}.
The increasing availability of network data sets
offers the intriguing possibility of improving community recovery algorithms by synthesizing information across correlated networks.
However, in many settings the graphs are not aligned---which may happen for a variety of reasons, including anonymization, missing or erroneous data, or simply the alignment being unknown---which presents a challenge.
Thus graph matching---the task of recovering the latent vertex alignment between graphs---plays a central role in efforts to integrate data across networks.
Our work follows an exciting recent line of work at the intersection of community recovery and graph matching.

Recently, R\'acz and Sridhar~\cite{racz2021correlated} initiated the study of community recovery in correlated stochastic block models (SBMs), focusing on the simplest setting of two correlated graphs with two balanced communities.
They determined the information-theoretic limits for exact graph matching, which has applications for community recovery. In particular, they uncovered a region of the parameter space where exact community recovery is possible using two correlated graphs even though it is information-theoretically impossible to do so using just a single graph.
Subsequently, Gaudio, R\'acz, and Sridhar~\cite{gaudio2022exact} determined the information-theoretic limits for exact community recovery from two correlated SBMs. This required going beyond exact graph matching and understanding the subtle interplay between community recovery and graph matching.

Gaudio, R\'acz, and Sridhar~\cite{gaudio2022exact} posed the question of understanding what happens in the case of more than two graphs, which arises naturally in all the motivating examples. For instance, people participate in numerous overlapping yet complementary social networks, and only by combining these can we fully understand and make inferences about society. Similarly, synthesizing information across protein-protein interaction networks from several related species can aid in inferring protein functions~\cite{singh2008global}.
The main challenge lies in understanding how to
optimally
pass information across three or more graphs.

Our main contribution fully answers this open question by Gaudio, R\'acz, and Sridhar~\cite{gaudio2022exact}.
Specifically, we precisely characterize the information-theoretic threshold for exact community recovery given $K$ correlated SBMs, for any constant $K$.
This result highlights an intricate phase diagram and quantifies the value of each additional correlated graph for
the task of
community recovery.
In particular, for every $K \geq 3$ we uncover and characterize a region of the parameter space where exact community recovery is possible using~$K$ correlated graphs, even though (1) this is
impossible~using any $K-1$ of them and (2) none of the latent matchings can be exactly~recovered.
See Section~\ref{sec:results} and Theorems~\ref{thm1} and~\ref{thm2} for~details.

Along the way, we also precisely characterize the information-theoretic threshold for exact graph matching given $K$ correlated SBMs, for any constant $K$. In particular, we uncover and characterize a region of the parameter space where the latent matching between two correlated SBMs cannot be exactly recovered given just the two graphs, but it can be exactly recovered given $K > 3$ correlated SBMs.
See Section~\ref{sec:results} and Theorems~\ref{thm3} and~\ref{thm4} for~details.

To prove our
results, we study the so-called $k$-core matching between all pairs of graphs. Recent works have shown the $k$-core matching to be a flexible and successful tool in a variety of settings for two correlated graphs~\cite{cullina2020partial,gaudio2022exact,RS23}.
Our main technical contribution is to extend this analysis to more than two graphs. The main difficulty lies in understanding the size of \emph{intersections} of ``bad sets'' for $k$-core matchings for different pairs of graphs. We refer to Section~\ref{sec:overview} for details.

\subsection{Models, questions, and prior work}
\label{sec:model}
\textbf{The stochastic block model (SBM).}
\label{SBM}
The SBM is the most common probabilistic generative model for networks with latent community structure. First introduced by Holland, Laskey, and Leinhardt~\cite{holland1983stochastic}, it has garnered considerable attention and research. In particular, it can be employed as a natural testbed for evaluating and assessing clustering algorithms on average-case networks~\cite{abbe2018community}.
The SBM notably displays sharp information-theoretic phase transitions for various inference tasks, offering a detailed understanding of when community information can be extracted from network data. The phase transition thresholds were conjectured by Decelle et al.~\cite{decelle2011asymptotic} and were proved rigorously in several papers~\cite{massoulie2014community,mossel2015reconstruction,mossel2016consistency,mossel2018proof,abbe2015exact,abbe2015community}.
We refer to the survey~\cite{abbe2018community} for a detailed overview of the~SBM.

In this paper, we focus on the simplest setting, a SBM with two symmetric communities.
Let $n$ be a positive integer and let $p,q\in [0,1]$ be parameters representing probabilities.
We construct a graph $G \sim \SBM(n,p,q)$ as follows.
The graph $G$ has $n$ vertices, labeled by $[n]:=\{1,2,3,\ldots,n\}$. Each vertex $i$ is assigned a community label $\sigma^{*}(i)$ from the set $\{+1, -1\}$; these are drawn i.i.d.\ uniformly at random across $i\in [n]$. Let $\boldsymbol{\sigma^*}:=\left\{ \sigma^{*}(i) \right\}_{i=1}^{n}$ denote the community label vector. The vertices are thus categorized into two communities:
$V^+ := \{i \in [n] : \sigma^*(i) = +1\}$
and $V^- := \{i \in [n] : \sigma^*(i) = -1\}$.
Given the community labels~$\boldsymbol{\sigma^*}$,
the edges of $G$ are drawn independently between pairs of distinct vertices.
If $\sigma^*(i) = \sigma^*(j)$, then the edge $(i, j)$ is in $G$ with probability $p$;
otherwise, it is in $G$ with probability $q$.

\textbf{Community recovery.}
\label{recovery}
In the community recovery task, an algorithm takes as input the graph $G$ (without knowing $\boldsymbol{\sigma^*}$) and outputs an estimated community labeling $\widehat{\boldsymbol{\sigma}}$.
Define the \textit{overlap} between the estimated labeling and the
ground truth as follows:
\[
\overlap(\boldsymbol{\sigma^*},\widehat{\boldsymbol{\sigma}})
:=\frac{1}{n}\left|\sum_{i=1}^n\sigma^*(i)\widehat{\sigma}(i)\right|.
\]
The overlap measures how well the true community labels and the estimated labels of the algorithm match. Note that $\overlap(\boldsymbol{\sigma^*},\widehat{\boldsymbol{\sigma}})\in[0,1]$, where the larger the value is, the better performance the algorithm has. In particular, the algorithm succeeds in exactly recovering the partition into two communities (i.e., $\boldsymbol{\sigma^*}=\boldsymbol{\widehat{\sigma}}$ or $\boldsymbol{\sigma^*}=-\boldsymbol{\widehat{\sigma}}$) if and only if $\overlap(\boldsymbol{\sigma^*},\widehat{\boldsymbol{\sigma}})=1$.
Our focus in this paper is achieving this goal, known as exact community recovery.

It is well-known that exact community recovery is most challenging and interesting in the logarithmic average degree regime~\cite{abbe2018community}.
Accordingly, we focus on this regime:
in most of the paper we assume that
$p=a\frac{\log n}{n}$ and $q=b\frac{\log n}{n}$ for some constants $a,b>0$.
In this regime there is a sharp information-theoretic threshold for exact community recovery~\cite{abbe2015exact,mossel2016consistency,abbe2015community}.
Let
$\dchab := (\sqrt{a}-\sqrt{b})^{2}/2$
denote the so-called \textit{Chernoff-Hellinger divergence}.
Then the information-theoretic threshold is given by \begin{equation}\label{ssbm}
\dchab=1.
\end{equation}
In other words, if $\dchab>1$, then exact recovery is possible (and, in fact, efficiently). That is, there is a (polynomial-time) algorithm which outputs an estimator $\widehat{\boldsymbol{\sigma}}$
with the guarantee that
$\lim_{n\to\infty}\p(\overlap(\boldsymbol{\sigma^*},\widehat{\boldsymbol{\sigma}}) = 1) = 1$. On the other hand, if $\dchab<1$ then exact recovery is impossible: for any estimator $\widetilde{\boldsymbol{\sigma}}$, we have that $\lim_{n\to\infty}\p(\overlap(\boldsymbol{\sigma^*},\widetilde{\boldsymbol{\sigma}}) = 1) = 0$.

\textbf{Correlated SBMs.} The objective of our work is to understand how the sharp threshold for exact community recovery varies when the input data involves multiple correlated graphs.
To do so, we first define a natural model of multiple correlated SBMs~\cite{lyzinski2015spectral,onaran2016optimal,lyzinski2018information} (and see further discussion in Section~\ref{sec:open} about alternative models).

We construct $(G_1,\ldots,G_K) \sim \CSBM(n,p,q,s)$ as follows, where the additional parameter $s\in[0,1]$
reflects the degree of correlation between the graphs (and the number of graphs $K$ is dropped from the notation for ease of readability).
First, generate a parent graph
$G_{0} \sim \SBM(n, p, q)$
with community labels $\boldsymbol{\sigma^*}$.
Subsequently, given $G_{0}$, construct $G_1',G_2',\ldots,G_K'$ by independent subsampling.
Specifically, each edge of $G_{0}$ is included in $G_i'$ with probability $s$, independently of everything else, and non-edges of $G_{0}$ remain non-edges in $G_i'$. The graphs $G_i'$
inherit both the vertex labels and the community labels
from the parent graph $G_{0}$.
Finally,
let $\pi_{12}^*,\ldots,\pi_{1K}^*$ be i.i.d.\ uniformly random permutations of $[n]$
and let $\boldsymbol{\pi^*}:=(\pi_{12}^*,\ldots,\pi_{1K}^*)$.
Define $G
_1 :=G_1'$ and, for all $i \in \{2, \ldots, K\}$, define $G_i :=\pi_{1i}^*(G_i')$.
In other words,
for every $i > 1$ and $j \in [n]$,
vertex $j$ in $G_i'$ is relabeled to $\pi_{1i}^{*}(j)$ in $G_i$.
This last relabeling step mirrors the real-world observation that vertex labels are often unaligned across graphs.
This construction is shown in Figure~\ref{figure 1}.
\begin{figure}[t!]
\centering
\includegraphics[width=0.95\textwidth]{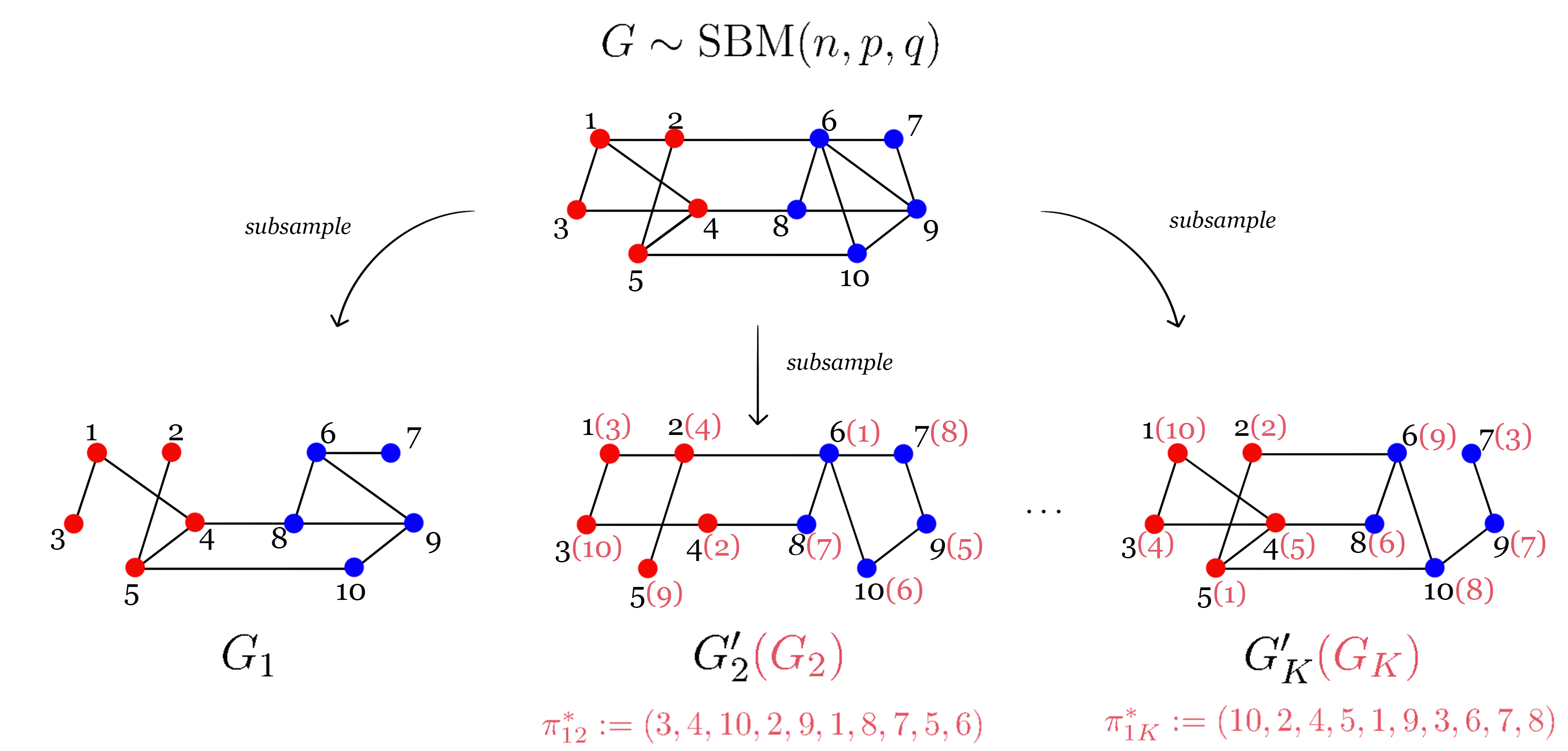}
\caption{Schematic showing the construction of multiple correlated SBMs (see text for details).}
\label{figure 1}
\end{figure}

An important property of the model is that marginally each graph $G_{i}$ is an SBM. Since the subsampling probability is $s$, we have that $G_{i} \sim \SBM(n,ps,qs)$.
Thus, it follows from~\eqref{ssbm} that, in the logarithmic average degree regime where $p=a\frac{\log n}{n}$ and $q=b\frac{\log n}n$,
the communities can be exactly recovered from $G_{1}$ alone precisely when $s\dchab=D_+(sa,sb)>1$.

The key question in our work is how to improve the threshold by incorporating more information as~$K$, the number of correlated SBMs, increases.  This question was initiated by R\'acz and Sridhar~\cite{racz2021correlated} and then solved by Gaudio, R\'acz, and Sridhar~\cite{gaudio2022exact} when $K=2$.

An essential observation is that, to go beyond the threshold, one needs to combine information from the $K$ graphs $G_1, \ldots, G_{K}$ through graph matching. Then one can exactly recover the community labels using the combined information, even in regimes where it is information-theoretically impossible to exactly recover $\boldsymbol{\sigma^*}$ given up to $K-1$ graphs.

To be more specific, if $\boldsymbol{\pi^*}$ were known, then one can reconstruct $G_j'$ from $G_j$ and then combine the graphs $G_1', \ldots, G_{K}'$ to obtain the union graph $H^*$, defined as follows:
the edge $(i, j)$ is included in $H^*$
if and only if
$(i, j)$ is included in at least one of $G_1', \ldots, G_{K}'$.
Note that $H^{*}$ is also an SBM;
specifically,
$H^* \sim \SBM\left(n,\left(1-(1-s)^K\right)p,\left(1-(1-s)^K\right)q\right)$,
so~\eqref{ssbm} directly implies that the communities can
be exactly recovered from the union graph $H^*$ if $\left(1-(1-s)^K \right)\dchab>1$.

\textbf{Graph matching.}
In real applications, the permutations $\boldsymbol{\pi^*} = (\pi_{12}^{*}, \ldots, \pi_{1K}^{*})$ are often not known.
The arguments above highlight the importance of an intermediate task, known as graph matching: how can one recover the latent permutations $\boldsymbol{\pi^*}$ given the graphs $(G_1,G_2,\ldots,G_K)$? While here we regard graph matching as an important intermediate step, it is of great significance in its own right, with applications in social network privacy~\cite{pedarsani2011privacy}, machine learning~\cite{cour2006balanced}, and more.
For two correlated SBMs, this problem was resolved by R\'acz and Sridhar~\cite{racz2021correlated}, who proved that the information-theoretic threshold for (pairwise) exact graph matching is $s^{2}(a+b)/2=1$. Note that this is also the connectivity threshold for the intersection graph of $G_1$ and $G_2'$ (see~\cite{racz2021correlated}). Denote
\begin{equation}\label{tc}
\dconnab:=\frac{a+b}{2}.
\end{equation}
With this notation, the (pairwise) exact graph matching threshold is given by
$s^{2} \dconnab = 1$.
This directly implies (by a union bound)
that if $s^{2} \dconnab > 1$,
then $\boldsymbol{\pi^*}$ can be exactly recovered given
$(G_1,G_2,\ldots,G_K)\sim\CSBM(n,a\frac{\log n}{n},b\frac{\log n}{n},s)$, for any constant $K$.
By the discussion above, this also gives a sufficient condition for exact community recovery given
$(G_1,G_2,\ldots,G_K)\sim\CSBM(n,a\frac{\log n}{n},b\frac{\log n}{n},s)$:
\begin{equation}\label{eq:suff_immediate}
s^{2} \dconnab > 1
\qquad \text{ and } \qquad
\left(1-(1-s)^K\right)\dchab>1.
\end{equation}
We will generalize the exact graph matching result of R\'acz and Sridhar~\cite{racz2021correlated} and show (see Theorem~\ref{thm3} below) that
$\boldsymbol{\pi^*}$ can be exactly recovered given
$(G_1,G_2,\ldots,G_K)\sim\CSBM(n,a\frac{\log n}{n},b\frac{\log n}{n},s)$
if
\begin{equation}\label{eq:matching}
s\left(1-(1-s)^{K-1}\right) \dconnab > 1,
\end{equation}
which (for $K > 2$) is weaker than the condition $s^{2} \dconnab > 1$ implied by~\cite{racz2021correlated}.
(Moreover, in Theorem~\ref{thm4} we show that the condition in~\eqref{eq:matching} is tight for exact recovery of $\boldsymbol{\pi^*}$.)
Thus, by the discussion above, this gives a sufficient condition for exact community recovery given
$(G_1,G_2,\ldots,G_K)\sim\CSBM(n,a\frac{\log n}{n},b\frac{\log n}{n},s)$:
\begin{equation}\label{suff}
s\left(1-(1-s)^{K-1}\right)\dconnab > 1
\qquad \text{ and } \qquad
\left(1-(1-s)^K\right)\dchab>1.
\end{equation}
\textbf{The interplay between community recovery and graph matching.}
The condition~\eqref{suff} is, however, not tight. To attain the sharp threshold for exact community recovery given $K$ graphs, we need to answer the following question: does there exist a parameter regime where exact community recovery is possible for $K$ graphs, even though (1) exact graph matching is impossible,  and (2) exact community recovery is impossible using only $K-1$ graphs?

For $K=2$ graphs, Gaudio, R\'acz, and Sridhar~\cite{gaudio2022exact} proved that the sharp threshold for exact community recovery given $K$ correlated SBMs is given by
\begin{equation}
\label{k=2}
s^2\dconnab+s(1-s)\dchab>1 \qquad \text{ and } \qquad \left(1-(1-s)^2 \right)\dchab>1.
\end{equation}
The condition $\left(1-(1-s)^2 \right)\dchab>1 $ is necessary due to the work~\cite{racz2021correlated}. The first condition in~\eqref{k=2} demonstrates the interplay between community recovery and graph matching. To be more specific, the first term $s^2\dconnab$ is the threshold for exact graph matching given $(G_1,G_2)$, while the second term $s(1-s)\dchab$ comes from community recovery.

Our main contribution generalizes this result,
determining the exact community recovery threshold
for $K \geq 3$ graphs. If $\left(1-(1-s)^K\right)\dchab>1$, then the sharp threshold is given by
\begin{equation}
\label{k=3}
s\left(1-(1-s)^{K-1}\right)\dconnab+s(1-s)^{K-1}\dchab>1.
\end{equation}
The condition~\eqref{k=3} also clearly exhibits the interplay between community recovery and graph matching. The first term comes from graph matching, while the second term comes from community recovery, as in the case of $K=2$.
We refer to Section~\ref{sec:results} and Theorems~\ref{thm1} and~\ref{thm2} for details.

Despite the apparent similarity in results,
when $K\geq 3$ the situation differs significantly from that of two graphs. The primary challenge lies in the existence of multiple methods for matching $K \geq 3$ graphs. When $K=2$, there is only a single matching that needs to be recovered from $G_1$ and~$G_2$.
In contrast, with three or more graphs, the graphs can be matched pairwise, or to some anchor graph, or potentially in many other ways.
Integrating information across different matchings requires substantial additional effort.
We
present the formal results in the next section.

\subsection{Results}
\label{sec:results}

Our main contributions are to determine the precise information-theoretic thresholds for exact community recovery and for exact graph matching given $K$ correlated SBMs $(G_1,G_2,\ldots,G_K)\sim\CSBM(n,a\frac{\log n}{n},b\frac{\log n}{n},s)$.

\subsubsection{Threshold for exact community recovery}

We first describe the precise information-theoretic threshold for exact community recovery, starting with the positive direction.

\begin{theorem}[Exact community recovery from $K$ correlated SBMs]\label{thm1}
Fix constants $a, b > 0$ and $s \in [0,1]$,
and let $(G_{1}, G_{2}, \ldots, G_{K}) \sim \CSBM( n, a \frac{\log n}{n}, b \frac{\log n}{n}, s)$.
Suppose that the following two conditions both hold:
\begin{equation}\label{eq:union_all_three}
\left(1-(1-s)^{K}\right) \dchab > 1
\end{equation}
and
\begin{equation}\label{eq:bad_for_12_and_13_good_for_23}
s\left(1-(1-s)^{K-1}\right) \dconnab + s(1-s)^{K-1} \dchab > 1.
\end{equation}
Then exact community recovery is possible. That is, there is an estimator $\widehat{\boldsymbol{\sigma}} = \widehat{\boldsymbol{\sigma}}(G_{1}, G_{2},\ldots,G_{K})$ such that
$
\lim\limits_{n \to \infty} \p \left( \overlap \left(\widehat{\boldsymbol{\sigma}}, \boldsymbol{\sigma^{*}} \right) = 1 \right) = 1.
$
\end{theorem}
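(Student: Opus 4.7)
The plan is to lift the $k$-core-matching approach of Gaudio, R\'acz, and Sridhar from $K=2$ graphs to $K$ graphs, and then run a vertex-wise genie-aided log-likelihood test on top.

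First, for each $j \in \{2,\ldots,K\}$ I would compute the $k$-core matching $\widehat{\pi}_{1j}$ between $G_{1}$ and $G_{j}$. By the now-standard analysis of the $k$-core matching, $\widehat{\pi}_{1j}$ agrees with $\pi_{1j}^{*}$ on all but a bad set $B_{j}$ of size $o(n)$; using these partial matchings I would build an estimated union graph $\widehat{H}$ by declaring an edge $(u,v)$ present in $\widehat{H}$ whenever it is present in $G_{1}$ or in $\widehat{\pi}_{1j}^{-1}(G_{j})$ for some $j \geq 2$. On vertex pairs whose endpoints both lie outside $B_{2}\cup\cdots\cup B_{K}$, $\widehat{H}$ coincides with the true union graph $H^{*}\sim\SBM(n,(1-(1-s)^{K})p,(1-(1-s)^{K})q)$. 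Condition~\eqref{eq:union_all_three}, together with the sharp single-graph SBM threshold~\eqref{ssbm}, then yields (for instance via a standard SDP or spectral method applied to $\widehat{H}$) a preliminary labeling $\widetilde{\boldsymbol{\sigma}}$ that agrees with $\boldsymbol{\sigma^{*}}$ on $n-o(n)$ vertices.

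Second, I would refine $\widetilde{\boldsymbol{\sigma}}$ via a per-vertex genie test: for each $v$ I recompute $\widehat{\sigma}(v)$ as the sign of a log-likelihood ratio built from the edges incident to $v$ in $G_{1}$ and in each matched copy $\widehat{\pi}_{1j}^{-1}(G_{j})$, using $\widetilde{\boldsymbol{\sigma}}$ for the other vertex labels. When $v \notin B_{2}\cup\cdots\cup B_{K}$ the information available is essentially that of the full union graph, and condition~\eqref{eq:union_all_three} ensures the test succeeds with probability $1-o(1/n)$ via a Chernoff--Hellinger bound. When $v$ lies in one or more $B_{j}$, the matched copies near $v$ cannot be trusted, and the effective LLR splits into two pieces: edges of $G_{1}$ that also appear in some correctly-matched $G_{j}$ (contributing a $\dconnab$-type exponent with effective scale $s(1-(1-s)^{K-1})$) and edges of $G_{1}$ that appear in no other graph (contributing a $\dchab$-type exponent with effective scale $s(1-s)^{K-1}$); condition~\eqref{eq:bad_for_12_and_13_good_for_23} is exactly the requirement that the sum of these two exponents exceeds $1$.

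The main obstacle, as flagged by the authors in the overview, is controlling the joint behavior of the bad sets $B_{2},\ldots,B_{K}$: the worst-case vertex for the genie test lies in the intersection of many $B_{j}$'s, so one needs sharp first-moment bounds on $|B_{j_{1}}\cap\cdots\cap B_{j_{\ell}}|$. I would characterize $v \in B_{j_{1}}\cap\cdots\cap B_{j_{\ell}}$ by the existence, for each $j \in \{j_{1},\ldots,j_{\ell}\}$, of a distinct twin $u_{j}$ that is indistinguishable from $v$ inside the intersection graph of $G_{1}$ and $\widehat{\pi}_{1j}^{-1}(G_{j})$, and then compute the first moment of the number of such configurations by a Chernoff--Hellinger calculation tailored to the combinatorics of the $\ell$ pairs. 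To close the loop I would check that the error set of $\widetilde{\boldsymbol{\sigma}}$ is, with probability $1-o(1)$, disjoint from the local neighborhoods used by the genie test (a standard robustness argument for LLR-based refinement), so that plugging $\widetilde{\boldsymbol{\sigma}}$ into the LLR gives the same answer as using the true $\boldsymbol{\sigma^{*}}$, and finally conclude by a union bound over $v \in [n]$.
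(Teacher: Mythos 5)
Your high-level architecture (almost-exact initial labeling, partial $k$-core matchings, union-graph refinement for well-matched vertices, separate cleanup for the rest) parallels the paper's, but there is a genuine gap in your treatment of the ``bad'' vertices, and it traces back to using only the anchor matchings $\widehat{\pi}_{12},\ldots,\widehat{\pi}_{1K}$. The exponent pair you invoke---count $n^{1-s(1-(1-s)^{K-1})\dconnab+o(1)}$ against per-vertex failure $n^{-s(1-s)^{K-1}\dchab+o(1)}$---is valid only for vertices lying in essentially \emph{all} of the bad sets $B_j$. A vertex lying in exactly one $B_j$ is far more common: there are $n^{1-s^2\dconnab+o(1)}$ of them. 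For such a vertex the LLR you describe uses only edges of $G_1$ (you partition $G_1$'s edges by co-occurrence in trusted copies but never use the trusted copies' own edges), so its failure probability is at best $n^{-s\dchab+o(1)}$, and the union bound then needs $s^2\dconnab+s\dchab>1$. This is \emph{not} implied by \eqref{eq:union_all_three}--\eqref{eq:bad_for_12_and_13_good_for_23}: take $K=3$, $s=0.3$, $\dchab\approx 1.53$, $\dconnab\approx 5.1$ (realizable with $a\approx 8.7$, $b\approx 1.45$); both hypotheses hold, yet $s^2\dconnab+s\dchab\approx 0.92<1$, so your union bound over the single-$B_j$ vertices diverges. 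One repair is to let the test for a vertex in exactly $\ell$ of the $B_j$'s use the $K-1-\ell$ trusted copies in full; this creates a family of conditions indexed by $\ell$ which does follow from the hypotheses, but only via a computation (at the extremal point $s\dconnab=\dchab=(1-(1-s)^K)^{-1}$ the margin is proportional to $(1-(1-s)^{\ell})(1-(1-s)^{K-1-\ell})>0$) that your plan neither performs nor flags. The paper's repair is the cleaner one and is its central new idea: compute all $\binom{K}{2}$ pairwise matchings and compose them, so a vertex missing from $\widehat{\mu}_{12}$ typically still recovers its $G_2$-correspondence via $\widehat{\mu}_{j2}\circ\widehat{\mu}_{1j}$; only vertices whose matching ``metagraph'' is disconnected are genuinely bad, and those do number $n^{1-s(1-(1-s)^{K-1})\dconnab+o(1)}$, so the two-term decomposition in \eqref{eq:bad_for_12_and_13_good_for_23} applies exactly to them.

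Two further points. Your characterization of $v\in B_{j_1}\cap\cdots\cap B_{j_\ell}$ via ``indistinguishable twins'' is not how the $k$-core matching's unmatched set behaves: $F_{1j}$ is the complement of the $k$-core of the intersection graph, a low-degree phenomenon, and the paper bounds the intersections by showing $F_{1j}$ is contained w.h.p.\ in the set of bounded-degree vertices of $G_1\wedge G_j$ (via a modified \Luczak expansion on $G\{[n]\setminus v\}$ to decouple dependencies) and then computing moments of intersections of these low-degree sets; twin-counting is the tool for impossibility of matching, not for the $k$-core's failure set. Finally, attributing the $\dconnab$ term in \eqref{eq:bad_for_12_and_13_good_for_23} to a piece of a single vertex's LLR is a miscue: that term is the size exponent of the worst bad set entering the union bound, while the only contribution of an individual bad vertex's test is the $s(1-s)^{K-1}\dchab$ term.
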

Combined with Theorem~\ref{thm2} below (which shows that Theorem~\ref{thm1} is tight),
this result precisely answers an open problem of Gaudio, R\'acz, and Sridhar~\cite{gaudio2022exact}.
The condition~\eqref{eq:union_all_three} is required for exact community recovery for $K$ graphs by~\cite{racz2021correlated}. We now focus on the condition~\eqref{eq:bad_for_12_and_13_good_for_23}.
In the prior work~\cite{gaudio2022exact}, it is proved that the threshold for exact community recovery for two graphs is given by~\eqref{k=2}. The primary contribution of Theorem~\ref{thm1} is to go beyond this threshold as the number of graphs $K$ increases. In particular, this showcases that there exists a regime where (1) it is impossible to exactly recover $\boldsymbol{\sigma^{*}}$ from $(G_1,G_2,\ldots,G_{K-1})$ alone and (2) any exact graph matching is impossible, yet one can perform exact recovery of $\boldsymbol{\sigma^{*}}$ given $(G_1,G_2,\ldots,G_K)$. This requires developing novel algorithms that integrate information from $(G_1,G_2,\ldots,G_K)$ delicately and incorporate multiple graph matchings carefully.

Here we first provide a detailed discussion of the algorithms for three graphs $(G_1, G_2, G_3) \sim \CSBM(n, a\frac{\log n}{n}, b\frac{\log n}{n}, s)$, which is the simplest case with intriguing new phenomena and challenges as mentioned. This avoids complicated notations (which arise for general $K$) for easier understanding. The new techniques used for combining multiple matchings and integrating information with three graphs are subsequently generalized to $K > 3$ correlated SBMs. The high level idea of the algorithm for exact community recovery when $K=3$ consists of five steps (in the following discussion we assume $a > b$; when $a< b$, change majority to minority everywhere):
\begin{enumerate}
\item \label{step1}Obtain an almost exact community labeling of $G_{1}$.
\item \label{step2}Obtain three pairwise partial almost exact graph matchings $\widehat{\mu}_{12}$, $\widehat{\mu}_{13}$, and $\widehat{\mu}_{23}$ between graph pairs $(G_1,G_2)$, $(G_1,G_3)$, and $(G_2,G_3)$, respectively.
\item \label{step3}For vertices in $G_1$ that are part of at least two matchings, refine the almost exact community labeling in Step 1 via majority vote in the (union) graph consisting of edges that appear at least once in $(G_1,G_2,G_3)$.

\item \label{step4}For vertices in $G_1$ that are part of only $\widehat{\mu}_{12}$ (resp. $\widehat{\mu}_{13}$), label them via majority vote of their neighbors' labels in the graph consisting of edges that appear only in $G_1$ and not in $G_2$ (resp. only in $G_1$ and not in $G_3$).

\item \label{step5} For vertices in $G_1$ that are not part of any of the three matchings or only part of $\widehat{\mu}_{23}$, label them via majority vote of their neighbors' labels in $G_{1}$. 
\end{enumerate}
Each step in the algorithm involves abundant technical details. See Section~\ref{sec:overview} for a detailed overview of the algorithms and proofs.
Note that the threshold~\eqref{eq:bad_for_12_and_13_good_for_23} captures the interplay between community recovery and graph matching, which we now discuss in more detail.

The first term in~\eqref{eq:bad_for_12_and_13_good_for_23}, which is $s\left(1-(1-s)^2 \right) \dconnab$ for $K=3$, comes from graph matching.
In~\cite{gaudio2022exact} it is shown that for one matching,
say $\widehat{\mu}_{12}$, the best possible almost
exact graph matching makes $n^{1-s^2\dconnab+o(1)}$ errors.
Here, we show that it is possible to obtain almost exact matchings $\widehat{\mu}_{12}$ and $\widehat{\mu}_{13}$ (namely, these will be $k$-core matchings; see Section~\ref{sec:overview} for details) such that the size of the \emph{intersection} of the two error sets is $n^{1-s(1-(1-s)^2) \dconnab+o(1)}$,
which is a smaller power of $n$.
This quantifies how synthesizing information across graph matchings can reduce errors and this exponent is precisely what shows up in the first term in~\eqref{eq:bad_for_12_and_13_good_for_23}.
This observation is important and relevant for Steps~\ref{step4} and~\ref{step5} in the algorithm.

On the other hand, the second term in~\eqref{eq:bad_for_12_and_13_good_for_23}, which is $s(1-s)^2\dchab$ for $K=3$, comes from community recovery. In fact, this term arises from the majority votes in Step~\ref{step5}, where we use only edges in $G_1$.
Note that the nodes that are unmatched by $\widehat{\mu}_{12}$ and $\widehat{\mu}_{13}$ are, roughly speaking, the isolated nodes in the intersection graphs of $G_{1}$ and $G_{2}$, and $G_{1}$ and $G_{3}$, respectively.
Thus, while we use all edges in $G_{1}$ in this step,
the relevant edges are not present in $G_{2}$ nor in $G_{3}$,
giving the ``effective'' factor of $s(1-s)^{2}$.
By~\eqref{ssbm}, the exact community recovery threshold for $\SBM(n, s(1-s)^2a\frac{\log n}{n},s(1-s)^2b\frac{\log n}{n})$ is $s(1-s)^2\dchab = 1$,
giving
the second term in~\eqref{eq:bad_for_12_and_13_good_for_23}.

The following impossibility result shows the tightness of Theorem~\ref{thm1}.
\begin{theorem}[Impossibility of exact community recovery]\label{thm2}
Fix constants $a,b>0$ and $s\in[0,1]$, and let $(G_{1}, G_{2}, \ldots, G_{K}) \sim \CSBM( n, a \frac{\log n}{n}, b \frac{\log n}{n}, s)$.
Suppose that either
\begin{equation}\label{eq:not union_all_three}
\left(1-(1-s)^K \right)\dchab < 1
\end{equation} or
\begin{equation}\label{eq:bad_for_all}
{s\left(1-(1-s)^{K-1}\right)} \dconnab + s(1-s)^{K-1} \dchab < 1.
\end{equation}
Then exact community recovery is impossible.
That is, for any estimator $\boldsymbol{\widetilde{\sigma}}=\boldsymbol{\widetilde{\sigma}}(G_{1},G_{2}, \ldots, G_{K})$, we have that
$\lim\limits_{n \to \infty} \p \left( \overlap \left(\boldsymbol{\widetilde{\sigma}}, \boldsymbol{\sigma^{*}} \right) = 1 \right) = 0$.
\end{theorem}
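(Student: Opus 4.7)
The plan is to establish the two impossibility conditions separately.

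For condition~\eqref{eq:not union_all_three}, I would argue via a genie that reveals $\boldsymbol{\pi^*}$. With $\boldsymbol{\pi^*}$ in hand, the estimator can reconstruct $(G_1', \ldots, G_K')$ from $(G_1, \ldots, G_K)$ and form the union graph $H^* := G_1' \cup \cdots \cup G_K'$, which is marginally distributed as $\SBM(n, (1-(1-s)^K)p, (1-(1-s)^K)q)$. A standard observation is that, conditional on $H^*$, the assignment of edges of $H^*$ to the different $G_i'$'s depends only on $s$ and not on $\boldsymbol{\sigma^*}$, so the posterior of $\boldsymbol{\sigma^*}$ given $(G_1', \ldots, G_K')$ equals the posterior given $H^*$ alone. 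The classical SBM impossibility threshold~\eqref{ssbm} then rules out exact recovery from $H^*$---and hence from $(G_1, \ldots, G_K)$ even with the genie---whenever $(1-(1-s)^K)\dchab<1$.

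For condition~\eqref{eq:bad_for_all}, I plan a two-vertex swap argument adapted from the classical bad-vertex analysis of SBM impossibility. For each vertex $v$, consider two events: (A) $v$ is isolated in the intersection graph $G_1 \cap \bigl(\bigcup_{i \geq 2} G_i'\bigr)$, so every $G_1$-edge at $v$ is exclusive to $G_1$; and (B) the classical SBM log-likelihood test at $v$, computed using only $v$'s neighbors in $G_1 \setminus \bigl(\bigcup_{i \geq 2} G_i'\bigr)$, is ambiguous. Let $B$ be the set of vertices for which both (A) and (B) hold. Condition~(A) encodes the graph-matching obstruction---$v$'s $G_1$-neighborhood has no overlap with its neighborhood in any $G_i'$, so $\pi_{1i}^*(v)$ is uninferable from local structure---while~(B) encodes the residual community-recovery obstruction using only $G_1$-exclusive edges.

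A first-moment computation gives $\p(\mathrm{A}) = n^{-s(1-(1-s)^{K-1})\dconnab + o(1)}$ (the isolation probability in a graph with edge parameters $s(1-(1-s)^{K-1})p$ and $s(1-(1-s)^{K-1})q$), and, conditional on~(A), the relevant subgraph of $G_1 \setminus \bigl(\bigcup_{i \geq 2} G_i'\bigr)$ at $v$ is (approximately) $\SBM(n, s(1-s)^{K-1}p, s(1-s)^{K-1}q)$, whose classical bad-vertex probability is $n^{-s(1-s)^{K-1}\dchab + o(1)}$. Hence, under~\eqref{eq:bad_for_all}, $\EE|B|$ grows polynomially, and a standard second-moment estimate upgrades this to $|B| \to \infty$ in probability. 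In particular, with high probability there exist bad vertices $v_+, v_- \in B$ with $\sigma^*(v_+) = +1$ and $\sigma^*(v_-) = -1$. Letting $\boldsymbol{\sigma^{**}}$ denote the labeling obtained from $\boldsymbol{\sigma^*}$ by swapping the labels at $v_+$ and $v_-$, the plan is to show that
\[
\frac{\p\bigl((G_1, \ldots, G_K) \mid \boldsymbol{\sigma^{**}}\bigr)}{\p\bigl((G_1, \ldots, G_K) \mid \boldsymbol{\sigma^*}\bigr)}
\]
stays bounded away from $0$ with nonvanishing probability, so that MAP---and hence any estimator---cannot distinguish $\boldsymbol{\sigma^*}$ from $\boldsymbol{\sigma^{**}}$.

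The main obstacle is controlling this likelihood ratio, since the likelihoods marginalize over all $\boldsymbol{\pi^*} \in (S_n)^{K-1}$, and a clever matching of $v_\pm$ to vertices in $G_2, \ldots, G_K$ could in principle tip the balance. To address this, I would introduce a refined genie revealing $\pi_{1i}^*(u)$ for every $u \notin \{v_+, v_-\}$ and every $i \geq 2$, reducing the residual matching ambiguity for each $i$ to a single transposition on the two remaining images. The likelihood ratio then becomes a local computation involving only the edges incident to $\{v_+, v_-\}$ across all $K$ graphs: condition~(A) ensures that the $G_i$-edges at $\pi_{1i}^*(v_\pm)$ come from parent edges disjoint from those generating $G_1$'s $v_\pm$-edges, so they supply no additional evidence tying the transposition to the swap, while condition~(B) forces the $G_1$-evidence to be inconclusive. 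The most delicate technical step is verifying that this refined genie does not inadvertently leak information via the revealed partial matchings, and that the $K-1$ coupled transposition ambiguities can be handled jointly.
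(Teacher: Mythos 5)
Your proposal follows essentially the same route as the paper: condition~\eqref{eq:not union_all_three} is handled by the genie argument from the two-graph prior work, and condition~\eqref{eq:bad_for_all} by a genie-aided MAP analysis in which the adversary learns everything except the restriction of $\pi^*_{12}$ to a set of ``bad'' vertices, whose size $n^{1-s(1-(1-s)^{K-1})\dconnab+o(1)}$ times the per-vertex majority-failure probability $n^{-s(1-s)^{K-1}\dchab+o(1)}$ diverges, with a second-moment upgrade and a crossing-pair/swap argument to conclude MAP failure. The ``delicate step'' you flag is exactly where the paper works hardest: it replaces your event (A) by a slightly smaller set $S^*$ (singletons of $G_1\wedge(G_2\vee\cdots\vee G_K)$ that additionally have no $G_1$-edges into the expanded singleton set $\bar R^*$), which is what makes the likelihood permutation-invariant over $S^*$ and decouples the majority statistic from membership in $S^*$ in the moment computations.
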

Impossibility of exact community recovery given $K$ graphs under the condition~\eqref{eq:not union_all_three} is proved in~\cite{racz2021correlated}. Hence, Theorem~\ref{thm2} focuses on proving impossibility for exact community recovery given $K$ graphs under the condition~\eqref{eq:bad_for_all}. In particular, condition~\eqref{eq:bad_for_all} reveals a parameter regime where exact community recovery from $(G_1,G_2,\ldots,G_K)$ is impossible, yet, if $\bm{\pi}^*$ were known, then exact community recovery would be possible based on the (correctly matched) union graph. 

Theorems~\ref{thm1} and~\ref{thm2} combined give the tight threshold for exact community recovery for general $K$ correlated SBMs, see~\eqref{k=3}. Fig.~\ref{figure 2} exhibits phase diagrams illustrating the results for three graphs.

\subsubsection{Threshold for exact graph matching}

The techniques that we develop in order to prove Theorems~\ref{thm1} and~\ref{thm2} also allow us to solve the question of exact graph matching,
that is, exactly recovering $\bm{\pi}^* =\left(\pi^*_{12}, \ldots,\pi^*_{1K} \right)$ from $(G_{1}, G_{2}, \ldots, G_{K})$.
In the context of correlated SBMs and community recovery, exact graph matching can be thought of as an intermediate step towards exact community recovery.
However, more generally, graph matching is a fundamental inference problem in its own right; see Section~\ref{sec:related work} for discussion of related work.
We start with the positive direction in the following theorem.

\begin{theorem}[Exact graph matching from $K$ correlated SBMs]\label{thm3}
Fix constants $a, b > 0$ and $s \in [0,1]$,
and let $(G_{1}, G_{2}, \ldots, G_{K}) \sim \CSBM( n, a \frac{\log n}{n}, b \frac{\log n}{n}, s)$.
Suppose that
\begin{equation}\label{eq:exact graph matching}
s\left(1-(1-s)^{K-1}\right) \dconnab  > 1.
\end{equation}
Then exact graph matching is possible. That is, there exists an estimator
$\widehat{\bm{\pi}}
= \widehat{\boldsymbol{\pi}}(G_{1}, G_{2},\ldots,G_{K})
= (\widehat{\pi}_{12}, \ldots, \widehat{\pi}_{1K})$
such that
$\lim_{n \to \infty} \p \left( \widehat{\bm{\pi}}(G_{1}, G_{2},\ldots,G_{K})=\bm{\pi}^* \right) = 1.$
\end{theorem}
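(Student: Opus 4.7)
The plan is to deploy the $k$-core matching technique---which has proved successful for two correlated SBMs~\cite{cullina2020partial,gaudio2022exact,RS23}---on every pair $(G_1, G_j)$, and then to boost the resulting partial matchings into an exact one by showing that the union of their correct-match sets covers every vertex of $V(G_1)$.

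First, for each $j \in \{2, \ldots, K\}$, I would compute the $k$-core matching $\widehat{\mu}_{1j}$ between $G_1$ and $G_j$. Standard analysis shows that $\widehat{\mu}_{1j}$ agrees with $\pi_{1j}^*$ on $V(G_1) \setminus B_j$, where the bad set $B_j$ is, to leading order, the set of vertices isolated in the intersection graph $G_1 \cap G_j'$ (with lower-order contributions coming from pendants and other $k$-core obstructions). When $s^{2} \dconnab \le 1$, any single $B_j$ can be a positive power of $n$, so a single pair does not suffice for exact matching.

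The heart of the argument is the following claim: under the hypothesis $s(1-(1-s)^{K-1}) \dconnab > 1$, we have $\bigcap_{j=2}^{K} B_j = \emptyset$ with high probability. Naively treating the events $\{v \in B_j\}_j$ as independent would give the wrong exponent; they are in fact correlated through the edges of $G_1$ incident to $v$, so the right calculation decouples edge by edge. For a fixed $G_0$-neighbor $u$ of $v$, the edge $(u,v)$ fails to certify $v$ as good in the pair $(1,j)$ iff either $(u,v)\notin G_1$ or $(u,v)\notin G_j'$. For $(u,v)$ to fail simultaneously in all $K-1$ pairs, either $(u,v)\notin G_1$ (probability $1-s$) or $(u,v)\in G_1$ yet $(u,v)\notin G_j'$ for every $j\ge 2$ (probability $s(1-s)^{K-1}$); together these contribute $1-s(1-(1-s)^{K-1})$ per $G_0$-edge. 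Multiplying over the $G_0$-neighbors of $v$ and calibrating with $p = a\log n/n$, $q = b\log n/n$ yields
\[
\p\!\left(v \in \bigcap_{j=2}^{K} B_j\right) \le n^{-\, s(1-(1-s)^{K-1}) \dconnab \,+\, o(1)},
\]
and a union bound over $v \in V(G_1)$ gives $\E[|\bigcap_j B_j|] = o(1)$ under the hypothesis.

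Finally, assemble $\widehat{\bm{\pi}}$ coordinate by coordinate. The emptiness of the joint bad set produces for every $v \in V(G_1)$ some $j_v$ with $v \notin B_{j_v}$, so $\pi_{1j_v}^*(v) = \widehat{\mu}_{1j_v}(v)$ is already recovered. For each remaining $j$, I would use the vertices of $V(G_1)\setminus B_j$ as correctly matched seeds and identify $\pi_{1j}^*(v)$ as the vertex $w \in V(G_j)$ that maximizes the count of $u \in N_{G_1}(v)\setminus B_j$ with $(\widehat{\mu}_{1j}(u), w)\in G_j$; a standard seeded-matching argument, powered by the linear-in-$n$ abundance of seeds, rules out any wrong $w$ with high probability. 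The principal obstacle throughout is the previous step: the bad sets of different pairs share the randomness of $G_1$'s edges, and only the edge-by-edge decoupling above produces the tight exponent. A related technicality is verifying that the $k$-core corrections beyond the isolated-vertex part of $B_j$ also collectively vanish under intersection across $j$ at the same exponent, so that the informal identification $B_j \approx \{v : v\text{ isolated in }G_1\cap G_j'\}$ does not lose anything.
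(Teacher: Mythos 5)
Your reduction to the emptiness of $\bigcap_{j=2}^{K} B_j$, and the edge-by-edge decoupling giving the exponent $s(1-(1-s)^{K-1})\dconnab$, are both correct and essentially coincide with the paper's key estimate (the paper's Lemma on $|\bigcap F^*_{ij}|$, specialized to the cut $\{1\}$ versus $\{2,\dots,K\}$). The genuine gap is in your final, seeded-matching step. Emptiness of $\bigcap_j B_j$ only guarantees that each $v$ is correctly matched by \emph{some} pair $(1,j_v)$; for the remaining indices $j$ with $v\in B_j$ you still must produce $\pi^*_{1j}(v)$. Your proposal is to pick the $w\in V(G_j)$ maximizing $\#\{u\in N_{G_1}(v)\setminus B_j : (\widehat{\mu}_{1j}(u),w)\in G_j\}$. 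But $v\in B_j$ means precisely that $v$ sits outside the $k$-core of $G_1\wedge_{\pi^*_{1j}}G_j$, and (to leading order) that $v$ is \emph{isolated} in that intersection graph: for the true image $w^*=\pi^*_{1j}(v)$ the count you are maximizing is $0$ (and at most $k-1$ in general). Meanwhile, among the $n-1$ wrong candidates $w$, each attains count $\geq 1$ with probability of order $\log^2 n/n$, so with high probability on the order of $\log^2 n$ wrong candidates strictly beat a count of $0$. The statistic you propose is exactly the information whose absence put $v$ into $B_j$ in the first place, so the argmax is wrong with high probability and the step fails.

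The repair is what the paper does: compute all $\binom{K}{2}$ pairwise $k$-core matchings, including $\widehat{\mu}_{ij}$ for $i,j\geq 2$, and recover $\pi^*_{1j}(v)$ by composing matchings along a path in the ``metagraph'' $\mathcal{MG}_v$ on $[K]$ whose edges are the pairs that match $v$. A vertex is fully recoverable iff $\mathcal{MG}_v$ is connected, so one must bound, for \emph{every} bipartition $(\Gamma,\Gamma^c)$ of $[K]$, the number of vertices lying in $\bigcap_{i\in\Gamma, j\in\Gamma^c}F^*_{ij}$ — not only for the cut isolating $G_1$. The paper shows the singleton cuts are the worst case, yielding the same exponent $1-s(1-(1-s)^{K-1})\dconnab$ you derived, so your calculation is the right leading-order ingredient; what is missing is (i) the extra family of matchings that makes the composition possible, and (ii) the bound over all cuts rather than just one.
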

Since the condition in~\eqref{eq:exact graph matching} is weaker than $s^{2} \dconnab > 1$ (which is the threshold for exact graph matching for $K=2$, as shown in~\cite{racz2021correlated}), Theorem~\ref{thm3} implies that there exists a parameter regime where $\widehat{\pi}_{12}$ cannot be exactly recovered from $(G_{1}, G_{2})$, but $\widehat{\bm{\pi}}$ can be exactly recovered from $(G_{1}, G_{2}, \ldots, G_{K})$.
In other words, it is necessary to combine information across all graphs in order to recover $\widehat{\bm{\pi}}$ (and even just to recover $\widehat{\pi}_{12}$).

The estimator $\widehat{\bm{\pi}}$ in Theorem~\ref{thm3} is based on pairwise $k$-core matchings (see Section~\ref{sec:overview} for further details).
Roughly speaking, for each pairwise $k$-core matching the number of unmatched vertices is $n^{1-s^{2}\dconnab + o(1)}$;
however, we shall show that the number of vertices which cannot be matched through some combination of pairwise $k$-core matchings is
$n^{1-s\left(1-(1-s)^{K-1}\right) \dconnab + o(1)}$,
which is of smaller order.
Hence, when
$s\left(1-(1-s)^{K-1}\right) \dconnab > 1$,
then exact graph matching is possible.

The following impossibility result shows the tightness of Theorem~\ref{thm3}.
\begin{theorem}[Impossibility of exact graph matching from $K$ correlated SBMs]\label{thm4}
Fix constants $a, b > 0$ and $s \in [0,1]$,
and let $(G_{1}, G_{2}, \ldots, G_{K}) \sim \CSBM( n, \frac{a \log n}{n}, \frac{b \log n}{n}, s)$.
Suppose that
\begin{equation}\label{eq:no exact graph matching}
s\left(1-(1-s)^{K-1}\right) \dconnab  < 1.
\end{equation}
Then exact graph matching is impossible. That is, for any estimator
$\widetilde{\bm{\pi}}
= \widetilde{\boldsymbol{\pi}}(G_{1}, G_{2},\ldots,G_{K})
= (\widetilde{\pi}_{12}, \ldots \widetilde{\pi}_{1K})$
we have that
$\lim_{n \to \infty} \p \left( \widetilde{\bm{\pi}}(G_{1}, G_{2},\ldots,G_{K})=\bm{\pi}^* \right) = 0.$
\end{theorem}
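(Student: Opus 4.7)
The plan is to prove impossibility via a standard ambiguity argument: under the hypothesis $s(1-(1-s)^{K-1}) \dconnab < 1$, I will exhibit with probability tending to one two distinct permutations that are indistinguishable given $(G_1, \ldots, G_K)$, so that no estimator can output $\bm{\pi}^*$ with probability bounded away from zero.

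First I would introduce the \emph{effective intersection graph} $H$ on $[n]$ capturing the threshold: place an edge $(u, z) \in H$ iff $(u, z) \in G_1$ and $(u, z) \in G_i'$ for at least one $i \in \{2, \ldots, K\}$. Conditional on $\boldsymbol{\sigma^*}$, the edges of $H$ are independent, and an edge $(u, z)$ is in $H$ with probability $p_{uz} \cdot s(1-(1-s)^{K-1})$, where $p_{uz} \in \{p, q\}$ depends on the communities of $u, z$. Hence $H$ is marginally an SBM with expected average degree $s(1-(1-s)^{K-1}) \dconnab \log n$. By standard first- and second-moment analysis for isolated vertices in sparse SBMs (as in the proofs of the connectivity/exact-recovery thresholds), the hypothesis $s(1-(1-s)^{K-1}) \dconnab < 1$ implies that $H$ has $n^{1-s(1-(1-s)^{K-1})\dconnab + o(1)}$ isolated vertices with high probability, with an abundance in each community.

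Next I would set up the ambiguity. Pick two isolated vertices $v, w$ of $H$ with $\sigma^{*}(v) = \sigma^{*}(w)$ (possible w.h.p.) and additionally satisfying some mild regularity events that hold with probability $1-o(1)$ conditional on isolation (e.g., non-adjacency in every $G_i$, and disjointness of their neighborhoods in $G_1$ and in $\bigcup_{i\ge 2} G_i'$). Define $\bm{\pi}^{**} := (\pi_{12}^{*} \circ \tau, \ldots, \pi_{1K}^{*} \circ \tau)$ with $\tau = (v\, w)$. I would show that the posterior probabilities of $\bm{\pi}^{*}$ and $\bm{\pi}^{**}$ given the data coincide: the likelihood $\P(G_1, \ldots, G_K \mid \bm{\pi})$, after marginalizing over $(G_0, \boldsymbol{\sigma^{*}})$, factorizes into per-pair terms indexed by the edge-tuples $(\mathbbm{1}[(u, z) \in G_i])_{i=1}^{K}$, and the isolation of $v, w$ in $H$ together with the regularity conditions ensures these per-pair terms are preserved when we apply $\tau$ to the coordinates $i \geq 2$. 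Since the number of pairs $(v, w)$ yielding such swap-equivalent alternatives diverges, a union bound over these swaps upgrades the naive bound $\P(\widetilde{\bm{\pi}} = \bm{\pi}^{*}) \leq 1/2$ to convergence to $0$.

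The main obstacle I anticipate is the likelihood-preservation step. Although the high-level idea parallels the $K = 2$ impossibility argument of \cite{racz2021correlated}, for $K \geq 3$ the subsampled graphs $G_1, G_2', \ldots, G_K'$ are coupled through the shared parent $G_0$, introducing $K$-wise correlations among the edge-tuples that must be handled carefully. The key will be to decompose each incident edge of $v$ (and of $w$) by its type---present in $G_1$ only, in some $G_i'$ ($i \ge 2$) only, or in both---and to show that isolation of $v$ and $w$ in $H$ excludes precisely the only type for which the swap alters the joint per-pair distribution. Certifying that the remaining types contribute symmetrically under $\tau$, combined with $\sigma^{*}(v) = \sigma^{*}(w)$ matching the SBM edge probabilities, is the crux of the calculation, and it is exactly here that the asymmetric definition of $H$ (which treats $G_1$ as an anchor) and the precise form of the threshold $s(1-(1-s)^{K-1}) \dconnab$ emerge.
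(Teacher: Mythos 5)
Your approach is sound in outline but genuinely different from the paper's. The paper proves Theorem~\ref{thm4} by reduction: it hands the estimator the true cross-matchings $\pi^*_{2j}$, forms the union graph $H := G_2 \vee_{\pi^*_{23}} \cdots \vee_{\pi^*_{2K}} G_K \sim \SBM(n, (1-(1-s)^{K-1})a\log n/n, (1-(1-s)^{K-1})b\log n/n)$, shows that $(G_1, G_2, \ldots, G_K)$ can be re-simulated from $(G_1, H)$ by further subsampling $H$, and concludes that an exact $K$-graph matcher would yield an exact matcher for the pair $(G_1, H)$ --- two correlated SBMs with asymmetric subsampling rates $s$ and $1-(1-s)^{K-1}$ --- contradicting the known two-graph impossibility result of Cullina et al.\ when $s(1-(1-s)^{K-1})\dconnab<1$. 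You instead re-derive that two-graph argument from scratch for the effective intersection graph $G_1 \wedge (G_2' \vee \cdots \vee G_K')$, via isolated vertices and transposition swaps. Both routes isolate the same effective graph and threshold; the paper's reduction is shorter because it outsources the swap calculus to a citation, while yours is self-contained.

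Two steps in your plan need repair. First, the likelihood does \emph{not} factorize over vertex pairs after marginalizing over $\boldsymbol{\sigma^*}$. In the case $A_{vz}=A_{wz}=0$ with $B'_{vz}\neq B'_{wz}$ the swap exchanges the tuples attached to the pairs $(v,z)$ and $(w,z)$, and the corresponding per-pair factors agree only when $\sigma(v)=\sigma(w)$; this cannot hold for every $\sigma$ appearing in the marginalization, so term-by-term likelihood preservation fails. The standard fix is a genie argument: give the estimator $\boldsymbol{\sigma^*}$ as side information, choose $v,w$ with $\sigma^*(v)=\sigma^*(w)$, and work with the conditional likelihood, which does factorize (this is also how the paper's MAP analysis for Theorem~\ref{thm2} is set up). Second, passing from one ambiguous swap to success probability $o(1)$ is not a union bound but a symmetry/counting argument, and it cannot be run over all pairs of isolated vertices: your cross-disjointness condition ($N_{G_1}(v)\cap N_{\vee_{i\ge2}G_i'}(w)=\emptyset$ and vice versa) fails for a given pair with probability of order $\log^2 n/n$, while the number of pairs of isolated vertices can be as large as $n^{2-2s(1-(1-s)^{K-1})\dconnab+o(1)}\gg n$. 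You need to prune the isolated set to a subset $S$ defined by global conditions in the spirit of the paper's Definition~\ref{S} (no $G_1$-edges from $S$ into the neighborhood of the whole isolated set), verify that $|S|$ still diverges within each community, and conclude that all permutations of $S$ respecting community labels are posterior-equivalent, so any estimator succeeds with probability at most $1/|S|\to 0$. With these two adjustments your argument goes through.
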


Theorems~\ref{thm3} and~\ref{thm4} combined give the tight threshold for exact graph matching for general $K$ correlated SBMs, see~\eqref{eq:matching}.
We note that, in independent and concurrent work~\cite{ameen2024exact}, Ameen and Hajek derived the threshold for exact graph matching from $K$ correlated Erd\H{o}s--R\'enyi random graphs; in other words, they proved Theorems~\ref{thm3} and~\ref{thm4} in the special case of $a=b$.

Comparing Theorems~\ref{thm3} and~\ref{thm4} with Theorems~\ref{thm1} and~\ref{thm2}, note that
there exists a parameter regime where exact community recovery is possible even though exact graph matching is impossible.
\begin{figure}[t!]
    \centering
    \begin{subfigure}{0.4\textwidth}
        \centering
    \includegraphics[width=\textwidth]{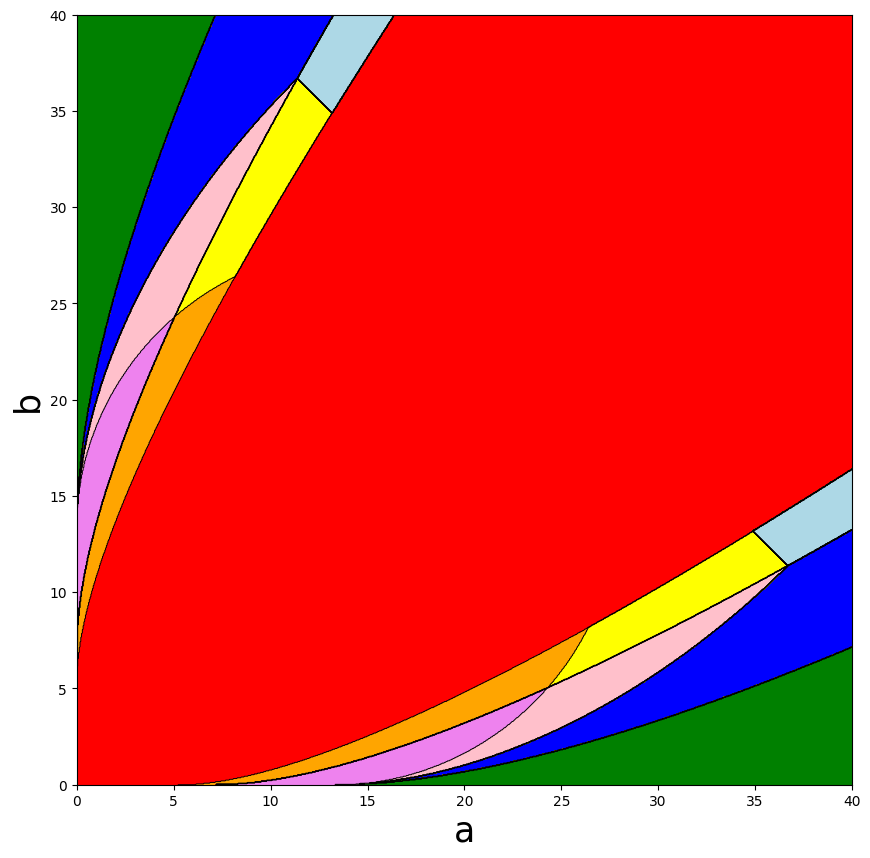} 
        \caption{Fixed $s=0.15$.}
        \label{fig:figure1}
    \end{subfigure}\hfill
    \begin{subfigure}{0.4\textwidth}
        \centering
\includegraphics[width=\textwidth]{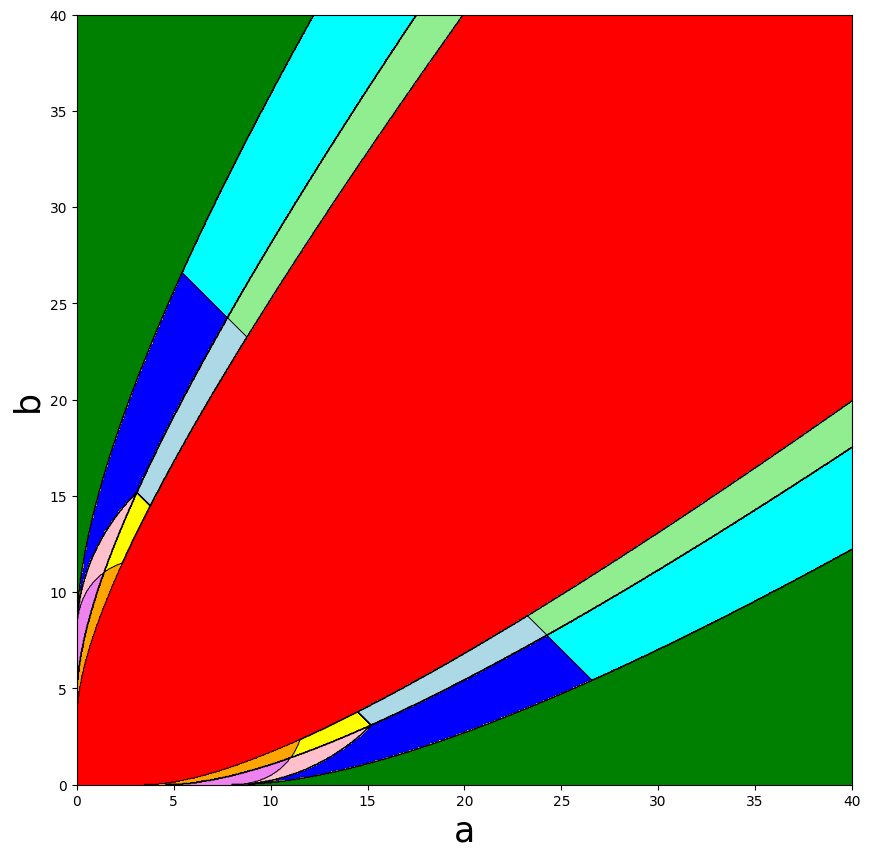} 
        \caption{Fixed $s=0.25$.}
        \label{fig:figure2}
    \end{subfigure}
\caption{Phase diagram for exact community recovery for three graphs with fixed $s$, and $a \in [0, 40]$, $b \in[0, 40]$ on the axes.
\textit{Green region}: exact community recovery is possible from $G_1$ alone;
\textit{Cyan region}: exact community recovery is impossible from $G_1$ alone, but exact graph matching of $G_1$ and $G_2$ is possible, and subsequently exact community recovery is possible from $(G_1, G_2)$;
\textit{Dark Blue region}: exact community recovery is impossible from $G_1$ alone, exact graph matching is also impossible from $(G_{1}, G_{2})$, yet exact community recovery is possible from $(G_1, G_2)$;
\textit{Pink region}: exact community recovery is impossible from $(G_1, G_2)$ (even though it would be possible if $\pi^*_{12}$ were known), yet exact community recovery is possible from $(G_1,G_2,G_3)$;
\textit{Violet region}: exact community recovery is impossible from $(G_1, G_2,G_3)$ (even though it would be possible from  $(G_1,G_2)$ if $\pi^*_{12}$ were known);
\textit{Light Green region}: exact community recovery is impossible from $(G_1,G_2)$, but exact graph matching of graph pairs is possible, and subsequently exact community recovery is possible from $(G_1, G_2, G_3)$;
\textit{Grey region}: exact community recovery is impossible from $(G_1,G_2)$, exact graph matching is also impossible from $(G_{1}, G_{2})$, but exact graph matching is possible from $(G_1,G_2,G_3)$, and subsequently exact community recovery is possible from $(G_1, G_2, G_3)$;
\textit{Yellow region}: exact community recovery is impossible from $(G_1,G_2)$, exact graph matching is impossible from $(G_1,G_2,G_3)$, yet exact community recovery is possible from $(G_1, G_2, G_3)$;
\textit{Orange region}: exact community recovery is impossible from $(G_1, G_2,G_3)$ (even though it would be possible from $(G_1,G_2,G_3)$ if $\bm{\pi^*}$ were known);
\textit{Red region}: exact community recovery is impossible from $(G_1, G_2,G_3)$ (even if $\bm{\pi^*}$ is known).
The principal finding of this paper is the characterization of the Pink, Violet, Orange, Yellow, Grey, and Light Green regions.}
    \label{figure 2}
\end{figure}

\subsection{Overview of algorithms and proofs}
\label{sec:overview}
In this section we elaborate on the technical details of the community recovery algorithm, for which high-level ideas were presented in Section~\ref{sec:results}.
We focus our discussion on the setting of $K=3$ graphs, which already captures the main technical challenges; we highlight these and explain how we overcome them.
We subsequently explain the generalization from $3$ graphs to $K$ graphs. The overview of the impossibility proof is discussed as well.

\textbf{$k$-core matching.}
We now define a $k$-core matching~\cite{cullina2020partial,gaudio2022exact,RS23}, which is used for almost exact graph matching in Step~\ref{step2}.
Given a pair of graphs $(G, H)$ with vertex set $[n]$,  for any permutation $\pi$, we have the corresponding intersection graph $G\wedge_{\pi}H$, where $(i,j)$ is an edge in the intersection graph if and only if $(i,j)$ is an edge in  $G$ and $(\pi(i),\pi(j))$  is an edge in  $H$.  The $k$-core estimator explores all possible permutations $\pi$ of $[n]$ to seek a permutation $\widehat{\pi}$ that maximizes the size of the $k$-core of the intersection graph $G\wedge_{\pi} H$;
recall that the $k$-core of a graph is the maximal induced subgraph for which all vertices have degree at least $k$.
The output of the $k$-core estimator is then a partial matching $\widehat{\mu}$, which is the restriction of $\widehat{\pi}$
to the vertex set of the $k$-core in $G\wedge_{\widehat{\pi}} H$.

One significant advantage of using $k$-core matchings
is a certain optimality property in terms of performance.
Specifically, if
$(G_{1}, G_{2}) \sim \CSBM( n, a \frac{\log n}{n}, b \frac{\log n}{n}, s)$,
then the $k$-core estimator between $G_{1}$ and $G_{2}$
fails to match at most
$n^{1-s^2\dconnab+o(1)}$ vertices,
which is the same order as the number of singletons of $G_1\wedge_{\pi_{12}^*} G_2$ and any graph matching algorithm would fail to match these singletons~\cite{cullina2016improved,racz2021correlated,gaudio2022exact}.
Another significant benefit of utilizing $k$-core matchings is the correctness of the $k$-core estimator for correlated SBMs, as discussed in~\cite{gaudio2022exact}. The $k$-core estimator might not be able to match all vertices under the parameter regime that we are interested in; however, every vertex that it does match is matched correctly with high probability.

\textbf{Community recovery subroutines.}
The high-level summary of the algorithm is as follows. Since exact community recovery might be impossible in $G_1$ alone, we first obtain an initial estimate which gives almost exact community recovery in $G_1$, as described in Step~\ref{step1}.
In Step~\ref{step2}, we use pairwise partial $k$-core matchings with $k=13$ to obtain $\bm{\widehat{\mu}}:=\{\widehat{\mu}_{12},\widehat{\mu}_{13},\widehat{\mu}_{23}\}$ (see Fig.~\ref{fig:image2sub}), which we will use to combine information across $(G_1,G_2,G_3)$ to recover communities. Note that each partial matching $\widehat{\mu}_{ij}$ only matches a subset of the vertices, denoted as $M_{ij}$; we denote the set of vertices not matched by $\widehat{\mu}_{ij}$ by $F_{ij} := [n]\setminus M_{ij}$. Subsequently, we split the vertices into two categories: ``good'' vertices and ``bad'' vertices, where ``good'' vertices are part of at least two matchings and ``bad'' vertices are part of at most one matching (see Fig.~\ref{fig:image1sub}). We conduct several majority votes among ``good'' and ``bad'' vertices to do the clean-up after the graph matching phase, where each subroutine is meticulously executed to disentangle the intricate dependencies among $(G_1,G_2,G_3)$ and $\bm{\widehat{\mu}}$.

\begin{figure}[t!]
    \centering
    \begin{subfigure}[b]{0.45\textwidth}
        \centering
        \includegraphics[width=0.7\textwidth]{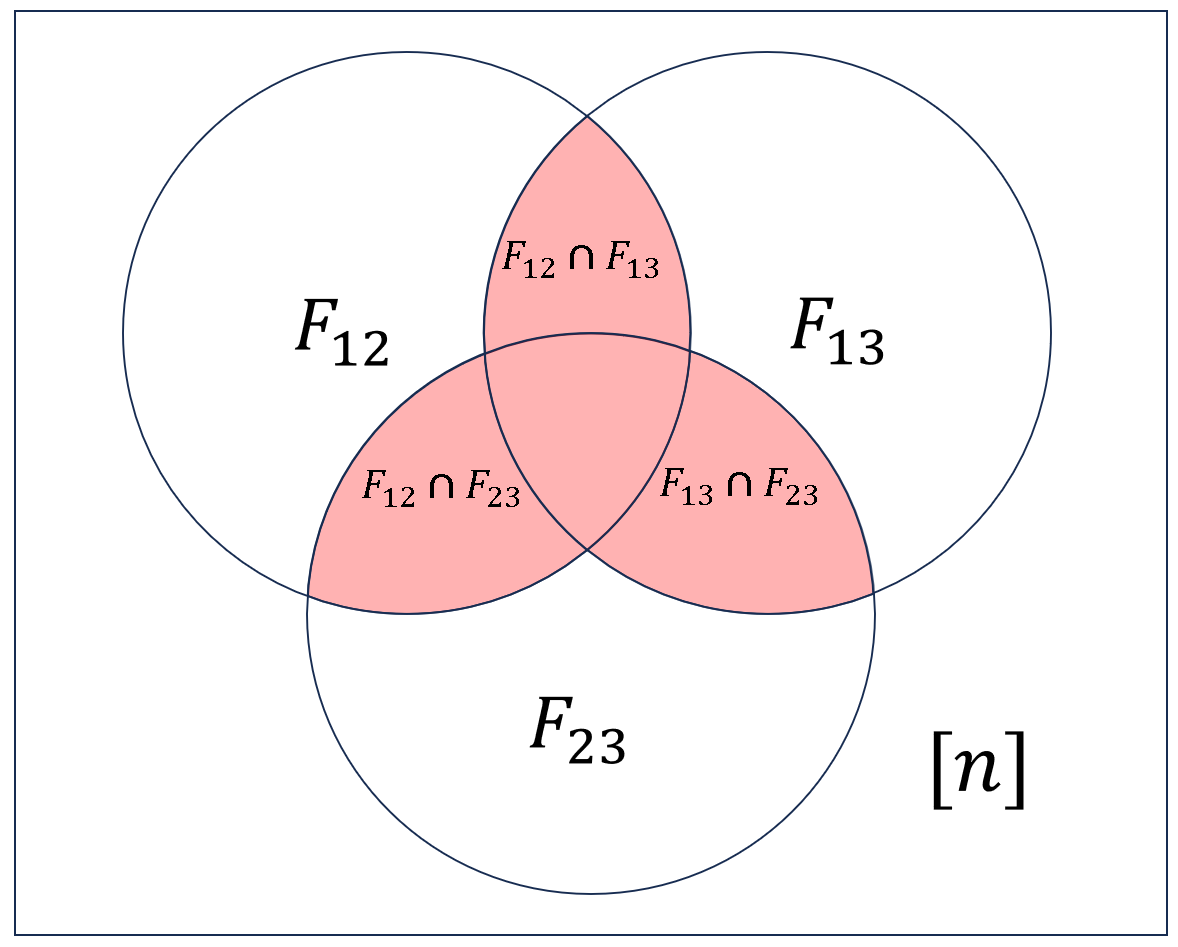}
        \caption{Categorization of vertices for three graphs: vertices in the red regions are ``bad'' while vertices in the white regions are ``good''.}
        \label{fig:image1sub}
    \end{subfigure}
    \hfill
    \begin{subfigure}[b]{0.45\textwidth}
        \centering
        \includegraphics[width=0.7\textwidth]{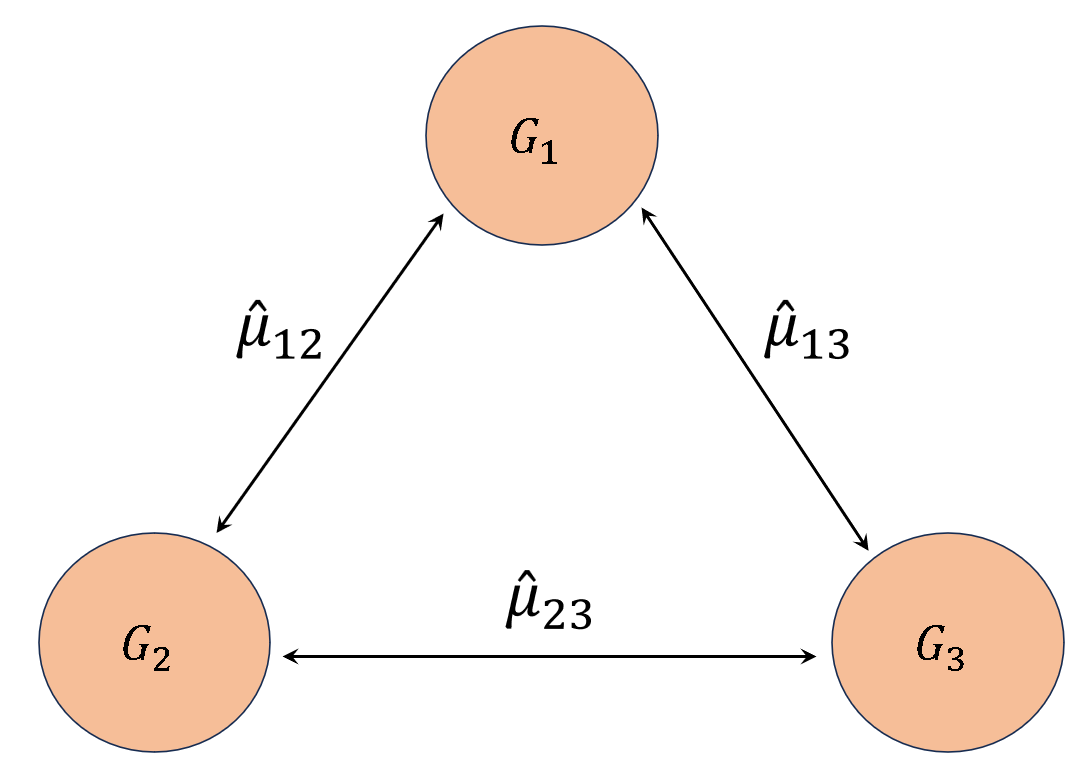}
        \caption{Graph matchings for three graphs. For general~$K$, consider $\binom{K}{2}$ partial graph matchings. }
        \label{fig:image2sub}
    \end{subfigure}
    \caption{Schematic landscape of partial matchings over three graphs.}
    \label{fig:3}
\end{figure}

\textbf{Exact community recovery for the ``good'' vertices.}
The major distinction between being ``good'' and being ``bad'' is that ``good'' vertices can combine information from all three graphs via their union graph (which is denser),
whereas ``bad'' vertices cannot.
Suppose that vertex $i$ is part of $\widehat{\mu}_{12}$ and $\widehat{\mu}_{13}$ (i.e., $i\in M_{12}\cap M_{13}$).
We can then identify the union graph
$G_1\vee_{\widehat{\mu}_{12}} G_2\vee_{\widehat{\mu}_{13}} G_3$,
which consists of
edges $(i, j)$ such that
$(i, j)$ is an edge in $G_1$
or $(\widehat{\mu}_{12}(i), \widehat{\mu}_{12}(j))$ is an edge in $G_2$
or $(\widehat{\mu}_{13}(i), \widehat{\mu}_{13}(j))$ is an edge in~$G_3$,
and $i$ is part of this union graph.
Similarly, if $i\in M_{12}\cap M_{23}$,
then $i$ is part of the union graph
$G_1\vee_{\widehat{\mu}_{12}} G_2\vee_{\widehat{\mu}_{23}\circ\widehat{\mu}_{12}} G_3$
that also integrates information from all three graphs.
On the union graph, we can refine the almost exact community labeling by reclassifying ``good'' vertices based on a majority vote among the labels of their neighbors that are also ``good'',
and this reclassification will be correct as long as the condition~\eqref{eq:union_all_three} holds.

There are many underlying technical challenges and roadblocks in the theoretical analysis. The key difficulty arises from the structure of the union graph. It is statistically guaranteed that in $G_1\vee_{\pi^*_{12}} G_2\vee_{\pi^*_{13}} G_3$, all vertices have a community label which is the same as the majority community among their neighbors~\cite{mossel2016consistency}.
However,
whether this is also the case for
$G_1\vee_{\widehat{\mu}_{12}} G_2\vee_{\widehat{\mu}_{13}} G_3$
is unclear, since the latter graph is only defined on the ``good'' vertices $ M_{12}\cap M_{13}$. One would like to demonstrate that the removal of ``bad'' vertices does not significantly affect the majority community among neighbors of ``good'' vertices. Prior work~\cite{gaudio2022exact} addressed a similar problem for two graphs by employing a technique known as \Luczak expansion~\cite{luczak1991size} to $F_{12}$ to ensure that the vertices inside the expanded set $\overline{F_{12}}$ are only weakly connected to the vertices outside of the expanded set $[n]\setminus \overline{F_{12}}$. Unfortunately, this method is no longer applicable for correlated SBMs with three or more graphs. Even though the size of the expanded set $\overline{F_{12}}$ is orderwise equal to the size of ${F}_{12}$, the size of the intersection of the expanded sets
$\overline{F_{12}} \cap \overline{F_{13}}$
might not be orderwise equal to the size of ${F}_{12}\cap{F}_{13}$, which directly leads to the failure of the algorithm working down
to the information-theoretic threshold.
To overcome this challenge, we consider the graph $G\{[n]\setminus v\}$ to decouple the dependence of $v$ being connected to a vertex $w$ and $w$ being part of the $k$-core.
Applying the \Luczak expansion on such a graph for any given $v$,
and through a union bound,
we prove that unmatched vertices are contained in the set of vertices whose degree is smaller than a constant, with high probability.
This allows us to
quantify the size of ${F}_{12}\cap{F}_{13}$
and meanwhile directly ensure that ``good'' vertices within the $k$-core are only weakly connected with ``bad'' vertices.

Another hurdle needed to overcome, as stated in \cite{gaudio2022exact}, concerns the almost exact community recovery in Step~\ref{step1} which is subsequently used for majority votes. Therefore, it is of great importance to guarantee that the incorrectly-classified vertices are not
well-connected and do not have a great impact on majority votes. Consequently, we utilize an algorithm originally developed by Mossel, Neeman, and Sly~\cite{mossel2016consistency} which allows us to manage the geometry of the misclassified vertices and demonstrate that the vertices classified incorrectly are indeed only weakly connected.

\textbf{Exact community recovery for the ``bad'' vertices.}
The remaining step is to label the ``bad'' vertices.
The ``bad'' vertices can be further classified into three categories (see Fig.~\ref{fig:image1sub}):
vertices in $F_{12}\cap F_{13}$,
which are only matched by $\widehat{\mu}_{23}$ or are not matched by any of the three matchings;
vertices in $F_{13}\cap F_{23}\setminus F_{12}$,
which are only matched by $\widehat{\mu}_{12}$;
and vertices in $F_{12}\cap F_{23}\setminus F_{13}$,
which are only matched by $\widehat{\mu}_{13}$.

Consider the vertices in $F_{12}\cap F_{13}$ (the other cases are similar).
First of all, as discussed above,
we show that
$|F_{12}\cap F_{13}|=n^{1-s(1-(1-s)^{2})\dconnab+o(1)}$ with high probability.
Consider the graph
$G_{1} \setminus_{\wh{\pi}_{12}} G_{2} \setminus_{\wh{\pi}_{13}} G_{3}$,
which consists of the edges
$(i,j)$ in $G_{1}$ such that
$(\wh{\pi}_{12}(i), \wh{\pi}_{12}(j))$ and $(\wh{\pi}_{13}(i), \wh{\pi}_{13}(j))$
are not edges in $G_{2}$ and $G_{3}$, respectively.
Due to the approximate independence of
$F_{12}\cap F_{13}$
and
$G_{1} \setminus_{\wh{\pi}_{12}} G_{2} \setminus_{\wh{\pi}_{13}} G_{3}$,
for a vertex $i \in F_{12} \cap F_{13}$
we can calculate the probability of the failure of the majority vote in the graph
$G_{1} \setminus_{\wh{\pi}_{12}} G_{2} \setminus_{\wh{\pi}_{13}} G_{3}$
in a relatively straightforward manner,
giving
$n^{-s(1-s)^2\dchab+o(1)}$.
The factor $s(1-s)^{2}$ arises from the fact that
the edges in this graph are subsampled in $G_{1}$ and are not subsampled in $G_{2}$ and $G_{3}$.
Now since a vertex in $F_{12} \cap F_{13}$
can have at most $12$ edges outside of $F_{12}$ in $G_{1} \wedge_{\wh{\mu}_{12}} G_{2}$,
and also at most $12$ edges outside of $F_{13}$ in $G_{1} \wedge_{\wh{\mu}_{13}} G_{3}$,
the majority vote for $i \in F_{12} \cap F_{13}$ essentially does not change
whether it is performed in
$G_{1}$ or in
$G_{1} \setminus_{\wh{\pi}_{12}} G_{2} \setminus_{\wh{\pi}_{13}} G_{3}$.
Putting all this together, we can bound the probability that the majority vote fails as:
\begin{multline*}
\p(\text{exists a vertex $i\in F_{12}\cap F_{13}$ such that the majority vote fails})\\
=|F_{12}\cap F_{13}|\times\p(\text{majority vote fails for a vertex})
=n^{1-s(1-(1-s)^{2})\dconnab-s(1-s)^2\dchab+o(1)}.
\end{multline*}
Thus, if~\eqref{eq:bad_for_12_and_13_good_for_23} with $K=3$ holds, then majority vote will correctly classify all vertices in $F_{12} \cap F_{13}$.

\begin{figure}[t!]
    \centering
    \begin{subfigure}[b]{0.45\textwidth}
        \centering
        \includegraphics[width=0.7\textwidth]{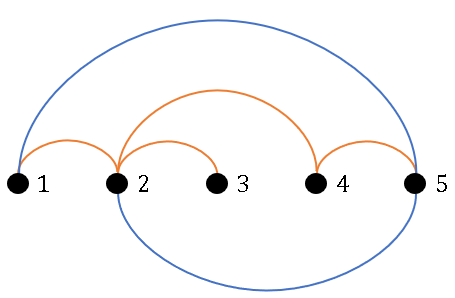}
        \caption{$\mathcal{MG}_v$ for a ``good'' vertex $v$: $\mathcal{MG}_v$ is connected.}
        \label{fig:meta1}
    \end{subfigure}
    \hfill
    \begin{subfigure}[b]{0.45\textwidth}
        \centering
        \includegraphics[width=0.7\textwidth]{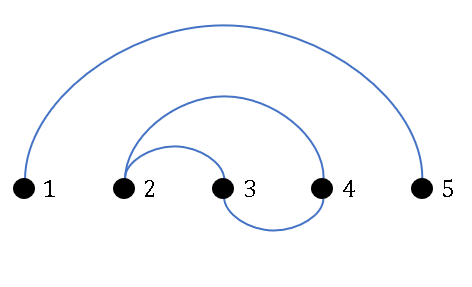}
        \caption{$\mathcal{MG}_v$ for a ``bad'' vertex $v$: $\mathcal{MG}_v$ is disconnected.}
        \label{fig:meta2}
    \end{subfigure}
    \caption{Schematic showing the meta graph $\mathcal{MG}_v$ when $K=5$.}
    \label{fig:4}
\end{figure}
\textbf{Generalization to $K$ graphs.}
For $K$ graphs, we have $\binom{K}{2}$ pairwise matchings to consider (see Fig.~\ref{fig:image2sub}).
We again categorize the vertices as ``good'' and ``bad''.
The ``good'' vertices can integrate information across all $K$ graphs through the pairwise partial $k$-core matchings
$\left\{ \widehat{\mu}_{ij} : i,j\in[K], i \neq j \right\}$,
while ``bad'' vertices cannot.
To illustrate this concept more vividly, for any vertex $v$, consider a new ``metagraph'' $\mathcal{MG}_v$ on $K$ nodes, defined as follows:
there is an edge between $i$ and $j$ in $\mathcal{MG}_v$
if and only if
$v$ can be matched through $\widehat{\mu}_{ij}$ (see Fig.~\ref{fig:4}).
If the metagraph $\mathcal{MG}_v$ is connected, then there exists a path that can connect all of its $K$ nodes.
Equivalently, there exists a set of matchings that allows us to combine information across all $K$ graphs.
Subsequently, we quantify the number of ``bad'' vertices to be $n^{1-s(1-(1-s)^{K-1})\dconnab+o(1)}$. The remaining analysis for $K$ graphs can be derived by generalizing the analysis for three graphs.

\textbf{Impossibility proof.} As discussed in Section~\ref{sec:results}, we focus on the proof of~\eqref{eq:bad_for_all} for impossibility. We compute the maximum a posterior (MAP) estimator for the communities in $G_1$. We show that, even with significant additional information provided, including all the correct community labels in $G_2$, the true matchings $\pi^*_{ij}$ for $i,j\in\{2,3,\ldots,K\}$, and most of the true matching $\pi^*_{12}$ except for singletons in the graph $G_1\wedge_{\pi^*_{12}}(G_2\vee \ldots \vee G_K)$, the MAP estimator fails to exactly recovery communities with probability bounded away from 0 if~\eqref{eq:bad_for_all}  holds.
The proof is adapted from the MAP analysis in~\cite{gaudio2022exact}.
The difference is that here we are considering $K$ graphs $G_1,G_2,\ldots, G_K$ with different additional information provided for the MAP estimator.
Given that the MAP estimator is ineffective under this regime, all other estimators also fail.

\textbf{Exact graph matching threshold.}
The proof of the exact graph matching threshold is implicitly present in the proof of the exact community recovery threshold.
Essentially, since we show that the number of ``bad'' vertices is
$n^{1-s(1-(1-s)^{K-1})\dconnab+o(1)}$,
the condition~\eqref{eq:exact graph matching} implies that there are no ``bad'' vertices with high probability.
Since all vertices are ``good'', and ``good'' vertices can integrate information across all $K$ graphs,
the latent matchings can be recovered~exactly.
For the impossibility result we analyze the MAP estimator and show that,
even with significant additional information, including the true matchings $\{ \pi_{ij}^{*} : i,j \in \{2, 3, \ldots, K\} \}$,
it fails if~\eqref{eq:no exact graph matching} holds.

\subsection{Related work}\label{sec:related work}

Our work generalizes---and solves an open question raised by---the work of Gaudio, R\'acz, and Sridhar~\cite{gaudio2022exact}.
Just as~\cite{gaudio2022exact}, our work lies at the interface of the literatures on community recovery and graph matching\footnote{We note that graph matching has both positive and negative societal impacts. In particular, it is well known that graph matching algorithms can be used to de-anonymize social networks, showing that anonymity is different, in general, from privacy~\cite{narayanan2009anonymizing}. At the same time, studying fundamental limits can aid in determining precise conditions when anonymity can indeed
guarantee privacy, and when additional safeguards are necessary.}---two fundamental learning problems---which we briefly summarize here.


\textbf{Community recovery in SBMs.}
A huge research literature exists on learning latent community structures in networks, and this topic is especially well understood for the SBM~\cite{holland1983stochastic,decelle2011asymptotic,mossel2015reconstruction,mossel2016consistency,massoulie2014community,mossel2018proof,abbe2015exact,abbe2015community,bordenave2015non,abbe2018community}. Specifically, we highlight the work of~\cite{abbe2015exact,mossel2016consistency}, which identify the precise threshold for exact community recovery for SBMs with two balanced communities. Our algorithm builds upon their analysis,
taking particular care about dealing with the dependencies arising from the multiple inexact partial matchings between $K$ correlated graphs.

\textbf{Graph matching: correlated Erd\H{o}s-R\'enyi random graphs.}
The past decade has seen a plethora of research on average-case graph matching,
focusing on
correlated Erd\H{o}s-R\'enyi random graphs~\cite{pedarsani2011privacy}.
The information-theoretic thresholds for recovering the latent vertex correspondence $\pi^*$
have been established for  exact recovery~\cite{cullina2016improved,wu2022settling,cullina2018exact}, almost exact recovery~\cite{cullina2020partial}, and weak recovery~\cite{ganassali2020tree,ganassali2021impossibility,hall2023partial,wu2022settling,ding2023matching}.
In parallel, a line of work has focused on algorithmic advances~\cite{mossel2020seeded,barak2019nearly,ding2021efficient,fan2020spectral,mao2021random,mao2023exact,mao2023random},
culminating in recent breakthroughs that developed efficient graph matching algorithms in the constant noise setting~\cite{mao2023exact,mao2023random}.
We particularly highlight the work of Cullina, Kiyavash, Mittal, and Poor~\cite{cullina2020partial}, who introduced $k$-core matchings and showed their utility for partial matching of correlated Erd\H{o}s--R\'enyi random graphs. Subsequent work has shown the power of $k$-core matchings as a flexible and successful tool for graph matching~\cite{gaudio2022exact,RS23,ameen2023robust}.
Our work both significantly builds upon these works, as well as further develops this machinery, which may be of independent interest.
We also note the independent and concurrent work of Ameen and Hajek which determined the exact graph matching threshold for $K$ correlated Erd\H{o}s--R\'enyi random graphs~\cite{ameen2024exact}.

\textbf{Graph matching: beyond correlated Erd\H{o}s-R\'enyi random graphs.}
Motivated by
real-world networks,
a growing line of recent work studies graph matching beyond Erd\H{o}s-R\'enyi
graphs~\cite{bringmann2014heterogeneous,korula2014efficient,chiasserini2016social,onaran2016optimal,racz2022correlated,yu2021power,racz2021correlated,gaudio2022exact,wang2022geometric,RS23,ding2023efficiently,yang2023efficient,yang2023graph,chai2024efficient,chen2024computational}, including for correlated SBMs~\cite{lyzinski2015spectral,onaran2016optimal,lyzinski2018information,racz2021correlated,gaudio2022exact,yang2023efficient,yang2023graph,chai2024efficient,chen2024computational}.
The works that are most relevant to ours are~\cite{racz2021correlated,gaudio2022exact}, which have been discussed extensively above.

\subsection{Discussion and Future Work}
\label{sec:open}

Our main contribution determines the precise threshold for exact community recovery from $K$ correlated SBMs, for any constant~$K$.
This highlights the power of integrative data analysis for community recovery, yet many open questions still remain.

\textbf{Efficient algorithms.}
Theorem~\ref{thm1} characterizes when exact community recovery is information-theoretically possible from $K$ correlated SBMs. Is this possible \emph{efficiently} (i.e., in time polynomial in~$n$)?
The bottleneck in the algorithm that we use to prove Theorem~\ref{thm1}, which makes it inefficient, is the $k$-core matching step; the other steps are efficient.


In recent work (parallel to ours),
Chai and R\'acz~\cite{chai2024efficient} gave
the first efficient algorithm for exact graph matching for correlated SBMs with two balanced communities,
building on recent breakthrough results on
efficient graph matching algorithms for correlated Erd\H{o}s--R\'enyi random graphs~\cite{mao2023random}.
By a union bound, this result also applies to $K$ correlated SBMs for any constant $K$.
This shows that exact graph matching is possible efficiently whenever
$s^{2} \dconnab > 1$
(i.e., whenever pairwise exact graph matching is information-theoretically possible)
and also
$s^{2} > \alpha \approx 0.338$,
where $\alpha$ is Otter's tree-counting constant.
Consequently, using this algorithm for exact graph matching as a black box, exact community recovery from $K$ correlated SBMs is also possible efficiently
whenever the following three conditions hold:
$\left(1-(1-s)^{K}\right) \dchab > 1$,
$s^{2} \dconnab > 1$,
and
$s^{2} > \alpha \approx 0.338$.

However, the main focus of our work is uncovering and characterizing regimes where exact community recovery is possible even though (pairwise) exact graph matching is not (see Theorem~\ref{thm1}).
Is \emph{this} possible efficiently?
We conjecture that it is. We refer to~\cite{gaudio2022exact} for further discussion. 

\textbf{General block models.}
We focused here on the simplest case of SBMs with two balanced communities.
It would be interesting to extend these results to general block models with multiple communities.
This is understood well in the single graph setting~\cite{abbe2018community}
and recent work has also characterized the threshold for exact graph matching for two correlated SBMs with $k$ symmetric communities~\cite{yang2023graph}.

\textbf{Alternative constructions of correlated graphs.}
An exciting research direction is to study different constructions of correlated graph models.
For general $K$, there are many ways that $K$ graphs (or $K$ Bernoulli random variables) can be correlated.
In particular, the following is a natural alternative construction of multiple correlated SBMs. First, generate
$G_0\sim \SBM(n,p,q)$. Then, independently generate $H_i\sim \SBM(n,p',q')$ for $i \in [K]$. Construct $G_{i}' := G_0 \vee H_i$, and finally generate $G_i$ through an independent random permutation of the vertex indices in $G_i'$.
This construction is equivalent to the one we studied in this paper for $K=2$ and it is different when~$K \geq 3$.
Investigating the graph matching and community recovery thresholds under this construction is interesting and valuable.

\subsection{Notation}

We introduce here some notation that will be used in the rest of the paper. In most of the paper we focus on the setting of $K=3$ graphs: $(G_{1}, G_{2}, G_{3}) \sim \CSBM(n, a \frac{\log n}{n}, b \frac{\log n}{n}, s)$,
and this is the setting that we consider here as well.

Let $V:=[n]=\{1,2,3,...,n\}$ denote the vertex set of the parent graph $G_{0}$,
and let
$V^+:=\{i\in [n] : \sigma^*(i)=+1\}$ and
$V^-:=\{i\in [n] : \sigma^*(i)=-1\}$
denote the sets of vertices in the two communities.
Let $\binom{[n]}{2}:=\{(i,j) : i,j\in [n], i\neq j\}$
denote the set of all unordered vertex pairs.
Given a community labeling $\sigma \in \{+1,-1\}^{n}$,
we define the set of intra-community vertex pairs as
$\mathcal{E}^+(\sigma) := \{(i, j) \in \binom{[n]}{2}: \sigma(i)=\sigma(j)\}$
and the set of inter-community vertex pairs as
$\mathcal{E}^+(\sigma) := \{(i, j) \in \binom{[n]}{2}: \sigma(i)=-\sigma(j)\}$.
Note that $\mathcal{E}^+(\sigma)$ and $\mathcal{E}^-(\sigma)$ form a partition of~$\binom{[n]}{2}$.

Let $A$, $B$, and $C$ denote the adjacency matrices of $G_1$, $G_2$, and $G_3$, respectively. Let $B'$ and $C'$ denote the adjacency matrices of $G_2'$ and $G_3'$, respectively. Note that, by construction, we have for all $(i,j)\in \binom{[n]}{2}$
that
$B'_{i,j} = B_{\pi^*_{12}(i),\pi^*_{12}(j)}$ and
$C'_{i,j} = C_{\pi^*_{13}(i),\pi^*_{13}(j)}$.
Observe that we have the following probabilities for every $(i,j)\in \binom{[n]}{2}$.
If $a, b, c \in \{0,1\}^{3}$ and $a+b+c>0$, then
\[
\p \left( \left( A_{ij}, B'_{ij}, C'_{ij} \right) = (a,b,c) \right) =
    \begin{cases}
        s^{a+b+c}(1-s)^{3-a-b-c}p & \text{if } \sigma^*(i)=\sigma^*(j), \\
        s^{a+b+c}(1-s)^{3-a-b-c}q  & \text{if } \sigma^*(i)\neq\sigma^*(j).
    \end{cases}
\]
Furthermore, we have that
\[
\p\left( \left( A_{ij},B'_{ij},C'_{ij} \right) = (0,0,0) \right) =
    \begin{cases}
        1-p+(1-s)^{3}p & \text{if } \sigma^*(i)=\sigma^*(j), \\
        1-q+(1-s)^{3}q  & \text{if } \sigma^*(i)\neq\sigma^*(j).
    \end{cases}
\]

\subsection{Organization}

The
rest of the paper is
structured as follows.
First, we elaborate on the recovery algorithm for three graphs in Section~\ref{sec:the recovery algorithm}. Section~\ref{sec:preliminaries} includes some useful preliminary propositions, including some nice properties of almost exact community recovery on $G_1$. Section~\ref{sec:kcore} discusses the $k$-core estimator.
After these preparations, we are ready to prove the main theorems in the paper.

Section~\ref{sec:proof1} proves Theorem~\ref{thm1} for three graphs, where we first validate the accuracy of the community labels for ``good'' vertices and then classify the remaining ``bad'' vertices.  Section~\ref{sec:proof2} presents the proof of the impossibility result (Theorem~\ref{thm2}) for three graphs.
Section~\ref{sec:proof3} discusses the recovery algorithm for $K$ graphs and provides a general proof for $K$ graphs, with additional arguments on how to identify ``good'' and ``bad'' vertices. Section~\ref{sec:proof4} discusses the proof of the impossibility result (Theorem~\ref{thm2}) for $K$ graphs.
Section~\ref{sec: exact graph} contains the proof of the threshold for exact graph matching given $K$ graphs, that is, the proofs of Theorems~\ref{thm3} and~\ref{thm4}.


\section{The recovery algorithm for three graphs}\label{sec:the recovery algorithm}

Our recovery algorithm is based on discovering a matching between subsets of two graphs.
\begin{definition}
\label{def0}
Let $G_i$ and $G_j$ be two graphs in vertex set $[n]$ with adjacency matrix $A,B$, respectively. The pair $(M_{ij}, \mu_{ij})$ is a matching
between $G_i$ and $G_j$ if
\begin{itemize}
\item $M_{ij} \in [n]$,
\item $\mu_{ij}: M_{ij} \to [n]$,
\item $\mu_{ij}$ is injective.
\end{itemize}
\end{definition}
Given a matching $(M_{ij}, \mu_{ij})$, here are some related notations. Define $G_i \vee_{\mu_{ij}} G_j$ to be the
union graph, whose vertex set is $M$, whose vertex index is the same as the vertex index of $G_i$  and whose edge set is $\{\{\ell, m\} : \ell, m \in M_{ij}, A_{\ell m}+B_{\mu_{ij}(\ell),\mu_{ij}(m)}\geq 1\}$. In other words, the edges
are those that appear in either $G_i$ or $G_j$. Conversely, $G_i \wedge_{\mu_{ij}} G_j$ represents intersection graph, whose vertex set is $M_{ij}$, whose vertex index is the same as the vertex index of $G_i$  and whose edge set is $\{\{\ell, m\} :\ell, m \in M_{ij}, A_{\ell m}=B_{\mu_{ij}(\ell),\mu_{ij}(m)}= 1\}$. In other words, the edges
are those that appear in both $G_i$ and $G_j$. Define  $G_i \setminus_{\mu_{ij}} G_j$ to be the graph $G_i$ minus $G_j$, whose vertex set is $M_{ij}$, whose vertex index is the same as the vertex index of $G_i$  and the edges are those only appear on $G_i$ and not appear in $G_j$.
\begin{definition}
Let $G_i$ and $G_j, G_k$ be three graphs on vertex set $[n]$ with adjacency matrix $A,B,C$, respectively. The pair $(M_{ij}, \mu_{ij})$ is a matching
between $G_i$ and $G_j$, while the pair $(M_{jk}, \mu_{jk})$ is a matching
between $G_j$ and $G_k$. Denote $\mu_{jk}\circ\mu_{ij}$ as the composition matching between $G_i$ and $G_k$, defined on the vertex set $M_{ij}\cap M_{jk}$.
\end{definition}

For three graphs, we can define the additional notations in the same manner as in Definition~\ref{def0} and the core concepts remain consistent. $G_i\vee_{\mu_{ij}} G_j\vee_{\mu_{jk}\circ\mu_{ij}} G_k$ represents the union graph of $G_i,G_j,G_k$, whose vertex set is $M=M_{ij}\cap M_{jk}$, whose vertex index is the same as the vertex index of $G_i$  and whose edge set is $\{\{\ell, m\} : \ell, m \in M, A_{\ell m}+B_{\mu_{ij}(\ell),\mu_{ij}(m)}+C_{\mu_{jk}\circ \mu_{ij}(\ell),\mu_{jk}\circ \mu_{ij}(m)}\geq 1\}.$  In other words, the edge set are the edges that appears in at least one graph out of $G_i,G_j, G_k$. Similarly, $G_i\wedge_{\mu_{ij}} G_j\wedge_{\mu_{jk}} G_k$ represents the intersection graph, the edge set is $\{\{\ell,m\}:\ell, m\in M;A_{\ell m}=B_{\mu_{ij}(\ell),\mu_{ij}(m)}=C_{\mu_{jk}\circ \mu_{ij}(\ell),\mu_{jk}\circ \mu_{ij}(m)}= 1\}$. Define $G_i\vee_{\mu_{ij}} G_j\setminus_{\mu_{jk}} G_k$ be the graph whose edge set is those edges that appear in either $G_i$ or $G_j$ and not appear in $G_k$. Similarly, we can define
$G_i\wedge_{\mu_{ij}} G_i\setminus_{\mu_{jk}} G_k$,
$G_i\setminus_{\mu_{ij}} (G_i\vee_{\mu_{jk}} G_k)$,
and
$G_i\setminus_{\mu_{ij}} (G_i\wedge_{\mu_{jk}} G_k)$ as well.
Note that
all the definitions above are defined on vertex set $M$ and use vertex index in $G_i$.

Introduce $d_{\min} (G):=\min_{i\in [n]} d(i)$, where $d(i)$ is the degree of vertex $i$.
\begin{definition}
A matching $(M_{ij}, \mu_{ij})$ is a $k$-core matching of $(G_i, G_j)$ if $d_{\min}(G_i \wedge_{\mu_{ij}} G_j) \geq k$. A matching $(M_{ij}, \mu_{ij})$ is called a
maximal $k$-core matching if it involves the greatest number of vertices, among all $k$-core matchings.
\end{definition}
\begin{algorithm}[h]
\caption{$k$-core matching}
\label{alg 1}
\begin{flushleft}
    \textbf{Input:} {Pair of graphs $G_i,G_j$ on  $n$ vertices, $k\in [n]$.} \\
    \textbf{Output:} {A matching $(\widehat{M}_{ij},\widehat{\mu}_{ij})$ of $G_i$ and $G_j$.}
\end{flushleft}
\begin{algorithmic}[1]
\STATE Enumerating all possible matchings, find the maximal $k$-core matching $(\widehat{M}_{ij},\widehat{\mu}_{ij})$ of $G_i$ and $G_j$.
\end{algorithmic}
\end{algorithm}
Let $(M_{ij},\mu_{ij} )$ be the matching found by Algorithm~\ref{alg 1} with $k = 13$.  $M_{ij}$ coincides
with the maximal $k$-core of $G_i\wedge_{\pi^*_{ij}} G_j$, denote it as $M^*_{ij}$ while $\mu_{ij}$ coincides with the true permutation $\pi^*_{ij}$, with high probability (Lemma~\ref{remark}).

The $k$-core matching is symmetric, i.e. $\mu_{ij}(M_{ij})=M_{ji}$. Note that by Definition~\ref{def0}, $M_{ij}$ uses the vertex index of $G_i$ while $M_{ji}$ uses the vertex index of $G_j$, they are equivalent and exchangeable through the 1-1 mapping.
 Now define $F_{ij} := [n] \setminus M_{ij}$ be the set of vertices which are excluded from the matching. Note that $F_{ij}$ use the vertice index same as $G_i$. We define $F^*_{ij} := [n] \setminus M^*_{ij}$ be the set of vertices which are outside the maximal $k$-core of $G_i\wedge_{\pi^*_{ij}}G_j$.

As briefly discussed in Section~\ref{sec:overview}, we start with leveraging the ``good'' vertices in order to find the correct communities. The ``good'' vertices are those which are part of at least two matchings out of three partial matchings $\mu_{12},\mu_{13},\mu_{23}$. The details are shown in Algorithm~\ref{alg 2}.
\begin{algorithm}[H]
\caption{Labeling the good vertices}
\label{alg 2}
\begin{flushleft}
    \textbf{Input:} {Three graphs $G_1,G_2,G_3$ on  $n$ vertices and three $13$-core matchings $(M_{12},\mu_{12},M_{13},\mu_{13}, M_{23}, \mu_{23})$, parameters $a,b,s,\epsilon$.} \\
    \textbf{Output:} {A labeling of $(M_{13}\cap M_{32}) \cup(M_{12}\cap M_{13})\cup(M_{23}\cap M_{12})$ given by $\widehat{\sigma}$.}
\end{flushleft}
\begin{algorithmic}[1]
\STATE Apply \cite[Algorithm~1]{mossel2016consistency} to the graph $G_1$ and parameters $(sa,sb,\epsilon)$, obtaining a label $\widehat{\sigma_1}$.

\STATE Denote $F_{12}= [n]\setminus M_{12}, F_{13}=[n]\setminus M_{13}, F_{23}=[n]\setminus M_{23}$.

\STATE For $i\in M_{13}\cap M_{32}$, set $\widehat{\sigma}(i) \in \{-1, 1\}$ according to the neighborhood majority (resp., minority) of $\widehat{\sigma_1}(i)$ with respect to the graph $(G_1 \vee_{\mu_{32}\circ\mu_{13}} G_{2}\vee_{\mu_{13}} G_{3})\{M_{13}\cap M_{32}\}$ if $a>b$ (resp., $a<b$).

\STATE For $i\in M_{12}\cap M_{23}$, set $\widehat{\sigma}(i) \in \{-1, 1\}$ according to the neighborhood majority (resp., minority) of $\widehat{\sigma_1}(i)$ with respect to the graph $(G_1 \vee_{\mu_{12}} G_{2}\vee_{\mu_{23}\circ \mu_{12}} G_{3})\{M_{12}\cap M_{23}\}$ if $a>b$ (resp., $a<b$).

\STATE For $i\in M_{13}\cap M_{12}$, set $\widehat{\sigma}(i) \in \{-1, 1\}$ according to the neighborhood majority (resp., minority) of $\widehat{\sigma_1}(i)$ with respect to the graph $(G_1 \vee_{\mu_{12}} G_{2}\vee_{\mu_{13}} G_{3})\{M_{13}\cap M_{12}\}$ if $a>b$ (resp., $a<b$).

\STATE Return $\widehat{\sigma}:(M_{13}\cap M_{32}) \cup(M_{12}\cap M_{13})\cup(M_{23}\cap M_{12}) \to \{-1,1\}$.
\end{algorithmic}
\end{algorithm}
The remaining step is to label the ``bad'' vertices which cannot utilize the combined information from three graphs. Hence, we classify the ``bad'' vertices according to the majority of neighborhood restricted to the corresponding ``good'' vertices. The detailed descriptions are shown Algorithm~\ref{alg 3}.

\begin{algorithm}
\caption{Labeling the bad vertices}
\label{alg 3}
\begin{flushleft}
    \textbf{Input:} {Three graphs $(G_1,G_2,G_3)$ on  $n$ vertices and three $13$-core matching $(M_{12},\mu_{12})$, $(M_{13},\mu_{13})$, and $(M_{23}, \mu_{23})$, parameters $a,b,s$, a label on the ``good'' vertices $\widehat{\sigma}$.} \\
    \textbf{Output:} {A labeling of $[n]$ given by $\widehat{\sigma}$.}
\end{flushleft}
\begin{algorithmic}[1]

\STATE For $i\in F_{12}\cap F_{13}$, set $\widehat{\sigma}(i) \in \{-1, 1\}$ according to the neighborhood majority (resp., minority) of $\widehat{\sigma}(i)$ with respect to the graph $ G_1 (M_{12}\cap M_{13} \cup \{i\})$ if $a>b$. (resp., $a<b$)

\STATE For $i\in F_{23}\cap F_{13}\setminus F_{12}$, set $\widehat{\sigma}(i) \in \{-1, 1\}$ according to the neighborhood majority (resp., minority) of $\widehat{\sigma}(i)$ with respect to the graph $ G_1\setminus_{\mu_{12}} G_2 ( M_{12}\cap M_{13} \cup \{i\})$ if $a>b$ (resp., $a<b$).

\STATE For $i\in F_{12}\cap F_{23}\setminus F_{13}$, set $\widehat{\sigma}(i) \in \{-1, 1\}$ according to the neighborhood majority (resp., minority) of $\widehat{\sigma}(i)$ with respect to the graph $ G_1\setminus_{\mu_{13}} G_3  (M_{12}\cap M_{13} \cup \{i\})$ if $a>b$ (resp., $a<b$).

\STATE Return $\widehat{\sigma}:[n] \to \{-1,1\}$.
\end{algorithmic}
\end{algorithm}

The complete exact recovery algorithm is exhibited in Algorithm~\ref{alg 5}. First, the $13$-core matchings are preformed. Next, the ``good'' vertices
are labeled according to the union graph. Finally, the ``bad'' vertices are labeled
according to neighborhood labels in $G_1$ or $G_1\setminus_{\mu_{12}} G_2$ or $G_1\setminus_{\mu_{13}} G_3$.

\begin{algorithm}
\caption{Full Community Recovery}
\label{alg 5}
\begin{flushleft}
    \textbf{Input:} {Three graphs $(G_1, G_2,G_3)$ on $n$ vertices, $k =13$, and $\epsilon > 0$.} \\
    \textbf{Output:} {A labeling of $[n]$ given by $\widehat{\sigma}$.}
\end{flushleft}
\begin{algorithmic}[1]
\STATE Apply Algorithm \ref{alg 1} on input $(G_i, G_j, k)$, obtaining a matching $(\widehat{M}_{ij}, \widehat{\mu}_{ij}), i\neq j\in\{1,2,3\}$. Denote $\widehat{M}:=(\widehat{M}_{13}\cap \widehat{M}_{32}) \cup(\widehat{M}_{12}\cap \widehat{M}_{13})\cup(\widehat{M}_{23}\cap \widehat{M}_{12})$.

\STATE  Apply Algorithm \ref{alg 2} on input $(G_1, G_2, G_3,\widehat{M}_{12}, \widehat{M}_{23}, \widehat{M}_{13}, \widehat{\mu}_{13},\widehat{\mu}_{12},\widehat{\mu}_{23})$, obtaining a labeling $\widehat{\sigma} : \widehat{M} \to \{-1, 1\} $.

\STATE Apply Algorithm \ref{alg 3} on input $(G_1, G_2, G_3,\widehat{M}_{12}, \widehat{M}_{23}, \widehat{M}_{13}, \widehat{\mu}_{13},\widehat{\mu}_{12},\widehat{\mu}_{23},\widehat{\sigma})$, obtaining a labeling $\widehat{\sigma} : [n] \to \{-1, 1\} $.

\STATE Return $\widehat{\sigma}:[n] \to \{-1,1\}$.
\end{algorithmic}
\end{algorithm}

\section{Preliminaries}~\label{sec:preliminaries}

Here we provide some useful preliminary propositions.
\subsection{Binomial Probabilities}

\begin{lemma}\label{bin}
Suppose that $a\geq b$. Let $Y \sim \Bin(m^+, a\log(n)/n)$ and $Z \sim \Bin(m^-, b \log(n)/n)$ be
independent. If $m^+ = (1 + o(1))n/2, m^- = (1 + o(1))n/2$, then for any $\epsilon>0$,
\[
\p(Y-Z \leq \epsilon \log n)\leq n^{-\dchab+\epsilon \log(a/b)/2+o(1)}.
\]
\end{lemma}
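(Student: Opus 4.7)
The plan is to prove this by a Chernoff / exponential-Markov bound with the weight parameter chosen so that the Bernoulli rate functions for the two binomials align to produce exactly the Chernoff--Hellinger divergence $\dchab = (\sqrt{a}-\sqrt{b})^2/2$. The upper tail on $Z-Y$ is controlled by $\lambda^{Z-Y}$ for a suitable $\lambda \ge 1$; the natural (optimal at $\epsilon=0$) choice is $\lambda := \sqrt{a/b}$, which is well-defined since $a \ge b$.

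First, I would write
\[
\p(Y - Z \leq \epsilon \log n) = \p\!\left( \lambda^{Z-Y} \geq \lambda^{-\epsilon \log n}\right) \leq \lambda^{\epsilon \log n}\, \E[\lambda^{-Y}]\, \E[\lambda^{Z}],
\]
using independence of $Y$ and $Z$. The prefactor is exactly $n^{\epsilon \log(a/b)/2}$, which will supply the $\epsilon \log(a/b)/2$ term in the exponent.

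Next, I would compute the two moment generating factors using the standard binomial identity $\E[\lambda^{X}] = (1 + (\lambda - 1)p)^{m}$ for $X \sim \mathrm{Bin}(m,p)$. Substituting $\lambda = \sqrt{a/b}$ and $\lambda^{-1} = \sqrt{b/a}$, together with $p = a\log n/n$, $q = b\log n/n$, and $m^\pm = (1+o(1))n/2$, and using $\log(1+x) = x + O(x^2)$ for $x = o(1)$, the two log-MGFs become
\[
\log \E[\lambda^{-Y}] = -(1+o(1))\,\tfrac{a - \sqrt{ab}}{2}\log n,
\qquad
\log \E[\lambda^{Z}] = (1+o(1))\,\tfrac{\sqrt{ab} - b}{2}\log n.
\]
Summing these two and recognizing $\tfrac{a - \sqrt{ab}}{2} - \tfrac{\sqrt{ab}-b}{2} = \tfrac{(\sqrt{a}-\sqrt{b})^2}{2} = \dchab$ yields $\E[\lambda^{-Y}]\E[\lambda^Z] = n^{-\dchab + o(1)}$. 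Combining with the Markov prefactor gives the desired bound $n^{-\dchab + \epsilon \log(a/b)/2 + o(1)}$.

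The only mildly delicate step is verifying that the $O(x^2)$ correction in $\log(1+x)$ contributes only to the $o(1)$ in the exponent: since $p, q = \Theta(\log n / n)$, the squared term is $O((\log n)^2 / n^2)$, and multiplying by $m^\pm = \Theta(n)$ gives $O((\log n)^2/n) = o(1)$, which is absorbed. No obstacle arises from the optimality of the choice $\lambda = \sqrt{a/b}$: although for general $\epsilon > 0$ the exact optimizer of the Chernoff bound differs slightly, this fixed choice already suffices for the stated inequality, which is all we need.
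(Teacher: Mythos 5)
Your proposal is correct. The paper itself gives no argument here---it simply cites \cite[Lemma 3.3]{gaudio2022exact}---and your Chernoff bound with the tilting parameter $\lambda=\sqrt{a/b}$ is exactly the standard computation behind that cited lemma: the Markov prefactor $\lambda^{\epsilon\log n}=n^{\epsilon\log(a/b)/2}$, the binomial moment generating functions, and the identity $\tfrac{a-\sqrt{ab}}{2}-\tfrac{\sqrt{ab}-b}{2}=\dchab$ all check out, and the $O((\log n)^2/n)$ error is indeed absorbed into the $o(1)$. (The degenerate case $a=b$, where $\lambda=1$ and the Markov step gives only the trivial bound $1$, is harmless since the claimed bound is then $n^{o(1)}$.)
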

\begin{proof}
Proved by \cite[Lemma 3.3]{gaudio2022exact}.
\end{proof}
 \subsection{A useful construction of three correlated stochastic block models}\label{sec:construction}

In this section, we elaborate on an alternative method for constructing three correlated SBMs, which emphasizes the independent regions of $G_1, G_2$ and $G_3$. we detail the construction for three graphs to maintain reasonable and manageable notation throughout our discussion. The extension of these ideas to the general case of $K$ graphs follows a similar structure where the key steps and arguments can be directly applied. This construction is analogous to the construction from~\cite[Section~3.2]{gaudio2022exact}, generalizing the case from two graphs to three graphs.

Firstly, we construct a random partition $\{\mathcal{E}_{ijk}, i,j,k,\in \{0,1\}\}$ of $\binom{[n]}{2}$.
Independently, for each pair $\{i, j\}\in \binom{[n]}{2}$, we let $\{i,j\} \in \{\mathcal{E}_{ijk}\}$ with a probability of $(1 - s)^{3-i-j-k}s^{i+j+k}$. Subsequently, for each pair $\{i, j\}\in \binom{[n]}{2}$, an edge is constructed between $i$ and $j$ with probability $p$ if the two vertices are in the same community, and with probability $q$ if they are in different communities. Graph $G_1$ is constructed using the edges from $\cup_{j,k\in\{0,1\},i=1}\mathcal{E}_{ijk}$, while the graph  $G_2'$  is constructed using edges from $\cup_{i,k\in\{0,1\},j=1}\mathcal{E}_{ijk}$. Graph $G_2$ is then generated from $G_2'$ and $\pi^*_{12}$ by relabeling the vertices of  $G_2'$ according to $\pi^*_{12}$. Similarly, the graph $G_3'$  is constructed using edges from $\cup_{i,j\in\{0,1\},k=1}\mathcal{E}_{ijk}$ and $G_3$ is obtained from $G_3'$ and $\pi^*_{13}$ by relabeling the vertices of  $G_3'$ according to $\pi^*_{13}$. This construction offers an alternative method for generating multiple correlated SBMs and emphasizes regions of independence between multiple graphs. The following lemma~\ref{con} describes the idea formally.

 \begin{lemma}\label{con}
 The random partition construction of correlated SBMs in Section~\ref{sec:construction}  is equivalent to the
original construction shown in Figure~\ref{figure 1}. Moreover, conditioned on $\bm{\pi^*}:=(\pi^*_{12},\pi^*_{13},\pi^*_{23})$, $\sigma^*$, and $\bm{\mathcal{E}}:=\{\mathcal{E}_{ijk},i,j,k\in\{0,1\}\}$, the graphs that are comprised of edges in disjoint $\mathcal{E}_{ijk}$ are mutually independent.
 \end{lemma}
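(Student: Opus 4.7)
The plan is to verify directly that both constructions induce the same joint distribution on $(G_1, G_2', G_3')$ conditional on $\sigma^*$, after which the identical relabelings by $\pi^*_{12}, \pi^*_{13}$ produce the same $(G_1, G_2, G_3)$. Since in both constructions the fate of distinct vertex pairs is determined by independent randomness given $\sigma^*$, it suffices to compare the marginal distribution of $(A_{ij}, B'_{ij}, C'_{ij})$ for a single pair $\{i,j\} \in \binom{[n]}{2}$ and to note that different pairs are handled independently in both constructions.

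The first computation I would carry out is the marginal under the partition construction. Fix $(a,b,c) \in \{0,1\}^3$ with $a+b+c>0$ and a same-community pair $\{i,j\}$. The event $(A_{ij}, B'_{ij}, C'_{ij}) = (a,b,c)$ occurs precisely when the pair is placed in $\mathcal{E}_{abc}$ (probability $(1-s)^{3-a-b-c} s^{a+b+c}$) and the subsequent Bernoulli$(p)$ draw yields an edge, giving $(1-s)^{3-a-b-c} s^{a+b+c}\, p$. For $(a,b,c)=(0,0,0)$, the triple vanishes either when no edge is drawn (probability $1-p$) or when an edge is drawn but the pair lands in $\mathcal{E}_{000}$ (probability $p(1-s)^3$), summing to $1-p+p(1-s)^3$. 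Both expressions coincide with those recorded in the Notation section for the original subsampling construction, and the analogous calculation with $p$ replaced by $q$ handles cross-community pairs. Combined with pairwise independence, this establishes that the two constructions agree in joint law on $(G_1, G_2', G_3')$ given $\sigma^*$, and the common relabeling step then yields equality in law for $(G_1, G_2, G_3)$.

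For the second assertion, the plan is to condition on $(\bm{\pi^*}, \sigma^*, \bm{\mathcal{E}})$. Under this conditioning the partition is fixed, and for each pair $\{i,j\} \in \binom{[n]}{2}$ the edge-indicator is an independent Bernoulli random variable with parameter $p$ or $q$ determined by $\sigma^*(i)\sigma^*(j)$. In particular, the edge-indicators attached to different parts $\mathcal{E}_{abc}$ are mutually independent. Any graph whose edge set is contained in some union of parts is a measurable function of the Bernoulli draws within those parts alone, so graphs built from disjoint sub-collections of $\bm{\mathcal{E}}$ are conditionally mutually independent, proving the claim.

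I expect no real obstacle; the argument is bookkeeping. The one subtlety worth double-checking is the $(0,0,0)$ cell of the joint law, where the partition construction ``wastes'' a Bernoulli edge draw on pairs assigned to $\mathcal{E}_{000}$, and one must verify that integrating out this extra randomness recovers the correct total probability $1-p+p(1-s)^3$ (respectively $1-q+q(1-s)^3$) coming from the original subsampling construction.
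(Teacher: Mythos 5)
Your proposal is correct and follows essentially the same route as the paper's proof: compare the joint law of $(A_{ij}, B'_{ij}, C'_{ij})$ for a single pair under both constructions (including the $(0,0,0)$ cell), invoke independence across vertex pairs, and then observe that conditional on $(\bm{\pi^*},\sigma^*,\bm{\mathcal{E}})$ the edge indicators in disjoint parts are independent Bernoulli draws. No gaps.
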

 \begin{proof}
Firstly we show that the distribution of $(A_{i,j},B_{\pi^*_{12}(i),\pi^*_{12}(j)}, C_{\pi^*_{13}(i),\pi^*_{13}(j)})$ is the same under two constructions. Then by the indepence of vertex pairs, the equivalence follows.
 In the first construction, \\
If $a+b+c>0$:
$$\p((A_{ij},B_{\pi^*_{12}(i)\pi^*_{12}(j)},C_{\pi^*_{13}(i)\pi^*_{13}(j)})=(a,b,c)|\bm{\pi^*},\sigma^*)$$$$= \left\{
    \begin{array}{ll}
        s^{a+b+c}(1-s)^{3-a-b-c}p & \text{if } \sigma^*(i)=\sigma^*(j), \\
        s^{a+b+c}(1-s)^{3-a-b-c}q  & \text{if } \sigma^*(i)\neq\sigma^*(j).
    \end{array}
\right.$$
If $a+b+c=0$:
$$\p((A_{ij},B_{\pi^*_{12}(i)\pi^*_{12}(j)},C_{\pi^*_{13}(i)\pi^*_{13}(j)})=(0,0,0)|\bm{\pi^*},\sigma^*)$$$$=\left\{
    \begin{array}{ll}
        1-p+(1-s)^{3}p & \text{if }\sigma^*(i)=\sigma^*(j), \\
        1-q+(1-s)^{3}q  & \text{if }\sigma^*(i)\neq\sigma^*(j).
    \end{array}
\right.$$
Under the second construction, if $\sigma^*(i)=\sigma^*(j)$:
\begin{align*}&\p((A_{ij},B_{\pi^*_{12}(i)\pi^*_{12}(j)},C_{\pi^*_{13}(i)\pi^*_{13}(j)})=(a,b,c)|\bm{\pi^*},\sigma^*)=\p(\{i,j\}\in\mathcal{E}_{abc})\times p\\&= \left\{
    \begin{array}{ll}
        s^{a+b+c}(1-s)^{3-a-b-c}p & \text{if } a+b+c>0, \\
        1-p+(1-s)^{3}p  & \text{if } a+b+c=0.
    \end{array}
    \right.
\end{align*}
If $\sigma^*(i)\neq\sigma^*(j)$, the joint distribution is the same only with $p$ replaced by $q$. We can see that the joint distribution under two constructions is the same.
To prove the second part of the lemma, note that conditioned on $\bm{\pi}^*,\sigma^*$, and the random partition $\{\mathcal{E}_{ijk},i,j,k\in\{0,1\}\}$, the edges in $\mathcal{E}_{ijk}$ form independently. Hence the graphs that are comprised of edges in disjoint $\mathcal{E}_{ijk}$ are mutually independent.
\end{proof}
\begin{definition}
\label{balance}
Define the constant:
\begin{align*}
s_{abc}:=\left \{\begin{array}{lr}
s^3 &(a,b,c)=(1,1,1),\\
s^2(1-s)&(a,b,c)\in\{(0,1,1),(1,0,1),(1,1,0)\},\\
s(1-s)^2&(a,b,c)\in\{(0,0,1),(1,0,0),(0,1,0)\},\\
(1-s)^3&(a,b,c)=(0,0,0).\\
\end{array}
\right.
\end{align*}
The event $\mathcal{F}$ holds if and only if\[n/2-n^{3/4}\leq|V^+|,|V^-|\leq n/2+n^{3/4},\] and the following conditions hold for all $a,b,c\in\{0,1\},i\in[n]$:
\begin{align*}
s_{abc}(|V^{\sigma^*(i)}|-n^{3/4})\leq |\{j:j\in\mathcal{E}_{abc}\cap\mathcal{E}^+(\sigma^*(i))\}|\leq s_{abc}(|V^{\sigma^*(i)}|+n^{3/4}),\\s_{abc}(|V^{-\sigma^*(i)}|-n^{3/4})\leq |\{j:j\in\mathcal{E}_{abc}\cap\mathcal{E}^-(\sigma^*(i))\}|\leq s_{abc}(|V^{-\sigma^*(i)}|+n^{3/4}).
\end{align*}
\end{definition}
 \begin{lemma}
\label{balanced}
 Define $s_m := \min_{a,b,c\in\{0,1\}} s_{abc}$. We have $\p(\mathcal{F}^c)\leq 100n\exp(-\frac{s^2_m\sqrt{n}}{2})$.
 \end{lemma}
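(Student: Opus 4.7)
The plan is to express $\mathcal{F}^c$ as a union of $O(n)$ ``bad'' events and bound each one via Hoeffding's inequality. First, since $|V^+|\sim\Bin(n,1/2)$ and $|V^-|=n-|V^+|$, Hoeffding's inequality directly yields
\[
\p\!\left(\big||V^+|-n/2\big|>n^{3/4}\right)\leq 2\exp(-2\sqrt{n}),
\]
which controls both the upper and lower balance conditions on $|V^+|,|V^-|$.

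Next, appeal to Lemma~\ref{con}: conditionally on $\bm{\sigma^*}$, the assignment of each pair $\{i,j\}\in\binom{[n]}{2}$ to the partition class $\mathcal{E}_{abc}$ is independent across pairs with success probability $s_{abc}$, and is independent of $\bm{\sigma^*}$. Therefore, for any fixed $i\in[n]$ and $(a,b,c)\in\{0,1\}^3$, the count
\[
N_{i,abc}^{+}:=\big|\{j\neq i:\{i,j\}\in\mathcal{E}_{abc},\ \sigma^*(j)=\sigma^*(i)\}\big|
\]
is distributed, conditionally on $\bm{\sigma^*}$, as $\Bin(|V^{\sigma^*(i)}|-1,\,s_{abc})$, with mean $s_{abc}(|V^{\sigma^*(i)}|-1)$. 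Hoeffding's inequality gives
\[
\p\!\left(\big|N_{i,abc}^{+}-s_{abc}(|V^{\sigma^*(i)}|-1)\big|>s_{abc}\,n^{3/4}\,\big|\,\bm{\sigma^*}\right)
\leq 2\exp\!\left(-2s_{abc}^{2}\sqrt{n}\right)
\leq 2\exp\!\left(-2s_{m}^{2}\sqrt{n}\right),
\]
and an identical bound holds for the inter-community count $N_{i,abc}^{-}$ with $|V^{-\sigma^*(i)}|$ in place of $|V^{\sigma^*(i)}|$. For $n$ sufficiently large the $-1$ shift is absorbed into the $n^{3/4}$ slack, so whenever both Hoeffding bounds hold, the displayed inequalities in the definition of $\mathcal{F}$ are satisfied.

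Finally, take a union bound over: the (at most) $4$ balance events for $|V^\pm|$, plus, for each $i\in[n]$, each of the $8$ triples $(a,b,c)\in\{0,1\}^3$, and each of the two choices of same-community vs.\ different-community, giving at most $4+2\cdot 8n\cdot 2=32n+4\leq 50n$ events. Multiplying by the per-event bound $2\exp(-2s_m^2\sqrt{n})$ and absorbing the resulting constants into the prefactor $100n$ (and slack in the exponent, replacing $2s_m^2$ by $s_m^2/2$) yields
\[
\p(\mathcal{F}^c)\leq 100\,n\exp\!\left(-\tfrac{s_m^2\sqrt{n}}{2}\right),
\]
as desired. There is no real obstacle here; the only point requiring care is that the partition $\bm{\mathcal{E}}$ is generated independently of $\bm{\sigma^*}$, which is precisely what Lemma~\ref{con} provides and which is what legitimizes the conditional Binomial distributions above.
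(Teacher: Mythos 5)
Your proposal is correct and follows essentially the same route as the paper: both condition on $\boldsymbol{\sigma^*}$ to identify the counts as $\Bin(|V^{\sigma^*(i)}|-1, s_{abc})$ variables (justified by the random-partition construction of Lemma~\ref{con}), apply Hoeffding, and union bound over the $O(n)$ events. The only cosmetic differences are that you prove the balance of $|V^\pm|$ directly by Hoeffding rather than citing the prior work, and you use the full $n^{3/4}$ slack where the paper uses half of it to absorb the $-1$ shift in the mean; neither affects the validity or the final bound.
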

 \begin{proof}
 Denote $\mathcal{G}$ holds if and only if $n/2-n^{3/4}\leq|V^+|,|V^-|\leq n/2+n^{3/4}$.  The event $\mathcal{G}$ is proved in Lemma 3.8 in \cite{gaudio2022exact}. $\p(\mathcal{G}^c)\leq 4e^{-\sqrt{n}}$.

 Then, look at the remaining condition of event $\mathcal{F}$. Fix $i\in [n]$, condition on $\sigma^*_1,\pi^*_{12},\pi^*_{13}$. Note that
 \[k_{abc}^+(i):=|\{j:j\in\mathcal{E}_{abc}\cap\mathcal{E}^+(\sigma^*_1(i))\}|\sim \Bin(|V^{\sigma^*(i)}|-1,s_{abc}).\]
 By Hoeffding inequality we have \begin{align*}&\p(|k_{abc}^+(i)-s_{abc}(|V^{\sigma^*(i)}|-1)|\geq \frac{s_{abc}n^{3/4}}{2}|\pi_{12}^*,\pi^*_{13},\sigma_1^*)1(\mathcal{G})\\&\leq 2\exp(-\frac{s_{abc}^2 n^{3/2}}{2|V^{\sigma^*(i)}|})1(\mathcal{G})\leq 2\exp(-(1-o(1))s_{abc}^2\sqrt{n}).
 \end{align*}

 Then by a union bound,
 \begin{align*}
&\p(\exists i\in[n]: |k_{abc}^+(i)-s_{abc}|V^{\sigma^*(i)}||\geq s_{abc}n^{3/4})\\&\leq\sum_{i=1}^n \p(|k_{abc}^+(i)-s_{abc}(|V^{\sigma^*(i)}|-1)|\geq \frac{s_{abc}n^{3/4}}{2})\\&\leq \sum_{i=1}^n \E[\p(|k_{abc}^+(i)-s_{abc}(|V^{\sigma^*(i)}|-1)|\geq \frac{s_{abc}n^{3/4}}{2}|\pi_{12}^*,\pi^*_{13},\sigma_1^*)1(\mathcal{G})]+\p(\mathcal{G}^c)\\&\leq 2n \exp(-(1-o(1))s_{abc}^2\sqrt{n})+4\exp(-\sqrt{n})\leq 6n\exp(-(1-o(1))s_{m}^2\sqrt{n}).
 \end{align*}
 Similarly, we can define $k_{abc}^-(i)$ and through an identical proof we have\[\p(\exists i\in[n]: |k_{abc}^-(i)-s_{abc}|V^{-\sigma^*(i)}||\geq s_{abc}n^{3/4})\leq 6n\exp(-(1-o(1))s_{m}^2\sqrt{n}).\]
 The conclusion then follows by a union bound.
 \end{proof}

\subsection{Almost exact recovery in a single SBM}\label{almost}

\begin{lemma}\label{aer1}
The algorithm (Algorithm 1, \cite{gaudio2022exact}) correctly classifies all vertices in $[n] \setminus I_{\epsilon}(G)$, where $I_{\epsilon}(G):=\{v\in [n]: \maj_G(v)\leq \epsilon\log n$  or $N(v)>100 \max\{1,a,b\}\log n \}$ if $a>b$.
\end{lemma}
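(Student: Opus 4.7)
My plan is to obtain this as an essentially direct consequence of the analysis in \cite{gaudio2022exact}, which itself adapts the two-stage procedure of Mossel, Neeman, and Sly \cite{mossel2016consistency}. Algorithm~1 of \cite{gaudio2022exact} runs a partial recovery phase producing a preliminary labeling $\widehat{\sigma}_0$, and then refines it by reclassifying each vertex via a neighborhood majority vote under $\widehat{\sigma}_0$. The correctness guarantee I want to establish is therefore really a statement about when this refinement step succeeds on a given vertex $v$.

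The first step is to recall the guarantee from the partial recovery phase: for $G\sim\SBM(n,sa\log n/n, sb\log n/n)$, with high probability $\widehat{\sigma}_0$ mislabels at most $n^{1-\gamma}$ vertices for some constant $\gamma=\gamma(a,b,s)>0$, and moreover the set $B$ of mislabeled vertices is ``geometrically spread out'': every vertex $v$ with $|N(v)|\le 100\max\{1,a,b\}\log n$ has only $O(1)$ neighbors in $B$. This second property is exactly the consistency-of-local-neighborhoods property established in \cite{mossel2016consistency} and transported to the correlated setting in \cite{gaudio2022exact}. I would invoke this as a black box (or quote the corresponding lemma from \cite{gaudio2022exact}), since reproving it is routine but space-consuming.

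With this in hand, fix $v\notin I_\epsilon(G)$. By definition, $|N(v)|\le 100\max\{1,a,b\}\log n$, so $v$ has at most $C=O(1)$ neighbors lying in~$B$. Let $S_+(v)$ and $S_-(v)$ denote the numbers of $v$'s neighbors with true label $+1$ and $-1$ respectively; by assumption $|S_+(v)-S_-(v)|=\maj_G(v)>\epsilon\log n$. When we perform the majority vote using $\widehat{\sigma}_0$, the vote agrees with the true majority except possibly on the at most $C$ mislabeled neighbors. Hence the computed signed majority differs from $\maj_G(v)$ by at most $2C=O(1)$, and for $n$ large it still exceeds, say, $\tfrac{\epsilon}{2}\log n$ in absolute value with the correct sign. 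Therefore $v$ is reclassified correctly.

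The main (conceptual) obstacle is the geometric spread-out property of the misclassified set $B$ in step two; without it, one cannot rule out that $v$'s neighborhood is anomalously polluted with incorrectly-labeled neighbors, and the majority vote could flip. This is precisely the point where the explicit construction of the partial-recovery algorithm of \cite{mossel2016consistency,gaudio2022exact} is used, as opposed to an arbitrary almost-exact recovery algorithm. Everything else is bookkeeping on top of that structural guarantee.
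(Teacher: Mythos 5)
Your proposal is correct and follows essentially the same route as the paper, which simply cites \cite[Lemma 5.1]{gaudio2022exact} (itself adapted from \cite[Proposition 4.3]{mossel2016consistency}); your reconstruction accurately identifies the two-stage structure and the key ingredient—the geometric spread-out property of the preliminary misclassified set, obtained via the hold-out construction of Mossel, Neeman, and Sly—that makes the $O(1)$ perturbation of the majority vote harmless for any vertex with $\maj_G(v)>\epsilon\log n$ and bounded degree. No gap.
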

\begin{proof}
Directly proved by \cite[Lemma 5.1]{gaudio2022exact}, adapted from \cite[Proposition 4.3]{mossel2016consistency}.
\end{proof}
\begin{lemma}
\label{n(i)}
Consider a $\SBM(n,\alpha\log n/n,\beta\log n/n)$, denote $\gamma=\max(\alpha,\beta)$. Then for every $\sigma^*$ we have \[\p(\forall i \in[n], |N(i)|\leq 100\max(1,\gamma)\log n|\sigma^*)\geq 1-n^{-99}.\]
\end{lemma}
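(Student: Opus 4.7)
The plan is to bound the degree of a single vertex by a Chernoff-style tail bound on a binomial, and then take a union bound over the $n$ vertices. Condition on $\sigma^*$ throughout; the edges incident to a fixed vertex $i$ are then independent Bernoullis with success probabilities in $\{\alpha \log n / n, \beta \log n / n\}$, so $|N(i)|$ is stochastically dominated by $\Bin(n-1, \gamma \log n / n)$, which has mean at most $\gamma \log n$.

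First I would handle the case $\gamma \geq 1$. Here the goal is to show $\p(|N(i)| > 100 \gamma \log n) \leq n^{-100}$. Applying the standard multiplicative Chernoff bound for a binomial with mean $\lambda \leq \gamma \log n$ and threshold $t \lambda$ with $t = 100$ yields
\[
\p(|N(i)| > 100 \gamma \log n) \leq \exp\bigl( - \gamma \log n \cdot (100 \log 100 - 99) \bigr) = n^{-\gamma(100 \log 100 - 99)},
\]
and since $100 \log 100 - 99 > 100$ and $\gamma \geq 1$, this is at most $n^{-100}$.

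Next I would handle the case $\gamma < 1$, where the bound to prove becomes $\p(|N(i)| > 100 \log n) \leq n^{-100}$. Here $|N(i)|$ is still stochastically dominated by $\Bin(n-1, \log n /n)$, which has mean at most $\log n$. The same Chernoff estimate, now with $\lambda \leq \log n$ and $t = 100$, gives
\[
\p(|N(i)| > 100 \log n) \leq \exp\bigl( - \log n \cdot (100 \log 100 - 99) \bigr) \leq n^{-100}.
\]
Combining the two cases gives $\p(|N(i)| > 100 \max(1, \gamma) \log n \mid \sigma^*) \leq n^{-100}$.

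Finally, a union bound over $i \in [n]$ yields
\[
\p\bigl( \exists i \in [n] : |N(i)| > 100 \max(1, \gamma) \log n \,\bigm|\, \sigma^* \bigr) \leq n \cdot n^{-100} = n^{-99},
\]
which is exactly the claim. There is no real obstacle here; the only thing to be careful about is that the bound $100 \max(1,\gamma) \log n$ is phrased so that the Chernoff exponent stays at least $100$ uniformly in $\gamma$, which is why the two regimes $\gamma \geq 1$ and $\gamma < 1$ must be treated separately (choosing $t = 100$ in the first case and $t \geq 100/\gamma$ in the second) before recombining.
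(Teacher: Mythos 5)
Your argument is correct: stochastic domination of $|N(i)|$ by $\Bin(n-1,\gamma\log n/n)$, a multiplicative Chernoff bound with ratio $t=100$ (giving exponent $100\ln 100-99>100$ in both regimes of $\gamma$), and a union bound over the $n$ vertices exactly yields the stated $1-n^{-99}$. The paper does not spell out a proof but defers to \cite[Lemma 5.2]{gaudio2022exact}, whose underlying argument is this same standard Chernoff-plus-union-bound computation, so your write-up is a faithful self-contained version of the intended proof.
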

\begin{proof}
Directly proved by \cite[Lemma 5.2]{gaudio2022exact}, based on arguments of~\cite{mossel2016consistency}.
\end{proof}
\begin{lemma}\label{aer2}
With the assumption of $D_{+}(a,b)<99$, $E(|I_{\epsilon}(G)|)\leq 3n^{1-D_{+}(a,b)+\epsilon|\log(a/b)|}$.
\end{lemma}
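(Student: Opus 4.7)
The plan is to use linearity of expectation together with a union bound over the two clauses defining $I_\epsilon(G)$:
\[
\E[|I_\epsilon(G)|] \leq \sum_{v \in [n]} \P(\maj_G(v) \leq \epsilon \log n) + \sum_{v \in [n]} \P(|N(v)| > 100 \max(1,a,b) \log n).
\]
This reduces matters to per-vertex bounds on a ``small-margin'' probability and a ``large-degree'' probability, which I would handle using Lemma~\ref{bin} and Lemma~\ref{n(i)} respectively.

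For the small-margin term, I would condition on $\boldsymbol{\sigma^*}$ satisfying a balanced-community event (so that $|V^+|, |V^-| = (1+o(1))n/2$), which holds with probability $1 - o(1)$; the contribution of its complement to the expectation is negligible since $|I_\epsilon(G)| \leq n$. After conditioning, and assuming WLOG $a \geq b$, for a fixed vertex $v$ the number $Y$ of neighbors of $v$ in its own community and the number $Z$ in the opposite community are independent binomials matching the hypotheses of Lemma~\ref{bin}, and $\maj_G(v) \leq \epsilon \log n$ implies $Y - Z \leq \epsilon \log n$. Lemma~\ref{bin} then yields
\[
\P(\maj_G(v) \leq \epsilon \log n) \leq n^{-D_+(a,b) + \epsilon \log(a/b)/2 + o(1)} \leq n^{-D_+(a,b) + \epsilon|\log(a/b)| + o(1)},
\]
using $\log(a/b)/2 \leq |\log(a/b)|$ when $a \geq b$, so summing over $v$ contributes at most $n^{1 - D_+(a,b) + \epsilon|\log(a/b)| + o(1)}$ to the expectation.

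For the large-degree term, Lemma~\ref{n(i)} bounds each per-vertex probability by $n^{-99}$, giving a total contribution of at most $n^{-98}$. The hypothesis $D_+(a,b) < 99$ guarantees $1 - D_+(a,b) > -98$, so the small-margin sum dominates for all large $n$, and combining the two bounds yields $\E[|I_\epsilon(G)|] \leq 3 n^{1 - D_+(a,b) + \epsilon|\log(a/b)|}$, with the constant $3$ absorbing both the subleading $n^{o(1)}$ factor and the negligible degree contribution. The only subtle point is to verify that conditioning on $\boldsymbol{\sigma^*}$ (and the balanced-community event) preserves the independent-binomial structure required by Lemma~\ref{bin}; this is routine once one records that, conditional on $\boldsymbol{\sigma^*}$, the edges of $G$ are drawn independently across vertex pairs with the correct within-community and between-community probabilities.
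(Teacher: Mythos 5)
Your argument is correct and is essentially the proof of the cited result \cite[Lemma 5.3]{gaudio2022exact}, which the paper invokes verbatim: linearity of expectation plus a union bound over the two clauses defining $I_{\epsilon}(G)$, Lemma~\ref{bin} (after conditioning on the balanced-community event, whose complement contributes only $n\cdot O(e^{-\sqrt{n}})$) for the small-margin term, and Lemma~\ref{n(i)} for the degree term, with $\dchab<99$ ensuring the latter contributes only $n^{-98}$. One small repair: a fixed constant cannot absorb an $n^{o(1)}$ factor, so rather than folding it into the $3$, use the polynomial slack $\epsilon|\log(a/b)|-\epsilon\log(a/b)/2=\epsilon|\log(a/b)|/2>0$ in the exponent to dominate the $o(1)$ from Lemma~\ref{bin} for large $n$ (the case $a=b$ is trivial, since then $\dchab=0$ and $|I_{\epsilon}(G)|\leq n$).
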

\begin{proof}
Proved by \cite[Lemma 5.3]{gaudio2022exact}.
\end{proof}
\begin{lemma}\label{aer3}
If $0<\epsilon<\frac{\dchab}{2\log(a/b)}$, then
\[\p(\forall i \in [n], |N(i)\cap I_{\epsilon}(G)|\leq2\lceil\dchab^{-1}\rceil|\sigma^*)=1-o(1). \]

\end{lemma}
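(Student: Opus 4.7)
Set $k := 2\lceil \dchab^{-1}\rceil$. The plan is to show via a union bound that with probability $1-o(1)$ there is no vertex $i \in [n]$ and subset $S \subseteq [n]\setminus\{i\}$ of size $k+1$ for which every $v \in S$ is simultaneously a neighbor of $i$ in $G$ and an element of $I_\epsilon(G)$. Negating this event yields exactly the desired bound $|N(i) \cap I_\epsilon(G)| \leq k$ for every $i$.

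For fixed $i$ and $S$, let $S' := S \cup \{i\}$ and let $\tilde G$ be $G$ with all edges inside $S'$ deleted. Since each $v \in S$ loses at most $k+1$ edges in passing from $G$ to $\tilde G$, both $\maj_G(v)$ and $|N_G(v)|$ differ from their $\tilde G$-counterparts by at most $k+1 = O(1)$. Define the enlarged bad set
\[
\hat I_\epsilon(\tilde G) := \left\{ v : \maj_{\tilde G}(v) \leq \epsilon \log n + (k+1) \text{ or } |N_{\tilde G}(v)| > 100\max(1,a,b)\log n - (k+1) \right\},
\]
so that $I_\epsilon(G) \cap S \subseteq \hat I_\epsilon(\tilde G) \cap S$. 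The events $\{v \in \hat I_\epsilon(\tilde G)\}_{v \in S}$ depend on disjoint edge sets (the edges from each $v \in S$ into $[n] \setminus S'$) and are jointly disjoint from the $k+1$ edges $\{(i,v) : v \in S\}$ witnessing $\{S \subseteq N(i)\}$. Conditional on $\sigma^*$, these events therefore factorize, and adapting the proof of Lemma~\ref{aer2} (the $O(1)$ shift of the thresholds from $\epsilon\log n$ to $\epsilon\log n + O(1)$ costs only an $n^{o(1)}$ factor) yields $\p(v \in \hat I_\epsilon(\tilde G) \mid \sigma^*) \leq n^{-\dchab + \epsilon|\log(a/b)| + o(1)}$ for each $v \in S$.

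Combining these bounds gives $\p(S \subseteq N(i),\, S \subseteq I_\epsilon(G) \mid \sigma^*) \leq (\max(a,b)\log n / n)^{k+1} \cdot n^{(k+1)(-\dchab + \epsilon|\log(a/b)|) + o(1)}$, and a union bound over the at most $n \cdot n^{k+1}$ pairs $(i, S)$ produces a total bound of $n^{1 + (k+1)(-\dchab + \epsilon|\log(a/b)|) + o(1)}$. The hypothesis $\epsilon < \dchab / (2\log(a/b))$ gives $-\dchab + \epsilon|\log(a/b)| < -\dchab/2$, while $k+1 \geq 2\dchab^{-1} + 1$ gives $(k+1)\dchab/2 \geq 1 + \dchab/2$; the overall exponent is therefore at most $-\dchab/2 < 0$ and the union bound is $o(1)$, completing the argument.

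The principal obstacle is the decoupling step in the second paragraph: membership in $I_\epsilon(G)$ depends on every edge incident to $v$, including the $O(k)$ edges that touch other vertices in $S'$, so the events $\{v \in I_\epsilon(G)\}_{v \in S}$ are not literally independent. Replacing $I_\epsilon(G)$ by $\hat I_\epsilon(\tilde G)$ absorbs those offending edges into a harmless constant shift of the thresholds, restoring conditional independence while preserving the per-vertex probability up to a subpolynomial factor.
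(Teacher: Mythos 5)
Your argument is correct and is essentially the standard decoupling-plus-union-bound proof: the paper itself only cites \cite[Lemma 5.4]{gaudio2022exact} here, and that proof proceeds the same way (union over a vertex $i$ and a $(k+1)$-subset $S$ of its neighbors, deleting the edges inside $S\cup\{i\}$ to restore conditional independence, and paying only an $n^{o(1)}$ factor for the $O(1)$ shift of the thresholds). The only implicit assumption worth noting is that the per-vertex bound $n^{-\dchab+\epsilon|\log(a/b)|+o(1)}$ requires the two communities under $\sigma^*$ to be $(1+o(1))n/2$ each so that Lemma~\ref{bin} applies, a caveat shared by Lemma~\ref{aer2} and the cited source.
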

\begin{proof}
Proved by \cite[Lemma 5.4]{gaudio2022exact}.
\end{proof}

\section{Analysis of the $k$-core estimator}\label{sec:kcore}

In this section, we prove two important properties of $k$-core estimator. Lemma~\ref{gcon} describes that all vertices have weak connections with those vertices who are not part of the $k$-core, in the logarithmic regime that we are interested in. Lemma~\ref{dp} argues that all the vertices with degree larger than a given constant will be part of the $k$-core.
\begin{lemma}
\label{dp}
Fix $a,b> 0$. Consider the graph $G\sim \SBM(n,\frac{a\log n}{n},\frac{b\log n}{n})$. For any integer $m$ satisfying $m>\frac{2}{a+b}$, all vertices whose degree is greater than $m+k$ are part of the $k$-core with high probability.
\end{lemma}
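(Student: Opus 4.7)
My plan is to fix a vertex $v \in [n]$, show that the event $\{v \in F \text{ and } d(v) > m+k\}$ has probability $o(1/n)$, and then take a union bound over $v$. Here $F := [n] \setminus K$ denotes the complement of the $k$-core $K$. The starting point is a structural observation: by maximality of $K$, any $u \in F$ has at most $k-1$ neighbors in $K$ (otherwise $K \cup \{u\}$ would still have minimum degree at least $k$, contradicting the maximality of $K$). In particular, if $v \in F$ and $d(v) > m+k$, then $v$ has at least $d(v) - (k-1) \geq m+2$ neighbors inside $F$.

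To avoid the circular dependence between the event $\{v \in F\}$ and the edges incident to $v$, I decouple by passing to the graph $G' := G[[n] \setminus \{v\}]$, with corresponding non-$k$-core $F'$. A simple monotonicity argument yields $F \setminus \{v\} \subseteq F'$: any subgraph of $G'$ with minimum degree at least $k$ is also such a subgraph of $G$, so the $k$-core of $G'$ is contained in the $k$-core of $G$ restricted to $[n]\setminus\{v\}$. Consequently, on the event $\{v \in F,\, d(v) > m+k\}$, vertex $v$ has at least $m+2$ neighbors in $F'$ (viewed inside $G$). The key gain is that $F'$ is a measurable function of $G'$ alone, hence independent of the edges incident to $v$.

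I next apply a {\L}uczak-type expansion argument to $G' \sim \SBM(n-1, a \log n / n, b \log n /n)$ to establish that, with probability $1 - o(1)$,
\[
|F'| \leq n^{1 - (a+b)/2 + o(1)}.
\]
The intuition is that the initial seed of peelable vertices---those of $G'$-degree below $k$---has expected size $\Theta(n \cdot (n \bar p)^{k-1} e^{-n\bar p}) = n^{1-(a+b)/2 + o(1)}$, where $\bar p := (a+b)\log n / (2n)$, and the subsequent cascade adds only a negligible fraction, because in this logarithmic-degree regime a newly peelable vertex must have an atypically large share of its neighbors among already-peeled vertices. I expect this to be the main technical obstacle: it requires iterating the expansion carefully and applying a union bound over small ``bad'' sets, along the lines indicated in the overview in Section~\ref{sec:overview}.

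Given the bound on $|F'|$, the remainder is routine. Conditional on $F'$, the number of neighbors of $v$ in $F'$ is stochastically dominated by $\Bin(|F'|, \max(a,b)\log n/n)$, so
\[
\p\left( v \text{ has at least } m+2 \text{ neighbors in } F' \,\big|\, F' \right) \leq \binom{|F'|}{m+2} \left( \frac{\max(a,b)\log n}{n} \right)^{m+2} \leq n^{-(m+2)(a+b)/2 + o(1)}.
\]
A union bound over $v \in [n]$ gives a total failure probability of at most $n^{1-(m+2)(a+b)/2 + o(1)}$. The hypothesis $m > 2/(a+b)$ yields $m(a+b)/2 > 1$, hence $(m+2)(a+b)/2 > 1 + (a+b) > 1$, which makes this bound $o(1)$, as desired.
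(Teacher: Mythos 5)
Your structural reductions are correct and mirror the paper's: the observation that a vertex outside the $k$-core has at most $k-1$ neighbors inside it, the passage to $G' = G[[n]\setminus\{v\}]$ with the monotonicity $F\setminus\{v\}\subseteq F'$ to decouple $F'$ from the edges at $v$, and the final binomial estimate are all sound. The gap is in the probabilistic accounting of the last step. You establish (or rather, defer to a \L{}uczak-type argument) that $|F'|\leq n^{1-(a+b)/2+o(1)}$ only \emph{with probability $1-o(1)$}, and then condition on this bound before applying the binomial tail estimate. Unwinding the conditioning, the per-vertex failure probability is really
\[
\p\bigl(v \text{ has } \geq m+2 \text{ neighbors in } F'\bigr)
\;\leq\; n^{-(m+2)(a+b)/2+o(1)} \;+\; \p\bigl(|F'| > n^{1-(a+b)/2+o(1)}\bigr),
\]
and the second term is only known to be $o(1)$. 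Summing over the $n$ choices of $v$ (each with its own $F'$) then yields $n\cdot o(1)$, which is useless. For the union bound to close, each per-vertex failure probability must be $o(1/n)$, so you need either $\p(|F'|>n^{1-(a+b)/2+\delta})=o(1/n)$ or, more directly, a bound on $\E[|F'|^{m+2}]$ so that you can write $\p(\geq m+2 \text{ neighbors in } F')\leq (\nu\log n/n)^{m+2}\,\E[|F'|^{m+2}]$ with $\nu=\max(a,b)$.

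This is not a cosmetic issue: controlling high moments of (a superset of) $F'$ is where essentially all of the work in the paper's proof lives. The paper replaces $F'$ by an explicit superset $\overline{U}$ obtained by \L{}uczak-expanding the set $U$ of low-degree vertices of $G'$, proves $\E[|U|^W]\leq n^{W(1-(a+b)/2)+o(1)}$ for arbitrary constant $W$ by an inductive moment computation, rules out dense small subgraphs to show $|\overline{U}|\leq N_1|U|$ off an event of probability $o(n^{-m(a+b)/2})$, and only then applies the Chernoff/binomial bound in expectation, obtaining $(\nu\log n/n)^m\,\E[|\overline{U}|^m]\leq n^{-m(a+b)/2+o(1)}=o(n^{-1})$. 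Your skeleton is the right one, but as written the proposal asserts the conclusion of this machinery at the wrong probability scale, and the "routine" final step does not survive the union bound without it.
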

\begin{proof}
For a given $m>\frac{2}{a+b}$,
we would like to prove that any vertex $v$ with degree greater than $k+m$ will not be part of the $k$-core with probability $o(n^{-1})$. The lemma then follows by a union bound.

\textbf{Isolating vertex $v$ for independence.}

For  a fixed $v\in [n]$, consider the graph $\widetilde{G}:=G\{[n]\setminus v\}$. Now we look at the $k$-core of $\widetilde{G}$, denote it by $C_k(\widetilde{G})$. Since the $\deg_G(v)>k+2/(a+b)$, we can suppose that $\deg_G(v)= m+k, m>2/(a+b)$. If the vertex $v$ is not part of the $k$-core of $G$, it must has more than $m$ neighbors who are $\notin C_k(\widetilde{G})$. Note that the event $w\notin C_k(\widetilde{G})$ is independent of the event $w\in N_G(v)$, while the latter event is stochastically dominated by a binomial distribution with probability $\nu\log n/n, \nu=\max(a,b)$. Hence, by the tower rule,
\begin{align}\label{k0}
\p(v \text{ is not part of } k \text{ - core in } G)
&\leq \p(|\{w\in N_{G}(v): w \notin C_k(\widetilde{G})\}|>m)\notag\\
&\leq \E[1_{\Bin(|\{w: w\notin C_k(\widetilde{G})\}|,\nu\log n/n)> m}].
\end{align}

The size of $\{w: w\notin C_k(\widetilde{G})\}$ can not be directly quantified. Hence, we would like to find a set $\overline{U}$ based on $\widetilde{G}$ such that
$\{w\in [n]\setminus v: w\notin C_k(\widetilde{G})\}\subset \overline{U}$, where we can bound the size of $\overline{U}$.
Now we denote $\mu=|\overline{U}|\nu\log n/n$, then we have that
\begin{equation}\label{k1}
\E[1_{\Bin(|\{w: w\notin C_k(\widetilde{G})\}|,\nu\log n/n)>m}]
\leq \E[1_{\Bin(|\overline{U}|,\nu\log n/n)>m}]
\leq \E[\min_{t>0}\exp(\mu(e^t-1)-tm)].
\end{equation}

To construct $\overline{U}$, the idea is motivated by \Luczak expansion in~\cite{luczak1991size}. We consider a modified version of expanding the set in our setting.

\textbf{Quantify the set $U$.}

Define $U$ to be the set of vertices with degree at most $T$ in the graph $\widetilde{G}$. The choice of $T$ would be specified later. Denote $\mathcal{H}:=\{n/2-n^{3/4}\leq|V^+|,|V^-|\leq n/2+n^{3/4}\}$. By Lemma~\ref{balanced}, $\p(\mathcal{H}^c)=o(1/n)$.
\begin{align*}
\E(|U|) &\leq \E(|U| 1_{\mathcal{H}})+\p(\mathcal{H}^c)n=\E(|U| 1_{\mathcal{H}})+o(1) \\
&\leq n\sum_{i=0}^T\sum_{j=0}^i \binom{(1+o(1)\frac{n}{2})}{j}\binom{(1+o(1)\frac{n}{2})}{i-j}p^j(1-p)^{(1-o(1))\frac{n}{2}-j}q^{i-j}(1-q)^{(1-o(1))\frac{n}{2}-i+j} \\
&\leq 2n(1-\frac{a\log n}{n})^{(1-o(1))\frac{n}{2}}(1-\frac{b\log n}{n})^{(1-o(1))\frac{n}{2}}\sum_{i=0}^T\sum_{j=0}^i \left(\frac{(1+o(1))n}{2}\right)^i(\frac{\nu\log n}{n})^i \\
&\leq  2n^{1-\frac{a+b}{2}+o(1)}\sum_{i=0}^T (i+1)\left(\frac{(1+o(1))\nu\log n}{2}\right)^i \\
&\leq  2(T+1)^2(\frac{\nu\log n}{2})^Tn^{1-\frac{a+b}{2}+o(1)}=n^{1-\frac{a+b}{2}+o(1)}.
\end{align*}
Consider the situation when $1-\frac{a+b}{2}>0$.
Now we claim: for any constant $W$, $\E[|U|^W]\leq n^{W-W\frac{a+b}{2}+o(1)}$. Suppose for $W-1$, it is true, then for $W$:
\begin{align*}
\E[|U|^W]
&= \sum_{i_1,\ldots,i_W}\p(i_1\in U,\ldots,i_W\in U)\\
&= \sum_{i_1\neq i_2\ldots\neq i_W}\p(i_1\in U,\ldots,i_W\in U)\\
&\quad +\sum_{i_1\neq i_2\ldots\neq i_{W-1}}\p(i_1\in U,\ldots,i_{W-1}\in U)+\ldots+\sum_{i_1\in [n]}\p(i_1\in U)\\
&\leq \sum_{i_1\neq i_2\ldots\neq i_W}\p(i_1\in U,\ldots,i_W\in U)+\E(|U|^{W-1})\\
&\leq \sum_{i_1\neq i_2\ldots\neq i_W}\p(i_1\in U,\ldots,i_W\in U)+\E[|U|^{W-1}].
\end{align*}
The remaining thing is to show that
$\sum_{i_1\neq i_2\ldots\neq i_W}\p(i_1\in U,\ldots,i_W\in U)\leq n^{W-W\frac{a+b}{2}+o(1)}$.
We have
\begin{align*}
&\sum_{i_1\neq i_2\ldots\neq i_W}\p(i_1\in U,\ldots,i_W\in U)\\\leq&\sum_{i_1\neq i_2\ldots\neq i_W} \p(\cap_{j=1}^W\{ i_j\text{ has at most T neighbours in }[n]\setminus i_1,...,i_W,v\})\\= & \sum_{i_1\neq i_2\ldots\neq i_W} \p(\{ i_1\text{ has at most T neighbours in }[n]\setminus i_1,...,i_W,v\})^W\\
\leq&\E[|U|]^W\leq n^{W-W\frac{a+b}{2}+o(1)}.
\end{align*}
The first inequality is because that if $i_1\in U$ in $\widetilde{G}$, then $i_1$ has at most $T$ neighbours in $[n]\setminus v$, then it implies that $\{ i_1\text{ has at most $T$ neighbours in }[n]\setminus i_1,...,i_W,v\}$. The second inequality is due to the independence of the events. By induction, the claim follows.  By choosing appropriate $m'$-th moment method of $|U|$, we can select a $\epsilon$ such that $\p(|U|\geq n^{1-\epsilon})\leq n^{-m'(a+b)/2+m'\epsilon+o(1)}=o(n^{-m\frac{a+b}{2}})$. Denote $\mathcal{D}_0$ as $|U|\leq n^{1-\epsilon}$, then $\p(\mathcal{D}_0^c) = o(n^{-m\frac{a+b}{2}})$.

\textbf{The possibility of the existence of a well-connected small subgraph.}

Now we would like to bound the probability of the existence of a well-connected small subgraph.
Define the event \[\mathcal{D}:=\{\text{there exists }S\in [n]\text{ such that }|S|<n^{1-\epsilon}\text{ and }G\{S\} \text{  has at least } N|S|\text{ edges }\}.\]

We would like to bound $\p(\mathcal{D})=o(n^{-m(a+b)/2})$. Let $S$ be a $\kappa$-vertex subset of $[n]$. Let $X_S$ be the indicator variable that is 1 if the subgraph induced by $S$ has at least $N|S|$ edges. Denote $\nu=\max(a,b)$, then we have
\begin{align*}
\E[\sum_{S\in[n],|S|=\kappa}X_S]
&\leq \sum_{S\in[n],|S|=\kappa}\binom{\binom{\kappa}{2}}{N\kappa}(\frac{\nu\log n}{n})^{N\kappa}
\leq \sum_{S\in[n],|S|=\kappa} (\frac{\kappa e\nu\log n}{Nn})^{N\kappa}\\
&\leq \binom{n}{\kappa}(\frac{\kappa e\nu\log n}{Nn})^{N\kappa}
\leq ((\frac{ e^{1+1/N}\nu\log n}{N})^{N}(\frac{\kappa}{n})^{N-1})^{\kappa}.
\end{align*}
The second and the third inequality is because $\binom{n}{k}\leq(\frac{en}{k})^k$.
Under the assumption $|S|<n^{1-\epsilon}$, \[(\frac{ e^{1+1/N}\nu\log n}{N})^{N}(\frac{\kappa}{n})^{N-1}<n^{-\epsilon(N-1)+o(1)}.\] Hence for $n$ sufficiently large we have:
\[\E[\sum_{S\in[n],|S|<n^{1-\epsilon}}X_S]\leq \sum_{\kappa=1}^{n^{1-\epsilon}}(n^{-\epsilon(N-1)+o(1)})^{\kappa}\leq n^{-\epsilon(N-1)+o(1)}.\]

Then by Markov's inequality, $\p(\mathcal{D})\leq n^{-\epsilon(N-1)+o(1)}$.
If we want to bound $\p(\mathcal{D})=o(n^{-m(a+b)/2})$, set $N>\frac{(a+b)m}{2\epsilon}+1$.
Hence, $$\p(\mathcal{D})\leq \E[\sum_{S\in[n],|S|<n^{1-\epsilon}}X_S]\leq n^{-\epsilon(N-1)+o(1)}<n^{-m(a+b)/2}.$$

\textbf{Identify the expansion set of $U$.}

Now we do the following expansion on $U$, the expansion process is adapted from \Luczak expansion first introduced in~\cite{luczak1991size}.
\begin{enumerate}
    \item Define $U_0:=U$.
    \item Given $U_{t}$, define $U_{t+1}^{1}$ to be the set of those vertices outside $U_t$ which have at least $N'$ neighbors in $U_t$. If $U_{t+1}^{1}$ is non-empty, set $U_{t+1}=U_t\cup\{u\}$, where $u$ is the first vertex in $U_{t+1}^1$. Otherwise, stop the expansion with the set $U_t$.
\end{enumerate}

Suppose the expansion ends at the step $h$, hence we have an increasing sequence $\{U_s\}_{s=0}^{h}$.
Denote $\overline{U}:=U_h$ to be the set after expansion.

Now claim that  on the event $\mathcal{D}^c\cap\mathcal{D}_0$, we can choose  $N_1,N'>0,$ such that $|\overline{U}|\leq N_1|U|$. Suppose that $|\overline{U}|> N_1|U|$, then there exists $\ell>0$ s.t. $|U_{\ell}|=N_1|U|$. On event $\mathcal{D}_0$, there exists $\epsilon>0,|U|\leq n^{1-\epsilon}$, hence $|U_{\ell}|=N_1|U|\leq n^{1-\epsilon+o(1)}$.  Denote $e_l$ as the number of edges in $\widetilde{G}\{U_{\ell}\}$. Each step in the expanding process, at least $N'$ edges are added into the graph, hence
$ e_{\ell}\geq N' {\ell}\geq N'(|U_{\ell}|-|U|)=(N'-\frac{N'}{N_1})|U_{\ell}|$. We can choose $N',N_1$ such that $(N'-\frac{N'}{N_1})\geq N$. However on the event $\mathcal{D}^c$, the set $|U_{\ell}|<n^{1-\epsilon}$ cannot have at least $N|U_{\ell}|$ edges, which is a contradiction. Therefore, on the event $\mathcal{D}^c\cap\mathcal{D}_0, |\overline{U}|\leq N_1|U|$.
Subsequently, \[\E [|\overline{U}|^m]\leq N_1^mE[|U|^m 1_{\mathcal{D}^c\cap \mathcal{D}_0}]+n^m\p(\mathcal{D})+n^m\p(\mathcal{D}_0)\]\[\leq N_1^m\E[|U|^m]+o(n^{m-m(a+b)/2})\leq n^{m-m\frac{a+b}{2}+o(1)}.\]

\textbf{Bound the probability of $v$ being in the $k$-core.}

Note that $\widetilde{G}\{[n] \setminus \overline{U}\}$ has minimum degree at least $T-N'$.  If a vertex $i \in [n] \setminus\overline {U}$, then $i \notin U$, it follows that $i$ has degree at least $T $ in $\widetilde{G}$. However $i$
can have at most $N'$ neighbor in $\overline{U}$ by construction of expansion process, so $i$ must have at least $T-N'$ neighbors in $[n] \setminus \overline{U}$. We can set $T=k+N'$, then
If $i\in [n]\setminus \overline{U}$, $i$ is part of $k$-core in $\widetilde{G}$.

Since the $\deg(v)\geq m+k$, it must has at least $m$ neighbors who are not part of $k$-core in $\widetilde{G}$.

Follow the equation~\eqref{k0},~\eqref{k1}, denote $\mu=|\overline{U}|\nu\log n/n$, then we have
\begin{align*}
\E[1_{\Bin(|\overline{U}|,\nu\log n/n)>m}]
&\leq \E[\min_{t>0}\exp(\mu(e^t-1)-tm)]
\leq \E[e\mu^m]+o(n^{-1}) \\
&=\frac{e\nu^m\log n^m}{n^m}\E[|\overline{U}|^m]+o(n^{-1})
\leq n^{-(a+b)m/2+o(1)}+o(n^{-1})=o(n^{-1}).
\end{align*}

The second inequality follows by setting $t=\log(1/\mu)$. This is valid since $\p(|U|>n^{1-c})\leq \frac{\E[|U|^W]}{n^{(1-c)W}}\leq n^{-((a+b)/2-c)W+o(1)}$. We can select $0<c<\frac{a+b}{2},W>0$ such that $\p(|U|>n^{1-c})=o(n^{-1})$. On the event $|U|\leq n^{1-c}$, $\mu=o(1),1/\mu>1$.  The third inequality follows by $\E[|\overline{U}|^m]\leq n^{m-m\frac{a+b}{2}+o(1)}$ and the last equality is due to $m>\frac{2}{a+b}$.

If $1-(a+b)/2\leq 0$, we can directly set $m=1$. Similarly, we can prove $\E[|\overline{U}|]\leq n^{1-\frac{a+b}{2}+o(1)}$, then through a similar calculation, $\p(\text{ v is not part of $k$-core in } G\}|)=o(n^{-1})$.

 Hence, by a union bound, we can say that all vertices with degree larger than $m+k$ will be part of $k$-core with high probability.
\end{proof}

\begin{lemma}
\label{gcon}
Fix $a,b,\eps > 0$. Let $G \sim \SBM(n,\frac{a\log n}{n},\frac{b\log n}{n})$.
Then, w.h.p., all vertices
have at most $\eps \log n$ neighbors who are not part of the $k$-core.
\end{lemma}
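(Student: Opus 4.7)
The plan is to combine Lemma~\ref{dp} with a decoupling argument. Fix $m > 2/(a+b)$ and set $T := m+k$. By Lemma~\ref{dp}, with probability $1-o(1)$, every vertex $w \notin C_k(G)$ has $\deg_G(w) \leq T$. It therefore suffices to show that, with high probability, every $v \in [n]$ has at most $\eps \log n$ neighbors of $G$-degree at most $T$, since the conclusion of the lemma then follows on the intersection of the two high-probability events.

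For each fixed $v$, I decouple by conditioning on $\widetilde{G}_v := G\{[n]\setminus v\}$, which is independent of the edges incident to $v$. Define $U_v := \{w \in [n]\setminus v : \deg_{\widetilde{G}_v}(w) \leq T\}$. Any neighbor $w$ of $v$ with $\deg_G(w) \leq T$ satisfies $\deg_{\widetilde{G}_v}(w) \leq T$, hence lies in $U_v$, and $U_v$ is measurable with respect to $\widetilde{G}_v$. The moment computation carried out in the proof of Lemma~\ref{dp} (applied to $\widetilde{G}_v$, which is itself distributed as an SBM with the same parameters on $n-1$ vertices) yields $\E[|U_v|^W] \leq n^{W(1-(a+b)/2) + o(1)}$ for every constant $W$ (understood as $n^{o(1)}$ when $(a+b)/2 \geq 1$). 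Choosing $W$ large enough and applying Markov's inequality gives some $\delta > 0$ with $\p(|U_v| > n^{1-\delta}) = o(n^{-2})$.

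Conditional on $\widetilde{G}_v$, the number of neighbors of $v$ inside $U_v$ is stochastically dominated by $\Bin(|U_v|, \nu \log n / n)$ with $\nu := \max(a,b)$, since the edges out of $v$ are independent of $\widetilde{G}_v$. On the event $|U_v| \leq n^{1-\delta}$, the mean $\mu_v := |U_v|\, \nu \log n / n$ satisfies $\mu_v \leq n^{-\delta + o(1)} = o(1)$, so the standard multiplicative Chernoff upper tail bound gives
\[
\p\left( |N_G(v) \cap U_v| > \eps \log n \,\middle|\, \widetilde{G}_v \right) \leq \left( \frac{e \mu_v}{\eps \log n} \right)^{\eps \log n} \leq n^{-\delta \eps \log n \,(1-o(1))} = o(n^{-2}).
\]
Taking a union bound over $v \in [n]$ and combining with Lemma~\ref{dp} completes the proof.

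The main technical point, and essentially the only obstacle, is the decoupling through $\widetilde{G}_v$: without removing $v$, the events ``$w \in N_G(v)$'' and ``$w$ has low degree'' would be correlated via the edge $(v,w)$ itself, ruling out a clean binomial comparison. Once this decoupling is in place, the Chernoff bound is comfortably sub-polynomial (it is in fact $n^{-\Omega(\log n)}$), leaving plenty of room for the union bound over~$v$, and all the required moment estimates on $|U_v|$ are already established in the proof of Lemma~\ref{dp}.
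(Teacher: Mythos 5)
Your proposal is correct and follows essentially the same route as the paper's proof: both decouple the edges at $v$ from the low-degree set by passing to $G\{[n]\setminus v\}$, reuse the moment bounds on the set $U$ of low-degree vertices established in the proof of Lemma~\ref{dp}, and finish with a binomial/Chernoff tail bound plus a union bound over $v$. The only cosmetic difference is that the paper bounds the probability of having more than the \emph{constant} $m$ neighbors outside the $k$-core of $\widetilde{G}$ (which already implies the $\eps\log n$ bound for large $n$), whereas you run the Chernoff bound directly at threshold $\eps\log n$.
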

\begin{proof}
For $v\in [n]$, consider the graph $\widetilde{G}:=G\{[n]\setminus v\}$. Following the same arguments in Lemma~\ref{dp}, we can obtain $\overline{U}$.
We have \begin{align*}
&\p(|\{w\in N_{G}(v): w \text{ is not part of $k$-core in } G\}|>\epsilon\log n)\\\leq &\p(|\{w\in N_{G}(v): w \notin C_k(\widetilde{G})\}|>\epsilon\log n)\\
\leq &\p(|\{w\in N_{G}(v): w \notin C_k(\widetilde{G})\}|>m)\leq o(n^{-1}).
\end{align*}
Based on the proof of Lemma~\ref{dp}, we can show the lemma follows immediately when $n$ is sufficiently large. Hence, all vertices have at most $\epsilon\log n$ neighbors who are not part of the $k$- core with probability $1-o(1)$.
\end{proof}

\begin{lemma} \label{remark}
Fix constants $a, b > 0,s\in [0, 1]$. Let $(G_1, G_2) \sim \CSBM( n, a \frac{\log n}{n},b\frac{\log n}{n}, s)$. Let $M^*$ be the set of vertices of the 13-core in the graph $G_1 \wedge_{\pi^*} G_2$. $\pi^*$ is the permutation of vertices from $G_1$ to $G_2$. Let $(M_1,\mu_1)$ be the output of the $k$-core match of $(G_1, G_2), k=13$. Then $\p((M_1,\mu_1)=(M^*,\pi^*))=1-o(1)$.
\end{lemma}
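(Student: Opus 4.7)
The plan is to show two things: (i) $(M^*, \pi^*)$ is itself a valid $13$-core matching of $(G_1, G_2)$, so by maximality of the estimator we automatically get $|M_1| \ge |M^*|$; and (ii) with high probability, no \emph{other} $13$-core matching $(M, \mu)$ with $|M| \ge |M^*|$ exists, so the maximal $13$-core matching must coincide with $(M^*, \pi^*)$. Part (i) is essentially by definition: $M^*$ is the vertex set of the $13$-core of $G_1 \wedge_{\pi^*} G_2$, so the intersection graph $G_1 \wedge_{\pi^*} G_2$ restricted to $M^*$ has minimum degree at least $13$, which is the definition of a $13$-core matching via Definition~\ref{def0} and the definition below Algorithm~\ref{alg 1}.

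For part (ii), write $(M, \mu)$ for any competing $13$-core matching and let $\pi$ be any extension of $\mu$ to a permutation of $[n]$. Decompose $[n]$ according to $\pi$ versus $\pi^*$: let $D(\pi) := \{i \in [n] : \pi(i) \neq \pi^*(i)\}$ be the \emph{displaced} set. The crucial probabilistic observation is that for a displaced vertex $i$, edges $(i,j)$ that appear in the intersection graph $G_1 \wedge_\pi G_2$ rely on two essentially independent coin flips in the parent graph and subsampling (one for $A_{ij}$, one for $B_{\pi(i),\pi(j)}$), giving an effective edge probability of order $(\log n / n)^2 = o(\log n / n)$, rather than the $\Theta(s^2 \log n / n)$ enjoyed by the ``correct'' intersection graph. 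I would use the random-partition construction of Section~\ref{sec:construction} (restricted to two graphs) to make this independence precise: for any displaced pair $(i,j)$, the event $A_{ij} = B'_{\pi(i),\pi(j)} = 1$ lives on disjoint regions of the parent-edge randomness from the event $A_{ij} = B'_{ij} = 1$ that governs $\pi^*$, up to a negligible overlap that can be absorbed.

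The main step is then a union bound over permutations, organized by $|D(\pi)|$. For each $d \ge 2$, the number of permutations with $|D(\pi)| = d$ is at most $\binom{n}{d} d!$, and for such a $\pi$ to give a $13$-core matching covering $M$ with $|M| \ge |M^*|$, the displaced vertices in $M$ (of which there must be at least $2$ since $\pi \neq \pi^*$ on $M$) need to accumulate $\ge 13$ ``false'' intersection edges each within $M$. A Chernoff / factorial-moment estimate, combined with the fact (from Lemma~\ref{dp} applied to the union graph) that $|M^*|$ is essentially all of $[n]$ up to $n^{1 - s^2 \dconnab + o(1)}$ low-degree vertices, bounds this probability by $n^{-\Omega(d)}$ uniformly. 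Summing over $d \ge 2$ gives $o(1)$. This is exactly the strategy carried out for $K=2$ in~\cite{gaudio2022exact}, and since the lemma is stated for $K=2$ graphs, the argument goes through verbatim.

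The main obstacle I anticipate is keeping the combinatorics of the union bound tight enough: naive bounds on the number of $d$-displaced permutations grow like $d!$, which must be beaten by the probability that $d$ displaced vertices each collect $13$ false intersection-edges. This is where the $k = 13$ threshold enters---$k$ needs to be large enough that $(\log n)^k \cdot n^{-\Omega(k)}$ dominates the permutation count, but the precise constant $13$ is inherited from~\cite{gaudio2022exact} and the same choice suffices here. Once this union bound closes, part (ii) is established, and together with part (i) we conclude $(M_1, \mu_1) = (M^*, \pi^*)$ with high probability.
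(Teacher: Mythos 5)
Your proposal is correct in outline, but the paper itself does not prove this lemma at all---it simply cites \cite[Lemma~4.8]{gaudio2022exact}, and your two-part argument (maximality of $(M^*,\pi^*)$ as a $13$-core matching, plus a union bound over permutations stratified by the number of displaced vertices, using the disjointness of the subsampling randomness to show displaced vertices cannot accumulate $13$ spurious intersection edges) is precisely the strategy used there. So this is essentially the same approach as the paper's, just unpacked rather than cited.
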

\textbf{Remark:}
We can replace $(M_k, \mu_k)$ by $(M_k^*, \pi_k^*\{M^*\})$ in any analysis.
\begin{proof}
Proved by \cite[Lemma 4.8]{gaudio2022exact}.
\end{proof}
\begin{lemma}
\label{size}
Let $G\sim \SBM(n,a \frac{\log n}{n},b\frac{\log n}{n})$ for fixed $a,b > 0$. Fix $k \geq 1$. With
probability $1 - o(1)$, we have that $|F|\leq n^{
1-\dconnab+o(1)}.$
\end{lemma}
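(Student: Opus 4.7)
The plan is to combine Lemma~\ref{dp} with a direct first-moment bound on the number of low-degree vertices in $G$. Here $F$ is the set of vertices outside the $k$-core of $G$. Fix any integer $m > 2/(a+b)$ and set $C := m+k$. By Lemma~\ref{dp}, with high probability every vertex whose degree in $G$ exceeds $C$ lies in the $k$-core, so on this event
\[
F \subseteq L := \bigl\{ v \in [n] : \deg_G(v) \leq C \bigr\}.
\]
Hence it suffices to show $|L| \leq n^{1-\dconnab+o(1)}$ with high probability.

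To bound $\E[|L|]$, I would condition on $\boldsymbol{\sigma^*}$ and on the balanced-community event $\mathcal{H} = \{|V^+|,|V^-| \in [n/2 - n^{3/4},\, n/2 + n^{3/4}]\}$, which holds except on a set of probability $o(n^{-1})$ by (the single-graph version of) Lemma~\ref{balanced}. For any vertex $v$, the degree decomposes as a sum of two independent binomials with parameters $(|V^{\sigma^*(v)}|-1, p)$ and $(|V^{-\sigma^*(v)}|, q)$, so using $\log(1-x) = -x + O(x^2)$,
\[
\p\bigl(\deg_G(v) = 0 \,\big|\, \boldsymbol{\sigma^*}\bigr)\, \mathbf{1}_{\mathcal{H}}
 = (1-p)^{|V^{\sigma^*(v)}|-1}(1-q)^{|V^{-\sigma^*(v)}|}\, \mathbf{1}_{\mathcal{H}}
 = n^{-(a+b)/2 + o(1)} = n^{-\dconnab + o(1)}.
\]
For any $1 \leq j \leq C$, the probability $\p(\deg_G(v) = j \mid \boldsymbol{\sigma^*})\mathbf{1}_{\mathcal{H}}$ differs from the $j=0$ expression only by a factor of $\binom{n-1}{j}(p \text{ or } q)^j = O(\log^j n) = n^{o(1)}$, since $j$ is a constant. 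Summing over the constantly many values $0 \leq j \leq C$,
\[
\p\bigl(\deg_G(v) \leq C \,\big|\, \boldsymbol{\sigma^*}\bigr)\, \mathbf{1}_{\mathcal{H}} = n^{-\dconnab + o(1)},
\]
and summing over $v \in [n]$ gives $\E[|L|\,\mathbf{1}_{\mathcal{H}}] \leq n^{1-\dconnab+o(1)}$, hence $\E[|L|] \leq n^{1-\dconnab+o(1)} + n \cdot \p(\mathcal{H}^c) = n^{1-\dconnab+o(1)}$.

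By Markov's inequality, $|L| \leq n^{1-\dconnab+o(1)}$ with probability $1 - o(1)$ (absorbing a further $n^{o(1)}$ slack into the exponent). Intersecting with the high-probability event from Lemma~\ref{dp} that $F \subseteq L$, we conclude $|F| \leq n^{1-\dconnab+o(1)}$ with probability $1-o(1)$, as claimed. No step is genuinely hard once Lemma~\ref{dp} is in hand; the only care needed is tracking the $n^{o(1)}$ slack arising from the balancedness event and the polylogarithmic factors in the binomial tail, but these are routine.
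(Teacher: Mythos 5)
Your proof is correct. The paper itself gives no argument for this lemma---it simply cites \cite[Lemma~4.13]{gaudio2022exact}---but your route (Lemma~\ref{dp} to reduce $F$ to the set of vertices of degree at most $m+k$, then a first-moment count of low-degree vertices conditioned on the balancedness event, then Markov) is exactly the technique this paper deploys for the harder intersection version in Lemma~\ref{fup}, and the computation of $\E[|L|]$ you give is the same as the bound on $\E[|U|]$ inside the proof of Lemma~\ref{dp}. The only points requiring care---the superpolynomially small contribution of $\mathcal{H}^c$ and the $n^{o(1)}$ factors from $\binom{n-1}{j}(\nu\log n/n)^j$ for constant $j$---are handled correctly.
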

\begin{proof}
Proved by \cite[Lemma 4.13]{gaudio2022exact}.
\end{proof}

\begin{lemma}
\label{fup}
Suppose $G_1,G_2,G_3$ are independently subsampled with probability s from a parent graph $G\sim \SBM(n,a\log n/n,b\log n/n)$  for $a,b>0$. Let $F^*_{ij}$ be the set of vertices outside the $k$-core of $G_i\wedge_{\pi_{ij}} G_j$(taking vertice index in i) with $k=13$. Prove that with probability $1-o(1)$,
$|F^*_{ij}\cap F^*_{jk}|\leq n^{1-(2s^2-s^3)\dconnab+\delta}$, for any $\delta>0$.

\end{lemma}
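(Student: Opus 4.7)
The plan is to bound $|F^*_{ij}\cap F^*_{jk}|$ by a first-moment argument: first reduce the condition ``outside the $13$-core'' to the simpler condition ``bounded degree in the intersection graph,'' and then exploit the random-partition construction of Section~\ref{sec:construction} to compute the joint probability of these two bounded-degree events. Without loss of generality I take $(i,j,k)=(1,2,3)$; the other cases are symmetric.

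The first step is the reduction to degree conditions. Each intersection graph $G_i\wedge_{\pi_{ij}^*}G_j$ is marginally an $\SBM(n,s^2 a\log n/n,s^2 b\log n/n)$, since each parent edge survives independently in both subsamples with probability $s^2$. Lemma~\ref{dp} then produces a constant $m=m(a,b,s)$ such that, with high probability, every vertex of degree strictly greater than $m+13$ in $G_i\wedge_{\pi_{ij}^*}G_j$ lies in its $13$-core. A union bound over the two intersection graphs therefore yields
\[
F^*_{12}\cap F^*_{23}\subseteq\bigl\{v\in[n]:\deg_{G_1\wedge_{\pi_{12}^*}G_2}(v)\leq m+13\text{ and }\deg_{G_2\wedge_{\pi_{23}^*}G_3}(v)\leq m+13\bigr\}
\]
with high probability, where vertices are identified across the three graphs through $\pi_{12}^*$ and $\pi_{23}^*$.

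Next I would analyze the joint distribution of the two degrees using Lemma~\ref{con}. In the partition language, the edges of $G_1\wedge_{\pi_{12}^*}G_2$ are precisely the parent edges whose unordered pair lies in $\mathcal{E}_{110}\cup\mathcal{E}_{111}$, and the edges of $G_2\wedge_{\pi_{23}^*}G_3$ are those in $\mathcal{E}_{011}\cup\mathcal{E}_{111}$. If both degrees at a fixed vertex $v$ are at most $m+13$, then the number $X_v$ of parent neighbors $w$ of $v$ for which $\{v,w\}\in\mathcal{E}_{110}\cup\mathcal{E}_{011}\cup\mathcal{E}_{111}$ is at most $2(m+13)$. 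Each pair falls independently into this union with probability $2s^2(1-s)+s^3=2s^2-s^3$; hence, conditioning on $\sigma^*$ and restricting to the balanced event $\mathcal{F}$ of Lemma~\ref{balanced}, $X_v$ is distributed as
\[
\Bin\!\bigl((1+o(1))n/2,\,(2s^2-s^3)a\log n/n\bigr)+\Bin\!\bigl((1+o(1))n/2,\,(2s^2-s^3)b\log n/n\bigr).
\]
A routine Chernoff/Poisson approximation (of the form $(1-p')^{n/2}(1-q')^{n/2}\cdot\poly(\log n)$ with $p'=(2s^2-s^3)a\log n/n$ and $q'=(2s^2-s^3)b\log n/n$, analogous to the proof of Lemma~\ref{aer2}) gives
\[
\p\bigl(v\in F^*_{12}\cap F^*_{23}\bigr)\leq n^{-(2s^2-s^3)\dconnab+o(1)}.
\]
Summing over $v\in[n]$ and applying Markov's inequality yields $|F^*_{12}\cap F^*_{23}|\leq n^{1-(2s^2-s^3)\dconnab+\delta}$ with high probability, for any fixed $\delta>0$.

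The main obstacle is the joint analysis in the middle step: because $G_1\wedge_{\pi_{12}^*}G_2$ and $G_2\wedge_{\pi_{23}^*}G_3$ both depend on the subsample $G_2'$, the two degrees at a given vertex are positively correlated, and treating them as independent would only yield the weaker exponent $2s^2\dconnab$. The partition construction bypasses this difficulty by directly exhibiting the joint law in terms of disjoint type classes $\mathcal{E}_{abc}$, producing the effective subsampling probability $2s^2-s^3=s(1-(1-s)^{K-1})|_{K=3}$ that matches the first term in the threshold~\eqref{eq:bad_for_12_and_13_good_for_23}. A minor bookkeeping issue is the consistent identification of vertex labels across $G_1,G_2,G_3$ via $\pi^*$; since these permutations are bijections, the cardinality of $F^*_{12}\cap F^*_{23}$ is invariant under the choice of labeling, so the first-moment bound is unaffected.
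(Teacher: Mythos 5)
Your proposal is correct and reaches the right exponent, but the key middle step is genuinely different from the paper's. Both proofs begin identically: apply Lemma~\ref{dp} to each intersection graph (marginally an $\SBM(n,s^2a\log n/n,s^2b\log n/n)$) to replace ``outside the $13$-core'' by ``degree at most $m+13$'' and thereby reduce to bounding $|U_{12}\cap U_{23}|$. For the joint probability that a vertex has small degree in both intersection graphs, the paper conditions on the degree $D_1$ of $v$ in the common graph, uses the conditional independence of the two subsampling events to write $\E[\bm{1}_{v\in U_{12}}\bm{1}_{v\in U_{13}}]=\E[(\sum_{i\le m+k}\binom{D_1}{i}s^i(1-s)^{D_1-i})^2]$, and then grinds through estimates of $\E[D_1^L(1-s)^{2D_1}]$. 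You instead observe that small degree in both intersection graphs forces degree at most $2(m+13)$ in the graph of parent edges landing in $\mathcal{E}_{110}\cup\mathcal{E}_{011}\cup\mathcal{E}_{111}$, a class whose total sampling probability is exactly $2s^2-s^3$, so that a single binomial lower-tail bound $\p(X_v\le 2(m+13))\le n^{-(2s^2-s^3)\dconnab+o(1)}$ finishes the first-moment computation. Your route is more direct and makes the provenance of the factor $2s^2-s^3$ transparent; it also generalizes to the $K$-graph setting of Lemma~\ref{fup2k}, where the relevant union of classes has probability $(1-(1-s)^L)(1-(1-s)^{K-L})$. One small point of hygiene: the per-vertex probability bound should be stated for $U_{12}\cap U_{23}$ rather than for $F^*_{12}\cap F^*_{23}$ directly, since the containment $F^*_{ij}\subseteq U_{ij}$ is a global high-probability event; apply Markov to $|U_{12}\cap U_{23}|$ and intersect with that event at the end, exactly as your opening reduction already sets up.
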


\begin{proof}
$|F^*_{12}\cap F^*_{23}|=|F^*_{21}\cap F^*_{23}|=|F^*_{12}\cap F^*_{13}|$, by symmetricity of $G_1, G_2, G_3$.

Define $U_{ij}$ to be the set of vertices with degree at most   $m+k$ in the intersection graph $G_i\wedge_{\pi_{ij}} G_j$ which marginally follows $ \SBM(n,\frac{as^2\log n}{n},\frac{bs^2\log n}{n})$, and $m$ is an integer satisfying $m>\frac{2}{s^2(a+b)}$. Then by Lemma~\ref{dp}, w.h.p., $F^*_{ij}\subset U_{ij}$. Hence $|F^*_{12}\cap F^*_{23}|\leq |U_{12}\cap U_{23}|$ with high probability.

It thus remains to bound $|U_{12}\cap U_{13}|$. Firstly, we
bound the expectation of $|U_{12}\cap U_{13}|$:
\[
\E[|U_{12}\cap U_{13}|]= \sum_{v=1}^n \E[\bm{1}_{v\in U_{12}\cap U_{13}}]=n\E[\bm{1}_{v\in U_{12}}\bm{1}_{v\in U_{13}}].
\]

Let $D_1$ denote the degree of vertex $v$ in the graph $G_1$. Let $X_a\sim \Bin((1+o(1))n/2,sa\log n/n)$, and $X_b\sim \Bin((1+o(1))n/2,sb\log n/n)$. On the event $\mathcal{F}, D_1\stackrel{d}{=}X_a+X_b$, where $\mathcal{F}$ is defined in Definition~\ref{balance}, $X_a,X_b$ are independent. Note that by Lemma~\ref{balanced}, $\p(\mathcal{F}^c)=o(\frac{1}{n^2})$. We have
\begin{align}\label{u12}
\E[\bm{1}_{v\in U_{12}}\bm{1}_{v\in U_{13}}]
&= \E[\E[\bm{1}_{v\in U_{12}}\bm{1}_{v\in U_{13}}|D_1]]
= \E\left[\left(\sum_{i=0}^{m+k}\binom{D_1}{i}s^{i}(1-s)^{D_1-i}\right)^2\right] \notag \\
&=\sum_{i=0}^{m+k}\sum_{j=0}^{m+k}C(i,j) \E[D_1^{i+j}(1-s)^{2D_1}].
\end{align}
Here $C(i,j)$ is a constant related to $i,j$.
Now look at $\E[D_1^L(1-s)^{2D_1}]$. In our regime, $L\leq 2(m+k)$ are constant. Hence:
\begin{align}\label{d1}
\E[D_1^L(1-s)^{2D_1}\bm{1}_{\mathcal{F}}]
&= \E[(X_a+X_b)^L(1-s)^{2X_a}(1-s)^{2X_b}\bm{1}_{\mathcal{F}}]\notag\\
&= \sum_{t=0}^{K} C_t\E[X_a^t(1-s)^{2X_a}\bm{1}_{\mathcal{F}}]\E[X_b^{L-t}(1-s)^{2X_a}\bm{1}_{\mathcal{F}}].
\end{align}
 Here $C_t$ is constant related to $t$, the second equality is due to the independence of $X_a,X_b$. Now look at $\E[X_a^t(1-s)^{2X_a}\bm{1}_{\mathcal{F}}]$.
 \begin{align*}
 \E[X_a^t(1-s)^{2X_a}1_{\mathcal{F}}]\leq\E[X_a^t(1-s)^{2X_a}]
 = &\sum_{\ell=0}^{(1+o(1))n/2}\ell^t(1-s)^{2\ell}(\frac{sa\log n}{n})^{\ell}(1-\frac{sa\log n}{n})^{(1+o(1))n/2-\ell}\\=&\sum_{\ell=0}^{(\log n)^3}\ell^t(\frac{(1-s)^2sa\log n}{n})^{\ell}(1-\frac{sa\log n}{n})^{(1+o(1))n/2-\ell}\\+&\sum_{\ell=(\log n)^3+1}^{(1+o(1))n/2}\ell^t(\frac{(1-s)^2sa\log n}{n})^{\ell}(1-\frac{sa\log n}{n})^{(1+o(1))n/2-\ell}.
 \end{align*}
We can bound the first part:
 \begin{align*}
 &\sum_{\ell=0}^{(\log n)^3}\ell^t(\frac{(1-s)^2sa\log n}{n})^{\ell}(1-\frac{sa\log n}{n})^{(1+o(1))n/2-\ell}\\\leq&(\log n)^{3t}\sum_{\ell=0}^{(\log n)^3}(\frac{(1-s)^2sa\log n}{n})^{\ell}(1-\frac{sa\log n}{n})^{(1+o(1))n/2-\ell}\\\leq&(\log n)^{3t}\sum_{\ell=0}^{(1+o(1))n/2-\ell}(\frac{(1-s)^2sa\log n}{n})^{\ell}(1-\frac{sa\log n}{n})^{(1+o(1))n/2-\ell}\\=&(\log n)^{3t}(1-(1-(1-s)^2)\frac{sa\log n}{n})^{(1+o(1))n/2}
 \leq n^{-(1-(1-s)^2)sa/2+o(1)}
 \end{align*}
Then we can bound the second part, note that
 \begin{multline*}
 \sum_{\ell=(\log n)^3+1}^{(1+o(1))n/2}\ell^t(\frac{(1-s)^2sa\log n}{n})^{\ell}(1-\frac{sa\log n}{n})^{(1+o(1))n/2-\ell}
 = \sum_{\ell=(\log n)^3+1}^{(1+o(1))n/2}\ell^t(1-s)^{2\ell}\p(X_a=\ell)\\
 \leq (\log n)^{3t}(1-s)^{2(\log n)^3}\p(X_a>(\log n)^3)
 \leq n^{2(\log n)^2\log (1-s)+o(1)}
 = o(n^{-(1-(1-s)^2)sa/2+o(1)}).
 \end{multline*}
 The first inequality is true because $\ell^t(1-s)^{2\ell}$ decreases when $\ell>(\log n)^3$ for sufficiently large $n$. The last equality is true because $(\log n)^2\log(1-s)<-(1-(1-s)^2)sa/2$ for sufficiently large $n$.

Hence, by summing up the two parts, $\E[X_a^t(1-s)^{2X_a}]\leq n^{-s(1-(1-s)^2)a/2+o(1)}$, similarly we can proof $\E[X_b^{L-t}(1-s)^{2X_b}]\leq n^{-s(1-(1-s)^2)b/2+o(1)}$. By~\eqref{u12} and~\eqref{d1},
\begin{equation*}
\E[\bm{1}_{v\in U_{12}}\bm{1}_{v\in U_{13}}]
\leq \p(\mathcal{F})^c+n^{-s(1-(1-s)^2)\dconnab+o(1)}=n^{-s(1-(1-s)^2)\dconnab+o(1)}+o(\frac{1}{n^2}).
\end{equation*}

 By a Markov method we have \[\p(|U_{12}\cap U_{13}|\geq \log n \E[|U_{12}\cap U_{13}|])\leq\frac{1}{\log n}=o(1).\]
Hence we have $|F^*_{12}\cap F^*_{23}|\leq |U_{12}\cap U_{13}| \leq \log n\E[|U_{12}\cap U_{13}|]\leq n^{1-s(1-(1-s)^2)\dconnab+\delta}$, for any $\delta>0$, with probability $1-o(1)$.
\end{proof}

\section{Proof of Theorem~\ref{thm1} for three graphs}\label{sec:proof1}
\subsection{Exact recovery in $[n] \setminus (F_{12}\cup F_{23})\cup[n] \setminus (F_{12}\cup F_{13})\cup[n] \setminus (F_{13}\cup F_{23})$}

\begin{definition}
For a vertex i in G, define the quantity majority of i:\[
\maj_{G}(i):=|N_G(i)\cap V^{\sigma^*(i)}|-|N_G(i)\cap V^{-\sigma^*(i)}|.\]
\end{definition}
By Lemma~\ref{bin}, we can directly deduce that if $(1-(1-s)^3)D_+(a,b)>1+\epsilon|\log (a/b)|$,
then for all $i \in [n]$ we have that
$\maj_{G_1\vee_{\pi_{12}}G_2\vee_{\pi_{13}}G_3}(i)\geq \epsilon \log n$ with probability $1-o(1)$.

\subsection{Exact recovery in $[n] \setminus (F_{12}\cup F_{23})$}

Now suppose that $i\in M_{12}\cap M_{23}$. Look at $\widetilde{G}:=(G_1\vee_{\mu_{12}}G_2\vee_{\mu_{23}\circ \mu_{12}}G_3)([n]\setminus\{F_{12}\cup F_{23}\})$.
\begin{lemma}\label{ggb1}
 Suppose that $(1-(1-s)^3)D_+(a,b)>1+2\epsilon|\log (a/b)|$. Then with probability $1-o(1)$, all vertices in $(M_{12}\cap M_{23})$ have an $\epsilon \log n $ majority in $\widetilde{G}\{ M_{12}\cap M_{23}\}.$
\end{lemma}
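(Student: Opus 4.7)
The plan is to transfer the problem to the full union graph $H^* := G_1 \vee_{\pi_{12}^*} G_2 \vee_{\pi_{13}^*} G_3$, which marginally is $\SBM(n, (1-(1-s)^3)a\log n/n, (1-(1-s)^3)b\log n/n)$. By Lemma~\ref{remark}, w.h.p.\ $\mu_{12} = \pi_{12}^*|_{M_{12}^*}$ and $\mu_{23} = \pi_{23}^*|_{M_{23}^*}$; using the identity $\pi_{23}^* \circ \pi_{12}^* = \pi_{13}^*$, we can then identify $\widetilde{G}$ with $H^*\{M_{12}^* \cap M_{23}^*\}$. The hypothesis supplies $(1-(1-s)^3)\dchab > 1 + 2\epsilon|\log(a/b)|$, so Lemma~\ref{bin} applied to $H^*$, together with a union bound over $[n]$, yields w.h.p.\ that $\maj_{H^*}(i) \geq 2\epsilon \log n$ for every $i \in [n]$.

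The heart of the proof is then a leakage bound: w.h.p., for every $v \in M_{12}^* \cap M_{23}^*$,
\[
|N_{H^*}(v) \cap (F_{12}^* \cup F_{23}^*)| \leq \epsilon \log n.
\]
Given this, restricting $H^*$ to $M_{12}^* \cap M_{23}^* = [n] \setminus (F_{12}^* \cup F_{23}^*)$ changes the signed neighbor count of $v$ by at most $\epsilon \log n$, so $\maj_{\widetilde{G}}(v) \geq 2\epsilon \log n - \epsilon \log n = \epsilon \log n$, which is exactly what the lemma claims. It therefore suffices to bound each of $|N_{H^*}(v) \cap F_{12}^*|$ and $|N_{H^*}(v) \cap F_{23}^*|$ by $\epsilon \log n / 2$ uniformly in $v$.

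For the bound on $F_{12}^*$, first note that the intersection graph $G_1 \wedge_{\pi_{12}^*} G_2$ is marginally $\SBM(n, s^2 a \log n /n, s^2 b \log n /n)$, so Lemma~\ref{dp} ensures $F_{12}^* \subset U_{12} := \{u : \deg_{G_1 \wedge_{\pi_{12}^*} G_2}(u) \leq m+13\}$ w.h.p., for any fixed integer $m > 2/(s^2(a+b))$. To avoid dependence between $N_{H^*}(v)$ and membership in $U_{12}$, I use the decoupling trick from the proof of Lemma~\ref{dp}: define $U_{12}^{(v)}$ analogously on $(G_1 \wedge_{\pi_{12}^*} G_2)\{[n]\setminus v\}$, which is measurable with respect to edges not touching $v$ and hence (given $\sigma^*$) independent of $N_{H^*}(v)$. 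Since deleting $v$ only reduces degrees, $F_{12}^* \setminus \{v\} \subset U_{12}^{(v)}$, and $v \in M_{12}^*$ gives $F_{12}^* \subset U_{12}^{(v)}$. The moment computation in the proof of Lemma~\ref{dp} yields $\E[|U_{12}^{(v)}|^L] \leq n^{L - L s^2 \dconnab + o(1)}$ for every fixed $L$, so Markov and a union bound over $v$ deliver $|U_{12}^{(v)}| \leq n^{1 - s^2\dconnab + \delta}$ for all $v$ w.h.p., for any $\delta > 0$. Conditioning on $U_{12}^{(v)}$, the count $|N_{H^*}(v) \cap U_{12}^{(v)}|$ is stochastically dominated by $\Bin(n^{1-s^2\dconnab+\delta}, C\log n/n)$ with mean $o(1)$ once $\delta < s^2\dconnab$; Chernoff then gives a tail of $n^{-\omega(1)}$ for exceeding $\epsilon \log n / 2$, which easily survives the union bound over $v$. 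The same argument applied to $G_2 \wedge G_3$ controls the leakage into $F_{23}^*$.

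The main obstacle, as flagged in Section~\ref{sec:overview}, is precisely this uniform-in-$v$ leakage bound from the \emph{denser} graph $H^*$ into the bad set $F_{12}^*$ defined via the \emph{sparser} intersection graph. A direct \Luczak expansion on $G_1 \wedge_{\pi_{12}^*} G_2$ only controls leakage inside the intersection graph, not inside $H^*$; the $v$-removal decoupling is what supplies the independence structure needed to turn the moment bound on $|U_{12}^{(v)}|$ into a Chernoff-plus-union-bound argument that transfers the leakage control to $H^*$, uniformly over all $v$.
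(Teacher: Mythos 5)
Your proof is correct, and its skeleton matches the paper's: pass to the true permutations via Lemma~\ref{remark}, get a $2\epsilon\log n$ majority in the full union graph $G^*$ from Lemma~\ref{bin} and the hypothesis, and then show the restriction to $M_{12}^*\cap M_{23}^*$ perturbs each signed neighbor count by at most $\epsilon\log n$ via a uniform leakage bound on $|N_{G^*}(v)\cap(F_{12}^*\cup F_{23}^*)|$.

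Where you diverge is in how that leakage bound is proved, and the difference is worth recording. The paper splits $N_{G^*}(i)\cap F_{12}^*$ into neighbors reached through edges of $G_1\wedge_{\pi_{12}^*}G_2$ --- controlled by Lemma~\ref{gcon}, i.e.\ the statement that every vertex has $O(\epsilon\log n)$ intersection-graph neighbors outside the $k$-core --- and neighbors reached through edges of $G^*\setminus(G_1\wedge G_2)$, which are conditionally independent of $F_{12}^*$ given $(\bm{\pi^*},\sigma^*,\bm{\mathcal{E}})$ and are therefore handled by Poisson domination with mean $\nu\frac{\log n}{n}|F_{12}^*|$ plus Lemma~\ref{size}. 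You instead avoid the edge-type split entirely: you replace $F_{12}^*$ by the superset $U_{12}^{(v)}$ of low-degree vertices in $(G_1\wedge G_2)\{[n]\setminus v\}$, which is measurable with respect to vertex pairs not touching $v$ and hence independent of \emph{all} of $N_{G^*}(v)$, and then run a single binomial tail bound using the moment estimate $\E[|U_{12}^{(v)}|^L]\le n^{L-Ls^2\dconnab+o(1)}$. This is the same $v$-removal decoupling the paper uses internally to prove Lemmas~\ref{dp} and~\ref{gcon}, but deployed one level up so that one Chernoff-plus-union-bound argument subsumes both of the paper's cases; the price is that you need the uniform-in-$v$ control of $|U_{12}^{(v)}|$ (which requires taking $L>1/\delta$ in the Markov step), whereas the paper only needs the single event $|F_{12}^*|\le n^{1-s^2\dconnab+\delta}$ from Lemma~\ref{size}. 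Two small points of care in your write-up: the containment $F_{12}^*\subseteq U_{12}$ is a single high-probability event and must be paid for once outside the union bound over $v$, not per vertex; and the moment bound for $U_{12}^{(v)}$ as written assumes $s^2\dconnab<1$ (otherwise the argument is only easier, as in the paper's treatment of Lemma~\ref{dp}). Neither affects correctness.
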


\begin{proof}
Denote $F^*_{ij}$ the set of vertices outside the 13-core of $G_i \wedge_{\pi_{ij}^{*}} G_j$. In light of Lemma~\ref{remark} and its remark, we can replace $\mu_{ij}$ with $\pi^*_{ij}$, $F_{ij}$ with $F^*_{ij}$ in Lemma~\ref{ggb1}.
Where we define \[G^*:=G_1\vee_{\pi_{12}^{*}}G_2\vee_{\pi_{13}^{*}}G_3,\]\[H:=G^*\{M_{12}^*\cap M_{23}^*\}.\]
To bound the neighborhood majority in $H$, for $i\in M_{12}^*\cap M_{23}^*$ note that:
\begin{align*}
&\maj_H(i) =\sigma^*(i)\sum_{j\in N_H(i)} \sigma^*(j)\leq  \maj_{G^*}(i)+|N_{G^*}(i)\cap \{F_{12}^*\cup F_{23}^*\}|,\\&\maj_H(i) = \sigma^*(i)\sum_{j\in N_H(i)} \sigma^*(j)\geq  \maj_{G^*}(i)-|N_{G^*}(i)\cap \{F_{12}^*\cup F_{23}^*\}|.\end{align*} To sum up, we have\begin{equation}\label{eq1}
|\maj_H(i)-\maj_{G^*}(i)|\leq |N_{G^*}(i)\cap \{F_{12}^*\cup F_{23}^*\}|.
\end{equation}
Note that $\maj_{G^*}(i)>2\epsilon \log n, i\in [n]$ with probability $1-o(1)$, given that $(1-(1-s)^3)D_+(a,b)>1+2\epsilon|\log (a/b)|$ by Lemma~\ref{bin}. Now we prove that the right hand side of ~\eqref{eq1} can be bounded by $\epsilon \log n$.
Look at $|N_{G^*}(i)\cap \{F_{12}^*\cup F_{23}^*\}|$,
\begin{align*}
|N_{G^*}(i)\cap \{F_{12}^*\cup F_{23}^*\}|
&\leq |N_{G_1\wedge G_2}(i)\cap (F_{12}^*)|+|N_{G_2\wedge G_3}(\pi^*_{12}(i))\cap (F_{23}^*)|\\
&\quad +|N_{G^*\setminus(G_1\wedge G_2)}(i)\cap (F_{12}^*)|+|N_{G^*\setminus(G_2\wedge G_3)}(i)\cap (F_{23}^*)|.
\end{align*}
First, look at $|N_{G_1\wedge G_2}(i)\cap F_{12}^*|$, by Lemma~\ref{gcon}, w.h.p.,$$|N_{G_1\wedge G_2}(i)\cap F_{12}^*|<\epsilon\log n/8.$$
 Similarly, we have w.h.p.

$$|N_{G_2\wedge G_3}(\pi^*_{12}(i)\cap F_{23}^*)|<\epsilon\log n/8.$$

What is left is to bound $|N_{G^*\setminus(G_2\wedge G_3)}(i)\cap (F_{23}^*)|,|N_{G^*\setminus(G_1\wedge G_2)}(i)\cap (F_{12}^*)|$.

Note that conditioned on $\pi_{12}^*,\pi_{13}^*,\pi_{23}^*, \sigma^*,\bm{\mathcal{E}}:=\{\mathcal{E}_{ijk},i,j,k\in\{0,1\}\}$, the graph $G^*\setminus(G_1\wedge G_2)$ is independent of $F_{12}^*$ by Lemma~\ref{con}, since $F_{12}^*$ depends only on $G_1\wedge G_2$.
Thus  we can stochastically dominate $|N_{G^*\setminus(G_1\wedge G_2)}(i)\cap F_{12}^*|$ by a Poisson random variable X
with mean \[\lambda_n:=\nu\frac{\log n}{n}|\{j\in F_{12}^*:\{i,j\}\in \mathcal{E}_{100}\cup\mathcal{E}_{101}\cup\mathcal{E}_{001}\cup\mathcal{E}_{010}\cup\mathcal{E}_{011}\}|\leq \nu\frac{\log n}{n}|F_{12}^*|,\nu:=max(a,b).\]
For a fixed $\delta > 0$, define an event $\mathcal{Z}:=\{|F_{12}^*|\leq n^{1-s^2\dconnab+\delta}\}$. On $\mathcal{Z}$, we have that $\lambda_n\leq n^{-s^2\dconnab+\delta+o(1)}$. Hence, for any positive integer m:
\begin{multline*}
\p(\{|N_{G^*\setminus(G_1\wedge G_2)}(i)\cap (F_{12}^*)|\geq m\} \cap \mathcal{Z})
\leq \p(\{X\geq m\} \cap \mathcal{Z})
=\E[\p(X\geq m|F^*_{12},\bm{\mathcal{E}},\sigma^*,\bm{\pi^*})\bm{1}_{\mathcal{Z}}]\\
\leq \E[(\inf_{\theta>0}e^{-\theta m+\lambda_n(e^{\theta}-1)})\bm{1}_{\mathcal{Z}}]
\leq \E[e\lambda_n^m \bm{1}_{\mathcal{Z}}]
\leq n^{-m(s^2\dconnab-\delta-o(1))}.
\end{multline*}
Above, the equality on the second line is due to the tower rule and since $\mathcal{Z}$ is measurable with respect
to $|F_{12}^*|$, the inequality on the third line is due to a Chernoff bound; the inequality on the fourth line
follows from setting $\theta = \log(1/\lambda_n)$ (which is valid since $\lambda_n = o(1)$ if $\mathcal{Z}$ holds). The final inequality
uses the upper bound for $\lambda_n$ on $\mathcal{Z}$.
Taking a union bound, we have
\[
\p(\{\exists i\in[n],|N_{G^*\setminus(G_2\wedge G_1)}(i)\cap F_{12}^*|\geq m\}\cap \mathcal{Z})\leq n^{1-m(s^2\dconnab-\delta-o(1))}.
\]
Here if we take $m>(s^2\dconnab)^{-1}$ and $\delta<s^2\dconnab-m^{-1}$, the probability turns to $o(1)$. Thus, we can set $m=\lceil(s^2\dconnab)^{-1}\rceil+1$. In light of Lemma~\ref{size}, $|F_{12}^*|\leq n^{1-s^2\dconnab+\delta},\delta>0$ w.h.p. Hence, the event $\mathcal{Z}$ happens with probability $1-o(1)$. Hence we have
\[\p(\{\forall i\in[n],N_{G^*\setminus(G_2\wedge G_1)}(i)\cap F_{12}^*|\leq \lceil(s^2\dconnab)^{-1}\rceil\})=1-o(1).\]

By an identical proof, we have that
\[\p(\{\forall i\in[n],N_{G^*\setminus(G_2\wedge G_3)}(i)\cap F_{23}^*|\leq \lceil(s^2\dconnab)^{-1}\rceil\})=1-o(1).\]
Hence we have, with probability $1-o(1)$, for $i\in M_{12}^*\cap M_{23}^*$,
\[|\maj_H(i)-\maj_{G_1\vee_{\pi_{12}^*} G_2 \vee_{\pi_{13}^*}G_3}(i)|<\epsilon \log n,\] and hence with probability $1-o(1)$,\[\maj_H(i)>\epsilon \log n.\]
Then by Lemma ~\ref{remark}, we can replace $H$ with $\widetilde{G}$, $F_{ij}^*$ with $F_{ij}$, the lemma follows.
\end{proof}
Next, prove that each vertex in $G_2\vee_{\pi^*_{23}}G_3\setminus_{\pi^*_{12}} G_1$ has a small number of neighbors in $\pi^*_{12}(I_{\epsilon}(G_1))$

\begin{lemma}
\label{ggb2}
If $0<\epsilon\leq\frac{s\dchab}{4|\log(a/b)|}$, then\[
\p(\forall i \in [n], |N_{G_2\vee_{\pi^*_{23}}G_3\setminus_{\pi^*_{12}} G_1}(i)\cap{\pi^*_{12}} (I_{\epsilon}(G_1))|\leq 2\lceil (s\dchab)^{-1}\rceil)=1-o(1).
\]
\end{lemma}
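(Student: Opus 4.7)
The approach is to mirror the tail bound used at the end of the proof of Lemma~\ref{ggb1}, exploiting the independence structure provided by the random partition construction of Section~\ref{sec:construction}. The central observation is that the edges of $G_2\vee_{\pi^*_{23}}G_3\setminus_{\pi^*_{12}} G_1$ come exclusively from $\mathcal{E}_{010}\cup\mathcal{E}_{001}\cup\mathcal{E}_{011}$, whereas $I_{\epsilon}(G_1)$ is a deterministic function of $G_1$, whose edges lie entirely in $\mathcal{E}_{100}\cup\mathcal{E}_{101}\cup\mathcal{E}_{110}\cup\mathcal{E}_{111}$. By Lemma~\ref{con}, conditional on $(\bm{\mathcal{E}},\sigma^*,\bm{\pi^*})$, the graph $G_2\vee_{\pi^*_{23}}G_3\setminus_{\pi^*_{12}} G_1$ is therefore independent of $I_{\epsilon}(G_1)$ (and hence of $\pi^*_{12}(I_{\epsilon}(G_1))$).

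I would first control the size of $I_{\epsilon}(G_1)$. Since $G_1$ is marginally $\SBM(n, sa\log n/n, sb\log n/n)$, Lemma~\ref{aer2} applied with parameters $(sa,sb)$ yields $\E|I_{\epsilon}(G_1)| \leq 3 n^{1-s\dchab+\epsilon|\log(a/b)|}$, and Markov's inequality then guarantees that the event $\mathcal{Z}:=\{|I_{\epsilon}(G_1)|\leq n^{1-s\dchab+2\epsilon|\log(a/b)|}\}$ holds with probability $1-o(1)$. Fix $i\in[n]$, and further condition on $G_1$ (which fixes $\pi^*_{12}(I_{\epsilon}(G_1))$ as a deterministic set of vertices in $G_2$'s indexing). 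On $\mathcal{Z}$, the number of neighbors of $i$ in $G_2\vee_{\pi^*_{23}}G_3\setminus_{\pi^*_{12}} G_1$ that lie in $\pi^*_{12}(I_{\epsilon}(G_1))$ is stochastically dominated by a binomial random variable with at most $|I_{\epsilon}(G_1)|$ trials and per-edge probability bounded by $\nu\log n/n$, where $\nu:=\max(a,b)$, and hence by a Poisson with mean $\lambda_n \leq \nu\log n/n\cdot |I_{\epsilon}(G_1)| \leq n^{-s\dchab+2\epsilon|\log(a/b)|+o(1)}$.

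A Chernoff bound with $\theta=\log(1/\lambda_n)$ (valid on $\mathcal{Z}$, where $\lambda_n=o(1)$) then gives, for any positive integer $m$,
\[
\p\!\left(\{|N_{G_2\vee_{\pi^*_{23}}G_3\setminus_{\pi^*_{12}} G_1}(i)\cap \pi^*_{12}(I_{\epsilon}(G_1))|\geq m\}\cap\mathcal{Z}\right) \leq e\lambda_n^m \leq n^{-m(s\dchab-2\epsilon|\log(a/b)|)+o(1)}.
\]
Taking a union bound over $i\in[n]$ and setting $m=2\lceil (s\dchab)^{-1}\rceil+1$, the hypothesis $\epsilon\leq s\dchab/(4|\log(a/b)|)$ ensures $s\dchab-2\epsilon|\log(a/b)|\geq s\dchab/2$, so the exponent becomes at most $1-(2\lceil(s\dchab)^{-1}\rceil+1)s\dchab/2+o(1)<0$. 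Combined with $\p(\mathcal{Z}^c)=o(1)$, this yields the conclusion.

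I do not foresee any conceptual obstacle; the argument is structurally identical to the treatment of $|N_{G^*\setminus(G_1\wedge G_2)}(i)\cap F_{12}^*|$ in Lemma~\ref{ggb1}. The only thing that requires a bit of care is the order of conditioning: one must condition on $G_1$ in order to treat $\pi^*_{12}(I_{\epsilon}(G_1))$ as deterministic before invoking the conditional independence from Lemma~\ref{con}, so that the surviving randomness in $G_2\vee_{\pi^*_{23}}G_3\setminus_{\pi^*_{12}} G_1$ may indeed be compared to a Poisson.
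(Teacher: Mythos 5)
Your proposal is correct and follows essentially the same route as the paper: conditional independence of $I_{\epsilon}(G_1)$ and $G_2\vee_{\pi^*_{23}}G_3\setminus_{\pi^*_{12}}G_1$ via the partition construction (Lemma~\ref{con}), control of $|I_{\epsilon}(G_1)|$ by Lemma~\ref{aer2} plus Markov, Poisson domination with a Chernoff bound at $\theta=\log(1/\lambda_n)$, and a union bound with $m=2\lceil(s\dchab)^{-1}\rceil+1$. The only (minor) point the paper makes that you omit is the proviso $s\dchab<99$ needed to invoke Lemma~\ref{aer2}.
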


\begin{proof}
Since $I_{\epsilon}(G_1)$ depends on $G_1$ alone, it follows that $I_{\epsilon}(G_1)$ and $G_2\vee_{\pi^*_{23}}G_3\setminus_{\pi^*_{12}} G_1
$ are conditionally
independent given $\bm{\pi^*}, \sigma^*,\bm{\mathcal{E}}$. Hence we can stochastically dominate  $|N_{G_2\vee_{\pi^*_{23}}G_3\setminus_{\pi^*_{12}} G_1}(i)\cap{\pi^*_{12}}(I_{\epsilon}(G_1))|$ by
a Poisson random variable X with mean $\lambda_n$ given by\[
\lambda_n := \nu\log n/n |\{j\in I_{\epsilon}(G_1) : \{i, j\} \in \mathcal{E}_{011}\cup\mathcal{E}_{010}\cup\mathcal{E}_{001}\}| \leq \nu\log n/n|I_{\epsilon}(G_1)|.\]
Next, define the event $
\mathcal{Z}:=\{
|I_{\epsilon}(G_1)| \leq n^{1-s\dchab+2\epsilon|\log(a/b)|}\}$.

Notice that $P(\mathcal{Z}) = 1- o(1)$ by Lemma~\ref{aer2} and Markov’s inequality, provided $s\dchab < 99$.
Following identical arguments as the proof of Lemma~\ref{ggb1}, we
arrive at
\[\p(\exists i \in [n], |N_{G_2\vee_{\pi^*_{23}}G_3\setminus_{\pi^*_{12}} G_1}(i)\cap \pi^*_{12}(I_{\epsilon}(G_1))|\geq m)=o(1),\]
when $m>\lceil(s\dchab-2\epsilon |\log a/b|)^{-1}\rceil$. If $\epsilon\leq\frac{s\dchab}{4|\log(a/b)|}$, it suffices to set $m=2\lceil(s\dchab)^{-1}\rceil+1$.
\end{proof}

\begin{lemma}
\label{ggb3}
Suppose that $a,b,\epsilon>0$ satisfy the following conditions:
\begin{align*}
(1-(1-s)^3)\dchab>1+2\epsilon|\log a/b|,
\qquad 0<\epsilon\leq\frac{s\dchab}{4|\log a/b|}.
\end{align*}
With high probability, the algorithm correctly labels all vertices in $\{i\in [n]\setminus(F_{12}^*\cup F_{23}^*)\}$.
\end{lemma}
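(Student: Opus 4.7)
The plan is to combine Lemmas~\ref{ggb1}, \ref{ggb2}, \ref{aer1}, \ref{aer3}, and~\ref{remark} to show that, for every $i \in M_{12}^{*} \cap M_{23}^{*}$, the majority vote on $\widehat{\sigma_{1}}$-labels among the $\widetilde{G}$-neighbors of $i$ differs from the majority vote on true labels $\sigma^{*}$ by at most a constant number of votes, while the true majority is already at least $\epsilon \log n$ in magnitude. This will force the two signs to agree and hence the algorithm to label $i$ correctly.

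First, by Lemma~\ref{remark}, I can replace the $k$-core matchings $\mu_{12}, \mu_{23}$ with the restrictions of $\pi_{12}^{*}, \pi_{23}^{*}$ (and $F_{12}, F_{23}$ with $F_{12}^{*}, F_{23}^{*}$) throughout the analysis, up to an event of probability $1 - o(1)$. Second, by Lemma~\ref{ggb1} (applicable because the hypothesis $(1-(1-s)^{3})\dchab > 1 + 2\epsilon|\log(a/b)|$ holds), with high probability every $i \in M_{12}^{*} \cap M_{23}^{*}$ satisfies $\sigma^{*}(i)\,\maj_{\widetilde{G}\{M_{12}^{*}\cap M_{23}^{*}\}}(i) \geq \epsilon \log n$.

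Third, by Lemma~\ref{aer1}, with high probability $\widehat{\sigma_{1}}(j) = \sigma^{*}(j)$ for every $j \in [n] \setminus I_{\epsilon}(G_{1})$ (up to a global flip, which is handled by the fact that $\widehat{\sigma_{1}}$ agrees with $\sigma^{*}$ on a $1 - o(1)$ fraction of vertices). Consequently, for every $i \in M_{12}^{*} \cap M_{23}^{*}$,
\[
\bigl| \maj_{\widehat{\sigma_{1}}, \widetilde{G}\{M_{12}^{*}\cap M_{23}^{*}\}}(i) - \maj_{\sigma^{*}, \widetilde{G}\{M_{12}^{*}\cap M_{23}^{*}\}}(i) \bigr| \leq 2 \bigl| N_{\widetilde{G}}(i) \cap I_{\epsilon}(G_{1}) \bigr|.
\]
I will split the right-hand side into contributions from edges in $G_{1}$ and from edges appearing only in $G_{2} \vee G_{3}$ (identified with the $G_{1}$ index via $\pi_{12}^{*}$):
\[
\bigl| N_{\widetilde{G}}(i) \cap I_{\epsilon}(G_{1}) \bigr| \leq \bigl| N_{G_{1}}(i) \cap I_{\epsilon}(G_{1}) \bigr| + \bigl| N_{G_{2} \vee_{\pi_{23}^{*}} G_{3} \setminus_{\pi_{12}^{*}} G_{1}}(\pi_{12}^{*}(i)) \cap \pi_{12}^{*}(I_{\epsilon}(G_{1})) \bigr|.
\]
Lemma~\ref{aer3} bounds the first term by $2\lceil \dchab^{-1}\rceil$ (uniformly in $i$) with high probability, and Lemma~\ref{ggb2} bounds the second term by $2\lceil (s\dchab)^{-1}\rceil$ (uniformly in $i$) with high probability; both hypotheses on $\epsilon$ in the statement are exactly what the cited lemmas require.

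Combining these bounds, for all $i \in M_{12}^{*} \cap M_{23}^{*}$ simultaneously the discrepancy $| \maj_{\widehat{\sigma_{1}}}(i) - \maj_{\sigma^{*}}(i) |$ is at most an absolute constant $C = C(a,b,s)$, while $|\maj_{\sigma^{*}}(i)| \geq \epsilon \log n \gg C$ for $n$ large. Hence $\maj_{\widehat{\sigma_{1}}, \widetilde{G}}(i)$ and $\maj_{\sigma^{*}, \widetilde{G}}(i)$ share the same sign $\sigma^{*}(i)$, so the algorithm's label $\widehat{\sigma}(i)$ (defined as the neighborhood majority of $\widehat{\sigma_{1}}(i)$ when $a > b$, or minority when $a < b$) is $\sigma^{*}(i)$, completing the proof. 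The main subtlety, and the reason the two separate bounds above are needed, is that $I_{\epsilon}(G_{1})$ is measurable with respect to $G_{1}$ alone, so one can exploit conditional independence against the disjoint ``only $G_{2}$ or only $G_{3}$'' edges (this is what drives Lemma~\ref{ggb2}), while the $G_{1}$-contribution must be handled directly via Lemma~\ref{aer3}; everything else is assembly.
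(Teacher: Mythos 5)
Your proposal is correct and follows essentially the same route as the paper's proof: bound the discrepancy between the $\widehat{\sigma_1}$-majority and the true majority by $|N(i)\cap I_{\epsilon}(G_1)|$ via Lemma~\ref{aer1}, split that neighborhood count into the $G_1$ part (Lemma~\ref{aer3}) and the $G_2\vee_{\pi^*_{23}}G_3\setminus_{\pi^*_{12}}G_1$ part (Lemma~\ref{ggb2}), and compare the resulting constant against the $\epsilon\log n$ majority guaranteed by Lemma~\ref{ggb1}, transferring between $\pi^*$ and $\widehat{\mu}$ via Lemma~\ref{remark}. Your extra factor of $2$ in the discrepancy bound and your remark about the global sign flip are harmless refinements of the same argument.
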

\begin{proof}
Compare the neighborhood majority in $H$ corresponding to $\widehat{\sigma}_1$ with the true majority in $H$, where $H$ is defined in Lemma~\ref{ggb1}:
\begin{align*}
|\sigma^*(i)\sum_{j\in N_H(i)}(\widehat{\sigma}_1(j)-\sigma^*(j))|
&\leq |N_H(i)\cap I_{\epsilon}(G_1)|\leq |N_{G^*}(i)\cap I_{\epsilon}(G_1)|\\
&\leq |N_{G_2\vee_{\pi^*_{23}}G_3\setminus_{\pi^*_{12}} G_1}(i)\cap\pi^*_{12} (I_{\epsilon}(G_1))|+|N_{G_1}(i)\cap I_{\epsilon}(G_1)|\\
&\leq 2\lceil\dchab^{-1}\rceil+ 2\lceil(s\dchab)^{-1}\rceil\leq\epsilon\log n/2.
\end{align*}
The first inequality uses Lemma~\ref{aer1} that the set of errors are contained in $I_{\epsilon}(G_1)$.
The last inequality is due to Lemma~\ref{aer3},~\ref{ggb2}. Notice that $\maj_H(i)\geq \epsilon \log n$ for $i\in [n]\setminus(F_{12}^*\cup F_{23}^*)$. Hence, $\sigma^*(i)\sum_{j\in N_H(i)}\widehat{\sigma}_1(j)\geq \maj_H(i)-|\sigma^*(i)\sum_{j\in N_H(i)}(\widehat{\sigma}_1(j)-\sigma^*(j))|\geq\epsilon\log n/2>0$, which implies that the sign of neighborhood majorities are equal to the truth community label for any $i\in [n]\setminus(F_{12}^*\cup F_{23}^*)$, with probability $1-o(1).$ Then we can convert $H$ to $\widetilde{G}\{[n]\setminus (F_{12}\cup F_{23})\}$, the vertices in $[n]\setminus(F_{12}\cup F_{23})$ are correctly labeled with probability $1-o(1)$.

Using an identical proof, we can argue that the algorithm correctly label all vertices in $M_{13}\cap M_{32}$ and $M_{12}\cap M_{13}$.
\end{proof}
\subsection{Exact recovery in $[n]\setminus\{ (M_{13}\cap M_{32}) \cup(M_{12}\cap M_{13})\cup(M_{23}\cap M_{12})\}$}
Define $M=(M_{13}\cap M_{32}) \cup(M_{12}\cap M_{13})\cup(M_{23}\cap M_{12})$.
Denote $F_b=(F_{12}\cap F_{13})\cup (F_{12}\cap F_{23}\setminus F_{13})\cup (F_{13}\cap F_{23}\setminus F_{12})$, note that $F_b=[n]\setminus M.$
\begin{lemma}
\label{bbb0}
Suppose that $a,b,\epsilon>0$ satisfy the following conditions:
\begin{align*}
(1-(1-s)^3)\dchab>1+2\epsilon|\log a/b|,
\qquad 0<\epsilon\leq\frac{s\dchab}{4|\log a/b|},\\
s(1-(1-s)^2)\dconnab+s(1-s)^2\dchab>1.
\end{align*}
With high probability, the algorithm correctly labels all vertices that are in $F_b$.
\end{lemma}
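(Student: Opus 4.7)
I handle the three cases $i\in F_{12}\cap F_{13}$, $i\in F_{23}\cap F_{13}\setminus F_{12}$, and $i\in F_{12}\cap F_{23}\setminus F_{13}$ separately; the analyses are parallel, so I detail only the first and indicate the modifications at the end. Two preliminary identifications simplify matters. First, by Lemma~\ref{remark} I may work throughout with $(M^*_{ij},\pi^*_{ij}\{M^*_{ij}\})$ in place of $(M_{ij},\mu_{ij})$. Second, by Lemma~\ref{ggb3} together with its symmetric counterparts for $M_{13}\cap M_{32}$ and $M_{12}\cap M_{13}$, the labels $\widehat{\sigma}$ fed into Algorithm~\ref{alg 3} coincide with $\sigma^*$ on all of $M$ with high probability, so each majority vote in Algorithm~\ref{alg 3} is, w.h.p., a vote with respect to the true community labels.

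\textbf{Reducing the voting graph to edges only in $G_1$.} Fix $i\in F^*_{12}\cap F^*_{13}$. Since $i$ lies outside the $13$-core of $G_1\wedge_{\pi^*_{12}}G_2$, the peeling procedure shows $i$ has at most $12$ neighbours in $M^*_{12}$ inside $G_1\wedge_{\pi^*_{12}}G_2$; similarly $i$ has at most $12$ neighbours in $M^*_{13}$ inside $G_1\wedge_{\pi^*_{13}}G_3$. Consequently the neighbourhood majority of $i$ in $G_1\{M_{12}\cap M_{13}\cup\{i\}\}$ differs by at most $24$ from the same quantity computed in the sparser graph $(G_1\setminus_{\pi^*_{12}}G_2\setminus_{\pi^*_{13}}G_3)\{M_{12}\cap M_{13}\cup\{i\}\}$, whose edges are precisely those lying in the partition block $\mathcal{E}_{100}$ of Section~\ref{sec:construction}.

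\textbf{Per-vertex failure bound via conditional independence.} By Lemma~\ref{con}, conditional on $(\bm{\pi^*},\sigma^*,\bm{\mathcal{E}})$, the block $\mathcal{E}_{100}$ is independent of $\mathcal{E}_{11\cdot}\cup\mathcal{E}_{1\cdot 1}$, which together determine both the membership event $\{i\in F^*_{12}\cap F^*_{13}\}$ and the set $M^*_{12}\cap M^*_{13}$. On the event $\mathcal{F}$ of Definition~\ref{balance}, which has probability $1-o(1)$ by Lemma~\ref{balanced}, each community supplies $(1+o(1))n/2$ vertices to $M^*_{12}\cap M^*_{13}$, and the edges from $i$ in $\mathcal{E}_{100}$ appear independently with intra- and inter-community probabilities $s(1-s)^2 a\log n/n$ and $s(1-s)^2 b\log n/n$. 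Applying Lemma~\ref{bin} with slack $\epsilon'=25/\log n\to 0$ yields, uniformly in $i$,
\[
\p\bigl(\text{vote at }i\text{ fails}\,\bigm|\,i\in F^*_{12}\cap F^*_{13},\,\sigma^*,\,\bm{\pi^*},\,\bm{\mathcal{E}},\,\mathcal{F}\bigr)\ \le\ n^{-s(1-s)^2\dchab+o(1)}.
\]

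\textbf{Union bound and remaining cases.} Combining the previous bound with Lemma~\ref{fup}, which gives $|F^*_{12}\cap F^*_{13}|\le n^{1-s(1-(1-s)^2)\dconnab+o(1)}$ w.h.p., a union bound over $i$ produces
\[
\p\bigl(\exists\, i\in F^*_{12}\cap F^*_{13}:\ \text{vote fails}\bigr)\ \le\ n^{\,1-s(1-(1-s)^2)\dconnab-s(1-s)^2\dchab+o(1)}\ =\ o(1),
\]
by the hypothesis $s(1-(1-s)^2)\dconnab+s(1-s)^2\dchab>1$. The two remaining cases proceed identically: for $i\in F^*_{23}\cap F^*_{13}\setminus F^*_{12}$, Algorithm~\ref{alg 3} already removes $G_2$-edges from $G_1$, so the reduction to $\mathcal{E}_{100}$ only needs to absorb the $\le 12$ edges from $i$ into $M^*_{13}$ lying in $G_1\wedge G_3$ but not $G_2$; the required cardinality bound comes from the symmetric version of Lemma~\ref{fup}, and the case $i\in F^*_{12}\cap F^*_{23}\setminus F^*_{13}$ is analogous. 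The main obstacle throughout is the decoupling in the third paragraph: naively, the bad set $F^*_{12}\cap F^*_{13}$ and the voting graph both involve $G_1$ and are therefore entangled, and only via the partition construction of Lemma~\ref{con} do we recover the sharp factor $s(1-s)^2$ in the exponent of the per-vertex failure probability rather than a cruder exponent of $s$.
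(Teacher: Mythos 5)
Your proposal is correct and follows essentially the same route as the paper's proof: replace the $k$-core matchings by the true ones via Lemma~\ref{remark}, use the partition construction (Lemma~\ref{con}) to decouple the $\mathcal{E}_{100}$ voting edges from the events determining $F^*_{12}\cap F^*_{13}$ and $M^*_{12}\cap M^*_{13}$, apply Lemma~\ref{bin} for the per-vertex failure probability $n^{-s(1-s)^2\dchab+o(1)}$, take a union bound against the cardinality bound of Lemma~\ref{fup}, and transfer back to the actual voting graphs of Algorithm~\ref{alg 3} using the fact that a vertex outside a $13$-core has at most $12$ neighbours in it. The only cosmetic difference is that the paper fixes a small constant slack $\epsilon'$ and absorbs the $\le 24$ edge discrepancy as $\epsilon'\log n - 24$, whereas you let the slack shrink as $25/\log n$; both are fine.
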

\begin{proof}
For $i\in F_b$,
define
$H_i:=(G_1\setminus_{\pi^*_{13}}G_3\setminus_{\pi^*_{12}}G_2)\{(M_{12}\cap M_{13})\cup \{i\}\}.$
Let $E_i$ be the event that i has a majority of at most $\epsilon' \log n$ in the graph $H_i$. Let $\widehat{\sigma}$ be the labeling after the  step. For bervity, define a ``nice'' event based on the previous results.  Define the event $\mathcal{H}$, which holds if and only if:
\begin{itemize}
\item $F_{ij}=F_{ij}^*$;
\item $\widehat{H_i}=H_i$;
\item $\widehat{\sigma}(i)=\sigma^*(i)$ for all $i\in M_{12}\cap M_{13}$;
\item The event $\mathcal{F}$ holds;
\item $|F_{b}|\leq n^{1-(2s^2-s^3)\dconnab+\delta}.$
\end{itemize}
By Lemmas~\ref{remark},~\ref{fup},~\ref{balanced},~\ref{ggb3}, the event $\mathcal{H}$ holds with probability $1-o(1)$. Furthermore, define $E_i^*:=\maj_{H_i}(i)\leq \epsilon'\log n$, we have:
\begin{equation}
\label{eq:bound}
\begin{aligned}
\p(\cup_{i\in[n]}(\{i\in F_b\}\cap E_i))
&\leq \p((\cup_{i\in[n]}(\{i\in F_b^*\}\cap E_i^*))\cap\mathcal{H})+\p(\mathcal{H}^c)\\
&\leq \sum_{i=1}^n\p(\{i\in F_b^*\}\cap E_i^*\cap \{F_b^*\leq n^{1-(2s^2-3s^3)\dconnab+\delta}\}\cap\mathcal{F})+o(1).
\end{aligned}
\end{equation}
By the tower rule, rewrite the term in the right hand side as:

\begin{equation}
\label{bound2}
\begin{aligned}
\E[\p(E_i^*|\mathcal{\pi}^*,\sigma^*,\bm{\mathcal{E}},F_b^*)\bm{1}_{i\in F_b^*}\bm{1}_{\{|F_b^*|\leq n^{1-(2s^2-s^3)\dconnab+\delta}\}\cap \mathcal{F}}].
\end{aligned}
\end{equation}
Now look at $\p(E_i^*|\mathcal{\pi}^*,\sigma^*,\bm{\mathcal{E}},F_b^*)$. Conditional on $\bm{\mathcal{E}},\sigma^*,\pi^*$, $\maj_{H_i}(i):\overset{d}{=}Y-Z$, where $Y,Z$ are independent with:\[Y\sim Bin(|j\in M_{12}^*\cup M_{13}^*:\{i,j\}\in\mathcal{E}_{100}\cap\mathcal{E}^+(\sigma^*)|,a\log n/n),\]\[Z\sim Bin(|j\in M_{12}^*\cup M_{13}^*:\{i,j\}\in\mathcal{E}_{100}\cap\mathcal{E}^-(\sigma^*)|,b\log n/n).\]
By Definition~\ref{balance} of the event $\mathcal{F}$, we know that $|j\in M_{12}^*\cup M_{13}^*:\{i,j\}\in\mathcal{E}_{100}\cap\mathcal{E}^-(\sigma^*)|=(1-o(1))s(1-s)^2n/2$ and $|j\in M_{12}^*\cup M_{13}^*:\{i,j\}\in\mathcal{E}_{100}\cap\mathcal{E}^+(\sigma^*)|=(1-o(1))s(1-s)^2n/2$.

Lemma~\ref{bin} implies\[\p(E_i^*|\mathcal{\pi}^*,\sigma^*,\bm{\mathcal{E}},F_b^*)\bm{1}_{i\in F_b^*}\bm{1}_{\{|F_b^*|\leq n^{1-(2s^2-s^3)\dconnab+\delta}\}\cap \mathcal{F}}\leq n^{-s(1-s)^2\dchab+\epsilon'\log(a/b)/2+o(1)}.\]
Follow~\eqref{bound2} and take a union bound, we have
\begin{align*}
&\sum_{i=1}^n\p(\{i\in F_b^*\}\cap E_i^*\cap \{F_b^*\leq n^{1-(2s^2-s^3)\dconnab+\delta}\}\cap\mathcal{F})+o(1)\\\leq&n^{-s(1-s)^2\dchab+\epsilon'\log(a/b)/2+o(1)}\E[|F_b^*|\bm{1}_{F_b^*\leq n^{1-(2s^2-s^3)\dconnab+\delta}}]\\\leq&n^{1-(2s^2-s^3)\dconnab-s(1-s)^2\dchab+\epsilon'\log(a/b)/2+\delta+o(1)}.
\end{align*}
Under the condition $(2s^2-s^3)\dconnab+s(1-s)^2\dchab>1$, we can choose $\epsilon',\delta$ small enough so that the right hand side is $o(1)$. $\maj_{H_i}(i)>\epsilon'\log n$ for $i \in F_b^*$, by Lemma~\ref{remark}, $\maj_{\widehat{H_i}}(i)>\epsilon'\log n$ for $i \in F_b$.

Suppose that $i\in F_{13}\cap F_{12}$, i has at most 12 neighbors in the graph
$(G_1\wedge_{\pi^*_{12}} G_2)\{(M_{12}\cap M_{13})\cup\{i\}\}$, and in the graph $(G_1\wedge_{\pi^*_{13}} G_3)\{(M_{12}\cap M_{13})\cup\{i\}\}$. Therefore, i has an at least $(\epsilon'\log n-24)$ majority in $G_1\{(M_{12}\cap M_{13})\cup\{i\}\}$,
with high probability. Then, Algorithm~\ref{alg 3} correctly label all vertices in $F_{13}\cap F_{12}$.

Suppose that $i\in F_{12}\cap F_{23}\setminus F_{13}$, i has at most 12 neighbors in the graph
$(G_1\wedge_{\pi^*_{12}} G_2)\{(M_{12}\cap M_{13})\cup\{i\}\}$ Therefore, i has an at least $(\epsilon'\log n-12)$ majority in $G_1\setminus_{\mu_{13}} G_3 \{(M_{12}\cap M_{13})\cup\{i\}\}$,
with high probability. Hence, Algorithm~\ref{alg 3} correctly label all vertices in $F_{12}\cap F_{23}\setminus F_{13}$.

Suppose that $i\in F_{13}\cap F_{23}\setminus F_{12}$, i has at most 12 neighbors in the graph
$(G_1\wedge_{\pi^*_{13}} G_3)\{(M_{12}\cap M_{13})\cup\{i\}\}$. Therefore, i has an at least $(\epsilon'\log n-12)$ majority in $G_1\setminus_{\mu_{12}} G_2 \{(M_{12}\cap M_{13})\cup\{i\}\}$,
with high probability. Algorithm~\ref{alg 3} correctly label all vertices in $F_{13}\cap F_{23}\setminus F_{12}$.
\end{proof}

\section{Proof of impossibility for three graphs}\label{sec:proof2}
In this section we prove that Theorem~\ref{thm2} when the exact community recovery is impossible. The impossibility under the condition $(1-(1-s)^3)\dchab<1$ has been proved in \cite{racz2021correlated}. Hence we focus on proving impossibility when \begin{equation}\label{im}{(2s^{2}-s^3)} \dconnab + s(1-s)^{2} \dchab < 1.\end{equation}
To prove it, we study the MAP (maximum a posterior) estimator for the communities in $G_1$. Even with the additional information provided, including all the correct community labels in $G_2$, the true matching $\pi^*_{23}$ and most of the true matching $\pi^*_{12}$, the MAP estimator fails to exactly recovery communities with probability bounded away form 0 if the condition~\eqref{im} holds.
The proof is adapted from the MAP analysis in \cite{gaudio2022exact}. The difference is that we are considering three correlated SBM $G_1,G_2,G_3$. Since we know the true matching $\pi^*_{23}$, we can consider $H:=G_2\vee_{\pi_{23}^*}G_3\sim\SBM(n,(1-(1-s)^2)a\log n/n,(1-(1-s)^2)b\log n/n)$. Denote $R_{ij}$ the singleton in $G_i\wedge G_j$. Then $R=R_{12}\wedge R_{13}$ is the singleton set in $G_1\wedge H$.

\subsection{Notation}

Here we review and introduce some notations in brief.\\ $\sigma^*_i:=\text{ the ground truth community labels in }G_i,i=1,2,3$,\\$V_i^+:=\{j\in[n]:\sigma^*_i(j)=+1\},V_i^-:=\{j\in[n]:\sigma^*_i(j)=-1\},$\\
$\sigma^*_2(\pi^*_{12}(i))=\sigma^*_1(i),$\\ here we have
$V_2^+=\pi^*_{12}(V_1^+).$

\subsection{The MAP estimator}

First define the singleton set of a permutation $\pi$ with respect to the adjacency matrices $A$, $B$, and $C$ to be:
\[
R(\pi,A,B,C):=\{i\in[n]:\forall j \in[n],A_{i,j}D_{\pi(i),\pi(j)}=0\},
\quad
\text{where}
\quad
D_{ij}=\max\{B_{ij},C_{\pi^*_{23}(i)\pi^*_{23}(j)}\}.
\]
For brevity, write $R_{\pi}:=R(\pi,A,B,C)$.
\begin{definition}
\label{S}
Define the set $S(\pi,A,B,C)$ as followings:
\begin{enumerate}
\item $i\in R(\pi,A,B,C)$;
\item $i$ is a singleton in $G_1\{R_{\pi}\}$;
\item If $j\in N_1(i)$, $\pi(j)\notin N_H(\pi(R_{\pi}))$.
\end{enumerate}
\end{definition}
Where $A,B,C$ is the adjacency matrix of $G_1,G_2,G_3$ respectively and $D$ is the adjacency matrix in $H=G_2\vee_{\pi^*_{23}}G_3$. Note that $D$ is the adjacency matrix of $H$, so $ N_H(\pi(R_{\pi}))=\{i\in[n]:\exists k\in \pi(R_{\pi}), D_{i,k}=1\}$.

Define $\bar{R}_{\pi}:={R}_{\pi}\cup \pi^{-1}(N_H(\pi(R_{\pi})))$. The condition 2 and 3 in Definition~\ref{S} can be replaced by $A_{i,j}=0$ for all $j\in\bar{R}_{\pi}$. Write $R^*=R(\pi^*_{12},A,B,C)$, $S^*=S(\pi^*_{12},A,B,C)$, and $\bar{R}^*=\bar{R}_{\pi^*_{12}}$ for brevity.
We study the MAP estimate provided the additional knowledge $\sigma^*_2$, $\pi^*_{23}$, and $\pi^*_{12}\{[n]\setminus S^*\}$.
\begin{theorem}
\label{map}
Let $A,B,C,\sigma^*_2,\pi^*_{23},\pi^*_{12}\{[n]\setminus S^*\}, S^*$ be given. For $i\in[n]\setminus S^*, \widehat{\sigma}_{MAP}(i)=\sigma^*_2(\pi^*_{12}(i))$. For vertices in $S^*$, the MAP estimator depends on whether $a,b$ is larger:\\
1. If $a>b$, then the MAP estimator assigns the label +1 to the vertices corresponding to the largest $|S^*\cap V_1^+|$ values in the collection $\{\maj(i)\}_{i\in S^*}$ and assigns the label -1 to the remaining vertices in~$S^*$.\\
2. If $a<b$, then the MAP estimator assigns the label +1 to the vertices corresponding to the smallest $|S^*\cap V_1^+|$ values in the collection $\{\maj(i)\}_{i\in S^*}$ and assigns the label -1 to the remaining vertices in~$S^*$.
\end{theorem}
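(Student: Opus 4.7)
The plan is to compute the posterior
\begin{equation*}
\P\bigl(\sigma_1^* = \sigma \,\big|\, A, B, C, \sigma_2^*, \pi_{23}^*, \pi_{12}^*\{[n]\setminus S^*\}, S^*\bigr)
\end{equation*}
by Bayes' rule and show it factors in a way that pins down the MAP. The first claim of the theorem is immediate: on $[n]\setminus S^*$ the values $\pi_{12}^*(i)$ are part of the conditioning data, so $\sigma_1^*(i) = \sigma_2^*(\pi_{12}^*(i))$ is already determined and any posterior mass is on labelings consistent with this. The remaining task is to identify the MAP for $\sigma_1^*|_{S^*}$, subject to the hard constraint that the number of $+1$ labels on $S^*$ equals $|S^*\cap V_1^+| = |V_2^+| - |V_1^+\cap([n]\setminus S^*)|$, a quantity computable from the conditioning data. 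The uniform prior on $\sigma_1^*$ makes the problem equivalent to maximum likelihood over these balanced configurations.

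The key computation is the likelihood ratio between two balanced labelings $\tau, \tau'$ of $S^*$ differing only in their values at two vertices $i_1,i_2\in S^*$ (say $\tau(i_1)=+1, \tau(i_2)=-1$ with $\tau'$ swapped). Conditional on $\sigma_2^*$ and $\pi_{23}^*$, the conditional laws of $B$ and $C$ do not involve $\sigma_1^*|_{S^*}$ and therefore cancel. By Definition~\ref{S}, every $G_1$-neighbor of any $i\in S^*$ lies in $[n]\setminus\bar R^*$, where $\sigma_1^*$ is already known, so the $G_1$ edge/non-edge likelihood separates into per-vertex Bernoulli factors depending on $\sigma_1^*(i)$ only through the degree counts $d^{\pm}(i) := |N_1(i)\cap V_1^{\pm}|$. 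A direct calculation then gives
\begin{equation*}
\frac{\P(\text{data}\mid\sigma_1^*|_{S^*}=\tau')}{\P(\text{data}\mid\sigma_1^*|_{S^*}=\tau)} \;=\; \left(\frac{p(1-q)}{q(1-p)}\right)^{\maj(i_2)-\maj(i_1)},
\end{equation*}
since the non-edge exponents telescope: the totals $|V_1^{\pm}\setminus\bar R^*|$ appear symmetrically and cancel, leaving only the degree difference $\maj(i_2)-\maj(i_1) = (d^+(i_2)-d^-(i_2))-(d^+(i_1)-d^-(i_1))$. When $a>b$ the base exceeds $1$, so any MAP configuration must place the $+1$ label on higher-$\maj$ vertices whenever possible; iterating swaps yields that $+1$ is assigned to the $|S^*\cap V_1^+|$ vertices of largest $\maj$. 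The $a<b$ case is identical with the inequality reversed.

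The main obstacle is rigorously handling the conditioning on $S^*$, which is itself a random function of $(A,B,C,\pi_{12}^*,\pi_{23}^*)$ and whose observed value constrains the admissible extensions $\pi_{12}^*|_{S^*}$. To justify the Bayes decomposition I would condition on $\{S^* = s\}$ for a fixed candidate set $s$ and on the revealed portion $\pi_{12}^*\{[n]\setminus s\}$, then argue that the number of bijections $\pi_{12}^*|_s : s\to \pi_{12}^*(s)$ consistent with both $\{S^* = s\}$ and a given balanced $\sigma_1^*|_s$ is the same (up to a factor independent of the labeling) for every balanced choice. This uniformity should follow because the Definition~\ref{S} constraints depend on $\pi_{12}^*|_s$ only through patterns of edges between $S^*$ and the already-labeled set $[n]\setminus\bar R^*$, together with the symmetry between communities in the prior; thus swapping $+1\leftrightarrow -1$ inside $s$ preserves the count of compatible bijections. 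Combined with the structural fact that $G_1$-edges out of $S^*$ land exclusively in $[n]\setminus\bar R^*$, this decouples the posterior from any hidden $S^*$-dependence and reduces the problem to the Bernoulli ratio above. The overall structure closely parallels the MAP analysis in~\cite{gaudio2022exact}, with the extra bookkeeping needed to absorb $G_3$ via the composite graph $H = G_2\vee_{\pi_{23}^*}G_3$ and its associated singleton and expanded-singleton sets $R^*$, $\bar R^*$.
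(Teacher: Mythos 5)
Your proposal follows essentially the same route as the paper: Bayes' rule, the observation that labels off $S^*$ are determined, the balance constraint $|S^*\cap X^+|=|S^*\cap V_1^+|$, the uniformity of the count of permutations compatible with each balanced labeling (the paper's Lemma~\ref{bx}, proved exactly by your within-community decomposition of the permutation), and the reduction of the remaining likelihood dependence to $\sum_{i\in S^*}\sigma_X(i)\maj(i)$ with a base exceeding $1$ iff $a>b$. One imprecision: your swap ratio base $\frac{p(1-q)}{q(1-p)}$ treats the $G_1$ edges as plain Bernoulli and ignores the correlation with $B,C$; the correct base is $\frac{p_{100}q_{000}}{p_{000}q_{100}}$ with $p_{100}=s(1-s)^2p$, $p_{000}=1-(1-(1-s)^3)p$ (and similarly for $q$), obtained after checking that the cross-counts $\mu^{\pm}(\pi)_{111},\mu^{\pm}(\pi)_{110},\mu^{\pm}(\pi)_{101}$ are constant over the admissible permutations — but since both bases are $>1$ exactly when $a>b$, the conclusion is unaffected.
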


Then the following corollary prove the potential failure of the MAP estimator.

\begin{corollary}
\label{end}
If $a>b$, there exists $i\in S^*\cap V_1^+,j\in S^*\cap V^-$ such that $\maj(i)<\maj(j)$, then the MAP estimator fails. Similarly, if $a<b$, there exists $i\in S^*\cap V_1^+,j\in S^*\cap V^-$ such that $\maj(i)>\maj(j)$, then the MAP estimator fails.
\end{corollary}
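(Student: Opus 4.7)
The plan is to deduce Corollary~\ref{end} as a direct deterministic consequence of the characterization of the MAP estimator given in Theorem~\ref{map}. Because the estimator is given the side information $\sigma_2^*$, $\pi_{23}^*$, and $\pi_{12}^*\{[n]\setminus S^*\}$, it labels every vertex in $[n]\setminus S^*$ correctly via $\widehat{\sigma}_{MAP}(i)=\sigma_2^*(\pi_{12}^*(i))$; hence $\widehat{\sigma}_{MAP}$ fails to recover $\sigma_1^*$ exactly if and only if it mislabels at least one vertex of $S^*$.

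First I would handle the case $a>b$. By Theorem~\ref{map}, the estimator orders the vertices of $S^*$ by the values $\{\maj(i)\}_{i\in S^*}$ in decreasing order and assigns label $+1$ to the first $|S^*\cap V_1^+|$ vertices in this order and $-1$ to the rest. Now suppose there exist $i\in S^*\cap V_1^+$ and $j\in S^*\cap V^-$ with $\maj(i)<\maj(j)$; then $j$ is ranked strictly above $i$, which splits into two cases. Either $j$ falls within the top $|S^*\cap V_1^+|$ positions, in which case $\widehat{\sigma}_{MAP}(j)=+1$ while $\sigma_1^*(j)=-1$; or $j$ lies outside those top positions, in which case $i$, being ranked below $j$, also lies outside, so $\widehat{\sigma}_{MAP}(i)=-1$ while $\sigma_1^*(i)=+1$. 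In either case at least one vertex of $S^*$ is mislabeled, so the MAP estimator fails.

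The case $a<b$ is entirely symmetric: Theorem~\ref{map} now has the estimator assign $+1$ to the first $|S^*\cap V_1^+|$ vertices in the \emph{increasing} order of $\{\maj(i)\}_{i\in S^*}$, and the same two-case dichotomy applied to any pair $i\in S^*\cap V_1^+$, $j\in S^*\cap V^-$ with $\maj(i)>\maj(j)$ forces a mislabeling inside $S^*$.

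This argument itself presents no real obstacle---it is a short deterministic sorting/swap argument built directly on top of Theorem~\ref{map}. The genuine work, to be done downstream of Corollary~\ref{end} in Section~\ref{sec:proof2}, is to convert it into impossibility: under the condition~\eqref{im}, one must show that with probability bounded away from zero such a misordered pair actually exists inside $S^*$. The hard part will be this probabilistic step, which requires a lower bound on $|S^*\cap V_1^+|$ and $|S^*\cap V^-|$ coming from an analysis of the singletons of $G_1\wedge H$ with $H=G_2\vee_{\pi_{23}^*}G_3\sim\SBM(n,(1-(1-s)^2)a\log n/n,(1-(1-s)^2)b\log n/n)$, together with a Binomial-tail calculation (in the spirit of Lemma~\ref{bin}) showing that the majority values of typical positive-community and negative-community vertices in $S^*$ overlap enough to produce the reversal. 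It is in this combination that the exponents $(2s^2-s^3)\dconnab$ and $s(1-s)^2\dchab$ appear and combine into the threshold~\eqref{im}.
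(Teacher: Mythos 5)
Your argument is correct and is essentially the same short deterministic swap argument as the paper's: the paper phrases it as "if $i$ is labeled $+1$ correctly, then $j$ with the strictly larger majority must also be labeled $+1$, which is wrong," which is just the contrapositive form of your two-case dichotomy on the rank of $j$. No gap.
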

\begin{proof}
Suppose $a>b$. If the MAP estimator classifies $i $ as $+1$ correctly. By Theorem~\ref{map} the MAP estimator classifies $j$ as $+1$ which is wrong. The argument for the case $a<b$ is similar.
\end{proof}
\subsection{The analysis of the failure of MAP estimator}
\begin{definition}
The event $G_{\delta}$ holds if and only if
\[n^{1-(2s^2-s^3)T_c(a,b)-\delta}\leq|R^*\cap V_1^+|,|R^*\cap V_1^-|,|\bar{R}^*\cap V_1^+|,|\bar{R}^*\cap V_1^+|\leq n^{1-(2s^2-s^3)T_c(a,b)+\delta}.\]
\end{definition}

\begin{lemma}
\label{l1}
For any fixed $\delta>0$, $\p(G_{\delta})=1-o(1)$.
\end{lemma}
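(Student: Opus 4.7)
The plan is to: (i) identify the distribution of the singleton-generating graph, (ii) compute the first and second moments of $|R^*\cap V_1^\pm|$ and apply Chebyshev, and (iii) bound $|\bar R^*\setminus R^*|$ via the maximum degree of $H$.

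First, I would unravel the definition of $R^*$. Using $\pi^*_{23}\circ\pi^*_{12}=\pi^*_{13}$, one checks that $D_{\pi^*_{12}(i),\pi^*_{12}(j)}=(G_2'\vee G_3')_{ij}$, so $i\in R^*$ is equivalent to $i$ being isolated in $G_1\wedge_{\pi^*_{12}}H$. Because $G_1$ is independent of $G_2\vee G_3$ conditional on the parent graph, each potential edge lies in this intersection graph independently with probability $s(1-(1-s)^2)=2s^2-s^3$ times $p$ or $q$, so marginally $G_1\wedge_{\pi^*_{12}}H\sim \SBM(n,(2s^2-s^3)a\log n/n,(2s^2-s^3)b\log n/n)$ and $R^*$ is exactly its singleton set.

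Next, conditioning on the high-probability event $\mathcal{F}$ that $|V_1^\pm|=(1+o(1))n/2$ (Lemma~\ref{balanced}), for each $i\in V_1^+$,
\[
\p(i\in R^*\mid\sigma^*)=\bigl(1-(2s^2-s^3)\tfrac{a\log n}{n}\bigr)^{|V_1^+|-1}\bigl(1-(2s^2-s^3)\tfrac{b\log n}{n}\bigr)^{|V_1^-|}=n^{-(2s^2-s^3)\dconnab+o(1)}.
\]
Hence $\E[|R^*\cap V_1^+|\mid\sigma^*]=n^{1-(2s^2-s^3)\dconnab+o(1)}$ (and analogously for $V_1^-$), which diverges polynomially under the hypothesis~\eqref{im} since in particular $(2s^2-s^3)\dconnab<1$. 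For the second moment, observe that for distinct $i,j\in V_1^+$ the events $\{i\in R^*\}$ and $\{j\in R^*\}$ share only the constraint that the edge $(i,j)$ is absent from $G_1\wedge_{\pi^*_{12}}H$, so
\[
\p(i,j\in R^*\mid\sigma^*)=\frac{\p(i\in R^*\mid\sigma^*)\,\p(j\in R^*\mid\sigma^*)}{1-(2s^2-s^3)p}=\bigl(1+O(\log n/n)\bigr)\p(i\in R^*\mid\sigma^*)\,\p(j\in R^*\mid\sigma^*).
\]
This yields $\Var(|R^*\cap V_1^+|\mid\sigma^*)\le\E[|R^*\cap V_1^+|\mid\sigma^*]+o(1)(\E[|R^*\cap V_1^+|\mid\sigma^*])^2$, and Chebyshev's inequality (with a union bound over the two communities) gives $|R^*\cap V_1^\pm|\in[n^{1-(2s^2-s^3)\dconnab-\delta},n^{1-(2s^2-s^3)\dconnab+\delta}]$ with probability $1-o(1)$, for any fixed $\delta>0$.

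Finally, for $\bar R^*$ the inclusion $R^*\subseteq\bar R^*$ supplies the lower bound. For the upper bound, the definition yields $|\bar R^*|\le|R^*|+\sum_{i\in R^*}d_H(\pi^*_{12}(i))\le|R^*|(1+\Delta(H))$, where $\Delta(H)$ is the maximum degree in $H\sim\SBM(n,(2s-s^2)a\log n/n,(2s-s^2)b\log n/n)$. Lemma~\ref{n(i)} gives $\Delta(H)=O(\log n)=n^{o(1)}$ with probability $1-o(1)$, so $|\bar R^*\cap V_1^\pm|\le|\bar R^*|\le n^{1-(2s^2-s^3)\dconnab+o(1)}$, completing the proof. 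The main obstacle is the second-moment/Chebyshev step; the other ingredients are essentially bookkeeping. That step works precisely because $(2s^2-s^3)\dconnab<1$ under~\eqref{im}, which ensures the mean grows as a positive power of $n$ so that Chebyshev gives subpolynomial failure probability.
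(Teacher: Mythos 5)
Your proposal is correct and follows essentially the same route as the paper: identify $R^*$ as the singleton set of $G_1\wedge_{\pi^*_{12}}H$ with effective subsampling factor $2s^2-s^3$, compute the first and second moments of $|R^*\cap V_1^\pm|$ conditionally on the balanced-community event, conclude concentration (you via Chebyshev, the paper via Markov plus Paley--Zygmund, which is the same second-moment method), and bound $|\bar R^*|$ by $|R^*|(1+\Delta(H))$ using the $O(\log n)$ maximum-degree bound. The reliance on $(2s^2-s^3)\dconnab<1$ from~\eqref{im} is also implicit in the paper's argument, so this is not a gap.
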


The proof of this lemma is straightforward but tedious and we defer it to Section~\ref{sl1}.

Now define the variable $W_i$:
\begin{align}W_i:= \begin{cases}
1(i\in S^*,\maj(i)<0) , &i \in R^*\cap V_1^+,\\
1(i\in S^*,\maj(i)>0), &i \in R^*\cap V_1^-.\end{cases}
\end{align}
Denote $\mathcal{I}$ be the sigma algebra induced by the random variables
$$D=\max(B,C),
\pi^*_{12},
\sigma^*_1,
R^*,
\{ \mathcal{E}_{abc} : a,b,c\in\{0,1\} \}.$$
Note that $\bar{R}^*,\bar{R}^*\cap V_1^+,\bar{R}^*\cap V_1^-$ are $\mathcal{I}$ measurable.

Now I'd like to show that $\sum_{i\in R^*\cap V_1^+}W_i>0,\sum_{i\in R^*\cap V_1^-}W_i>0$ with high probability, then it follows that $\exists i\in S^*\cap V_1^+, j\in S^*\cap V_1^-$ such that $\maj(i)<0<\maj(j)$. By Corollary~\ref{end}, the MAP estimator fails. Use the first and second method to analyze $\sum_{i\in R^*\cap V_1^+}W_i,\sum_{i\in R^*\cap V_1^-}W_i$:
\begin{lemma}
\label{l2}
Fix $\delta>0$ and denote $\theta:= 1-(2s^2-s^3)\dconnab-s(1-s)^2\dchab$. We have
\[\E[\sum_{i\in R^*\cap V_1^+}W_i|\mathcal{I}]1(\mathcal{F}\cap \mathcal{G}_{\delta})\geq (1-n^{-(2s^2-s^3)\dconnab+2\delta})n^{\theta-\delta-o(1)}1(\mathcal{F}\cap \mathcal{G}_{\delta})\] and\[\E[\sum_{i\in R^*\cap V_1^-}W_i|\mathcal{I}]1(\mathcal{F}\cap \mathcal{G}_{\delta})\geq (1-n^{-(2s^2-s^3)\dconnab+2\delta})n^{\theta-\delta-o(1)}1(\mathcal{F}\cap \mathcal{G}_{\delta}).\]
\end{lemma}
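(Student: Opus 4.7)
The plan is to establish the lower bound via linearity of expectation: compute a lower bound on $\E[W_i \mid \mathcal{I}]$ for each $i \in R^* \cap V_1^+$ and sum over such $i$. Using the random partition construction of Lemma~\ref{con}, I first pin down the randomness remaining in the adjacency matrix $A$ of $G_1$ conditional on $\mathcal{I}$. For any fixed $i \in R^*$: pairs $(i,j) \in \mathcal{E}_{abc}$ with $a=0$ contribute $A_{ij}=0$ deterministically; for pairs with $a=1$ and $(b,c) \neq (0,0)$, the single sampling indicator simultaneously determines $A_{ij}$ and the corresponding entry of $D$, so $A_{ij}$ is $\mathcal{I}$-measurable and moreover $i\in R^*$ forces $A_{ij}=0$ in this range; and for pairs $(i,j) \in \mathcal{E}_{100}$, $A_{ij}$ is an independent $\Bernoulli(p)$ or $\Bernoulli(q)$ variable (depending on $\sigma^*(j)$) that remains independent of $\mathcal{I}$.

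Next, I decompose $\E[W_i \mid \mathcal{I}] = \p(i \in S^* \mid \mathcal{I}) \cdot \p(\maj(i) < 0 \mid i \in S^*, \mathcal{I})$. For the first factor, given $i \in R^*$ the event $i \in S^*$ is equivalent to $A_{ij}=0$ for all $j \in \bar{R}^*$ with $(i,j) \in \mathcal{E}_{100}$ (the remaining constraints holding automatically by the previous paragraph). Since $|\bar{R}^*| \leq n^{1-(2s^2-s^3)\dconnab + \delta}$ on $\mathcal{G}_\delta$ and each Bernoulli has mean at most $\nu \log n/n$ with $\nu = \max\{a,b\}$, a union bound yields $\p(i \notin S^* \mid \mathcal{I}) \leq n^{-(2s^2-s^3)\dconnab + 2\delta}$ for sufficiently large $n$, matching the prefactor in the lemma.

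For the second factor, conditional on $i \in S^*$ and $\mathcal{I}$, $\maj(i)$ is distributed as $Y - Z$, where $Y \sim \Bin(m^+, a\log n/n)$ and $Z \sim \Bin(m^-, b\log n/n)$ are independent, with $m^\pm$ counting the vertices in $V_1^\pm \setminus \bar{R}^*$ paired to $i$ through $\mathcal{E}_{100}$. On $\mathcal{F} \cap \mathcal{G}_\delta$, using $s_{100}=s(1-s)^2$ from Definition~\ref{balance}, $m^\pm = (1+o(1)) s(1-s)^2 n/2$. A matching lower bound to Lemma~\ref{bin} gives $\p(Y - Z < 0) \geq n^{-s(1-s)^2\dchab - o(1)}$; this can be obtained by fixing a single $(y,z)$ pair with $y<z$ near the large-deviation optimum and computing $\p(Y=y, Z=z)$ via Stirling's approximation, leveraging the scaling $D_+(ca, cb) = c\,\dchab$ with $c = s(1-s)^2$.

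Combining both factor bounds, multiplying by $|R^* \cap V_1^+| \geq n^{1-(2s^2-s^3)\dconnab - \delta}$ on $\mathcal{G}_\delta$, and summing over $i$ yields the stated inequality with $\theta = 1-(2s^2-s^3)\dconnab - s(1-s)^2 \dchab$; the bound for $V_1^-$ follows by a symmetric argument. The main technical obstacles I anticipate are (i) carefully verifying the conditional independence structure of $A_{ij}$ given $\mathcal{I}$, ensuring that the constraint encoded in $i \in R^*$ constrains only the $A_{ij}$ for $(i,j) \notin \mathcal{E}_{100}$ and leaves the ``free'' Bernoulli's undisturbed; and (ii) producing the matching lower bound on the binomial large-deviation probability, which does not follow from the Chernoff-style bound of Lemma~\ref{bin} and instead requires a direct tail computation.
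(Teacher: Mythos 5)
Your proposal is correct and follows essentially the same route as the paper's proof: the conditional independence of $\{A_{ij}:\{i,j\}\in\mathcal{E}_{100}\}$ given $\mathcal{I}$ (the paper's Lemma~\ref{cd}), the factorization of $\E[W_i\mid\mathcal{I}]$ into $\p(i\in S^*\mid\mathcal{I})$ times the majority-failure probability, the bound $\p(i\in S^*\mid\mathcal{I})\geq 1-n^{-(2s^2-s^3)\dconnab+2\delta}$ on $\mathcal{G}_\delta$ (Lemma~\ref{ss}), the identification of $\maj(i)$ with $Y-Z$ for binomials of size $(1+o(1))s(1-s)^2n/2$ on $\mathcal{F}\cap\mathcal{G}_\delta$ (Lemma~\ref{xi}), and the final summation using the lower bound on $|R^*\cap V_1^+|$ from $\mathcal{G}_\delta$. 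The one point where you are more explicit than the paper is the matching \emph{lower} bound $\p(Y-Z<0)\geq n^{-s(1-s)^2\dchab-o(1)}$, which the paper asserts as an equality in Lemma~\ref{xi} while only writing out the upper bound; your direct point-mass/Stirling computation near the large-deviation optimum is a valid way to supply it.
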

\begin{lemma}
\label{l3}
Fix $\delta>0$ and denote $\theta:= 1-(2s^2-s^3)\dconnab-s(1-s)^2\dchab$
\[\Var(\sum_{i\in R^*\cap V_1^+}W_i|\mathcal{I})1(\mathcal{F}\cap \mathcal{G}_{\delta})\leq n^{2\theta-3\delta}1(\mathcal{F}\cap \mathcal{G}_{\delta})\]and\[\Var(\sum_{i\in R^*\cap V_1^-}W_i|\mathcal{I})1(\mathcal{F}\cap \mathcal{G}_{\delta})\leq n^{2\theta-3\delta}1(\mathcal{F}\cap \mathcal{G}_{\delta}).\]
\end{lemma}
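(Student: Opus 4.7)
\textbf{Proof plan for Lemma~\ref{l3}.} The plan is to expand the conditional variance as a sum of diagonal variance terms plus pairwise covariance terms, and to show that both contributions are at most $n^{2\theta - 3\delta}$ on $\mathcal{F} \cap \mathcal{G}_\delta$. By symmetry, I focus only on the sum over $R^* \cap V_1^+$; the argument for $R^* \cap V_1^-$ is identical. Since each $W_i \in \{0,1\}$, we have $\Var(W_i \mid \mathcal{I}) \leq \E[W_i \mid \mathcal{I}]$, so the diagonal sum is at most $\E[\sum_i W_i \mid \mathcal{I}]$. The companion upper bound to Lemma~\ref{l2}---coming from the same binomial reduction but used with Chernoff--Hellinger in the upper direction, combined with the bound $|R^* \cap V_1^+| \leq n^{1 - (2s^2 - s^3)\dconnab + \delta}$ on $\mathcal{G}_\delta$---yields
$$\E\Bigl[\sum_i W_i \Bigm| \mathcal{I}\Bigr]\mathbf{1}(\mathcal{F} \cap \mathcal{G}_\delta) \leq n^{\theta + \delta + o(1)},$$
which is below $n^{2\theta - 3\delta}$ once $\delta$ is small enough that $\theta \geq 4\delta$. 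Such $\delta$ exists because~\eqref{im} is precisely the assumption $\theta > 0$.

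The substantive step is controlling the pairwise covariances using the alternative construction of Section~\ref{sec:construction}. Conditional on $\mathcal{I}$---which records $D$, $\pi^*_{12}$, $\sigma^*_1$, $R^*$, and the random partition $\bm{\mathcal{E}}$---every entry of $A$ is determined except those indexed by pairs in $\mathcal{E}_{100}$, which remain independent Bernoullis with success probability $p$ or $q$ according to $\sigma^*_1$. Hence $W_i$ depends only on $\{A_{ij} : \{i,j\} \in \mathcal{E}_{100}\}$. For distinct $i, i' \in R^* \cap V_1^+$ two cases arise. If $\{i,i'\} \notin \mathcal{E}_{100}$, then $A_{ii'} = 0$ deterministically (any category with $a = 1$ and $b + c > 0$ would force $A_{ii'} = D_{\pi^*_{12}(i),\pi^*_{12}(i')} = 1$, contradicting $i \in R^*$), so the random edges influencing $W_i$ and those influencing $W_{i'}$ are disjoint, and $\Cov(W_i, W_{i'} \mid \mathcal{I}) = 0$. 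If instead $\{i, i'\} \in \mathcal{E}_{100}$, then $A_{ii'}$ is the unique shared random edge, and the key cancellation is that $A_{ii'} = 1$ forces $i \notin S^*$---since $i' \in R^* \subseteq \bar{R}^*$ would then be an inadmissible neighbor of $i$---so $W_i = 0$; symmetrically $W_{i'} = 0$. Hence $W_i W_{i'} \equiv 0$ on $\{A_{ii'} = 1\}$, and conditioning on $\{A_{ii'} = 0\}$ decouples the remaining Bernoullis, giving
$$\Cov(W_i, W_{i'} \mid \mathcal{I}) = \E[W_i \mid \mathcal{I}]\,\E[W_{i'} \mid \mathcal{I}] \cdot \frac{\p(A_{ii'} = 1 \mid \mathcal{I})}{\p(A_{ii'} = 0 \mid \mathcal{I})} = O(\log n / n) \cdot \E[W_i \mid \mathcal{I}]\,\E[W_{i'} \mid \mathcal{I}].$$

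Summing this over all pairs and using the moment bound from the first paragraph,
$$\sum_{i \neq i'}\Cov(W_i, W_{i'} \mid \mathcal{I})\,\mathbf{1}(\mathcal{F}\cap \mathcal{G}_\delta) \leq O(\log n / n) \cdot \Bigl(\E\Bigl[\sum_i W_i \Bigm| \mathcal{I}\Bigr]\mathbf{1}(\mathcal{F}\cap\mathcal{G}_\delta)\Bigr)^2 \leq n^{2\theta + 2\delta - 1 + o(1)},$$
which is at most $n^{2\theta - 3\delta}$ provided $\delta < 1/5$. Combined with the diagonal bound, this proves the lemma. The main obstacle is precisely the covariance step: the naive product $\E[W_i]\E[W_{i'}]$ summed over pairs only gives $n^{2(\theta+\delta)}$, which does \emph{not} beat $n^{2\theta - 3\delta}$, so one genuinely needs the extra factor $\log n / n$. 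That factor materializes only thanks to the cancellation $A_{ii'} = 1 \Rightarrow W_i = W_{i'} = 0$, and identifying this cancellation while verifying that the other random edges determining $W_i$ and $W_{i'}$ form disjoint independent families under $\mathcal{I}$ is the heart of the argument.
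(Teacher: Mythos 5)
Your proposal is correct, and its treatment of the off-diagonal terms takes a genuinely different route from the paper's. The paper never computes covariances edge by edge: it bounds $\E[W_iW_j\mid\mathcal{I}]\le X_iX_j$ by discarding the indicators $1(i\in S^*)$, $1(j\in S^*)$ and invoking the conditional independence (Lemma~\ref{cd}) of the two majority sums (which involve disjoint edge families since $i,j\in\bar{R}^*$), and then recovers $\E[W_i\mid\mathcal{I}]\ge(1-n^{-(2s^2-s^3)\dconnab+2\delta})X_i$ from Lemma~\ref{ss}; the resulting per-pair covariance gain factor is $n^{-(2s^2-s^3)\dconnab+2\delta}$, which is why the paper needs $\delta<(2s^2-s^3)\dconnab/8$. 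You instead identify $A_{ii'}$ as the unique shared random edge, note that $A_{ii'}=1$ forces $W_i=W_{i'}=0$ (since $i'\in R^*\subseteq\bar{R}^*$), and derive the exact identity $\Cov(W_i,W_{i'}\mid\mathcal{I})=\E[W_i\mid\mathcal{I}]\,\E[W_{i'}\mid\mathcal{I}]\,\p(A_{ii'}=1\mid\mathcal{I})/\p(A_{ii'}=0\mid\mathcal{I})$, giving the stronger gain $O(\log n/n)$ and the cleaner restriction $\delta<1/5$; moreover the covariance vanishes outright when $\{i,i'\}\notin\mathcal{E}_{100}$, a refinement the paper does not use. Both arguments rest on the same conditional-independence structure from Lemmas~\ref{con} and~\ref{cd}, and the diagonal term is handled identically ($\Var(W_i\mid\mathcal{I})\le\E[W_i\mid\mathcal{I}]\le X_i$, then Lemma~\ref{xi} and $|R^*\cap V_1^+|\le n^{1-(2s^2-s^3)\dconnab+\delta}$ on $\mathcal{G}_{\delta}$). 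Your version is sharper, but both deliver the stated bound only for $\delta$ sufficiently small, which is the regime in which the lemma is actually applied in the proof of Theorem~\ref{thm2}.
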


The proofs of these two lemmas are deferred to Section~\ref{sl2}. Using the lemmas above we can now prove Theorem~\ref{map}.

\begin{proof}[Proof of Theorem~\ref{thm2} when $K=3$]
Firstly, show that $\sum_{i\in R^*\cap V_1^+}W_i>0$ with high probability. Use the second moment method, we obtain
\begin{align*}
\p(\sum_{i\in R^*\cap V_1^+}W_i>0|\mathcal{I})
&\geq \frac{\E[\sum_{i\in R^*\cap V_1^+}W_i|\mathcal{I}]^2}{\E[(\sum_{i\in R^*\cap V_1^+}W_i)^2|\mathcal{I}]}
= \frac{\E[\sum_{i\in R^*\cap V_1^+}W_i|\mathcal{I}]^2}{\E[\sum_{i\in R^*\cap V_1^+}W_i|\mathcal{I}]^2+\Var(\sum_{i\in R^*\cap V_1^+}W_i|\mathcal{I})}\\
&\geq 1-\frac{\Var(\sum_{i\in R^*\cap V_1^+}W_i|\mathcal{I})}{\E[\sum_{i\in R^*\cap V_1^+}W_i|\mathcal{I}]^2}.
\end{align*}
Hence for unconditional probability, let $\delta$ small enough, $\delta<\min((2s^2-s^3)\dconnab/8,\theta/4)$, then
\begin{align*}
\p(\sum_{i\in R^*\cap V_1^+}W_i>0)
&\geq \p(\{\sum_{i\in R^*\cap V_1^+}W_i>0\}\cap \mathcal{F}\cap\mathcal{G}_{\delta})
=\E[\p(\sum_{i\in R^*\cap V_1^+}W_i>0|\mathcal{I})1(\mathcal{F}\cap \mathcal{G}_{\delta})]\\
&\geq \E\left[\left(1-\frac{\Var(\sum_{i\in R^*\cap V_1^+}W_i|\mathcal{I})}{\E[\sum_{i\in R^*\cap V_1^+}W_i|\mathcal{I}]^2}\right)1(\mathcal{F}\cap \mathcal{G}_{\delta})\right]\\
&\geq \left( (1-o(1))n^{2\theta-3\delta-2(\theta-\delta)+o(1)}\right)\p(\mathcal{F}\cap\mathcal{G}_{\delta})=1-o(1).
\end{align*}
The inequality on the last line is by Lemma~\ref{l2},~\ref{l3}. The equality in the last line is by Lemma~\ref{l1},~\ref{balanced}. The proof for $\p(\sum_{i\in R^*\cap V_1^-}W_i>0)=1-o(1)$ is identical. Hence, in light of Corollary~\ref{end}, the MAP estimator fails with probability $1-o(1)$.
\end{proof}
\subsection{Analysis of $S_{\pi}$}
In this section we introduce the set $\mathcal{A}_{\pi}$ and some properties of $S_{\pi}$.

First, define the set
\[\mathcal{A}(S^*,\pi_{12}^*\{[n]\setminus S^*\}):=\{\pi\in S_n: S_{\pi}=S^*, \pi([n]\setminus S_{\pi})=\pi_{12}^*([n]\setminus S^*)\}.\] For brevity, sometimes write $\mathcal{A}^*$.
\begin{lemma}
\label{inv}
For any $\pi\in \mathcal{A}^*,A_{i,j}B_{\pi(i),\pi(j)}C_{\pi^*_{23}(\pi(i)),\pi^*_{23}(\pi(j))}=A_{i,j}B_{\pi^*_{12}(i)\pi^*_{12}(j)}C_{\pi^*_{13}(i),\pi^*_{13}(j)}$. Moreover, if $i\in S^*$ or $j\in S^*$,$A_{i,j}B_{\pi(i),\pi(j)}C_{\pi^*_{23}(\pi(i)),\pi^*_{23}(\pi(j))}=A_{i,j}B_{\pi^*_{12}(i)\pi^*_{12}(j)}C_{\pi^*_{13}(i),\pi^*_{13}(j)}=0$,$A_{i,j}C_{\pi^*_{23}(\pi(i)),\pi^*_{23}(\pi(j))}=A_{i,j}C_{\pi^*_{13}(i),\pi^*_{13}(j)}=0$,$A_{i,j}B_{\pi(i),\pi(j)}=A_{i,j}B_{\pi^*_{12}(i)\pi^*_{12}(j)}=0$.
\end{lemma}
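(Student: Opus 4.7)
The plan is to handle the ``moreover'' clause first and then derive the first equality by a short case split, since once the moreover clause is established the hard case of the first equality is free and the easy case is automatic.

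\textbf{Step 1 (moreover clause).} Fix $i\in S^*$; the case $j\in S^*$ is symmetric. Because $S^*=S(\pi^*_{12},A,B,C)$, condition~(1) of Definition~\ref{S} forces $i\in R^*$. Unpacking the definition of $R^*$, for every $j\in[n]$,
\[
A_{i,j}\,\max\!\left\{B_{\pi^*_{12}(i),\pi^*_{12}(j)},\ C_{\pi^*_{23}(\pi^*_{12}(i)),\pi^*_{23}(\pi^*_{12}(j))}\right\}=0.
\]
Using the compatibility identity $\pi^*_{23}\circ\pi^*_{12}=\pi^*_{13}$, which follows from the parent-graph construction in Section~\ref{sec:model} (since $\pi^*_{ij}=\pi^*_{1j}\circ(\pi^*_{1i})^{-1}$), the above separates into $A_{i,j}B_{\pi^*_{12}(i),\pi^*_{12}(j)}=0$ and $A_{i,j}C_{\pi^*_{13}(i),\pi^*_{13}(j)}=0$. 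Since $\pi\in\mathcal{A}^*$ gives $S_\pi=S^*$, the same definition applied to $\pi$ (using $R_\pi$ in place of $R^*$) produces $A_{i,j}B_{\pi(i),\pi(j)}=0$ and $A_{i,j}C_{\pi^*_{23}(\pi(i)),\pi^*_{23}(\pi(j))}=0$. These four vanishings are exactly the four asserted zeros, and the two triple products must vanish as well because each contains one of them as a factor.

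\textbf{Step 2 (first equality).} Split on whether $S^*$ meets $\{i,j\}$. If $i\in S^*$ or $j\in S^*$, both triple products vanish by Step~1, so they agree. Otherwise $i,j\in[n]\setminus S^*$, and the defining constraint $\pi([n]\setminus S_\pi)=\pi^*_{12}([n]\setminus S^*)$ of $\mathcal{A}^*$---interpreted pointwise, as is needed for consistency with the side information $\pi^*_{12}\{[n]\setminus S^*\}$ supplied to the MAP estimator in Theorem~\ref{map}---forces $\pi(i)=\pi^*_{12}(i)$ and $\pi(j)=\pi^*_{12}(j)$. Applying $\pi^*_{23}$ to each and invoking $\pi^*_{23}\circ\pi^*_{12}=\pi^*_{13}$ once more, each of the three factors on the left matches the corresponding factor on the right, so the equality holds on the nose.

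\textbf{Main obstacle.} The proof is essentially bookkeeping on permutations and indicator vanishings; the only genuinely delicate point is setting up the right interpretation of $\mathcal{A}^*$ on $[n]\setminus S^*$ (pointwise agreement with $\pi^*_{12}$) so that one never has to compare $B_{\pi(i),\pi(j)}$ and $B_{\pi^*_{12}(i),\pi^*_{12}(j)}$ at distinct coordinates, together with the clean use of $\pi^*_{23}\circ\pi^*_{12}=\pi^*_{13}$ to make sure the $C$-entries on the two sides are being evaluated at the same labels.
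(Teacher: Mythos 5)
Your proof is correct and follows essentially the same route as the paper's: the case $i,j\in[n]\setminus S^*$ is handled by the pointwise agreement $\pi=\pi^*_{12}$ there, and the case $i\in S^*$ or $j\in S^*$ is handled by passing to $R^*$ (resp.\ $R_\pi$) to kill every product containing $A_{i,j}$. You merely present the two cases in the opposite order and make explicit the compatibility $\pi^*_{23}\circ\pi^*_{12}=\pi^*_{13}$, which the paper uses implicitly.
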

\begin{proof}
If $i,j\in[n]\setminus S^*$, then $\pi(i)=\pi_{12}^*(i),\pi(j)=\pi_{12}^*(j)$, hence $A_{i,j}B_{\pi(i),\pi(j)}C_{\pi^*_{23}(\pi(i)),\pi^*_{23}(\pi(j))}=A_{i,j}B_{\pi^*_{12}(i)\pi^*_{12}(j)}C_{\pi^*_{13}(i),\pi^*_{13}(j)}$. If $i\in S^*$ or $j\in S^*$, by Definition~\ref{S}, $i\in R^*$ or $j\in R^*$, hence $A_{i,j}B_{\pi(i),\pi(j)}C_{\pi^*_{23}(\pi(i)),\pi^*_{23}(\pi(j))}=A_{i,j}B_{\pi^*_{12}(i)\pi^*_{12}(j)}C_{\pi^*_{13}(i),\pi^*_{13}(j)}=0$, $A_{i,j}C_{\pi^*_{23}(\pi(i)),\pi^*_{23}(\pi(j))}=A_{i,j}C_{\pi^*_{13}(i),\pi^*_{13}(j)}=0$, $A_{i,j}B_{\pi(i),\pi(j)}=A_{i,j}B_{\pi^*_{12}(i)\pi^*_{12}(j)}=0$.
\end{proof}
\begin{definition}\label{permutation}
Let $\rho$ be a permutation of $S^*$ The permutation $P_{\pi,\rho}$ is given by
\begin{align*}
P_{\pi,\rho}:=\begin{cases}
\pi(i)& i\in [n]\setminus S^*,\\
\pi(\rho(i))& i\in  S^*.
\end{cases}
\end{align*}
\end{definition}
\begin{lemma}\label{PA}
Let $\rho$ be a permutation of $S^*$. Then $P_{\pi,\rho}\in \mathcal{A}^*$.
\end{lemma}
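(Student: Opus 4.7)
The plan is to verify, for $\pi':=P_{\pi,\rho}$, the three requirements that define membership in $\mathcal{A}^*$: $\pi'$ is a permutation of $[n]$; $\pi'([n]\setminus S^*)=\pi^*_{12}([n]\setminus S^*)$; and $S_{\pi'}=S^*$. Two of these are immediate. First, $\pi'$ is a bijection of $[n]$ because $\rho$ permutes $S^*$ and $\pi$ is a bijection, so $\pi'$ carries $S^*$ onto $\pi(\rho(S^*))=\pi(S^*)$ and $[n]\setminus S^*$ onto $\pi([n]\setminus S^*)$, which are disjoint. Second, $\pi'([n]\setminus S^*)=\pi([n]\setminus S^*)=\pi^*_{12}([n]\setminus S^*)$ because $\pi'$ agrees with $\pi$ off $S^*$ and $\pi\in\mathcal{A}^*$. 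All the real content lies in showing $S_{\pi'}=S^*$.

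My strategy is to first prove the intermediate identity $R_{\pi'}=R^*$, and then reduce Definition~\ref{S} for $S_{\pi'}$ to the corresponding statements for $S^*$. The main input is Lemma~\ref{inv}: whenever either endpoint of a pair $(i,j)$ lies in $S^*$, the products $A_{ij}B_{\pi(i),\pi(j)}$ and $A_{ij}C_{\pi^*_{23}(\pi(i)),\pi^*_{23}(\pi(j))}$, together with their $\pi^*_{12}$-counterparts, vanish individually; in particular $A_{ij}D_{\pi(i),\pi(j)}=A_{ij}D_{\pi^*_{12}(i),\pi^*_{12}(j)}=0$. Combined with the fact that every vertex in $S^*$ is a singleton of $G_1\{R^*\}$ (Condition~2 of Definition~\ref{S}), this yields $R_{\pi'}=R^*$ by a short case analysis on whether $i,j$ lie in $S^*$: if either does, $A_{ij}$ is already forced to zero by the singleton property, or the $D$-factor drops out by Lemma~\ref{inv}; otherwise $\pi'$ coincides with $\pi^*_{12}$ on both endpoints and the condition reduces verbatim to membership in $R^*$.

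With $R_{\pi'}=R^*$ established, Conditions~1 and~2 of Definition~\ref{S} for $\pi'$ are literally the conditions that define $S^*$, so no work is required there. What remains is Condition~3, and the key observation will be the setwise identity $\pi'(R^*)=\pi^*_{12}(R^*)$. This holds because $\pi'$ agrees with $\pi^*_{12}$ on $R^*\setminus S^*$, while on $S^*$ we have $\pi'(S^*)=\pi(\rho(S^*))=\pi(S^*)=\pi^*_{12}(S^*)$ (using $\rho(S^*)=S^*$ and $\pi\in\mathcal{A}^*$). Consequently $N_H(\pi'(R^*))=N_H(\pi^*_{12}(R^*))$. Moreover, any $j\in N_1(i)$ of a vertex $i$ satisfying Conditions~1 and~2 must lie outside $R^*$, and in particular outside $S^*$, so $\pi'(j)=\pi^*_{12}(j)$. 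Combining these two facts makes Condition~3 for $\pi'$ equivalent to Condition~3 for $\pi^*_{12}$, giving $S_{\pi'}=S^*$.

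The only slightly delicate step, I expect, is the case $i\in S^*$ in the direction $R^*\subset R_{\pi'}$, where $\pi'(i)=\pi(\rho(i))$ is not a ``pinned'' value of $\pi^*_{12}$. The fix is to note that $\rho(i)\in S^*\subset R^*$, so $\pi(\rho(i))\in\pi(S^*)=\pi^*_{12}(S^*)\subset\pi^*_{12}(R^*)$, and hence $\pi(\rho(i))=\pi^*_{12}(m)$ for some $m\in R^*$; then Condition~3 of Definition~\ref{S} applied at $i$ with respect to $\pi^*_{12}$ forces $D_{\pi^*_{12}(m),\pi^*_{12}(j)}=0$ whenever $A_{ij}=1$, which is exactly the vanishing needed to conclude $A_{ij}D_{\pi'(i),\pi'(j)}=0$. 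Everything else in the argument is bookkeeping.
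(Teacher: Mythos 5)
Your proof is correct, and it is the same direct-verification argument the paper relies on: the paper's own proof of Lemma~\ref{PA} simply defers to \cite[Lemma~8.11]{gaudio2022exact} with $B$ replaced by $\max(B,C)$, and your case analysis (establishing $R_{P_{\pi,\rho}}=R^*$, then transporting Conditions~1--3 of Definition~\ref{S} via $P_{\pi,\rho}(R^*)=\pi^*_{12}(R^*)$ and the agreement of $P_{\pi,\rho}$ with $\pi^*_{12}$ off $S^*$) is exactly that argument written out. Your final paragraph correctly identifies and resolves the one point where Lemma~\ref{inv} alone does not suffice --- namely that $P_{\pi,\rho}(i)=\pi(\rho(i))$ for $i\in S^*$ requires invoking Condition~3 at $i$ against the image point $\pi^*_{12}(m)\in\pi^*_{12}(R^*)$ --- so there is no gap.
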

\begin{proof}
The proof is identical to~\cite[Lemma 8.11]{gaudio2022exact}]. We only need to change $B$ to $B'=\max(B,C)$ in the argument.
\end{proof}
A useful corollary of Lemma~\ref{PA} is that the elements of $\mathcal{A}^*$ can be described by permutation of S.
\begin{corollary}\label{ssss}
We have the following representation\[\mathcal{A}^*=\{P_{\pi^*,\rho}:\rho\text{ is a permutation of } S^*\}.\]
\end{corollary}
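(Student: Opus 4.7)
The plan is to establish the set equality by proving both inclusions separately; one direction is essentially immediate from Lemma~\ref{PA}, and the other requires a short direct construction. Specifically, Lemma~\ref{PA} states that $P_{\pi^*_{12}, \rho} \in \mathcal{A}^*$ for every permutation $\rho$ of $S^*$, which already gives the inclusion $\{P_{\pi^*_{12},\rho} : \rho \text{ is a permutation of } S^*\} \subseteq \mathcal{A}^*$. So the substance of the proof lies in the reverse inclusion.

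For the reverse inclusion I would take an arbitrary $\pi \in \mathcal{A}^*$ and exhibit a permutation $\rho$ of $S^*$ for which $\pi = P_{\pi^*_{12}, \rho}$. The defining conditions of $\mathcal{A}^*$ give $S_\pi = S^*$ and that $\pi$ agrees pointwise with $\pi^*_{12}$ on $[n]\setminus S^*$ (this is the natural reading of $\pi([n]\setminus S_\pi) = \pi^*_{12}\{[n]\setminus S^*\}$, which is precisely the side information used in the MAP-estimator setup). Since $\pi$ and $\pi^*_{12}$ are bijections of $[n]$ agreeing on $[n]\setminus S^*$, taking complements of images forces the setwise equality $\pi(S^*) = \pi^*_{12}(S^*)$. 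I would then define $\rho := (\pi^*_{12})^{-1} \circ \pi$ restricted to $S^*$; this equality of images guarantees $\rho(S^*) \subseteq S^*$, and injectivity of $\pi$ and $\pi^*_{12}$ makes $\rho$ a bijection of $S^*$. A one-line verification against Definition~\ref{permutation} then gives $P_{\pi^*_{12},\rho}(i) = \pi^*_{12}(\rho(i)) = \pi(i)$ on $S^*$ and $P_{\pi^*_{12},\rho}(i) = \pi^*_{12}(i) = \pi(i)$ on $[n]\setminus S^*$, so $\pi = P_{\pi^*_{12},\rho}$ as required.

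There is no real obstacle in this argument: the corollary is essentially a bookkeeping reformulation of Lemma~\ref{PA} together with the structural description of $\mathcal{A}^*$. The only point that merits care is making explicit the pointwise (rather than setwise) interpretation of the restriction condition in the definition of $\mathcal{A}^*$, which is what allows $\rho$ to be recovered unambiguously from $\pi$.
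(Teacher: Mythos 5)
Your proposal is correct and matches the paper's intent: the paper states this corollary without proof as an immediate consequence of Lemma~\ref{PA} (which gives the inclusion $\{P_{\pi^*_{12},\rho}\}\subseteq\mathcal{A}^*$) together with the definition of $\mathcal{A}^*$, and your reverse inclusion via $\rho:=(\pi^*_{12})^{-1}\circ\pi$ restricted to $S^*$ is exactly the omitted bookkeeping step. No gaps.
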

\subsection{Deriving the MAP estimator, proof of Theorem~\ref{map}}
\subsubsection{The posterior distribution of $\pi_{12}^*$}
First we define
\begin{align*}
\mu^+(\pi)_{abc}:=& \sum_{(\pi(i),\pi(j))\in \mathcal{E}^+(\sigma^*_2)}1((A_{i,j},B_{\pi(i),\pi(j)},C_{\pi^*_{23}(\pi(i)),\pi^*_{23}(\pi(j))})=(a,b,c)), a,b,c\in\{0,1\},\\\mu^-(\pi)_{abc}:=& \sum_{(\pi(i),\pi(j))\in \mathcal{E}^-(\sigma^*_2)}1((A_{i,j},B_{\pi(i),\pi(j)},C_{\pi^*_{23}(\pi(i)),\pi^*_{23}(\pi(j))})=(a,b,c)), a,b,c\in\{0,1\},\\\nu^+(\pi):=& \sum_{(\pi(i),\pi(j))\in \mathcal{E}^+(\sigma^*_2)} A_{i,j},\\\nu^-(\pi):=& \sum_{(\pi(i),\pi(j))\in \mathcal{E}^-(\sigma^*_2)} A_{i,j}.
\end{align*}
With these definitions, we can derive an exact expression for the posterior distribution of $\pi_{12}^*$ given $A,B,C,\sigma^*_2$.
\begin{lemma}\label{pospi0}
Let $\pi\in S_n$. There's a constant $D_1=D_1(A,B,C,\sigma^*_2,\pi^*_{23})$ such that \begin{align*}&\p(\pi_{12}^*=\pi|A,B,C,\sigma^*_2,\pi^*_{23})\\=&D_1\left(\frac{p_{111}p_{000}}{p_{011}p_{100}}\right)^{\mu^+(\pi)_{111}}\left(\frac{p_{100}}{p_{000}}\right)^{\nu^+(\pi)}\left(\frac{p_{000}p_{110}}{p_{100}p_{010}}\right)^{\mu^+(\pi)_{110}}\left(\frac{p_{000}p_{101}}{p_{001}p_{100}}\right)^{\mu^+(\pi)_{101}}\\\times&\left(\frac{q_{111}q_{000}}{q_{011}q_{100}}\right)^{\mu^-(\pi)_{111}}\left(\frac{q_{100}}{q_{000}}\right)^{\nu^-(\pi)}\left(\frac{q_{000}q_{110}}{q_{100}q_{010}}\right)^{\mu^-(\pi)_{110}}\left(\frac{q_{000}q_{101}}{q_{001}q_{100}}\right)^{\mu^-(\pi)_{101}}.\end{align*}
\end{lemma}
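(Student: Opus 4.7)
The plan is to apply Bayes' rule. Since $\pi_{12}^*$ and $\pi_{13}^*$ are i.i.d.\ uniform on $S_n$, the map $(\pi_{12}^*,\pi_{13}^*)\mapsto(\pi_{12}^*,\pi_{13}^*\circ(\pi_{12}^*)^{-1})=(\pi_{12}^*,\pi_{23}^*)$ is a bijection on $S_n\times S_n$, so $(\pi_{12}^*,\pi_{23}^*)$ is jointly uniform and independent; an analogous bijection handles $\sigma_2^*=\sigma_1^*\circ(\pi_{12}^*)^{-1}$, and $(\sigma_2^*,\pi_{23}^*)$ is independent of the edge data conditional on $\pi_{12}^*$ and $\sigma_1^*$. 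Hence the prior $\p(\pi_{12}^*=\pi\mid\sigma_2^*,\pi_{23}^*)$ is the uniform distribution on $S_n$ and contributes only a $\pi$-free factor to $D_1$.

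Next I would compute the likelihood. Conditional on $\pi_{12}^*=\pi$, $\sigma_2^*$, and $\pi_{23}^*$, the true labels $\sigma_1^*=\sigma_2^*\circ\pi$ and the permutation $\pi_{13}^*=\pi_{23}^*\circ\pi$ are both determined, so the triples $(A_{ij},B'_{ij},C'_{ij})$ are conditionally independent across $(i,j)\in\binom{[n]}{2}$ with joint distribution $p_{abc}$ or $q_{abc}$ according to whether $\sigma_1^*(i)=\sigma_1^*(j)$ or not. Using the identities $B_{\pi(i),\pi(j)}=B'_{ij}$ and $C_{\pi_{23}^*(\pi(i)),\pi_{23}^*(\pi(j))}=C'_{ij}$, and reindexing by $(i,j)\mapsto(\pi(i),\pi(j))$ so that the partition into intra- and inter-community pairs is given by $\mathcal{E}^\pm(\sigma_2^*)$, the likelihood takes the factored form $\prod_{a,b,c\in\{0,1\}} p_{abc}^{\mu^+(\pi)_{abc}}\,q_{abc}^{\mu^-(\pi)_{abc}}$.

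The key observation is that for each $(b,c)\in\{0,1\}^2$, the sum $\mu^+(\pi)_{0bc}+\mu^+(\pi)_{1bc}$ counts pairs $(i',j')\in\mathcal{E}^+(\sigma_2^*)$ with $(B_{i'j'},C_{\pi_{23}^*(i'),\pi_{23}^*(j')})=(b,c)$, which depends only on the conditioning variables and not on $\pi$ (the same holds on the minus side). Factoring out the $\pi$-invariant part $p_{0bc}^{N^+_{bc}}q_{0bc}^{N^-_{bc}}$ into the constant $D_1$, one is left with $\prod_{b,c}\left(p_{1bc}/p_{0bc}\right)^{\mu^+(\pi)_{1bc}}\left(q_{1bc}/q_{0bc}\right)^{\mu^-(\pi)_{1bc}}$. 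Substituting the identity $\mu^+(\pi)_{100}=\nu^+(\pi)-\mu^+(\pi)_{101}-\mu^+(\pi)_{110}-\mu^+(\pi)_{111}$ (and its minus-side counterpart) and regrouping shows that $(p_{100}/p_{000})^{\nu^+(\pi)}$ absorbs the $\mu^+(\pi)_{100}$ contribution, while each remaining $\mu^+(\pi)_{1bc}$ with $(b,c)\neq(0,0)$ picks up the ratio $p_{000}p_{1bc}/(p_{0bc}p_{100})$, reproducing exactly the four-factor product in the statement.

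The main obstacle is really the bookkeeping: one has to identify correctly which linear combinations of the sixteen counts $\{\mu^\pm(\pi)_{abc}\}$ are $\pi$-invariant (a fact that rests on the $(B,C\circ\pi_{23}^*)$-matrix being fixed by conditioning on $\pi_{23}^*$) versus which are genuinely $\pi$-dependent, and then to perform the regrouping without error. No probabilistic estimates enter; the content is a clean algebraic rearrangement of the factorized likelihood.
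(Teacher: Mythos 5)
Your proposal is correct and follows essentially the same route as the paper: Bayes' rule with the uniform, independent prior on $\pi_{12}^*$, the factorized likelihood $\prod_{a,b,c}p_{abc}^{\mu^+(\pi)_{abc}}q_{abc}^{\mu^-(\pi)_{abc}}$, identification of the $\pi$-invariant count combinations (your marginal counts $N^\pm_{bc}$ play the role of the paper's $\Lambda$'s), and the substitution $\mu^\pm(\pi)_{100}=\nu^\pm(\pi)-\mu^\pm(\pi)_{101}-\mu^\pm(\pi)_{110}-\mu^\pm(\pi)_{111}$ to produce the four stated ratios. Your justification that $(\pi_{12}^*,\pi_{23}^*)$ is jointly uniform and independent via the bijection $(\pi_{12}^*,\pi_{13}^*)\mapsto(\pi_{12}^*,\pi_{13}^*\circ(\pi_{12}^*)^{-1})$ is in fact slightly more careful than the paper's one-line independence claim.
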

\begin{proof}
 The proof is adapted from \cite[Lemma 8.13]{gaudio2022exact}.
By Bayes Rule,
\[
\p(\pi_{12}^*=\pi|A,B,C,\sigma^*_2,\pi^*_{23})=\frac{\p(A,B,C|\sigma^*_2,\pi_{12}^*=\pi,\pi^*_{23})\p(\pi_{12}^*=\pi|\sigma^*_2,\pi^*_{23})}{\p(A,B,C|\sigma^*_2,\pi^*_{23})}.
\]
In the construction of the multiple Correlated $\SBM$, the permutation $\pi_{12}^*$ is chosen
independently of everything else, including the community labeling $\sigma^*_2$ and the permutation $\pi^*_{23}$
. Hence we can rewrite\[
\p(\pi_{12}^*=\pi|A,B,C,\sigma^*_2,\pi^*_{23})=d_1(A,B,C,\sigma^*_2,\pi^*_{23})\p(A,B,C|\sigma^*_2,\pi^*_{23},\pi_{12}^*=\pi),\]
where $d_1(A,B,C,\sigma^*_2)=(n!\p(A,B,C|\sigma^*_2,\pi^*_{23}))^{-1}$. Look at $\p(A,B,C|\sigma^*_2,\pi^*=\pi,\pi^*_{23})$, note that the edge formation process in $G_1,G_2$ and $G_3$ is mutually independent across all vertex
pairs given $\sigma^*_2,\pi^*_{12},\pi^*_{23}$.
Hence we have
\[
\p(A,B,C|\sigma^*_2,\pi^*=\pi,\pi^*_{23})=\prod_{ijk\in\{0,1\}}p_{ijk}^{\mu^+(\pi)_{ijk}}q_{ijk}^{\mu^-(\pi)_{ijk}}.
\]
In particular,
the sums
$\sum_{(\pi(i),\pi(j))\in \mathcal{E}^+(\sigma^*_2)} B_{\pi(i),\pi(j)}C_{\pi_{23}^*\pi(i),\pi_{23}^*\pi(j)}$,
$\sum_{(\pi(i),\pi(j))\in \mathcal{E}^+(\sigma^*_2)} B_{\pi(i),\pi(j)}$,
and
$\sum_{(\pi(i),\pi(j))\in \mathcal{E}^+(\sigma^*_2)} C_{\pi_{23}^*\pi(i),\pi_{23}^*\pi(j)},|\mathcal{E}^+(\sigma^*_2)|$ are measurable with respect to $B,C,\sigma^*_2,\pi^*_{23}$.
Hence we do not care the relevant value and use $\Lambda$ to represent. Now, for simple notations we write $\sum ABC$ to represent $\sum_{(\pi(i),\pi(j))\in \mathcal{E}^+(\sigma^*_2)}A_{i,j}B_{\pi(i),\pi(j)}C_{\pi^*_{23}(\pi(i)),\pi^*_{23}(\pi(j))}$, $\sum AB$ to represent $\sum_{(\pi(i),\pi(j))\in \mathcal{E}^+(\sigma^*_2)}A_{i,j}B_{\pi(i),\pi(j)}$,
and
$\sum AC$ to represent $\sum_{(\pi(i),\pi(j))\in \mathcal{E}^+(\sigma^*_2)}A_{i,j}C_{\pi^*_{23}(\pi(i)),\pi^*_{23}(\pi(j))}$. Then, we can write \begin{align*}
\mu^+(\pi)_{011}=&\sum(1-A)BC=\sum BC-\mu^+(\pi)_{111}=\Lambda-\mu^+(\pi)_{111},\\\mu^+(\pi)_{010}=&\sum(1-A)B(1-C)=\Lambda-\mu^+(\pi)_{110},\\\mu^+(\pi)_{001}=&\sum(1-A)(1-B)C=\Lambda-\mu^+(\pi)_{101},\\\mu^+(\pi)_{000}=&\sum(1-A)(1-B)(1-C)=\Lambda-\mu^+(\pi)_{100}+\mu^+(\pi)_{101}+\mu^+(\pi)_{110}+\mu^+(\pi)_{111},\\\mu^+(\pi)_{100}=&\sum A(1-B)(1-C)=\Lambda-\mu^+(\pi)_{111}-\mu^+(\pi)_{101}-\mu^+(\pi)_{110}+\nu^+(\pi).
\end{align*}
Hence we can write
\begin{equation*}
\prod_{ijk\in\{0,1\}}p_{ijk}^{\mu^+(\pi)_{ijk}}
= d_2^{+}\left(\frac{p_{111}p_{000}}{p_{011}p_{100}}\right)^{\mu^+(\pi)_{111}}\left(\frac{p_{100}}{p_{000}}\right)^{\nu^+(\pi)}\left(\frac{p_{000}p_{110}}{p_{100}p_{010}}\right)^{\mu^+(\pi)_{110}}\left(\frac{p_{000}p_{101}}{p_{001}p_{100}}\right)^{\mu^+(\pi)_{101}}.
\end{equation*}
Here $d_2^+$ is some constant given the information $B,C,\sigma^*_2,\pi^*_{23}$
Replicating the arguments for $\mu^-(\pi)_{abc}$, we have that
\begin{equation*}
\prod_{ijk\in\{0,1\}}q_{ijk}^{\mu^-(\pi)_{ijk}}
= d_2^{-}\left(\frac{q_{111}q_{000}}{q_{011}q_{100}}\right)^{\mu^-(\pi)_{111}}\left(\frac{q_{100}}{q_{000}}\right)^{\nu^-(\pi)}\left(\frac{q_{000}q_{110}}{q_{100}q_{010}}\right)^{\mu^-(\pi)_{110}}\left(\frac{q_{000}q_{101}}{q_{001}q_{100}}\right)^{\mu^-(\pi)_{101}}.
\end{equation*}
Combining the two equations we prove the statement of lemma with $D_1=d_1d_2^+d_2^-$.
\end{proof}
\begin{lemma}
\label{pospi}
There is a constant $D_2=D_2(A,B,C,\sigma^*_2,S^*,\pi^*_{23},\pi^*_{12}\{[n]\setminus S^*\})$ such that \[\p(\pi_{12}^*=\pi|A,B,C,\sigma^*_2,\pi^*_{23},S^*,\pi^*_{12}\{[n]\setminus S^*\})=D_2(\sqrt{\frac{p_{100}q_{000}}{p_{000}q_{100}}})^{\nu^+{\pi}-\nu^-(\pi)}1(\pi\in \mathcal{A^*}).\]
 \end{lemma}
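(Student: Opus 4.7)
The plan is to derive this from Lemma~\ref{pospi0} by incorporating the extra conditioning on $(S^*, \pi^*_{12}\{[n]\setminus S^*\})$. Since both quantities are deterministic functions of $(\pi^*_{12}, A, B, C)$, the additional conditioning is equivalent to restricting the support of the posterior to permutations $\pi$ with $S(\pi,A,B,C)=S^*$ and $\pi\{[n]\setminus S^*\}=\pi^*_{12}\{[n]\setminus S^*\}$, i.e.\ to $\pi\in\mathcal{A}^*$. This produces the factor $\mathbf{1}(\pi\in\mathcal{A}^*)$ together with a new normalization constant that is measurable with respect to the enlarged conditioning $\sigma$-algebra.

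The core of the argument is to show that on $\mathcal{A}^*$, the only $\pi$-dependent factor in the formula of Lemma~\ref{pospi0} is the one involving $\nu^+(\pi)-\nu^-(\pi)$; the six factors featuring $\mu^{\pm}(\pi)_{111}$, $\mu^{\pm}(\pi)_{110}$, $\mu^{\pm}(\pi)_{101}$ all reduce to constants that can be absorbed into $D_2$. To see this, I would split each sum defining $\mu^{\pm}(\pi)_{abc}$ into the contribution from pairs $\{i,j\}\subseteq[n]\setminus S^*$ versus pairs with at least one endpoint in $S^*$. In the first case, every $\pi\in\mathcal{A}^*$ agrees with $\pi^*_{12}$ by definition, so each summand equals its value at $\pi^*_{12}$. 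In the second case, Lemma~\ref{inv} guarantees that the relevant edge product, namely $A_{ij}B_{\pi(i)\pi(j)}$, $A_{ij}C_{\pi^*_{23}\pi(i),\pi^*_{23}\pi(j)}$, or the triple $A_{ij}B_{\pi(i)\pi(j)}C_{\pi^*_{23}\pi(i),\pi^*_{23}\pi(j)}$, vanishes identically for every $\pi\in\mathcal{A}^*$, so these summands contribute $0$ regardless of $\pi$. Hence $\mu^{\pm}(\pi)_{abc}=\mu^{\pm}(\pi^*_{12})_{abc}$ on $\mathcal{A}^*$ for each $(a,b,c)\in\{(1,1,1),(1,1,0),(1,0,1)\}$, and these values are measurable with respect to the conditioning data.

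For the remaining factor $(p_{100}/p_{000})^{\nu^+(\pi)}(q_{100}/q_{000})^{\nu^-(\pi)}$, I would use the identity $\nu^+(\pi)+\nu^-(\pi)=\sum_{\{i,j\}} A_{ij}=|E(G_1)|$, which is a constant depending only on $A$. Writing $\nu^+(\pi)=\tfrac{1}{2}(|E(G_1)|+(\nu^+(\pi)-\nu^-(\pi)))$ and $\nu^-(\pi)=\tfrac{1}{2}(|E(G_1)|-(\nu^+(\pi)-\nu^-(\pi)))$ and collecting exponents gives
\begin{equation*}
\left(\frac{p_{100}}{p_{000}}\right)^{\nu^+(\pi)}\left(\frac{q_{100}}{q_{000}}\right)^{\nu^-(\pi)}
= C_0 \cdot \left(\sqrt{\frac{p_{100}q_{000}}{p_{000}q_{100}}}\right)^{\nu^+(\pi)-\nu^-(\pi)},
\end{equation*}
where $C_0$ depends only on $|E(G_1)|$ and the model constants. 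Absorbing $C_0$ together with the $\mu$-factor constants into the normalization yields the stated formula.

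The one subtle point to flag is the asymmetry between the $\mu^{\pm}(\pi)_{abc}$'s and $\nu^{\pm}(\pi)$ on $\mathcal{A}^*$: $\nu^{\pm}(\pi)$ is \emph{not} invariant, because the community-membership indicator $(\pi(i),\pi(j))\in\mathcal{E}^{\pm}(\sigma_2^*)$ can depend on $\pi$ when $A_{ij}=1$ and $i$ or $j$ lies in $S^*$. The $\mu^{\pm}(\pi)_{abc}$'s, in contrast, \emph{are} invariant precisely because the accompanying $B$ and/or $C$ factors force the nonzero contributions to come from pairs contained in $[n]\setminus S^*$, where $\pi$ is pinned down by $\pi^*_{12}$. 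This asymmetry is what allows the posterior to simplify down to a single scalar statistic $\nu^+(\pi)-\nu^-(\pi)$, and is the main thing to verify carefully in the write-up.
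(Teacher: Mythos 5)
Your proposal is correct and follows essentially the same route as the paper: Bayes' rule turns the extra conditioning into the indicator $\mathbf{1}(\pi\in\mathcal{A}^*)$ plus a new normalizing constant, Lemma~\ref{inv} shows the $\mu^{\pm}(\pi)_{111},\mu^{\pm}(\pi)_{110},\mu^{\pm}(\pi)_{101}$ factors are constant on $\mathcal{A}^*$, and the identity $\nu^+(\pi)+\nu^-(\pi)=\sum_{(i,j)}A_{ij}$ reduces the remaining $\pi$-dependence to the single statistic $\nu^+(\pi)-\nu^-(\pi)$. Your closing remark on why $\nu^{\pm}(\pi)$ individually fail to be invariant while the $\mu$'s are is a correct observation that the paper leaves implicit.
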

 \begin{proof}
By Bayes Rule we have that
\begin{align*}
&\p(\pi_{12}^*=\pi|A,B,C,\sigma^*_2,\pi^*_{23},S^*,\pi^*_{12}\{[n]\setminus S^*\})\\=&\frac{\p(\pi_{12}^*=\pi|A,B,C,\sigma^*_2,\pi^*_{23})\p(S^*,\pi^*_{12}\{[n]\setminus S^*\}|\pi_{12}^*=\pi,A,B,C,\sigma^*_2,\pi^*_{23})}{\p(S^*,\pi^*_{12}\{[n]\setminus S^*\}|A,B,C,\sigma^*_2,\pi^*_{23})}\\=&\frac{\p(\pi_{12}^*=\pi|A,B,C,\sigma^*_2,\pi^*_{23})}{\p(S^*,\pi^*_{12}\{[n]\setminus S^*\}|A,B,C,\sigma^*_2,\pi^*_{23})}1(\pi\in \mathcal{A^*}).
\end{align*}
The probability in the denominator is a function of $A,B,C,S^*, \pi^*\{[n]\setminus S^*\},\pi^*_{23},\sigma^*_2$. Furthermore, by Lemma~\ref{inv}, $\mu^+(\pi)_{111},\mu^+(\pi)_{110},\mu^+(\pi)_{101},\mu^-(\pi)_{111},\mu^-(\pi)_{110},\mu^-(\pi)_{101}$ are constant over $\pi\in \mathcal{A}^*$. By Lemma~\ref{pospi0}, we can write
\[\p(\pi_{12}^*=\pi|A,B,C,\sigma^*_2,\pi^*_{23},S^*,\pi^*_{12}\{[n]\setminus S^*\})=d_2\left(\frac{p_{100}}{p_{000}}\right)^{\nu^+(\pi)}\left(\frac{q_{100}}{q_{000}}\right)^{\nu^-(\pi)}1(\pi\in \mathcal{A^*}),\] where \begin{align*}d_2&=\frac{D_1}{\p(S^*,\pi^*_{12}\{[n]\setminus S^*\}|A,B,C,\sigma^*_2,\pi^*_{23})}\left(\frac{q_{111}q_{000}}{q_{011}q_{100}}\right)^{\mu^-(\pi_{12}^*)_{111}}\left(\frac{q_{000}q_{110}}{q_{100}q_{010}}\right)^{\mu^-(\pi_{12}^*)_{110}}\\&\times\left(\frac{q_{000}q_{101}}{q_{001}q_{100}}\right)^{\mu^-(\pi_{12}^*)_{101}}\left(\frac{p_{111}p_{000}}{p_{011}p_{100}}\right)^{\mu^+(\pi)_{111}}\left(\frac{p_{000}p_{110}}{p_{100}p_{010}}\right)^{\mu^+(\pi)_{110}}\left(\frac{p_{000}p_{101}}{p_{001}p_{100}}\right)^{\mu^+(\pi)_{101}}.
\end{align*}
$D_1$ is the same constant in Lemma~\ref{pospi0}, $d_2$ is a constant given $A,B,C,\sigma^*_2,S^*,\pi^*_{23},\pi^*_{12}\{[n]\setminus S^*\}$. To further simplifies the posterior distribution,
\begin{multline*}
\p(\pi_{12}^*=\pi|A,B,C,\sigma^*_2,\pi^*_{23},S^*,\pi^*_{12}\{[n]\setminus S^*\})
= d_2\left(\frac{p_{100}}{p_{000}}\right)^{\nu^+(\pi)}\left(\frac{q_{100}}{q_{000}}\right)^{\nu^-(\pi)}1(\pi\in \mathcal{A^*})\\
= d_2(\sqrt{\frac{p_{100}q_{100}}{p_{000}q_{100}}})^{\nu^-(\pi)+\nu^+(\pi)}(\sqrt{\frac{p_{100}q_{100}}{p_{000}q_{100}}})^{\nu^+(\pi)-\nu^-(\pi)}1(\pi\in \mathcal{A^*}).
\end{multline*}
Note that $\nu^-(\pi)+\nu^+(\pi)=\sum_{(i,j)\in \binom{[n]}{2}} A_{i,j}$ that only depends on $A$. The results follows then\[\p(\pi_{12}^*=\pi|A,B,C,\sigma^*_2,\pi^*_{23},S^*,\pi^*_{12}\{[n]\setminus S^*\})=D_2(\sqrt{\frac{p_{100}q_{100}}{p_{000}q_{100}}})^{\nu^+(\pi)-\nu^-(\pi)}1(\pi\in \mathcal{A^*}),\]
where $D_2=d_2(\sqrt{\frac{p_{100}q_{100}}{p_{000}q_{100}}})^{\sum_{(i,j)\in \binom{[n]}{2}} A_{i,j}}$.
\end{proof}

\subsubsection{The posterior distribution of \(\sigma^*\)}
Now we can study the posterior distribution of the community labeling $\sigma^*$. For a community partition $X = (X^+, X^-)$ of $[n]$ in $G_1$, define the set$$
B(X):=\{\pi\in\mathcal{A}^* : \pi(X^+) = V_2^+, \pi(X^-) = V_2^-\}.
$$
In particular, if $\sigma_X$ denotes the community memberships associated with $X$, the following must hold:\begin{itemize}
\item $\sigma_X(i)=\sigma^*_2(\pi^*_{12}(i))$ for $i\in [n]\setminus S^*$;
\item $|S^*\cap X^+|=|S^*\cap V_1^+|,|S^*\cap X^-|=|S^*\cap V_1^-|$.
\end{itemize}
The first condition must hold since we know the true vertex correspondence and the
true community labels outside of the $S^*$. The second condition must hold since the number
of vertices of each community in $S^*$ can be deduced by examining the community labels of $\pi^*(S^*)$ with respect to $\sigma^*_2$.
\begin{lemma}
\label{bx}
If $|B(X)|$ is not empty, then $|B(X)|=|S^*\cap X^-|!|S^*\cap X^+|!$.
\end{lemma}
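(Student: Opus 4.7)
The plan is to parameterize $\mathcal{A}^*$ using Corollary~\ref{ssss} and then count those elements that additionally respect the community structure encoded by $X$. Concretely, every $\pi \in \mathcal{A}^*$ has the form $\pi = P_{\pi^*_{12},\rho}$ for some permutation $\rho$ of $S^*$, so the map $\rho \mapsto P_{\pi^*_{12},\rho}$ is a bijection from permutations of $S^*$ onto $\mathcal{A}^*$. Thus $|B(X)|$ equals the number of permutations $\rho$ of $S^*$ such that $P_{\pi^*_{12},\rho}$ sends $X^+$ to $V_2^+$ and $X^-$ to $V_2^-$.

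Next I would separate the constraints on $[n]\setminus S^*$ from those on $S^*$. By Definition~\ref{permutation}, $P_{\pi^*_{12},\rho}$ agrees with $\pi^*_{12}$ on $[n]\setminus S^*$ regardless of $\rho$, so on this set the required condition $P_{\pi^*_{12},\rho}(X^{\pm})\subseteq V_2^{\pm}$ reduces to $\pi^*_{12}(X^{\pm}\setminus S^*)\subseteq V_2^{\pm}$. This holds automatically under the hypothesis that $B(X)$ is nonempty, since the first bullet in the characterization of $\sigma_X$ forces $X^{\pm}\setminus S^* = V_1^{\pm}\setminus S^*$, and $\pi^*_{12}$ already maps $V_1^{\pm}\setminus S^*$ into $V_2^{\pm}$.

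On $S^*$ itself, $P_{\pi^*_{12},\rho}(i) = \pi^*_{12}(\rho(i))$, and the constraint $P_{\pi^*_{12},\rho}(X^+)=V_2^+$ becomes, after subtracting the part already handled off $S^*$, the requirement $\pi^*_{12}(\rho(S^*\cap X^+)) = V_2^+ \cap \pi^*_{12}(S^*) = \pi^*_{12}(S^*\cap V_1^+)$. Since $\pi^*_{12}$ is a bijection, this is equivalent to $\rho(S^*\cap X^+) = S^*\cap V_1^+$; the analogous equivalence holds for $X^-$. By the nonemptiness of $B(X)$ we have $|S^*\cap X^+|=|S^*\cap V_1^+|$ and $|S^*\cap X^-|=|S^*\cap V_1^-|$, so a valid $\rho$ is simply the choice of a bijection $S^*\cap X^+\to S^*\cap V_1^+$ together with an independent bijection $S^*\cap X^-\to S^*\cap V_1^-$. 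Counting these gives exactly $|S^*\cap X^+|!\,|S^*\cap X^-|!$ valid $\rho$, and hence the claimed formula for $|B(X)|$.

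There is essentially no obstacle here once Corollary~\ref{ssss} is in hand; the only thing to be careful about is verifying that the partition of the constraints into the $[n]\setminus S^*$ part and the $S^*$ part is sharp, i.e.\ that the off-$S^*$ part really does contribute no free choices and that every choice of $\rho$ on $S^*$ compatible with the community counts yields a valid element of $B(X)$. Both follow directly from Definition~\ref{permutation} together with the defining properties of $\sigma_X$ listed right before the lemma, so the proof will be short.
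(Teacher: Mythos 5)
Your proof is correct and follows essentially the same route as the paper: both reduce the count to permutations of $S^*$ via Corollary~\ref{ssss} and split them according to the community partition. The only (immaterial) difference is the choice of anchor—you parameterize $B(X)$ relative to $\pi^*_{12}$ and characterize the admissible $\rho$ as bijections $S^*\cap X^{\pm}\to S^*\cap V_1^{\pm}$, whereas the paper anchors at an arbitrary element $\pi_0\in B(X)$ so that the admissible $\rho$ become permutations preserving $S^*\cap X^+$ and $S^*\cap X^-$; both yield the same product of factorials.
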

\begin{proof}
The proof is almost identical to~\cite[Lemma 8.15]{gaudio2022exact}. Suppose that $\pi_0,\pi_1\in B(X)$, by Corollary~\ref{ssss}, there exists $\rho$ such that $\pi_1=P_{\pi_0,\rho}$. Claim: if $i\in S^*\cap X^+,\rho(i)\in S^*\cap X^+$, if $i\in S^*\cap X^-,\rho(i)\in S^*\cap X^-$. If $\exists i \in S^*\cap X^+,\rho(i)\in S^*\cap X^-$, then $\sigma^*_2(\pi_0(i))= 1,\sigma^*_2(\pi_1(i))=\sigma^*_2(\pi_0(\rho(i)))=-1$. This violates the definition of $B(X)$. The claim is proved. Hence we can decomposition $\rho$ into two disjoint permutations $\rho^+$, $\rho^-$.$\rho^+$ is a permutation of  $ S^*\cap X^+$ while $\rho^-$ is a permutation of  $ S^*\cap X^-$. Hence $|B(X)|=(\#\text{ of choices of $\rho^+$}) \times (\#\text{ of choices of $\rho^-$}) =| S^*\cap X^+|!| S^*\cap X^-|!=| S^*\cap V_1^+|!| S^*\cap V^-|! $.
\end{proof}
Then we look at $\nu^+(\pi)-\nu^-(\pi)$.
\begin{lemma}
\label{ddd}
For all $\pi\in B(X),$ we have that $\nu^+(\pi)-\nu^-(\pi)=D_3+\sum_{i\in S^*} \maj(i)\sigma_X(i)$ where $D_3$ is a constant depending on $A,B,C,\sigma^*_2,\pi^*_{23},S^*,\pi^*_{12}\{[n]\setminus S^*\}$ but not on $X$.
\end{lemma}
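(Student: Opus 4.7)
The plan is to rewrite $\nu^+(\pi)-\nu^-(\pi)$ as a sum over unordered vertex pairs in $G_1$, then split the sum into three pieces according to endpoint membership in $S^*$ and show that two of the pieces are either $X$-independent or identically zero. The first observation is that, for any $\pi\in B(X)$, we have $\sigma^*_2(\pi(i))=\sigma_X(i)$ for every $i\in[n]$, so $(\pi(i),\pi(j))\in\mathcal{E}^+(\sigma^*_2)$ iff $\sigma_X(i)=\sigma_X(j)$, and analogously for $\mathcal{E}^-$. Hence
$$
\nu^+(\pi)-\nu^-(\pi)=\sum_{\{i,j\}\in\binom{[n]}{2}} A_{i,j}\,\sigma_X(i)\,\sigma_X(j).
$$

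The pairs $\{i,j\}\subset[n]\setminus S^*$ contribute a quantity that depends only on $A$ and on $\sigma_X$ restricted to $[n]\setminus S^*$; by the definition of $B(X)$, the latter coincides with $\sigma^*_2\circ\pi^*_{12}\{[n]\setminus S^*\}$, which is determined by the conditioning information and does not depend on $X$. This piece will be the constant $D_3$. The pairs $\{i,j\}\subset S^*$ contribute zero: by Definition~\ref{S}(2) every $i\in S^*$ is a singleton in $G_1\{R^*\}$, and since $S^*\subset R^*$ this forces $A_{i,j}=0$ for all distinct $i,j\in S^*$.

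The remaining cross-pairs with $i\in S^*$ and $j\in[n]\setminus S^*$ contribute $\sum_{i\in S^*}\sigma_X(i)\sum_{j\in N_1(i)}\sigma_X(j)$. By the same singleton property, $N_1(i)\cap R^*=\emptyset$ for $i\in S^*$, so every $j$ appearing in the inner sum automatically lies in $[n]\setminus S^*$, and therefore $\sigma_X(j)=\sigma^*_2(\pi^*_{12}(j))=\sigma^*_1(j)$. The inner sum thus equals the data-determined quantity $|N_1(i)\cap V_1^+|-|N_1(i)\cap V_1^-|=\maj(i)$, matching the convention used in the definition of $W_i$ in the MAP analysis. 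Assembling the three pieces yields $\nu^+(\pi)-\nu^-(\pi)=D_3+\sum_{i\in S^*}\maj(i)\,\sigma_X(i)$, as claimed. I do not anticipate a serious obstacle: the whole argument rests on the singleton structure encoded in Definition~\ref{S}, which simultaneously kills the within-$S^*$ contribution and forces all relevant neighbors of $S^*$-vertices into the data-determined region where $\sigma_X$ is pinned down by the conditioning.
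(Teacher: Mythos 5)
Your proposal is correct and follows essentially the same route as the paper's proof: the identical rewriting of $\nu^+(\pi)-\nu^-(\pi)$ as $\sum_{\{i,j\}}A_{i,j}\sigma_X(i)\sigma_X(j)$, the same three-way split by membership in $S^*$, the vanishing of the within-$S^*$ terms via the singleton property, and the identification of the cross terms with $\sum_{i\in S^*}\sigma_X(i)\maj(i)$ using that $\sigma_X$ is pinned to $\sigma_1^*$ off $S^*$. Your explicit remark that $\maj(i)$ here follows the convention of the MAP section (without the $\sigma^*(i)$ prefactor of the earlier formal definition) is a correct and useful clarification.
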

\begin{proof}
Note that $\sigma_X(i)\sigma_X(j)=1$ if $(\pi(i),\pi(j))\in\mathcal{E}^+(\sigma^*_2)$, $\sigma_X(i)\sigma_X(j)=-1$ if $(\pi(i),\pi(j))\in\mathcal{E}^-(\sigma^*_2)$. We have
\begin{align*}
&\nu^+(\pi)-\nu^-(\pi)
= \sum_{(i,j)\in \binom{[n]}{2}}\sigma_X(i)\sigma_X(j)A_{i,j}\\
=&\sum_{(i,j)\in \{[n]\setminus S^*\}}\sigma_X(i)\sigma_X(j)A_{i,j}+\sum_{i,j\in S^*}\sigma_X(i)\sigma_X(j)A_{i,j}+\sum_{i\in S^*, j\in \{[n]\setminus S^*\}}\sigma_X(i)\sigma_X(j)A_{i,j}.\end{align*}
We should note that if $(i,j)\in \{[n]\setminus S^*\}$, then
$ \sigma_X(i)\sigma_X(j)A_{i,j}=\sigma^*_2(\pi_{12}(i))\sigma^*_2(\pi_{12}(j))A_{i,j}=\sigma^*_2(\pi^*_{12}(i))\sigma^*_2(\pi^*_{12}(j))A_{i,j}.$ Denote $$D_3:=\sum_{(i,j)\in \{[n]\setminus S^*\}}\sigma_X(i)\sigma_X(j)A_{i,j}.$$ Clearly $D_3$ depends only on $A,\sigma^*_2, S^*,\pi_{12}^*\{[n]\setminus S^*\}$. If $i,j\in S^*$, $A_{i,j}=0$ by Definition~\ref{S}.
\begin{align*}
\sum_{i\in S^*, j\in \{[n]\setminus S^*\}}\sigma_X(i)\sigma_X(j)A_{i,j}
&=\sum_{i\in S^*,j\in\{ [n]\setminus S^*\}}\sigma_X(i)\sigma_1^*(j)A_{i,j}
=\sum_{i\in S^*}\sigma_X(i)\sum_{j\in\{ [n]\setminus S^*\}}\sigma^*_1(j)A_{i,j}\\
&=\sum_{i\in S^*}\sigma_X(i)\maj(i).
\end{align*}
The last equality exists because if $i\in S^*$,$\maj(i)=\sum_{j\in [n]}A_{i,j}\sigma^*_1(j)=\sum_{j\in\{ [n]\setminus S^*\}}A_{i,j}\sigma^*_1(j)$, since $A_{i,j}=0$ if $i,j\in S^*$.
Then the statement follows,
\begin{equation*}
\nu^+(\pi)-\nu^-(\pi)=D_3+\sum_{i\in S^*}\sigma_X(i)\maj(i). \qedhere
\end{equation*}
\end{proof}
\begin{lemma}\label{posl}
If $B(X)$ is nonempty, then
\begin{align*}
\p((V_1^+,V_2^+) = (X^+,X^-)|A,B,C,\sigma^*_2,\pi^*_{23},S^*,\pi^*_{12}\{[n]\setminus S^*\})=D_4\left(\sqrt{\frac{p_{100}q_{100}}{p_{000}q_{000}}}\right)^{\sum_{i\in S^*}\sigma_X(i)\maj(i)},
\end{align*}
where $D_4$ is a constant depending on $A,B,C,\sigma^*_2,\pi^*_{23},S^*,\pi^*_{12}\{[n]\setminus S^*\}$, but not on $X$.
\end{lemma}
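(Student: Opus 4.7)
The plan is to reduce the event $\{(V_1^+, V_1^-) = (X^+, X^-)\}$ to a disjoint union of atoms indexed by $\pi_{12}^*$, then apply Lemmas~\ref{pospi}, \ref{ddd}, and \ref{bx} in sequence. The key observation is that, under the conditioning on $\sigma^*_2$, knowing $\pi^*_{12}$ determines $\sigma^*_1$ via $\sigma^*_1(i) = \sigma^*_2(\pi^*_{12}(i))$, so $\{(V_1^+, V_1^-) = (X^+, X^-)\}$ holds if and only if $\pi^*_{12}(X^+) = V_2^+$ and $\pi^*_{12}(X^-) = V_2^-$. Moreover, the conditioning on $S^*$ and $\pi_{12}^*\{[n]\setminus S^*\}$ forces $\pi^*_{12} \in \mathcal{A}^*$. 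Combining these facts gives the identity $\{(V_1^+, V_1^-) = (X^+, X^-)\} = \{\pi^*_{12} \in B(X)\}$ on the conditioning event.

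First I would write
\[
\p\bigl((V_1^+,V_1^-) = (X^+,X^-)\,\big|\,A,B,C,\sigma^*_2,\pi^*_{23},S^*,\pi^*_{12}\{[n]\setminus S^*\}\bigr) = \sum_{\pi \in B(X)} \p(\pi^*_{12} = \pi\,|\,\cdot).
\]
By Lemma~\ref{pospi}, each summand equals $D_2 \bigl(\sqrt{p_{100}q_{000}/(p_{000}q_{100})}\bigr)^{\nu^+(\pi)-\nu^-(\pi)}$, since $B(X) \subseteq \mathcal{A}^*$ makes the indicator $1(\pi \in \mathcal{A}^*)$ redundant. Next, Lemma~\ref{ddd} shows that $\nu^+(\pi) - \nu^-(\pi) = D_3 + \sum_{i \in S^*} \sigma_X(i)\maj(i)$ for \emph{every} $\pi \in B(X)$; crucially, this expression depends on $\pi$ only through $\sigma_X$, which is determined by $X$ alone. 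Thus the entire summand is constant over $\pi \in B(X)$ and can be pulled outside the sum, leaving a multiplicative factor of $|B(X)|$.

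To finish, I would invoke Lemma~\ref{bx} to write $|B(X)| = |S^* \cap X^+|!\,|S^* \cap X^-|!$ whenever $B(X)$ is nonempty, and then check that these factorials depend only on the conditioning and not on $X$. This follows from the two constraints forced by $B(X) \neq \emptyset$: the community labels of $[n]\setminus S^*$ are pinned down by $\sigma^*_2 \circ \pi_{12}^*\{[n]\setminus S^*\}$, and $|X^\pm| = |V_2^\pm|$ (since any $\pi \in B(X)$ is a bijection sending $X^\pm$ to $V_2^\pm$). Hence
\[
|S^* \cap X^+| = |V_2^+| - \bigl|\{i \in [n]\setminus S^* : \pi^*_{12}(i) \in V_2^+\}\bigr|,
\]
and similarly for $|S^* \cap X^-|$; both are measurable with respect to the conditioning $\sigma$-algebra. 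Collecting $|B(X)|$, $D_2$, and the $D_3$-factor into a single constant $D_4 = D_4(A,B,C,\sigma^*_2,\pi^*_{23},S^*,\pi^*_{12}\{[n]\setminus S^*\})$ yields the claimed identity.

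The proof is mostly bookkeeping; the only subtlety, and the step I would be most careful about, is confirming that every quantity except $\sum_{i \in S^*}\sigma_X(i)\maj(i)$ in the exponent is $X$-free. This relies on three distinct facts working together: Lemma~\ref{ddd}'s claim that $D_3$ is $X$-independent, Lemma~\ref{bx}'s formula being symmetric in $|S^* \cap X^\pm|$ together with the bijection-on-communities constraint from $B(X) \neq \emptyset$, and the implicit constraints on $X$ from the conditioning on $\pi_{12}^*\{[n]\setminus S^*\}$. I would also note that the base of the exponential in the statement appears as the reciprocal of the one in Lemma~\ref{pospi}; this is either a harmless sign convention (absorbing a $\pm$ into $D_4$ since the total signed degree $\sum_{i \in S^*}\sigma_X(i)\maj(i)$ changes sign under relabeling) or a minor typo, and should be flagged during writeup.
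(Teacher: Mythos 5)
Your proposal is correct and follows essentially the same route as the paper's proof: decompose the event as a disjoint union over $\pi\in B(X)$, apply Lemma~\ref{pospi}, substitute Lemma~\ref{ddd} so the summand is constant on $B(X)$, and absorb $|B(X)|$ (via Lemma~\ref{bx}) and the $D_3$-factor into $D_4$. Your flag about the base of the exponential is also warranted — the paper's own derivation produces $\sqrt{p_{100}q_{000}/(p_{000}q_{100})}$ (consistent with Lemma~\ref{pospi} and the use in Theorem~\ref{map}), so the base printed in the lemma statement is a typo.
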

\begin{proof}
We have that
\begin{align*}
&\p((V_1^+,V_2^+)=(X^+,X^-)|A,B,C,\sigma^*_2,\pi^*_{23},S^*,\pi^*_{12}\{[n]\setminus S^*\})\\=&\sum_{\pi\in B(X)}\p(\pi^*_{12}=\pi|A,B,C,\sigma^*_2,\pi^*_{23},S^*,\pi^*_{12}\{[n]\setminus S^*\})\\=&\sum_{\pi\in B(X)}D_2\left(\sqrt{\frac{p_{100}q_{000}}{p_{000}q_{100}}}\right)^{\nu^+(\pi)-\nu^-(\pi)}\\=& D_2\left(\sqrt{\frac{p_{100}q_{000}}{p_{000}q_{100}}}\right)^{D_3}\sum_{\pi\in B(X)}\left(\sqrt{\frac{p_{100}q_{000}}{p_{000}q_{100}}}\right)^{\sum_{i\in S^*}\sigma_X(i)\maj(i)}\\=&D_2\left(\sqrt{\frac{p_{100}q_{000}}{p_{000}q_{100}}}\right)^{D_3} |B(X)|\left(\sqrt{\frac{p_{100}q_{000}}{p_{000}q_{100}}}\right)^{\sum_{i\in S^*}\sigma_X(i)\maj(i)}\\=& D_4\left(\sqrt{\frac{p_{100}q_{000}}{p_{000}q_{100}}}\right)^{\sum_{i\in S^*}\sigma_X(i)\maj(i)},
\end{align*}
where $D_4=D_2\left(\sqrt{\frac{p_{100}q_{000}}{p_{000}q_{100}}}\right)^{D_3} |S^*\cap V_1^+||S^*\cap V_1^-|.$
The statement follows.
\end{proof}
\begin{proof}[Proof of Theorem~\ref{map}]
Given $A,B,C,\sigma^*_2,\pi^*_{23},S^*,\pi^*_{12}\{[n]\setminus S^*\}$, it is obvious that $\widehat{\sigma}_{MAP}(i)=\sigma^*_2(\pi^*_{12}(i))$ for $i\in [n]\setminus S^*$. For vertices in $S^*$, note that \[\frac{p_{100}q_{000}}{p_{000}q_{100}}=\frac{s(1-s)^2p(1-(1-(1-s)^3)q)}{s(1-s)^2q(1-(1-(1-s)^3)q)}=(1+o(1))\frac{p}{q}=(1+o(1))\frac{a}{b}.\]
Thus, by Lemma~\ref{posl}, if $a>b$, the MAP estimator maximizes $\sum_{i\in S^*}\sigma_X(i)\maj(i)$ while the MAP estimator minimizes $\sum_{i\in S^*}\sigma_X(i)\maj(i)$ if $a<b$. Suppose $a>b$, while satisfying the condition $|S^*\cap X^+|=|S^*\cap V^+|$, $|S^*\cap X^-|=|S^*\cap V^-|$, the maximum of $\sum_{i\in S^*}\sigma_X(i)\maj(i)$ is obtained by setting $\sigma_X(i)=+1$ to the vertices $i\in S^*$ corresponding to the largest $|S^*\cap V_1^+|$ values in the collection $\{\maj(i)\}_{i\in S^*}$ and assigns the label -1 to the remaining vertices in $S^*$. The proof is the same, suppose $a<b$. Then Theorem~\ref{map} follows.
\end{proof}

\subsection{Proof of Lemma~\ref{l1}}\label{sl1}
Denote $E_i$ as the event that $i$ is a singleton in $G_1\wedge_{\pi^*_{12}} H $, in other words, $i\in R^*$. We assume that the communities are approximately balanced. More precisely, we assume that the event $\mathcal{G} = \{ n/2-n^{3/4}\leq|V^+|,|V^-|\leq n/2+n^{3/4}\}$ holds. By Lemma~\ref{balanced}, we have $\p(\mathcal{G})=1-o(1)$.

Conditioning on $\sigma^*_1$, if $i\in V_1^+$, then we have\begin{align*}
&\p(E_i|\sigma^*_1)1(\mathcal{G})=(1-s(1-(1-s)^2)a\log n/n)^{| V_1^+|-1}(1-s(1-(1-s)^2)b\log n/n)^{ |V_1^-|}1(\mathcal{G})\\\leq & \exp(-s(2s-s^2)\log n/n(a(| V_1^+|-1)+b|V_1^-|))1(\mathcal{G})\\=&\exp(-(1-o(1))(2s^2-s^3)\dconnab\log n)1(\mathcal{G})=n^{-(2s^2-s^3)\dconnab+o(1)}1(\mathcal{G}).
\end{align*}
The first inequality uses the fact $1-x\leq e^{-x}$.
Hence
\begin{equation*}
\E[|R^*\cap V_1^+||\sigma^*_1]1(\mathcal{G})=\sum_{i\in V^+}\p(E_i|\sigma^*_1)1(\mathcal{G})\leq |V_1^+|n^{-(2s^2-s^3)\dconnab+o(1)}1(\mathcal{G})\leq n^{1-(2s^2-s^3)\dconnab+o(1)}.
\end{equation*}
By Markov's inequality,\[\p(|R^*\cap V_1^+|\geq n^{1-(2s^2-s^3)\dconnab+\delta}|\sigma^*_1)1(\mathcal{G})\leq n^{-\delta+o(1)}=o(1).\]
Hence
\begin{multline*}
\p(|R^*\cap V_1^+|\geq n^{1-(2s^2-s^3)\dconnab+\delta}) \\
\leq \p(\mathcal{G}^c)+\E[\p(|R^*\cap V_1^+|\geq n^{1-(2s^2-s^3)\dconnab+\delta}|\sigma^*_1)1(\mathcal{G})]=o(1).
\end{multline*}

Now we derive a lower bound for $|R^*\cap V_1^+|$. For $\epsilon=\epsilon_n$ sufficiently small:
\begin{align*}
&(\log(\p(E_i|\sigma^*_1)))1(\mathcal{G})\\=& ((|V_1^+|-1)\log(1-s(1-(1-s)^2)a\log n/n)+|V_1^-|\log(1-s(1-(1-s)^2)b\log n/n))1(\mathcal{G})\\\geq& -(1+\epsilon)(2s^2-s^3)\log n/n(a|V_1^+|+b|V_1^-|)1(\mathcal{G})
=(1-o(1))(1+\epsilon)(2s^2-s^3)\dconnab(\log n)1(\mathcal{G}).
\end{align*}
The first inequality uses that $\log(1-x)\geq -(1+\epsilon)x$ provided $0<x<\epsilon/(1+\epsilon)$.
Setting $\epsilon=n^{-0.5}$, we have that
\begin{equation*}
\E[|R^*\cap V_1^+||\sigma^*_1]1(\mathcal{G})
=\sum_{i\in V^+}\p(E_i|\sigma^*_1)1(\mathcal{G})
\geq  n^{1-(2s^2-s^3)\dconnab-o(1)}1(\mathcal{G}).
\end{equation*}
Then we bound the variance of $|R^*\cap V_1^+|$. For $i,j\in V_1^+,i\neq j$:
\begin{align*}
&\Cov(1(E_i),1(E_j)|\sigma^*_1)
= \p(E_i E_j|\sigma^*_1)-\p(E_i|\sigma^*_1)^2\\=&(1-s(2s-s^2)a\log n/n)^{2|V_1^+|-3}(1-s(2s-s^2)b\log n/n)^{2|V_1^-|}(1-(1-s(2s-s^2)a\log n/n)).
\end{align*}
On event $\mathcal{G}$, $2|V_1^+|-3=(1+o(1))n$, $2|V_1^-|=(1+o(1))n$. Hence using the equality $1-x\leq e^{-x}$, we have that
\begin{align*}
\Cov(1(E_i),1(E_j)|\sigma^*_1)1(\mathcal{G})
&\leq s(2s-s^2)a\log n/n\exp(-(1-o(1))s(2s-s^2)(a+b)\log n)1(\mathcal{G})\\
&=n^{-1-(2s^2-s^3)(a+b)+o(1)}1(\mathcal{G}).
\end{align*}
Then \begin{align*}
&\Var(|R^*\cap V_1^+||\sigma^*_1 )1(\mathcal{G})\\
=&\Var(\sum_{i\in V_1^+}1(E_i)|\sigma^*_1)1(\mathcal{G})
= \sum_{i,j\in V_1^+}\Cov(1(E_i),1(E_j)|\sigma^*_1)1(\mathcal{G})\\
\leq& \sum_{i\in V_1^+}\p(E_i|\sigma^*_1)1(\mathcal{G})+\sum_{i\neq j \in V^+_1}\Cov(1(E_i),1(E_j)|\sigma^*_1)1(\mathcal{G})\\\leq&(n^{1-(2s^2-s^3)(a+b)+o(1)}+n^{1-(2s^2-s^3)(a+b)/2+o(1)})1(\mathcal{G})=n^{1-(2s^2-s^3)(a+b)/2+o(1)}1(\mathcal{G}).
\end{align*}
By the Paley-Zygmund inequality,
\begin{align*}
&\p(|R^*\cap V_1^+|\geq n^{1-(2s^2-s^3)\dconnab-\delta}|\sigma^*_1)1(\mathcal{G})\\\geq& (1-n^{-\delta+o(1)})^2\frac{\E[|R^*\cap V_1^+||\sigma^*_1]^2}{\E[|R^*\cap V_1^+|^2|\sigma^*_1]}1(\mathcal{G})\\\geq&(1-n^{-\delta+o(1)})^2(1-\frac{\Var[|R^*\cap V_1^+||\sigma^*_1]}{\E[|R^*\cap V_1^+||\sigma^*_1]^2})1(\mathcal{G})\\\geq&(1-n^{-\delta+o(1)})^2(1-n^{-(1-(2s^2-s^3)\dconnab)+o(1)})1(\mathcal{G})=(1-o(1))1(\mathcal{G}).
\end{align*}
Together with $\p(\mathcal{G})=1-o(1)$, we have \[\p(n^{1-(2s^2-s^3)\dconnab-\delta}\leq|R^*\cap V_1^+|\leq n^{1-(2s^2-s^3)\dconnab+\delta})=1-o(1).\]
The proof for $|R^*\cap V^-_1|$ is the same.
Now we study $|\bar{R}^*\cap V^+_1|$. Since $R^*\subset\bar{R}^*$,we have the lower bound $\p(|\bar{R}^*\cap V_1^+|\geq|R^*\cap V_1^+|\geq n^{1-(2s^2-s^3)\dconnab-\delta})=1-o(1)$.
For the upper bound,
\[
|\bar{R}^*\cap V_1^+|\leq|\bar{R}^*|\leq \sum_{i\in R^*}(1+|N_H(\pi^*_{12}(i))|)\leq|R^*|(1+\max_{j\in[n]} N_H(j)).
\]
By Lemma~\ref{n(i)}, since $H\sim \SBM(n,(1-(1-s)^2)a\log n/n,(1-(1-s)^2)b\log n/n)$, we have that
$\max_{j\in [n]} N_H(j)\leq 100\max{a,b}(1-(1-s)^2)\log n$ with probability $1-o(1)$.
Hence we have that with high probability \begin{align*}
|\bar{R}^*\cap V^+_1|&\leq (1+100\max{a,b}(1-(1-s)^2)\log n)|R^*|\\&\leq 2 (1+100\max{a,b}(1-(1-s)^2)\log n) n^{1-(2s^2-s^3)\dconnab+\delta}\\&\leq n^{1-(2s^2-s^3)\dconnab+\delta}.
\end{align*}

\subsection{Proof of Lemma~\ref{l2},~\ref{l3}}\label{sl2}
Firstly, we use some useful conditional independence properties given the sigma algebra $\mathcal{I}$.
\begin{lemma}
\label{cd}
Let $i\in R^*$, conditioned on $\mathcal{I}, \{A_{ij}:\{i,j\}\in \mathcal{E}_{100}\} $ is a collection of mutually independent random variables where
\begin{align*}
A_{ij}\sim\begin{cases}
\mathrm{Ber}(a\log n/n) & \sigma^*_1(i)=\sigma^*_1(j),\\
\mathrm{Ber}(b\log n/n) &\sigma^*_1(i)=-\sigma^*_1(j).
\end{cases}
\end{align*}
The random variables $1(i\in S^*)$ and $\sum_{j\in [n]\setminus\bar{R}^*}A_{i,j}\sigma^*_1(j)$ are conditionally independent given $\mathcal{I}$.
\end{lemma}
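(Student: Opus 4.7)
The plan is to invoke the random partition construction from Section~\ref{sec:construction}. There, each pair $\{i,j\}\in\binom{[n]}{2}$ is first assigned to one of the classes $\mathcal{E}_{abc}$, and then a single edge indicator $Z_{ij}$ is drawn independently across pairs, with $Z_{ij}\sim\mathrm{Ber}(p)$ if $\sigma^*_1(i)=\sigma^*_1(j)$ and $Z_{ij}\sim\mathrm{Ber}(q)$ otherwise. Whenever $\{i,j\}\in\mathcal{E}_{abc}$ one has $A_{ij}=a\,Z_{ij}$, $B'_{ij}=b\,Z_{ij}$, $C'_{ij}=c\,Z_{ij}$, and, using the identity $\pi^*_{23}\circ\pi^*_{12}=\pi^*_{13}$ implicit in the construction, $D_{\pi^*_{12}(i),\pi^*_{12}(j)}=(b\vee c)\,Z_{ij}$.

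First I would verify that conditioning on $\mathcal{I}$ leaves, as the only remaining randomness, the values $\{Z_{ij}:\{i,j\}\in\mathcal{E}_{000}\cup\mathcal{E}_{100}\}$, and that these remain mutually independent Bernoullis with their unconditional parameters. Whenever $b+c\geq 1$, the identity above gives $Z_{ij}=D_{\pi^*_{12}(i),\pi^*_{12}(j)}$, which is measurable with respect to $D$ and $\bm{\mathcal{E}}$. Moreover, $R^*$ is itself a function of $D$, $\bm{\mathcal{E}}$, and $\pi^*_{12}$: indeed, $i\in R^*$ iff no pair $\{i,j\}\in\mathcal{E}_{111}\cup\mathcal{E}_{110}\cup\mathcal{E}_{101}$ has $Z_{ij}=1$, a condition depending only on the $Z_{ij}$'s already revealed by $D$. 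Hence including $R^*$ inside $\mathcal{I}$ reveals nothing further about $\{Z_{ij}:\{i,j\}\in\mathcal{E}_{000}\cup\mathcal{E}_{100}\}$. The first claim is then immediate: for $i\in R^*$ and each $\{i,j\}\in\mathcal{E}_{100}$, $A_{ij}=Z_{ij}$, so these are conditionally independent Bernoullis with the stated means.

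For the second claim I would begin by unpacking the definition of $S^*$. Combining conditions (1)--(3) of Definition~\ref{S} and using that $\bar{R}^*=R^*\cup (\pi^*_{12})^{-1}(N_H(\pi^*_{12}(R^*)))$, one checks that $i\in S^*$ iff $i\in R^*$ and $A_{ij}=0$ for every $j\in\bar{R}^*\setminus\{i\}$. Given $\mathcal{I}$, the indicator $1(i\in S^*)$ is therefore a deterministic function of $\{Z_{ij}:\{i,j\}\in\mathcal{E}_{100},\,j\in\bar{R}^*\setminus\{i\}\}$, since for $\{i,j\}$ outside $\mathcal{E}_{100}$ the value $A_{ij}$ is already $\mathcal{I}$-measurable. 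In contrast, $\sum_{j\in[n]\setminus\bar{R}^*}A_{ij}\sigma^*_1(j)$ equals, up to an $\mathcal{I}$-measurable constant, a function of $\{Z_{ij}:\{i,j\}\in\mathcal{E}_{100},\,j\in[n]\setminus\bar{R}^*\}$. Since $\bar{R}^*$ is $\mathcal{I}$-measurable, these two index sets are disjoint, and the conditional independence follows from the first part together with the elementary fact that measurable functions of disjoint subsets of independent variables are independent.

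The main obstacle is the bookkeeping underlying the measurability claims: one must track carefully how $D$ together with the partition $\bm{\mathcal{E}}$ already determines $Z_{ij}$ whenever $b+c\geq 1$, and therefore determines both $R^*$ and $\bar{R}^*$. Once this is in place, the surviving $Z$'s on $\mathcal{E}_{000}\cup\mathcal{E}_{100}$ retain their original independence and distribution, and both parts of the lemma reduce to elementary properties of independent Bernoullis.
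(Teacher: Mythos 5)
Your proposal is correct and follows essentially the same route as the paper's proof: both rely on the random-partition construction (Lemma~\ref{con}) to see that the edges in $\mathcal{E}_{100}$ remain independent Bernoullis given $\mathcal{I}$ (you simply make explicit, via the latent indicators $Z_{ij}$, why $D$ and $\bm{\mathcal{E}}$ determine $R^*$ and $\bar{R}^*$ without revealing anything about $\mathcal{E}_{100}\cup\mathcal{E}_{000}$), and both conclude the second claim by noting that $1(i\in S^*)$ and $\sum_{j\in[n]\setminus\bar{R}^*}A_{ij}\sigma^*_1(j)$ depend on disjoint, $\mathcal{I}$-measurable index sets of these independent variables. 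No gaps.
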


\begin{proof}
Note that the sets $R^*,\bar{R}^*$ only depend on $\pi^*_{12}, H, G_1\wedge_{\pi^*_{12}}H$ . $G_1\setminus_{\pi^*_{12}} H$ is comprised of edges in $\mathcal{E}_{100}$, the graph $H$ is comprised of  edges in $\cup_{j+k>0}\mathcal{E}_{ijk}$, the graph $G_1\wedge_{\pi^*_{12}}H$ is comprised of edges in $\cup_{j+k>0}\mathcal{E}_{1jk}$. Thus by lemma~\ref{con}, $G_1\setminus_{\pi^*_{12}} H$ is conditionally independent of $R^*,\bar{R}^*$ given $\bm{\pi}^*,\sigma^*_1$ and the partition $\bm{\mathcal{E}}$. In particular, $\{A_{i,j}:\{i,j\}\in \mathcal{E}_{100}\}$ is conditionally independent of $\mathcal{I}$.

Note that $1(i\in S^*)=1(i\in R^*)1(A_{i,j}=0,\forall j \in\bar{R}^*)$. Hence $1(i\in S^*)$ is measurable with respect to the sigma algebra generated by $\mathcal{I}$ and the collection $C_1:=\{A_{i,j}:j\in\bar{R}^*,\{i,j\}\in \mathcal{E}_{100}\}$. On the other hand, since $\bar{R}^*$ is $\mathcal{I}-$ measurable, $\sum_{j\in [n]\setminus\bar{R}^*}A_{i,j}\sigma^*_1(j)$ is measurable with respect to the sigma algebra generated by $\mathcal{I}$ and the collection $C_2:=\{A_{i,j}:j\in[n]\setminus\bar{R}^*,\{i,j\}\in \mathcal{E}_{100}\}$. $C_1\cap C_2=\emptyset$, the independence of $A_{i,j}$ implies that the two random variables are conditionally independent given $\mathcal{I}$.
\end{proof}

\begin{lemma}\label{ss}
For any $\delta >0$, $i\in R^*$,it holds for sufficiently large $n$ that
\[\p(i\in S^*|\mathcal{I})1(\mathcal{G}_{\delta})\geq (1-n^{-(2s^2-s^3)\dconnab+2\delta})1(\mathcal{G}_{\delta}).\]
\end{lemma}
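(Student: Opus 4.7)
The plan is to exploit the reformulation of $S^*$ noted just after Definition~\ref{S}: on the event $\{i \in R^*\}$, we have $i \in S^*$ if and only if $A_{ij} = 0$ for every $j \in \bar{R}^* \setminus \{i\}$. Since both $1(i \in R^*)$ and the set $\bar{R}^*$ are $\mathcal{I}$-measurable, the task reduces to lower-bounding $\p(A_{ij} = 0 \text{ for all } j \in \bar{R}^* \setminus \{i\} \mid \mathcal{I})$ on the event $\{i \in R^*\}$.

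The key step is a case analysis that isolates the genuinely random $A_{ij}$'s given $\mathcal{I}$. First, for any $j \in \bar{R}^* \setminus \{i\}$ with $D_{\pi^*_{12}(i), \pi^*_{12}(j)} = 1$, the condition $i \in R^*$ (which is $\mathcal{I}$-measurable) already forces $A_{ij} = 0$. Next, for $j$ with $D_{\pi^*_{12}(i), \pi^*_{12}(j)} = 0$ and $\{i,j\} \notin \mathcal{E}_{100}$, the value $A_{ij}$ is deterministically $0$: indeed, if $\{i,j\} \in \mathcal{E}_{abc}$ with $b+c > 0$, then the absence of the edge in $H$ forces the parent edge not to have formed (otherwise $B'_{ij} \vee C'_{ij} = 1$), and if $\{i,j\} \in \mathcal{E}_{000}$ the conclusion is trivial. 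Finally, for $j$ with $\{i,j\} \in \mathcal{E}_{100}$, the value $D_{\pi^*_{12}(i),\pi^*_{12}(j)} = 0$ is automatic, and by Lemma~\ref{cd} the family $\{A_{ij} : \{i,j\} \in \mathcal{E}_{100}\}$ is conditionally independent given $\mathcal{I}$, with success probability at most $\nu \log n / n$ where $\nu := \max(a,b)$.

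Combining these observations, with $J := \{j \in \bar{R}^* \setminus \{i\} : \{i,j\} \in \mathcal{E}_{100}\}$, on $\{i \in R^*\}$ one gets
\[
\p(i \in S^* \mid \mathcal{I}) \;=\; \prod_{j \in J} \bigl(1 - \p(A_{ij}=1 \mid \mathcal{I})\bigr) \;\geq\; \left(1 - \frac{\nu \log n}{n}\right)^{|\bar{R}^*|}.
\]
On $\mathcal{G}_\delta$ we have $|\bar{R}^*| \leq |\bar{R}^* \cap V_1^+| + |\bar{R}^* \cap V_1^-| \leq 2 n^{1-(2s^2-s^3)\dconnab + \delta}$, so Bernoulli's inequality yields
\[
\p(i \in S^* \mid \mathcal{I}) \cdot 1(\mathcal{G}_\delta) \;\geq\; \Bigl(1 - 2\nu (\log n) \cdot n^{-(2s^2-s^3)\dconnab + \delta}\Bigr)\, 1(\mathcal{G}_\delta) \;\geq\; \bigl(1 - n^{-(2s^2-s^3)\dconnab + 2\delta}\bigr)\, 1(\mathcal{G}_\delta)
\]
for $n$ large enough, since $2\nu \log n \leq n^{\delta}$ eventually. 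The main obstacle is the case analysis above, which is where the random-partition construction of Section~\ref{sec:construction} is essential; once that is in place, the bound is routine.
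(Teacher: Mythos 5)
Your proposal is correct and follows essentially the same route as the paper: both reduce membership in $S^*$ (given $i \in R^*$) to the vanishing of the conditionally independent Bernoulli variables $A_{ij}$ for $j \in \bar{R}^*$ with $\{i,j\} \in \mathcal{E}_{100}$ via Lemma~\ref{cd}, then apply Bernoulli's inequality and the bound $|\bar{R}^*| \leq 2n^{1-(2s^2-s^3)\dconnab+\delta}$ on $\mathcal{G}_\delta$. The only cosmetic difference is that you make the case analysis (why the non-$\mathcal{E}_{100}$ pairs contribute nothing random) explicit and bound both edge probabilities by $\nu = \max(a,b)$, whereas the paper restricts directly to the sets $C^{\pm}(i)$ and tracks $a$ and $b$ separately.
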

\begin{proof}
For $ i\in R^*$, define the following random sets that are $\mathcal{I}-$ measurable:
\begin{align*}
C^+(i):&=\{j\in \bar{R}^*:\{i,j\}\in\mathcal{E}_{100}\cap\mathcal{E}^+(\sigma^*_1)\},
\\C^-(i):&=\{j\in \bar{R}^*:\{i,j\}\in\mathcal{E}_{100}\cap\mathcal{E}^-(\sigma^*_1)\}.
\end{align*}
Note that $i\in S^*$ if and only if $i\in R^*$ and $A_{i,j}=0$, $\forall j\in C^+(i)\cup C^-(i)$ conditioned on $\mathcal{I}$. Hence  by Lemma~\ref{cd}:
\begin{align*}
\p(i\in S^*|\mathcal{I})
&= \p(A_{i,j}=0,\forall j\in C^+(i)\cup C^-(i)|\mathcal{I})
= (1-a\log n/n)^{|C^+(i)|}(1-b\log n/n)^{|C_(i)|}\\
&\geq (1-a|C^+(i)|\log n/n)(1-b|C^-(i)|\log n/n)\\
&\geq 1-(a|C^+(i)|+b|C^-(i)|)\log n/n.
\end{align*}
The first inequality is because of Bernoulli's inequality. Note that on event $G_{\delta},|C^+(i)|,|C^-(i)|\leq|\bar{R}^*|\leq 2n^{1-(2s^2-s^3)\dconnab+\delta}$. Thus \[\p(i\in S^*|\mathcal{I})1(\mathcal{G}_{\delta})\geq (1-2(a+b) n^{-(2s^2-s^3)\dconnab+\delta}\log n)1(\mathcal{G}_{\delta}).\] Since $2(a+b)\log n\leq n^{\delta}$, the lemma follows.
\end{proof}
Now, define the random variable \begin{align*}
X_{i}:=\begin{cases}
\P(\sum_{j\in [n]\setminus\bar{R}^*}A_{i,j}\sigma^*_1(j)<0|\mathcal{I})& i\in R^*\cap V_1^+,\\
\P(\sum_{j\in [n]\setminus\bar{R}^*}A_{i,j}\sigma^*_1(j)>0|\mathcal{I})& i\in R^*\cap V_1^-.
\end{cases}
\end{align*}
Then we will study $X_i$ on the event $\mathcal{F}\cap G_{\delta}$.
\begin{lemma}\label{xi}
For $i\in R^*$, \[X_i1(\mathcal{F}\cap G_{\delta})=n^{-s(1-s)^2\dchab+o(1)}1(\mathcal{F}\cap G_{\delta}).\]
\end{lemma}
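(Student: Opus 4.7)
The plan is to use Lemma~\ref{cd} to reduce $X_i$ to the tail probability of a difference of independent binomials, and then apply Lemma~\ref{bin} (together with a matching lower bound) to get the Chernoff--Hellinger asymptotic with effective parameters $s(1-s)^2 a$ and $s(1-s)^2 b$.

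First I would invoke Lemma~\ref{cd} to observe that, conditionally on $\mathcal{I}$, the $\mathcal{E}_{100}$-edges incident to $i$ and landing in $[n]\setminus\bar{R}^*$ are mutually independent Bernoullis with parameter $a\log n/n$ or $b\log n/n$, according as the other endpoint has the same or opposite label as $\sigma_1^*(i)$. Partitioning $[n]\setminus\bar{R}^*$ into the sets
\[
T^{\pm}(i):=\{j\in[n]\setminus\bar{R}^*:\{i,j\}\in\mathcal{E}_{100},\ \sigma_1^*(j)=\pm\sigma_1^*(i)\},
\]
the $\mathcal{I}$-conditional distribution of $\sigma_1^*(i)\sum_{j\in[n]\setminus\bar{R}^*}A_{ij}\sigma_1^*(j)$ equals that of $Y-Z$, where $Y\sim\Bin(|T^+(i)|,a\log n/n)$ and $Z\sim\Bin(|T^-(i)|,b\log n/n)$ are independent. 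Hence, whether $i\in R^*\cap V_1^+$ or $i\in R^*\cap V_1^-$, $X_i$ equals $\p(Y-Z<0\mid\mathcal{I})$ (up to swapping $a\leftrightarrow b$, which leaves $\dchab$ invariant).

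Next, on the event $\mathcal{F}\cap G_\delta$, the cardinality bounds in Definition~\ref{balance} with $s_{100}=s(1-s)^2$, combined with $|\bar R^*|\le 2n^{1-(2s^2-s^3)\dconnab+\delta}=o(n)$, give $|T^{\pm}(i)|=(1+o(1))s(1-s)^2\,n/2$. The upper bound
\[
X_i\,\mathbf{1}(\mathcal{F}\cap G_\delta)\le n^{-s(1-s)^2\dchab+o(1)}\mathbf{1}(\mathcal{F}\cap G_\delta)
\]
then follows from Lemma~\ref{bin} applied with effective parameters $(s(1-s)^2 a,\,s(1-s)^2 b)$, upon noting the homogeneity identity $\dch(c\alpha,c\beta)=c\,\dch(\alpha,\beta)$ for $c>0$. (Equivalently, since the binomial parameter is $o(1)$ here, $Y$ and $Z$ are well-approximated by Poissons whose means match $\Bin((1+o(1))n/2, s(1-s)^2 a\log n/n)$ and $\Bin((1+o(1))n/2, s(1-s)^2 b\log n/n)$, which is the exact regime covered by Lemma~\ref{bin}.)

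For the matching lower bound $X_i\,\mathbf{1}(\mathcal{F}\cap G_\delta)\ge n^{-s(1-s)^2\dchab-o(1)}\mathbf{1}(\mathcal{F}\cap G_\delta)$, I would adapt a standard exponential tilt combined with a local central limit theorem for $Y-Z$ near the Chernoff--Hellinger minimizer, as in the sharp-threshold analyses of~\cite{abbe2015exact,mossel2016consistency,gaudio2022exact}; concretely, one restricts to the event that $Y-Z$ lands in a $\Theta(1)$-window below zero and evaluates the probability of that window via the local CLT at the tilted measure. The main technical care required is that the $o(1)$ in the exponent must be uniform over realizations of $\mathcal{I}$ on $\mathcal{F}\cap G_\delta$; this uniformity is precisely what the tight $(1\pm o(1))$ cardinality control in $\mathcal{F}$ delivers, since the binomial parameters $|T^\pm(i)|$ are then pinned down up to $(1+o(1))$ factors, which is enough accuracy for the polynomial-exponent estimate.
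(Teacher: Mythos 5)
Your proposal matches the paper's proof: the paper likewise uses Lemma~\ref{cd} to write $\sigma_1^*(i)\sum_{j\in[n]\setminus\bar R^*}A_{ij}\sigma_1^*(j)\overset{d}{=}Y-Z$ with independent binomials on $\mathcal{E}_{100}$-pairs, uses $\mathcal{F}\cap G_\delta$ to pin the binomial sizes at $(1\pm o(1))s(1-s)^2 n/2$, and then invokes Lemma~\ref{bin} with the rescaled parameters to get $n^{-s(1-s)^2\dchab+o(1)}$. The only difference is that you also sketch the matching lower bound (via tilting and a local CLT, with the correct observation that uniformity over $\mathcal{I}$ is the point needing care), whereas the paper's written proof only derives the upper bound even though the lemma is stated as an equality and the lower bound is what is actually used in Lemma~\ref{l2}; your version is, if anything, the more complete one.
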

\begin{proof}
Suppose $i\in R^*\cap V_1^+$. Note that $\bar{R}^*,\sigma^*_1$ are $\mathcal{I}-$ measurable. By Lemma~\ref{cd}:\[\sum_{j\in [n]\setminus\bar{R}^*}A_{i,j}\sigma^*_1(j)\overset{d}{=} Y-Z\]\[Y\sim \Bin(|\{j\in [n]\setminus\bar{R}^*:\{i,j\}\in\mathcal{E}_{100}\cap\mathcal{E}^+(\sigma^*_1)|,a\log n/n),\]\[Z\sim \Bin(|\{j\in [n]\setminus\bar{R}^*:\{i,j\}\in\mathcal{E}_{100}\cap\mathcal{E}^-(\sigma^*_1)|,b\log n/n).\]
Where $Y,Z$ are independent. For brevity, suppose $Y\sim \Bin(y,p)$, $Z\sim \Bin(z,q)$. On the event $\mathcal{F}\cap G_{\delta}$, the upper bound for $y$: \begin{align*}
y\leq |\{j\in [n]:\{i,j\}\in\mathcal{E}_{100}\cap\mathcal{E}^+(\sigma^*_1)|\leq s(1-s)^2(n/2+2n^{3/4})=(1+o(1))s(1-s)^2n/2.
\end{align*}
The lower bound for $y$:
\begin{align*}
    y&\geq|\{j\in [n]:\{i,j\}\in\mathcal{E}_{100}\cap\mathcal{E}^+(\sigma^*_1)|-|\bar{R}^*|\\&\geq s(1-s)^2(n/2-2n^{3/4})-n^{1-(2s^2-s^3)\dconnab+\delta}=(1-o(1))s(1-s)^2n/2.
\end{align*}
Same calculation for $z$ and we can derive:
\[(1-o(1))s(1-s)^2n/2\leq z\leq(1+o(1))s(1-s)^2n/2.\]
 We can rewrite $p$, $q$ as $p=(1+o(1)as(1-s)^2\log(s(1-s)^2n)/(s(1-s)^2n))$, $q=(1+o(1)bs(1-s)^2\log(s(1-s)^2n)/(s(1-s)^2n))$, hence by Lemma~\ref{bin}, we have
\begin{align*}
\p(\sum_{j\in [n]\setminus\bar{R}^*}A_{i,j}\sigma^*_1(j)<0|\mathcal{I})1(\mathcal{F}\cap G_{\delta})
= \p(Y-Z<0|\mathcal{I})1(\mathcal{F}\cap G_{\delta})
\leq n^{-s(1-s)^2\dchab+o(1)}.
\end{align*}
The same argument works for $i\in R^*\cap V_1^-$.
\end{proof}

\begin{proof}[Proof of Lemma \ref{l2}]
For $i\in R^*\cap V_1^+$, we have that
\begin{align*}
\E[W_i|\mathcal{I}]1(\mathcal{F}\cap \mathcal{G}_{\delta})
&= \p(i\in S^*, \sum_{j\in[n]\setminus\bar{R}^*}A_{i,j}\sigma^*_1(j)<0|\mathcal{I})1(\mathcal{F}\cap \mathcal{G}_{\delta})\\
&= \p(i\in S^*|\mathcal{I})\p(\sum_{j\in[n]\setminus\bar{R}^*}A_{i,j}\sigma^*_1(j)<0|\mathcal{I})1(\mathcal{F}\cap \mathcal{G}_{\delta}).
\end{align*}
The first equality holds because $A_{i,j}=0$ if $i\in S^*,j\in \bar{R}^*$. The second equality exits because the conditional independence in Lemma~\ref{cd}. By Lemm~\ref{ss} and~\ref{xi}, we have that
\begin{align*}
\E[W_i|\mathcal{I}]1(\mathcal{F}\cap \mathcal{G}_{\delta})
&\geq (1-n^{-(2s^2-s^3)\dconnab+2\delta})X_i1(\mathcal{F}\cap \mathcal{G}_{\delta})\\
&\geq (1-n^{-(2s^2-s^3)\dconnab+2\delta})n^{-s(1-s)^2\dchab+o(1)}1(\mathcal{F}\cap \mathcal{G}_{\delta}).
\end{align*}
Summing over $i\in R^*\cap V^+$, we have that
\begin{align*}
\E[\sum_{i\in R^*\cap V^+}W_i|\mathcal{I}]1(\mathcal{F}\cap \mathcal{G}_{\delta})
&\geq | R^*\cap V^+|(1-n^{-(2s^2-s^3)\dconnab+2\delta})n^{-s(1-s)^2\dchab+o(1)}1(\mathcal{F}\cap \mathcal{G}_{\delta})\\
&\geq (1-n^{-(2s^2-s^3)\dconnab+2\delta})n^{1-(2s^2-s^3)\dconnab-s(1-s)^2\dchab-\delta+o(1)}1(\mathcal{F}\cap \mathcal{G}_{\delta}).
\end{align*}
The second inequality is because on $\mathcal{G}_{\delta},|R^*\cap V^+|\geq n^{1-(2s^2-s^3)\dconnab-\delta}$. The proof of lower bound for $i\in R^*\cap V_1^-$ is the same.
\end{proof}
\begin{proof}[Proof of Lemma \ref{l3}]
Suppose $i\neq j\in R^*\cap V_1^+$, we have\begin{align*}
\E[W_iW_j|\mathcal{I}]
&=\p(i,j\in S^*, \sum_{k\in[n]\setminus\bar{R}^*}A_{i,k}\sigma^*_1(k)<0,\sum_{k\in[n]\setminus\bar{R}^*}A_{j,k}\sigma^*_1(k)<0|\mathcal{I})\\
&\leq \p(\sum_{k\in[n]\setminus\bar{R}^*}A_{i,k}\sigma^*_1(k)<0,\sum_{k\in[n]\setminus\bar{R}^*}A_{j,k}\sigma^*_1(k)<0|\mathcal{I})\\
&=\p(\sum_{k\in[n]\setminus\bar{R}^*}A_{i,k}\sigma^*_1(k)<0|\mathcal{I})\p(\sum_{k\in[n]\setminus\bar{R}^*}A_{j,k}\sigma^*_1(k)<0|\mathcal{I})=X_iX_j.
\end{align*}
The second inequality is because  by Lemma~\ref{cd}, $\{A_{i,k}\}_{k\in[n]\setminus\bar{R}^*}\}$ and $\{A_{j,k}\}_{k\in[n]\setminus\bar{R}^*}\}$ are conditionally independent given $\mathcal{I}$.
Consider the case $i=j$,\[\E[W_i^2|\mathcal{I}]=\p(i\in S^*, \sum_{j\in[n]\setminus\bar{R}^*}A_{i,j}\sigma^*_1(j)<0|\mathcal{I})\leq X_i.\]
Summing over $i\in R^*\cap V_1^+$:
\begin{align*}
\E[(\sum_{i\in R^*\cap V^+}W_i)^2|\mathcal{I}]
&= \sum_{i\in R^*\cap V^+}\E[W_i|\mathcal{I}]+\sum_{i\neq j \in R^*\cap V^+}\E[W_iW_j|\mathcal{I}]\\
&\leq \sum_{i\in R^*\cap V^+}X_i+\sum_{i\neq j \in R^*\cap V^+}X_iX_j
\leq \sum_{i\in R^*\cap V^+}X_i+(\sum_{i \in R^*\cap V^+}X_i)^2.
\end{align*}
Note that
\begin{align*}
\E[\sum_{i\in R^*\cap V_1^+} W_i|\mathcal{I}]^21(\mathcal{F}\cap \mathcal{G}_{\delta})
&=(\sum_{i\in R^*\cap V_1^+}\p(i\in S^*,\sum_{j\in[n]\setminus\bar{R}^*}A_{i,j}\sigma^*_1(j)<0|\mathcal{I}))^21(\mathcal{F}\cap \mathcal{G}_{\delta})\\
&\geq (1-n^{-(2s^2-s^3)\dconnab+2\delta})^2(\sum_{i\in R^*\cap V_1^+}X_i)^21(\mathcal{F}\cap \mathcal{G}_{\delta}).
\end{align*}
Hence we have

\begin{align*}
&\Var(\sum_{i\in R^*\cap V_1^+} W_i|\mathcal{I})1(\mathcal{F}\cap \mathcal{G}_{\delta})\\=&\left(\E[(\sum_{i\in R^*\cap V_1^+}W_i)^2|\mathcal{I}]-\E[\sum_{i\in R^*\cap V_1^+} W_i|\mathcal{I}]^2\right)1(\mathcal{F}\cap \mathcal{G}_{\delta})\\\leq &\left( \sum_{i\in R^*\cap V_1^+}X_i+(\sum_{i \in R^*\cap V_1^+}X_i)^2\left(1-(1-n^{-(2s^2-s^3)\dconnab+2\delta})^2\right)\right)1(\mathcal{F}\cap \mathcal{G}_{\delta})\\\leq &\left( \sum_{i\in R^*\cap V_1^+}X_i+2(\sum_{i \in R^*\cap V_1^+}X_i)^2n^{-(2s^2-s^3)\dconnab+2\delta}\right)1(\mathcal{F}\cap \mathcal{G}_{\delta})\\\leq& \left(|R^*\cap V_1^+|n^{-s(1-s)^2\dchab+o(1)}+2\left(|R^*\cap V_1^+|n^{-s(1-s)^2\dchab+o(1)}\right)^2n^{-(2s^2-s^3)\dconnab+2\delta}\right)1(\mathcal{F}\cap G_{\delta})\\\leq& (n^{\theta+\delta+o(1)}+2n^{2\theta+2\delta-(2s^2-s^3)\dconnab+2\delta+o(1)})1(\mathcal{F}\cap G_{\delta}).
\end{align*}
The second inequality uses the fact that $1-(1-n^{-x})^2\leq 2n^{-x}$. The third inequality exists by Lemma~\ref{l1},~\ref{xi}. Now, for further simplying the upper bound, note that if $\delta$ is small enough
(specifically, $\delta<\min((2s^2-s^3)\dconnab/8,\theta/4)$),
then
$\theta+\delta+o(1)<2\theta-3\delta$,
and $2\theta+4\delta-(2s^2-s^3)\dconnab+o(1)<2\theta-3\delta$.
Hence\[\Var(\sum_{i\in R^*\cap V_1^+} W_i|\mathcal{I})1(\mathcal{F}\cap \mathcal{G}_{\delta})\leq n^{2\theta-3\delta}1(\mathcal{F}\cap \mathcal{G}_{\delta}).\]
 The proof for $\Var(\sum_{i\in R^*\cap V_1^-} W_i|\mathcal{I})1(\mathcal{F}\cap \mathcal{G}_{\delta})$ is the same.
\end{proof}

\section{Proof of Theorem~\ref{thm1} for $K$ graphs}\label{sec:proof3}
\subsection{Categorization of vertices}

We start with the definition of categorizing vertices as ``good'' and ``bad'' vertices. To begin with, we define a metagraph for each vertex. Then we categorize each vertex as ``good'' and ``bad'' according to the connectivity of the metagraph for the vertex.
\begin{definition}
    \label{connectgraph}
    Given $(G_1,\ldots.,G_K)\sim\CSBM(n,a\frac{\log n}{n},b\frac{\log n}{n},s)$, and $\binom{K}{2}$ partial $k$-core matchings $\bm{\widehat{\mu}}:=\{\widehat{\mu}_{ij}: i\neq j\in [K]\}$. For any vertex $v$, define the following graph matching metagraph for the vertex $v$, denoted it as $\mathcal{MG}_v$. $\mathcal{E}(\mathcal{MG}_v)$ denotes the edge set in the graph $\mathcal{MG}_v$. There are $K$ nodes in $\mathcal{MG}_v$, where node $i$ represents the graph $G_i$, and an edge exists between $(i,j)$, that is, $(i,j)\in \mathcal{E}(\mathcal{MG}_v)$ if and only if vertex $v$ can be matched in the partial matching $\widehat{\mu}_{ij}$ between $G_i$ and~$G_j$.
\end{definition}

Note that $\mathcal{MG}_v$ is an undirected graph.
This is because of an inherent symmetry in the definition of a $k$-core matching, which looks at the $k$-core of the intersection graph of the two matched graphs.
Thus for $k$-core matchings, a vertex is matched by $\widehat{\mu}_{ij}$ if and only if it is matched by $\widehat{\mu}_{ji}$.
This property does not necessarily hold for other graph matching algorithms.

\begin{definition}    \label{uncon}
    Given $(G_1,\ldots,G_K)\sim\CSBM(n,a\frac{\log n}{n},b\frac{\log n}{n},s)$, and $\binom{K}{2}$ partial $k$-core matchings $\bm{\widehat{\mu}}:=\{\widehat{\mu}_{ij}: i\neq j\in [K]\}$, we define a vertex $v$ to be  ``good'' if and only if $\mathcal{MG}_v$ is connected. Conversely, a vertex $v$ is ``bad'' if and only if $\mathcal{MG}_v$ is disconnected.
\end{definition}

 For a ``bad'' vertex $v$, since $\mathcal{MG}_v$ is disconnected, the metagraph has at least two disjointed components. Hence, there must exist two sets $\Gamma_{g}(v),\Gamma_b(v)$ satisfying $\Gamma_{g}(v)\cap\Gamma_{b}(v)=\emptyset,\Gamma_g(v)\cup\Gamma_g(b)=[K]$ and for any $i\in \Gamma_{g}(v),j\in \Gamma_b(v)$, $(i,j)$ is not an edge in $\mathcal{MG}_v$. In other words, $v$ cannot be matched for any matching between the graph $G_i,i\in \Gamma_{g}(v) $ and the graph $G_j,j\in\Gamma_b(v)$. Heuristically, the definition  implies that ``bad'' vertices cannot utilize the combined information for all $K$ graphs.

Otherwise,the ``good'' vertices can utilize the combined information for all $K$ graphs, as shown in Lemma~\ref{goodn}.
\begin{lemma}
\label{goodn}
For a ``good'' vertex $v$ defined in Definition~\ref{uncon}, for any two node i,j (represents two graphs $G_i$, $G_j$), there exists a path $i:=\ell_0 -\ell_1-\ell_2\ldots-\ell_d:=j$ such that $v$ can be matched for $\widehat{\mu}_{\ell_m\ell_{m+1}},m\in\{0,1,...,k-1\}$. Define $\widehat{\pi}_{ij}(v):=\widehat{\mu}_{\ell_{d-1}\ell_{d}}\circ\widehat{\mu}_{\ell_{d-2}\ell_{d-1}}\circ\ldots\circ\widehat{\mu}_{\ell_{1}\ell_{2}}\circ\widehat{\mu}_{\ell_{0}\ell_{1}}(v)$.
\end{lemma}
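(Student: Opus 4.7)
The proof plan is essentially a direct unpacking of the definitions, with a light appeal to Lemma~\ref{remark} to ensure the composition is meaningful. Since $v$ is a ``good'' vertex, by Definition~\ref{uncon} the metagraph $\mathcal{MG}_v$ is connected. By the elementary fact that a connected graph contains a path between any pair of its vertices, for every choice of $i, j \in [K]$ there exists a path $i = \ell_0 - \ell_1 - \cdots - \ell_d = j$ in $\mathcal{MG}_v$. By Definition~\ref{connectgraph}, each edge $(\ell_m, \ell_{m+1})$ on this path corresponds precisely to the condition that vertex $v$ is matched by $\widehat{\mu}_{\ell_m \ell_{m+1}}$, which establishes the first assertion of the lemma.

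It remains to check that the composition $\widehat{\pi}_{ij}(v) := \widehat{\mu}_{\ell_{d-1}\ell_d} \circ \widehat{\mu}_{\ell_{d-2}\ell_{d-1}} \circ \cdots \circ \widehat{\mu}_{\ell_0 \ell_1}(v)$ is well-defined at $v$. For this I would invoke Lemma~\ref{remark}, which guarantees that with high probability each partial $k$-core matching $\widehat{\mu}_{\ell_m \ell_{m+1}}$ coincides with the restriction of the true permutation $\pi^*_{\ell_m \ell_{m+1}}$ to the maximal $k$-core $M^*_{\ell_m \ell_{m+1}}$ of $G_{\ell_m} \wedge_{\pi^*_{\ell_m \ell_{m+1}}} G_{\ell_{m+1}}$. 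Consequently, whenever the underlying parent-graph vertex identified with $v$ is matched by two consecutive matchings on the path, the image $\widehat{\mu}_{\ell_{m-1} \ell_m}(v)$ is exactly the $G_{\ell_m}$-label of that parent vertex, which lies in the domain $M^*_{\ell_m \ell_{m+1}} = \widehat{M}_{\ell_m \ell_{m+1}}$ of the next matching. Iterating along the path yields a well-defined composition and produces $\widehat{\pi}_{ij}(v) = \pi^*_{\ell_0 \ell_d}(v)$ with high probability.

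The lemma is fundamentally a bookkeeping statement rather than a technical one, and there is no real obstacle beyond ensuring that the vertex labels align correctly across different graphs under the permutations $\pi^*_{1k}$; this alignment is provided directly by the correctness of $k$-core matchings established in Lemma~\ref{remark}. The substantive use of this lemma will come later, when we exploit the existence of $\widehat{\pi}_{ij}(v)$ to transport edge information between the graphs $G_i$ and $G_j$ for ``good'' vertices, thereby enabling majority votes in the union graph across all $K$ graphs.
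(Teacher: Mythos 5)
Your proposal is correct and follows essentially the same route as the paper: connectedness of $\mathcal{MG}_v$ gives a path between any two nodes, and the composition along that path defines $\widehat{\pi}_{ij}(v)$. Your extra care about well-definedness of the composition (via Lemma~\ref{remark}) is a point the paper only addresses in the remark following the lemma, where the same appeal to $\widehat{\mu}_{ij}=\pi^*_{ij}$ w.h.p.\ is used to show the choice of path is immaterial.
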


\begin{proof}
For a ``good'' vertex $v$, since $\mathcal{MG}_v$ is connected, for any two nodes $i,j\in [K]$, there exists a path $\psi_{ij}(v):=\{\ell_0-\ell_1-\ldots-\ell_{d-1}-\ell_d,\ell_0=i,\ell_d=j,(\ell_m,\ell_{m+1})\in\mathcal{E}(\mathcal{MG}_v)\}$. We can use the path $\psi_{ij}(v)$ to define the $\widehat{\pi}_{ij}(v)$
\end{proof}
\textbf{Remark:} Note that such a path is not unique. However, the choice of path does not matter whp. By Lemma~\ref{remark}, for $k$-core paritial matching, $\widehat{\mu}_{ij}=\pi^*_{ij},i,j\in[K]$, with high probability. Hence, for two different paths with endpoints being $i,j\in[K]$, denoted by $\psi^1_{ij},\psi^2_{ij}$, we can define $\widehat{\pi}_{ij}^1,\widehat{\pi}_{ij}^2$ based on the two paths $\psi^1_{ij},\psi^2_{ij}$ separately. Then, with high probability $\widehat{\pi}_{ij}^1=\pi^*_{ij}=\widehat{\pi}_{ij}^2$.

By Lemma~\ref{goodn} and its remark, we can have the union graph of $K$ graphs $(G_1\vee G_2\vee G_3 \vee \ldots\vee G_K)$ for the ``good'' vertices, using the matchings $\bm{\widehat{\pi}}:=
(\widehat{\pi}_{12},\widehat{\pi}_{13},\ldots,\widehat{\pi}_{1K})$ where $\widehat{\pi}_{1K}$ is defined in Lemma~\ref{goodn}. If there are multiple paths existing, pick the shortest one, and break ties in lexicographic order.

\subsection{Exact community recovery algorithm for $K$ graphs}
The key steps of the exact community recovery algorithm for $K$ graphs are essentially the same as the algorithm for three graphs. Based on a given almost exact community recovery $\widehat{\sigma}_1$ and $\binom{K}{2}$ $k$-core matchings pairwise, we divide the vertices into two categories:``good'' and ``bad'' vertices according to Definition \ref{connectgraph}, \ref{uncon}. Then refine the community label according to the majority votes for the ``good'' and ``bad'' vertices sequentially, to obtain the exact community recovery label under the given conditions.
The full algorithm for exact community recovery is given in Algorithm~\ref{alg 3k}:
\begin{algorithm}
\caption{ Community Recovery for $K$ graphs}
\label{alg 3k}
\begin{flushleft}
    \textbf{Input:} {$K$ graphs $(G_1, G_2,\ldots,G_K)$ on $n$ vertices, $k =13$, and $\epsilon > 0$}. \\
    \textbf{Output:} {A labeling of $[n]$ given by $\widehat{\sigma}$.}
\end{flushleft}
\begin{algorithmic}[1]
\STATE Apply \cite[Algorithm~1]{mossel2016consistency} to the graph $G_1$ and parameters $(sa,sb,\epsilon)$, obtaining a label $\widehat{\sigma_1}$.
\STATE Apply Algorithm~\ref{alg 1} on input $(G_i, G_j, k)$, obtaining a matching $(\widehat{M}_{ij}, \widehat{\mu}_{ij}), i\neq j\in\{1,2,..,K\}$.

\STATE For ``good'' vertices $v$, look at the set $\Psi:=\{\widehat{\mu}_{ij},i,j\in[K]: (i,j)\in \mathcal{E}(\mathcal{MG}_v)\}$, by Lemma~\ref{goodn}, we can define the $\bm{\widehat{\pi}}:=(\widehat{\pi}_{12},\widehat{\pi}_{13},\ldots,\widehat{\pi}_{1K})$ to obtain the union graph of $K$ graphs based on the matchings from $\Psi$, denote it as $(G_1\vee G_2\ldots\vee G_K)_{\Psi}$. Denote $M:=\cap M_{ij}$, where $(i,j)$ satisfying $\widehat{\mu}_{ij}\in\mathcal{MG}_v$. Set $\widehat{\sigma}(v) \in \{-1, 1\}$ according to the neighborhood majority (resp., minority) of $\widehat{\sigma_1}(v)$ with respect to the graph $(G_1 \vee G_{2}\ldots\vee G_{K})_{\Psi}\{M\}$ if $a>b$ (resp., $a<b$).

\STATE For ``bad'' vertices $v$, denote $\phi:=\{j\in[K]: (1,j)\in \mathcal{E}(\mathcal{MG}_v)\}$. Denote $M:=\cap_{i\in[K]} M_{1i}$, set $\widehat{\sigma}(v) \in \{-1, 1\}$ according to the neighborhood majority (resp., minority) of $\widehat{\sigma}(v)$ with respect to the graph $ G_1\setminus_{j\notin\phi} G_j  (M \cup \{v\})$ if $a>b$ (resp., $a<b$).

\STATE Return $\widehat{\sigma}:[n] \to \{-1,1\}$.
\end{algorithmic}
\end{algorithm}

\subsection{Analysis of the $k$-core estimator}

Here we quantify the size of the``bad'' vertices. Suppose that for vertex $v$, $\Gamma_g=\{G_1,G_2,\ldots,G_L\}$ and $\Gamma_{b}=\{G_{L+1},G_{L+2},\ldots,G_{K}\}$, here $1<L<K-1$, $v$ cannot be matched for all the matchings between one graph from $\Gamma_a$ and another graph from $\Gamma_b$. Apparently, $v$ cannot be matched for $L(K-L)$ matchings. Heuristically, as the number of mathings that cannot be matched for vertices increases, the size of such vertices decreases, since every matching would match $(1-o(1))n$ vertices and only a very small fraction of vertices that cannot be matched. The following lemma demonstrated the claim.

\begin{lemma}
\label{fup2k}
Suppose that $G_1,G_2,\ldots, G_K$ are independently subsampled with probability $s$ from a parent graph $G\sim \SBM(n,a\log n/n,b\log n/n)$ for $a,b>0$. Let $F^*_{ij}$ be the set of vertices outside the $k$-core of $G_i\wedge_{\pi_{ij}} G_j$ with $k=13$. For $1\leq L \leq K-1$ and every $\delta > 0$, with probability $1-o(1)$ we have that
$|\cap_{1\leq i\leq L, L+1\leq j\leq K} F^*_{ij}|\leq n^{1-s(1-(1-s)^{K-1})\dconnab+\delta}$.
\end{lemma}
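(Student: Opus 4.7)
The plan is to adapt the first-moment argument from the proof of Lemma~\ref{fup} (which handled $K=3$, $L=1$) to arbitrary $K$ and $L$. Fix an integer $m > 2/(s^{2}(a+b))$ and let $U_{ij}$ denote the set of vertices with degree at most $m+k$ in $G_i \wedge_{\pi^{*}_{ij}} G_j$. Since each $G_i \wedge_{\pi^{*}_{ij}} G_j$ is marginally an $\SBM(n, s^{2} a \log n / n, s^{2} b \log n / n)$, Lemma~\ref{dp} together with a union bound over the $\binom{K}{2}$ pairs gives $F^{*}_{ij} \subset U_{ij}$ for every $(i,j)$ with high probability. Hence it suffices to establish
\begin{equation*}
\E\Bigl[\Bigl|\bigcap\nolimits_{i \in [L],\, j \in \{L+1, \ldots, K\}} U_{ij}\Bigr|\Bigr] \leq n^{1 - s(1-(1-s)^{K-1}) \dconnab + o(1)},
\end{equation*}
after which Markov's inequality delivers the claim.

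To estimate $\p(v \in \bigcap U_{ij})$ for a fixed vertex $v$, I will work in the natural $K$-graph generalization of the random partition construction of Section~\ref{sec:construction}. For each neighbor $w$ of $v$ in the parent graph $G$, let $\xi_w = (\xi_w^{1}, \ldots, \xi_w^{K}) \in \{0,1\}^{K}$ be the i.i.d.\ $\mathrm{Ber}(s)^{\otimes K}$ subsampling vector indicating which $G_i'$ contains the edge $(v,w)$. The event $v \in U_{ij}$ then reads $\sum_w \xi_w^{i} \xi_w^{j} \leq m+k$, so $v \in \bigcap U_{ij}$ implies the aggregated inequality
\begin{equation*}
\sum_{w \in N_G(v)} \Bigl(\sum_{i=1}^{L} \xi_w^{i}\Bigr)\Bigl(\sum_{j=L+1}^{K} \xi_w^{j}\Bigr) \leq (m+k)\, L\, (K-L).
\end{equation*}
Call $w$ \emph{bad} if the above product is nonzero, i.e.\ $\xi_w$ has a $1$ in some position of $[L]$ and also in some position of $\{L+1, \ldots, K\}$. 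Each bad $w$ contributes at least $1$ to the left-hand side, so the number of bad neighbors is at most $(m+k) L(K-L)$. Each $w$ is bad independently with probability $1-p_{0}$, where $p_{0} := (1-s)^{L} + (1-s)^{K-L} - (1-s)^{K}$, so conditioning on $D := |N_G(v)|$ gives
\begin{equation*}
\p\bigl(v \in \textstyle\bigcap U_{ij} \bigm| D\bigr) \leq \p\bigl(\Bin(D, 1-p_{0}) \leq (m+k) L(K-L) \bigm| D\bigr).
\end{equation*}

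A moment generating function computation on the balance event $\mathcal{F}$ of Definition~\ref{balance}, directly paralleling the $\E[D_1^{L}(1-s)^{2D_1}]$ estimate from the proof of Lemma~\ref{fup}, then yields $\p(v \in \bigcap U_{ij}) \leq n^{-(1-p_{0}) \dconnab + o(1)}$, since $\E[p_{0}^{D}] = n^{-(1-p_{0}) \dconnab + o(1)}$ on $\mathcal{F}$. To match the exponent stated in the lemma, it remains to verify the elementary inequality $1 - p_{0} = (1 - (1-s)^{L})(1 - (1-s)^{K-L}) \geq s(1 - (1-s)^{K-1})$ for all $1 \leq L \leq K-1$. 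Setting $x := 1-s$, the difference simplifies to $x(1-x^{L-1})(1-x^{K-L-1})$, which is non-negative for $x \in [0,1]$ and vanishes precisely at $L \in \{1, K-1\}$, confirming that these are the extremal cases. Combining everything completes the first-moment bound, and Markov's inequality closes the proof.

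The principal obstacle is producing the exponent in the \emph{correct} form. The ``condition on a single reference graph'' approach used in Lemma~\ref{fup}, which exploits the conditional independence of $G_2, \ldots, G_K$ given $G_1$, only produces the exponent $s(1-(1-s)^{K-L})$ (or, by symmetry, $s(1-(1-s)^{L})$), which agrees with $s(1-(1-s)^{K-1})$ only at the extremes $L \in \{1, K-1\}$. Encoding the constraint through a single scalar summary of $\xi_w$ at each neighbor---namely the bad-$w$ indicator---simultaneously couples all $L(K-L)$ pairwise degree constraints and produces the symmetric exponent $(1-p_{0})$, which is uniformly at least $s(1-(1-s)^{K-1})$ and hence suffices for every $L$.
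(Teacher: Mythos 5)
Your proof is correct, and it shares the paper's overall skeleton (reduce $F^*_{ij}$ to the low-degree sets $U_{ij}$ via Lemma~\ref{dp}, bound the first moment of the intersection, finish with Markov), but the key counting step is executed differently. The paper conditions on the degree $D$ of $v$ in the union $G_1\vee\cdots\vee G_L$, observes that $v\in\bigcap_{i\le L}U_{ij}$ forces degree at most $L(m+k)$ in $(G_1\vee\cdots\vee G_L)\wedge G_j$, and exploits conditional independence across $j\in\{L+1,\ldots,K\}$ given $D$; this yields $\E[D^N(1-s)^{(K-L)D}]\le n^{-(1-(1-s)^{K-L})(1-(1-s)^L)\dconnab+o(1)}$. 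You instead condition on the parent-graph degree, aggregate all $L(K-L)$ pairwise constraints into a single count of ``bad'' neighbors, and compute the bad-neighbor probability $1-p_0=(1-(1-s)^L)(1-(1-s)^{K-L})$ by inclusion--exclusion. The two routes produce the \emph{same} intermediate exponent, and both close with the same elementary minimization showing $(1-(1-s)^L)(1-(1-s)^{K-L})\ge s(1-(1-s)^{K-1})$ with equality at $L\in\{1,K-1\}$ (the paper phrases this as the maximum of $f(x)=(1-s)^x+(1-s)^{K-x}$ being attained at the endpoints). Your aggregation is arguably cleaner in that it is manifestly symmetric in $L\leftrightarrow K-L$ and replaces the two-step ``union then independence'' argument with a single i.i.d.\ Bernoulli thinning of the parent neighborhood; the paper's version has the mild advantage of reusing the exact computation template of Lemma~\ref{fup} verbatim. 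The only detail worth making explicit when writing this up is the $K$-graph generalization of the balance event $\mathcal{F}$ (Definition~\ref{balance} is stated for three graphs), which the paper also uses implicitly.
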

\begin{proof}
Define $U_{ij}$ to be the set of vertices with degree at most   $m+k$ in $G_i\wedge_{\pi_{ij}} G_j$, where $m>\frac{2}{(a+b)s^2}$, as defined in Lemma~\ref{dp}. Then by Lemma~\ref{dp}, w.h.p. we have $F^*_{ij}\subset U_{ij}$ and $|\cap_{1\leq i\leq L, L+1\leq j\leq K} F^*_{ij}|\leq |\cap_{1\leq i\leq L, L+1\leq j\leq K} U_{ij}|$. Now we look at the expectation:
\[
\E \left[|\cap_{1\leq i\leq L, L+1\leq j\leq K} U_{ij} \right]
= n \E \left[\prod_{i=1}^L\prod_{j=L+1}^K\bm{1}_{v\in U_{ij}} \right].
\]

Let $D$ denote the degree of vertex $v$ in the graph $G_1\vee G_3\ldots\vee G_{L}$.
\begin{equation*}
\E\left[\prod_{i=1}^L\prod_{j=L+1}^K\bm{1}_{v\in U_{ij}}\right]=\E\left[\E\left[\prod_{j=L+1}^K\prod_{i=1}^L\bm{1}_{v\in U_{ij}} \, \middle| \, D\right]\right]
\leq \E\left[\left(\sum_{i=0}^{(m+k)L}\binom{D}{i}s^{i}(1-s)^{D-i}\right)^{K-L}\right]
\end{equation*}
The first equality is by the tower rule.
The second inequality is due to two observations. Firstly, $\{\prod_{i=1}^L\bm{1}_{v\in U_{ij}}\}\subseteq \{\deg(v)\leq L(m+k)\text{ in the graph } (G_1\vee G_3\ldots\vee G_{L})\wedge G_j, L+1\leq j\leq K\}$. To be more detailed, if the degree of $v$ is at most $m+k$ in the graph $G_i\wedge G_j, 1\leq i\leq L$, then the degree of $v$ is at most $L(m+k)$ in the graph $(G_1\vee G_3\ldots\vee G_{L})\wedge G_j$. The second observation is, given $D$, for $j_1\neq j_2$, the events $\{\deg(v)\leq L(m+k)\text{ in the graph } (G_1\vee G_3\ldots\vee G_{L})\wedge G_{j_1}\},\{\deg(v)\leq L(m+k)\text{ in the graph } (G_1\vee G_3\ldots\vee G_{L})\wedge G_{j_2}\}$ are independent.

Similar to the proof of Lemma~\ref{fup}, let $X_{a}\sim \Bin((1+o(1))n/2,(1-(1-s)^L)a\log n/n)$, and $X_{b}\sim \Bin((1+o(1))n/2,(1-(1-s)^L)b\log n/n)$. On the event $\mathcal{F}, D_1\stackrel{d}{=}X_a+X_b$, where $\mathcal{F}$ is defined in Definition~\ref{balance}, $X_a,X_b$ are independent. Note that by Lemma~\ref{balanced}, $\p(\mathcal{F}^c)=o(\frac{1}{n^2})$. We have
\begin{equation}\label{u12k}
\E\left[\left(\sum_{i=0}^{m+k}\binom{D}{i}s^{i}(1-s)^{D-i}\right)^{K-L}\right]
=\sum_{i_1,i_2,\ldots,i_{K-L}=0}^{(m+k)L} C(i_1,i_2,\ldots,i_{K-L}) \E[D^{\sum_{j=0}^{K-L}i_{j}}(1-s)^{(K-L)D}].
\end{equation}
 Here $C(i_1,i_2,\ldots,i_{K-L})$ is a constant given $i_1,i_2,\ldots,i_{K-L}$.
Now look at $\E[D^N(1-s)^{(K-L)D}]$. In our regime, $N\leq (m+k)(K-L)L$ are constant. Hence:
\begin{align*}
 \E[D^N(1-s)^{(K-L)D}\bm{1}_{\mathcal{F}}] &=\E[(X_a+X_b)^N(1-s)^{(K-L)X_a}(1-s)^{(K-L)X_b}\bm{1}_{\mathcal{F}}]\notag\\
 &=\sum_{t=0}^{N} C_t\E[X_a^t(1-s)^{(K-L)X_a}\bm{1}_{\mathcal{F}}]\E[X_b^{N-t}(1-s)^{(K-L)X_a}\bm{1}_{\mathcal{F}}].
 \end{align*}
 Here $C_t$ is constant related to $t$, the second equality is due to the independence of $X_a,X_b$. Now look at $\E[X_a^t(1-s)^{(K-L)X_a}\bm{1}_{\mathcal{F}}]$.
 \begin{align*}
 &\E[X_a^t(1-s)^{(K-L)X_a}1_{\mathcal{F}}]\leq\E[X_a^t(1-s)^{(K-L)X_a}]\\= &\sum_{\ell=0}^{(1+o(1))n/2}\ell^t(1-s)^{(K-L)\ell}(\frac{(1-(1-s)^{L})a\log n}{n})^{\ell}(1-\frac{(1-(1-s)^{L})a\log n}{n})^{(1+o(1))n/2-\ell}\\=&\sum_{\ell=0}^{(\log n)^3}\ell^t(\frac{(1-s)^{K-L}(1-(1-s)^{L})a\log n}{n})^{\ell}(1-\frac{(1-(1-s)^{L})a\log n}{n})^{(1+o(1))n/2-\ell}\\+&\sum_{\ell=(\log n)^3+1}^{(1+o(1))n/2}\ell^t(\frac{(1-s)^{K-L}(1-(1-s)^{L})a\log n}{n})^{\ell}(1-\frac{(1-(1-s)^{L})a\log n}{n})^{(1+o(1))n/2-\ell}.
 \end{align*}
Similarly, we can bound the first part:
 \begin{align*}
 &\sum_{\ell=0}^{(\log n)^3}\ell^t(\frac{(1-s)^{K-L}(1-(1-s)^{L})a\log n}{n})^{\ell}(1-\frac{(1-(1-s)^{L})a\log n}{n})^{(1+o(1))n/2-\ell}\\\leq&(\log n)^{3t}\sum_{\ell=0}^{(\log n)^3}(\frac{(1-s)^{K-L}(1-(1-s)^{L})a\log n}{n})^{\ell}(1-\frac{(1-(1-s)^{L})a\log n}{n})^{(1+o(1))n/2-\ell}\\\leq&(\log n)^{3t}\sum_{\ell=0}^{(1+o(1))n/2}(\frac{(1-s)^{K-L}(1-(1-s)^{L})a\log n}{n})^{\ell}(1-\frac{(1-(1-s)^{L})a\log n}{n})^{(1+o(1))n/2-\ell}\\=&(\log n)^{3t}(1-(1-(1-s)^{K-L})(1-(1-s)^L)\frac{sa\log n}{n})^{(1+o(1))n/2}\\\leq & n^{-(1-(1-s)^{K-L})(1-(1-s)^L)a/2+o(1)}.
 \end{align*}
Then we can bound the second part, using the similar arguments in Lemma~\ref{fup}, we can show that
\begin{multline*}
\sum_{\ell=(\log n)^3+1}^{(1+o(1))n/2}\ell^t(\frac{(1-s)^{K-L}(1-(1-s)^{L})a\log n}{n})^{\ell}(1-\frac{(1-(1-s)^{L})a\log n}{n})^{(1+o(1))n/2-\ell} \\
=o(n^{-(1-(1-s)^{K-L})(1-(1-s)^L)a/2+o(1)}).
\end{multline*}

Hence, by summing up the two parts, $\E[X_a^t(1-s)^{(K-L)X_a}]\leq n^{-(1-(1-s)^{K-L})(1-(1-s)^L)a/2+o(1)}$. This is also true for $X_b$. Then, $\E[D^N(1-s)^{(K-L)D}]\leq n^{-(1-(1-s)^{K-L})(1-(1-s)^L)\dconnab+o(1)}$.

For $f(x)=(1-s)^x+(1-s)^{K-x},x\in[1,K-1]$, the function $f(x)$ obtains its maximum at $x=1,K-1$. Hence  $n^{((1-s)^{K-L}-1)((1-(1-s)^{L})\dconnab+o(1)}\leq n^{-s(1-(1-s)^{K-1})\dconnab+o(1)}$. Similar to Lemma~\ref{fup}, by Markov inequality, the lemma follows immediately.
\end{proof}
\begin{lemma}\label{lem:fup1k}
The size of ``bad'' vertices defined in Definition~\ref{uncon} can be upper bounded by \\$n^{-s(1-(1-s)^{K-1})\dconnab+o(1)}$.
\end{lemma}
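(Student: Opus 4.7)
The plan is to reduce this lemma directly to Lemma~\ref{fup2k} via the characterization of disconnectedness of the metagraph $\mathcal{MG}_v$, together with a union bound over the (constantly many) bipartitions of $[K]$. The only real ingredient beyond Lemma~\ref{fup2k} is a combinatorial observation about when $v$ is ``bad''.

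First, I would observe that $\mathcal{MG}_v$ is disconnected if and only if there is a nontrivial bipartition $[K] = \Gamma_g \sqcup \Gamma_b$ (with both parts nonempty) such that no edge of $\mathcal{MG}_v$ crosses the partition. By Definition~\ref{connectgraph}, this is equivalent to saying that for every $i \in \Gamma_g$ and $j \in \Gamma_b$, the vertex $v$ fails to be matched by $\widehat{\mu}_{ij}$, i.e.\ $v \in F_{ij}$. Thus, letting $\mathcal{B}$ denote the set of ``bad'' vertices,
\begin{equation*}
\mathcal{B} \;\subseteq\; \bigcup_{(\Gamma_g,\Gamma_b)} \;\bigcap_{\substack{i \in \Gamma_g\\ j \in \Gamma_b}} F_{ij},
\end{equation*}
where the outer union is over the $2^K - 2$ ordered nontrivial bipartitions of $[K]$, which is a constant since $K$ is constant.

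Second, I would invoke Lemma~\ref{remark} to replace each $k$-core matching failure set $F_{ij}$ by the ``ideal'' set $F^*_{ij}$ (the complement of the vertex set of the $13$-core of $G_i \wedge_{\pi^*_{ij}} G_j$) with high probability, simultaneously for all of the $\binom{K}{2}$ pairs by a union bound. Then, for each fixed bipartition with $|\Gamma_g| = L$ for some $1 \leq L \leq K-1$, Lemma~\ref{fup2k} applied to this bipartition yields
\begin{equation*}
\Bigl| \bigcap_{\substack{i \in \Gamma_g\\ j \in \Gamma_b}} F^*_{ij} \Bigr| \;\leq\; n^{\,1 - s(1-(1-s)^{K-1})\dconnab + \delta}
\end{equation*}
with probability $1-o(1)$, for any $\delta > 0$.

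Finally, taking a union bound over the finitely many bipartitions and absorbing the constant factor into the $o(1)$ exponent, one obtains $|\mathcal{B}| \leq n^{1 - s(1-(1-s)^{K-1})\dconnab + o(1)}$ with high probability, as claimed (I read the exponent in the statement as having a typo missing the leading ``$1$''). There is essentially no obstacle here: all the analytic work has been done in Lemma~\ref{fup2k}, whose proof already handled the delicate optimization showing that the worst bipartition is the one with $L = 1$ or $L = K-1$, giving the exponent $s(1-(1-s)^{K-1})\dconnab$ uniformly over all $L$. The present lemma is just the packaging step that converts the bipartition-wise bound into a bound on the total number of bad vertices.
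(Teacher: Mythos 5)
Your proposal is correct and follows essentially the same route as the paper: the paper's own (very terse) proof of this lemma consists precisely of the observation that a disconnected $\mathcal{MG}_v$ yields a nontrivial bipartition with no crossing edges, so that $v$ lies in $\bigcap_{i\in\Gamma_g, j\in\Gamma_b} F_{ij}$, after which Lemma~\ref{fup2k} (applied to each of the constantly many bipartitions, using the exchangeability of $G_1,\ldots,G_K$) and a union bound give the stated bound. You have simply written out the packaging steps—including the $F_{ij}\to F^*_{ij}$ replacement via Lemma~\ref{remark} and the correct reading of the exponent as $n^{1-s(1-(1-s)^{K-1})\dconnab+o(1)}$—that the paper leaves implicit.
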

\begin{proof}
By definition~\ref{uncon}, the meta graph $\mathcal{MG}_v$ for``bad'' vertex $v$ are disconnected. Hence, there exists at least two components of $\mathcal{MG}_v$, where there are no edges between the nodes from two components.
\end{proof}
With all the preparations for ``bad'' vertices and ``good'' vertices, we are now ready to prove Theorem~\ref{thm1} for general $K$ graphs.

\subsection{Exact recovery for ``good'' vertices}

By Lemma~\ref{bin}, we can directly deduce that if $(1-(1-s)^K)D_+(a,b)>1+\epsilon|\log (a/b)|$,
then for all $i \in [n]$ we have that
$\maj_{G_1\vee G_2\ldots\vee G_K}(i)\geq \epsilon \log n$ w.h.p.

Now for a ``good''vertex $i$, there exists a corresponding matching set $\Psi$ that $i$ can be matched for all the matchings in $\Psi$ and there is an union graph $\widetilde{G}:=(G_1\vee G_2\vee\ldots\vee G_3)$ that can be derived from the matchings in $\Psi$. Let $M$ be the set of vertices that can be matched for all matchings in $\Psi$.
\begin{lemma}\label{ggb1k}
 Suppose that $(1-(1-s)^3)D_+(a,b)>1+2\epsilon|\log (a/b)|$. Then with probability $1-o(1)$, all the ``good'' vertices in $M$ have an $\epsilon \log n $ majority in $\widetilde{G}\{ M\}$.
\end{lemma}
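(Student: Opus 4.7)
The plan is to generalize the proof of Lemma~\ref{ggb1} from three graphs to $K$ graphs, following the same high-level strategy: replace approximate objects with their true counterparts, reduce the majority in the restricted union graph to the majority in the full union graph plus an error, and then bound the error by splitting according to the structure of the underlying edges.

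First, I would use Lemma~\ref{remark} to replace each partial matching $\widehat{\mu}_{ij}$ with the true permutation $\pi^*_{ij}$, and each $F_{ij}$ with the corresponding $F^*_{ij}$, at a cost of an $o(1)$ failure probability. This lets me work with the fully relabeled graph $G^* := G_1 \vee_{\pi^*_{12}} G_2 \vee \cdots \vee_{\pi^*_{1K}} G_K$, which is marginally distributed as $\SBM(n, (1-(1-s)^K) a \log n / n, (1-(1-s)^K) b \log n / n)$. Under the hypothesis $(1-(1-s)^K) \dchab > 1 + 2\epsilon |\log(a/b)|$, Lemma~\ref{bin} combined with a union bound yields $\maj_{G^*}(i) \geq 2\epsilon \log n$ for every $i \in [n]$ with high probability. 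Let $M^* \subseteq [n]$ denote the vertex set of the restriction $H := G^*\{M^*\}$ that corresponds to $\widetilde{G}$ under this substitution, and let $F := [n] \setminus M^*$, so that $|\maj_H(i) - \maj_{G^*}(i)| \leq |N_{G^*}(i) \cap F|$ for every $i \in M^*$.

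The heart of the argument is to show $|N_{G^*}(i) \cap F| \leq \epsilon \log n$ uniformly over good vertices $i$ with high probability. By the definition of ``good,'' $F$ is contained in the union of intersections $\bigcap_{(i,j) \in E} F^*_{ij}$ over certain cuts $E$ of the metagraph; in particular $F \subseteq \bigcup_{(i,j) : \mu_{ij} \in \Psi} F^*_{ij}$. I would decompose the count by edge type, splitting the edges of $G^*$ incident to $i$ into two groups for each relevant pair $(j, \ell)$: those that appear in the intersection graph $G_j \wedge_{\pi^*_{j\ell}} G_\ell$ versus those that appear in $G^* \setminus (G_j \wedge_{\pi^*_{j\ell}} G_\ell)$. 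For the intersection portion, apply Lemma~\ref{gcon} to each $(G_j \wedge_{\pi^*_{j\ell}} G_\ell) \sim \SBM(n, s^2 a \log n / n, s^2 b \log n / n)$ to conclude that the number of neighbors of $i$ in $F^*_{j\ell}$ through intersection edges is at most $\epsilon \log n / (2 \binom{K}{2})$ with high probability. For the non-intersection portion, use Lemma~\ref{con} (the partition construction) to argue that, conditional on $F^*_{j\ell}$ and the relevant partition classes, the remaining edges incident to $i$ are independent of $F^*_{j\ell}$; then use the size bound $|F^*_{j\ell}| \leq n^{1 - s^2 \dconnab + \delta}$ from Lemma~\ref{size} (or the stronger Lemma~\ref{lem:fup1k} for intersections) together with a Poisson/Chernoff argument to bound the count by a constant with high probability, exactly as in the proof of Lemma~\ref{ggb1}.

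The main obstacle will be bookkeeping: for $K$ graphs there are $\binom{K}{2}$ pairwise partial matchings and correspondingly many pairs $F^*_{j\ell}$ to control, and different good vertices $i$ may use different paths $\Psi$ through their metagraphs, so the restricted graph $\widetilde{G}$ is not globally the same object. I would handle this by proving the neighborhood bound uniformly for \emph{every} $i$ and \emph{every} pair $(j, \ell)$, so that the worst-case choice of $\Psi$ is covered. Combining all these estimates by a union bound over the constantly many pairs gives $|N_{G^*}(i) \cap F| \leq \epsilon \log n$ for all good $i$ with high probability, and hence $\maj_H(i) \geq \maj_{G^*}(i) - \epsilon \log n \geq \epsilon \log n$, which is the statement of the lemma after translating back via Lemma~\ref{remark}.
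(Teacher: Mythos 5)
Your proposal follows essentially the same route as the paper's proof: replace the $k$-core matchings by the true permutations via Lemma~\ref{remark}, lower-bound $\maj_{G^*}(i)$ by $2\epsilon\log n$ using Lemma~\ref{bin}, and control $|N_{G^*}(i)\cap F|$ by splitting each pair $(j,\ell)$ into intersection-graph edges (handled by Lemma~\ref{gcon}) and the remaining edges (handled by conditional independence from Lemma~\ref{con}, the size bound of Lemma~\ref{size}, and a Poisson/Chernoff bound), then union-bounding over the $\binom{K}{2}$ pairs. The argument is correct and matches the paper's.
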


\begin{proof}
Denote $F^*_{ij}$ the set of vertices outside the 13-core of $G_i \wedge_{\pi_{ij}^{*}} G_j$. In light of Lemma~\ref{remark} and its remark, we can replace $\mu_{ij}$ with $\pi^*_{ij}$, $F_{ij}$ with $F^*_{ij}$ ,$M$ with $M^*$ in Lemma~\ref{ggb1}.
Where we define
\[
G^*:=G_1\vee_{\pi_{12}^{*}}G_2\vee\ldots\vee_{\pi_{1K}^{*}}G_K,
\qquad
H:=G^*\{M^*\}.
\]
To bound the neighborhood majority in $H$, for $i\in M^*$ we have:
\begin{align*}
&\maj_H(i) =\sigma^*(i)\sum_{j\in N_H(i)} \sigma^*(j)\leq  \maj_{G^*}(i)+\sum_{\widehat{\mu}_{j\ell}\notin \Psi}|N_{G^*}(i)\cap F^*_{j \ell}|,\\&\maj_H(i) = \sigma^*(i)\sum_{j\in N_H(i)} \sigma^*(j)\geq  \maj_{G^*}(i)-\sum_{\widehat{\mu}_{j\ell}\notin \Psi}|N_{G^*}(i)\cap F^*_{j \ell}|.\end{align*} To sum up, we have\begin{equation}\label{eq2}
|\maj_H(i)-\maj_{G^*}(i)|\leq \sum_{\widehat{\mu}_{j\ell}\notin \Psi}|N_{G^*}(i)\cap F^*_{j \ell}|.
\end{equation}
Note that $\maj_{G^*}(i)>2\epsilon \log n$, $i\in [n]$ with probability $1-o(1)$, given that $(1-(1-s)^K)D_+(a,b)>1+2\epsilon|\log (a/b)|$ by Lemma~\ref{bin}. Now we would like to prove that the right hand side of ~\eqref{eq2} can be bounded by $\epsilon \log n$.

Note that $|N_{G^*}(i)\cap F_{j\ell}^*|\leq |N_{G_j\wedge G_{\ell}}(i)\cap (F_{j\ell}^*)|+|N_{G^*\setminus(G_j\wedge G_{\ell})}(i)\cap (F_{j\ell}^*)|$.

First, look at $|N_{G_j\wedge G_{\ell}}(i)\cap (F_{j\ell}^*)|$, by Lemma~\ref{gcon}, w.h.p.,
$$|N_{G_j\wedge G_{\ell}}(i)\cap F_{j\ell}^*|<\epsilon\log n/2K^2.$$

 The remaining thing is to bound $|N_{G^*\setminus(G_j\wedge G_{\ell})}(i)\cap (F_{j\ell}^*)|$. Note that conditioned on $\bm{\pi}^*, \sigma^*,\bm{\mathcal{E}}:=\{\mathcal{E}_{i_1i_2..i_K},i_1,i_2,\ldots,i_K\in\{0,1\}\}$, the graph $G^*\setminus(G_j\wedge G_{\ell})$ is independent of $F_{j\ell}^*$ by Lemma~\ref{con}, since $F_{j\ell}^*$ depends only on $G_j\wedge G_{\ell}$.
Thus  we can stochastically dominate $|N_{G^*\setminus(G_j\wedge G_{\ell})}(i)\cap F_{j\ell}^*|$ by a Poisson random variable X
with mean \[\lambda_n:=\nu\frac{\log n}{n}|\{j\in F_{j\ell}^*:\{i,j\}\in G^*\setminus(G_j\wedge G_{\ell})|\leq \nu\frac{\log n}{n}|F_{j\ell}^*|,\nu:=max(a,b).\]
For a fixed $\delta > 0$, define an event $\mathcal{Z}:=\{|F_{j\ell}^*|\leq n^{1-s^2\dconnab+\delta}\}$. On $\mathcal{Z}$, $\lambda_n\leq n^{-s^2\dconnab+\delta+o(1)}$. Hence, for any positive integer $m$:
\begin{multline*}
\p(\{|N_{G^*\setminus(G_j\wedge G_{\ell})}(i)\cap (F_{j\ell}^*)|\geq m\} \cap \mathcal{Z}
\leq \p(\{X\geq m\} \cap \mathcal{Z})
=\E[\p(X\geq m|F^*_{j\ell},\bm{\mathcal{E}},\sigma^*,\bm{\pi^*})\bm{1}_{\mathcal{Z}}] \\
\leq \E[(\inf_{\theta>0}e^{-\theta m+\lambda_n(e^{\theta}-1)})\bm{1}_{\mathcal{Z}}]
\leq \E[e\lambda_n^m \bm{1}_{\mathcal{Z}}]
\leq n^{-m(s^2\dconnab-\delta-o(1))}.
\end{multline*}
Above, the equality on the second line is due to the tower rule and since $\mathcal{Z}$ is measurable with respect
to $|F_{j\ell}^*|$, the inequality on the third line is due to a Chernoff bound; the inequality on the fourth line
follows from setting $\theta = \log(1/\lambda_n)$ (which is valid since $\lambda_n = o(1)$ if $\mathcal{Z}$ holds). The final inequality
uses the upper bound for $\lambda_n$ on $\mathcal{Z}$.
Taking a union bound, we have
\[
\p(\{\exists i\in[n],|N_{G^*\setminus(G_j\wedge G_{\ell})}(i)\cap F_{j\ell}^*|\geq m\}\cap \mathcal{Z})\leq n^{1-m(s^2\dconnab-\delta-o(1))}.
\]
Here if we take $m>(s^2\dconnab)^{-1}$ and $\delta<s^2\dconnab-m^{-1}$, the probability turns to $o(1)$. Thus, we can set $m=\lceil(s^2\dconnab)^{-1}\rceil+1$. In light of Lemma~\ref{size}, $|F_{j\ell}^*|\leq n^{1-s^2\dconnab+\delta},\delta>0$ w.h.p. Hence, the event $\mathcal{Z}$ happens with probability $1-o(1)$. Hence we have
\[\p(\{\forall i\in[n],N_{G^*\setminus(G_j\wedge G_{\ell})}(i)\cap F_{j\ell}^*|\leq \lceil(s^2\dconnab)^{-1}\rceil\})=1-o(1).\]
Hence we have, with probability $1-o(1)$, for $i\in M^*$
\[|\maj_H(i)-\maj_{G^*}(i)|<\sum_{\widehat{\mu}_{j\ell}\notin \Psi}(\epsilon \log n/2K^2 +\lceil(s^2\dconnab)^{-1}\rceil)<\epsilon\log n,\] and hence with probability $1-o(1)$,\[\maj_H(i)>\epsilon \log n.\]
Then by Lemma~\ref{remark}, we can replace $H$ with $\widetilde{G}$, $F_{ij}^*$ with $F_{ij}$, the lemma follows.
\end{proof}
Next, prove that each vertex in $G^*\setminus_{\pi^*_{12}} G_1$ has a small number of neighbors in $I_{\epsilon}(G_1)$.

\begin{lemma}
\label{ggb2k}

If $0<\epsilon\leq\frac{s\dchab}{4|\log(a/b)|}$, then\[
\p(\forall i \in [n], |N_{G^*\setminus_{\pi^*_{12}} G_1}(i)\cap I_{\epsilon}(G_1)|\leq 2\lceil (s\dchab)^{-1}\rceil)=1-o(1).
\]
\end{lemma}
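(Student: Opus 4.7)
The plan is to adapt the proof of Lemma~\ref{ggb2} essentially verbatim, replacing the two-graph union $G_2 \vee_{\pi^*_{23}} G_3$ by the full $K$-graph union $G^* = G_1 \vee \cdots \vee G_K$. The structural fact I would rely on is the natural $K$-graph analogue of Lemma~\ref{con}: $I_{\epsilon}(G_1)$ is a function of $G_1$ alone, while $G^* \setminus_{\pi^*_{12}} G_1$ is built from edges lying in partition cells $\mathcal{E}_{0\,i_2\cdots i_K}$ with at least one $i_j=1$. Hence, conditional on $\bm{\pi^*}$, $\sigma^*$, and the random partition $\bm{\mathcal{E}}$, the two objects $I_{\epsilon}(G_1)$ and $G^* \setminus_{\pi^*_{12}} G_1$ are independent.

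First I would, as in the three-graph case, stochastically dominate
\[
\bigl|N_{G^* \setminus_{\pi^*_{12}} G_1}(i) \cap \pi^*_{12}(I_{\epsilon}(G_1))\bigr|
\]
by a Poisson random variable $X$ with mean
\[
\lambda_n \;\leq\; \nu \frac{\log n}{n}\,|I_{\epsilon}(G_1)|, \qquad \nu := \max(a,b).
\]
Define the event $\mathcal{Z} := \{|I_{\epsilon}(G_1)| \leq n^{1 - s\dchab + 2\epsilon|\log(a/b)|}\}$. Since $G_1 \sim \SBM(n, sa\log n/n, sb\log n/n)$ marginally, Lemma~\ref{aer2} combined with Markov's inequality gives $\p(\mathcal{Z}) = 1 - o(1)$ (provided $s\dchab < 99$, which we may freely assume). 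On $\mathcal{Z}$ we have $\lambda_n = o(1)$ and $\lambda_n \leq n^{-s\dchab + 2\epsilon|\log(a/b)| + o(1)}$.

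Then a Chernoff bound with $\theta = \log(1/\lambda_n)$, followed by a union bound over $i \in [n]$, yields
\[
\p\!\left(\exists i\in[n]:\ \bigl|N_{G^*\setminus_{\pi^*_{12}}G_1}(i)\cap\pi^*_{12}(I_{\epsilon}(G_1))\bigr|\geq m\right)
\;\leq\; n^{1 - m(s\dchab - 2\epsilon|\log(a/b)|) + o(1)} + \p(\mathcal{Z}^c).
\]
Under the hypothesis $\epsilon \leq \frac{s\dchab}{4|\log(a/b)|}$, the exponent $s\dchab - 2\epsilon|\log(a/b)|$ is at least $s\dchab/2$, so taking $m = 2\lceil (s\dchab)^{-1}\rceil + 1$ makes the right-hand side $o(1)$, which is exactly the claim.

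No substantial obstacle is anticipated; the argument is a direct generalization of Lemma~\ref{ggb2}. The only point requiring a moment's care is verifying the conditional independence in the $K$-graph partition construction, which is the obvious extension of Lemma~\ref{con}. Notably, the crude Poisson-mean bound $\lambda_n \leq \nu(\log n/n)|I_{\epsilon}(G_1)|$ is independent of $K$, so the same quantitative choice of $m$ works uniformly in~$K$.
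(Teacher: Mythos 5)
Your proposal is correct and follows essentially the same route as the paper's proof: conditional independence of $I_{\epsilon}(G_1)$ and $G^*\setminus_{\pi^*_{12}}G_1$ given $\bm{\pi^*},\sigma^*,\bm{\mathcal{E}}$, Poisson stochastic domination with mean bounded by $\nu\frac{\log n}{n}|I_{\epsilon}(G_1)|$, the event $\mathcal{Z}$ controlled via Lemma~\ref{aer2} and Markov, then a Chernoff bound with $\theta=\log(1/\lambda_n)$ and a union bound, choosing $m=2\lceil(s\dchab)^{-1}\rceil+1$. Your added observation that the crude bound on $\lambda_n$ is uniform in $K$ matches the paper's implicit treatment.
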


\begin{proof}
Since $I_{\epsilon}(G_1)$ depends on $G_1$ alone, it follows that $I_{\epsilon}(G_1)$ and $G^*\setminus_{\pi^*_{12}} G_1
$ are conditionally
independent given $\bm{\pi^*}, \sigma^*,\bm{\mathcal{E}}$. Hence we can stochastically dominate  $|N_{G^*\setminus_{\pi^*_{12}} G_1}(i)\cap I_{\epsilon}(G_1)|$ by
a Poisson random variable X with mean $\lambda_n$ given by\[
\lambda_n := \nu\log n/n |\{j\in I_{\epsilon}(G_1) : \{i, j\} \in G^*\setminus G_1\}| \leq \nu\log n/n|I_{\epsilon}(G_1)|.\]
Next, define the event $
\mathcal{Z}:=\{
|I_{\epsilon}(G_1)| \leq n^{1-s\dchab+2\epsilon|\log(a/b)|}\}$.

Notice that $P(\mathcal{Z}) = 1- o(1)$ by Lemma~\ref{aer2} and Markov’s inequality, provided $s\dchab < 99$.
Following identical arguments as the proof of Lemma~\ref{ggb1k}, we
arrive at
\[\p(\exists i \in [n], |N_{G^*\setminus_{\pi^*_{12}} G_1}(i)\cap I_{\epsilon}(G_1)|\geq m)=o(1)\]
when $m>\lceil(s\dchab-2\epsilon |\log a/b|)^{-1}\rceil$. If $\epsilon\leq\frac{s\dchab}{4|\log(a/b)|}$, then it suffices to set $m=2\lceil(s\dchab)^{-1}\rceil+1$.
\end{proof}

\begin{lemma}
\label{ggb3k}
Suppose that $a,b,\epsilon>0$ satisfy the following conditions:\begin{align*}
(1-(1-s)^K)\dchab>1+2\epsilon|\log a/b|,0<&\epsilon\leq\frac{s\dchab}{4|\log a/b|}.
\end{align*}
With high probability, the algorithm correctly labels all vertices in $\{i\in [n]\setminus M^*\}$.
\end{lemma}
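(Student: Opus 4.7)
The plan is to mirror the proof of Lemma~\ref{ggb3} from the three-graph case, with each step upgraded to its $K$-graph analog. By Lemma~\ref{remark} we may replace the empirical matchings $\widehat{\mu}_{ij}$ and unmatched sets $F_{ij}$ throughout by the true permutations $\pi^*_{ij}$ and the idealized sets $F^*_{ij}$, so that the majority vote in Step 3 of Algorithm~\ref{alg 3k} is carried out on the restricted union graph $H := G^*\{M^*\}$, where $G^* := G_1 \vee_{\pi^*_{12}} G_2 \vee \cdots \vee_{\pi^*_{1K}} G_K$ as in Lemma~\ref{ggb1k}.

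For a good vertex $i \in M^*$, the algorithm returns the sign of $\sigma^*(i)\sum_{j \in N_H(i)} \widehat{\sigma}_1(j)$, and I will lower bound this sum by
\begin{equation*}
\sigma^*(i) \sum_{j \in N_H(i)} \widehat{\sigma}_1(j) \;\geq\; \maj_H(i) - \left|\sigma^*(i) \sum_{j \in N_H(i)} \bigl(\widehat{\sigma}_1(j) - \sigma^*(j)\bigr)\right|.
\end{equation*}
For the first term, Lemma~\ref{ggb1k} gives $\maj_H(i) \geq \epsilon \log n$ for every $i \in M^*$ with probability $1 - o(1)$, since the hypothesis $(1-(1-s)^K)\dchab > 1 + 2\epsilon|\log(a/b)|$ is in force. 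For the second term, Lemma~\ref{aer1} confines the misclassifications of $\widehat{\sigma}_1$ to $I_\epsilon(G_1)$, so the absolute difference is bounded by $|N_H(i) \cap I_\epsilon(G_1)|$.

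To control this intersection, I will split the $H$-neighborhood of $i$ into edges lying in $G_1$ and edges lying in $G^* \setminus_{\pi^*_{12}} G_1$:
\begin{equation*}
|N_H(i) \cap I_\epsilon(G_1)| \;\leq\; |N_{G_1}(i) \cap I_\epsilon(G_1)| + |N_{G^* \setminus_{\pi^*_{12}} G_1}(i) \cap I_\epsilon(G_1)|.
\end{equation*}
By Lemma~\ref{aer3} the first summand is at most $2\lceil \dchab^{-1}\rceil$, and by Lemma~\ref{ggb2k} the second is at most $2\lceil (s\dchab)^{-1}\rceil$, both with probability $1-o(1)$ under the hypothesis $0 < \epsilon \leq s\dchab/(4|\log(a/b)|)$. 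Therefore the total error is $O(1)$, which is dwarfed by $\epsilon \log n$, and the sign of the weighted neighborhood sum coincides with $\sigma^*(i)$ for every good vertex simultaneously.

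The main technical obstacle has in fact already been overcome upstream: Lemma~\ref{ggb1k} is where one must control the effect of removing the bad vertices from the $K$-way union graph, which in turn relies on the intersection-size estimate $|\cap_{(j,\ell) \notin \Psi} F^*_{j\ell}|$ provided by Lemma~\ref{fup2k} and the \Luczak-type expansion developed in Section~\ref{sec:kcore}. Given those ingredients, together with Lemma~\ref{ggb2k} extending the cross-graph neighborhood control to $K$ graphs, the present lemma is a short bookkeeping argument identical in structure to its three-graph counterpart.
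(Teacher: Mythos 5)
Your proposal is correct and follows essentially the same route as the paper's proof: the same decomposition of $|N_H(i)\cap I_\epsilon(G_1)|$ into the $G_1$ part and the $G^*\setminus_{\pi^*_{12}}G_1$ part, controlled by Lemmas~\ref{aer1},~\ref{aer3}, and~\ref{ggb2k}, combined with the $\epsilon\log n$ majority guarantee from Lemma~\ref{ggb1k}. No substantive differences.
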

\begin{proof}
Compare the neighborhood majority in $H$ corresponding to $\widehat{\sigma_1}$ with the true majority in $H$, where $H$ is defined in Lemma~\ref{ggb1}:
\begin{multline*}
|\sigma^*(i)\sum_{j\in N_H(i)}(\widehat{\sigma_1}(j)-\sigma^*(j))|
\leq |N_H(i)\cap I_{\epsilon}
)|\leq |N_{G^*}(i)\cap I_{\epsilon}(G_1)|\\
\leq |N_{G^*\setminus G_1}(i)\cap I_{\epsilon}(G_1)|+|N_{G_1}(i)\cap I_{\epsilon}(G_1)|\leq2\lceil\dchab^{-1}\rceil+ 2\lceil(s\dchab)^{-1}\rceil\leq \epsilon\log n/2.
\end{multline*}
The first inequality uses Lemma~\ref{aer1} that the set of errors are contained in $I_{\epsilon}(G_1)$.
The last inequality is due to Lemma~\ref{aer3},~\ref{ggb2k}. Notice that $\maj_H(i)\geq \epsilon \log n$ for $i\in M^*$. Hence, $\sigma^*(i)\sum_{j\in N_H(i)}\widehat{\sigma}_1(j)\geq \maj_H(i)-|\sigma^*(i)\sum_{j\in N_H(i)}(\widehat{\sigma_1}(j)-\sigma^*(j))|\geq\epsilon\log n/2>0$, which implies that the sign of neighborhood majorities are equal to the truth community label for any $i\in M^*$, with probability $1-o(1).$ Then we can convert $H$ to $\widetilde{G}\{M\}$, the vertices in $M$ are correctly labeled with probability $1-o(1)$.
\end{proof}

Using an identical proof, we can argue that the algorithm correctly labels all ``good'' vertices with probability $1-o(1)$.

\subsection{Exact recovery for ``bad'' vertices}

\begin{lemma}
\label{bbb0k}
Suppose that $a,b,\epsilon>0$ satisfy the following conditions:\begin{align*}
(1-(1-s)^K)\dchab>1+2\epsilon|\log a/b|,\text{ } 0<&\epsilon\leq\frac{s\dchab}{4|\log a/b|},\\s(1-(1-s)^{K-1})\dconnab+s(1-s)^{K-1}\dchab&>1.
\end{align*}
With high probability, the algorithm correctly labels all ``bad'' vertices.
\end{lemma}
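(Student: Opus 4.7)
The plan is to replicate, one level up in combinatorial complexity, the proof template of Lemma~\ref{bbb0}. The ``bad'' vertices form a fine-grained family indexed by $\phi(v):=\{j\in[K]\setminus\{1\}:(1,j)\in\mathcal{E}(\mathcal{MG}_v)\}$, but for the failure analysis we will reduce every bad vertex to a single common ``worst-case'' analysis graph, and only then transfer the majority bound to the graph actually used by Algorithm~\ref{alg 3k}.

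Concretely, define the analysis graph
\[
H_v^*:=\bigl(G_1\setminus_{\pi^*_{12}}G_2\setminus_{\pi^*_{13}}\cdots\setminus_{\pi^*_{1K}}G_K\bigr)\{M^*\cup\{v\}\},\qquad M^*:=\bigcap_{j=2}^K M_{1j}^*,
\]
and work inside a nice event $\mathcal{H}$ that simultaneously enforces: $\widehat{\mu}_{ij}=\pi^*_{ij}$ and $F_{ij}=F_{ij}^*$ for every pair (Lemma~\ref{remark}); correctness of $\widehat{\sigma}$ on all ``good'' vertices (the $K$-graph good-vertex analysis culminating in Lemma~\ref{ggb3k}); the balanced event $\mathcal{F}$ (Lemma~\ref{balanced}); and the size bound $|F_b^*|\leq n^{1-s(1-(1-s)^{K-1})\dconnab+\delta}$ (Lemma~\ref{lem:fup1k}, based on Lemma~\ref{fup2k}). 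On $\mathcal{H}$, the edges of $H_v^*$ incident to~$v$ lie in the partition block $\mathcal{E}_{10\ldots 0}$ of Section~\ref{sec:construction}, and by Lemma~\ref{con} combined with a Lemma~\ref{cd}-style conditional-independence argument, they are independent of the event $v\in F_b^*$ given $\bm{\pi^*},\sigma^*,\bm{\mathcal{E}},F_b^*$. Since on~$\mathcal{F}$ the vertex $v$ has $(1\pm o(1))s(1-s)^{K-1}n/2$ potential $H_v^*$-neighbors in each community, Lemma~\ref{bin} gives the per-vertex bound
\[
\p\bigl(\maj_{H_v^*}(v)\leq\epsilon'\log n\,\big|\,\bm{\pi^*},\sigma^*,\bm{\mathcal{E}},F_b^*\bigr)\cdot\mathbf{1}_{\mathcal{H}}\leq n^{-s(1-s)^{K-1}\dchab+\epsilon'\log(a/b)/2+o(1)},
\]
uniformly in~$v$. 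A union bound over $v\in F_b^*$, using the size bound on $F_b^*$ above, then gives total failure probability at most $n^{1-s(1-(1-s)^{K-1})\dconnab-s(1-s)^{K-1}\dchab+\epsilon'\log(a/b)/2+\delta+o(1)}+o(1)$, which is $o(1)$ when $\epsilon',\delta$ are chosen small enough, invoking precisely the hypothesis~\eqref{eq:bad_for_12_and_13_good_for_23} for~$K$.

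Finally, transfer the majority bound from $H_v^*$ to the subgraph $G_1\setminus_{j\in\phi(v)^c}G_j\{M\cup\{v\}\}$ actually used by Algorithm~\ref{alg 3k}. The two graphs differ at~$v$ only by edges in $G_1\wedge_{\pi^*_{1j}}G_j$ for $j\in[K]\setminus\{1\}$ belonging to a prescribed subset determined by $\phi(v)$; crucially, for each $j$ with $v\notin M_{1j}^*$ the vertex~$v$ has at most $k-1=12$ neighbors in $G_1\wedge_{\pi^*_{1j}}G_j$ by definition of the $k$-core, so at most $12(K-1)$ edges incident to~$v$ are added or removed when passing between the two graphs. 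The majority therefore remains at least $\epsilon'\log n-24(K-1)>0$ for all large~$n$, and Algorithm~\ref{alg 3k} classifies~$v$ with the sign of $\sigma^*(v)$. The main obstacle is the uniform conditional-independence step: unlike the three-graph case where $\phi(v)$ takes only three nontrivial values, for general~$K$ one must check that a single analysis graph $H_v^*$ works uniformly across all $2^{K-1}$ possible signatures of bad vertices, and this requires carefully separating the dependence of $F_b^*$ (which lives on the pairwise intersection graphs $G_1\wedge_{\pi^*_{1j}}G_j$) from the edges of~$H_v^*$ (which live in the complementary partition block $\mathcal{E}_{10\ldots0}$), exactly as in the three-graph argument but now across $\binom{K}{2}$ pairwise matchings.
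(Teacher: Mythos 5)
Your proposal is correct and follows essentially the same route as the paper's proof: the same analysis graph $G_1\setminus_{\pi^*_{12}}G_2\setminus\cdots\setminus_{\pi^*_{1K}}G_K$ restricted to $M^*\cup\{v\}$, the same ``nice'' event combining Lemmas~\ref{remark},~\ref{fup2k},~\ref{balanced}, and~\ref{ggb3k}, the same conditional-independence and Lemma~\ref{bin} bound over the block $\mathcal{E}_{10\ldots0}$, the same union bound over $F_b^*$, and the same transfer step using the at-most-$12$ neighbors per unmatched pair guaranteed by the $13$-core. The only cosmetic difference is the constant in the final majority deficit ($24(K-1)$ versus the paper's $12|\psi|$), which is immaterial since both are $O(1)$ against $\epsilon'\log n$.
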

\begin{proof}
For vertex $i$ that are ``bad'', denote $\psi:=\{j\in[K]: i \text{ cannot be matched through } \widehat{\mu}_{1j}, j\neq 1\}$. Denote $F_b$ as the vertex set of all the ``bad'' vertices that have the same $\psi$ with vertex $i$. Denote $M^*:=\cap_{i\in[K]} M^*_{1i}$.
define
$H_i:=(G_1\setminus_{\pi^*_{12}}G_2\setminus_{\pi^*_{13}}G_3\ldots\setminus_{\pi^*_{1K}}G_K)\{M\cup \{i\}\}$.
Let $E_i$ be the event that i has a majority of at most $\epsilon' \log n$ in the graph $H_i$. Let $\widehat{\sigma}$ be the labeling after the  step. For brevity, define a ``nice'' event based on the previous results.  Define the event $\mathcal{H}$, which holds if and only if:
\begin{itemize}
\item $F_{ij}=F_{ij}^*$;
\item $\widehat{\sigma}(i)=\sigma^*(i)$ for all $i\in M^*$;
\item The event $\mathcal{F}$ holds;
\item $|F_{b}|\leq n^{1-s(1-(1-s)^{K-1})\dconnab+\delta}$.
\end{itemize}
By Lemma~\ref{remark},~\ref{fup2k},~\ref{balanced},~\ref{ggb3k}, the event $\mathcal{H}$ holds with probability $1-o(1)$. Furthermore, define $E_i^*:=\maj_{H_i}(i)\leq \epsilon'\log n$, we have that
\begin{equation}
\label{bound}
\begin{aligned}
\p(\cup_{i\in[n]}(\{i\in F_b\}\cap E_i) )
&\leq \p((\cup_{i\in[n]}(\{i\in F_b^*\}\cap E_i^*))\cap\mathcal{H})+\p(\mathcal{H}^c)\\
&\leq \sum_{i=1}^n\p(\{i\in F_b^*\}\cap E_i^*\cap \{F_b^*\leq n^{1-s(1-(1-s)^{K-1})\dconnab+\delta}\}\cap\mathcal{F})+o(1).
\end{aligned}
\end{equation}
By the tower rule, rewrite the term in the right hand side as:
\begin{equation}
\label{bound2k}
\E\left[
\p\left(E_i^* \, \middle| \, \mathcal{\pi}^*,\sigma^*,\bm{\mathcal{E}},F_b^* \right) \bm{1}_{i\in F_b^*}\bm{1}_{\{|F_b^*|\leq n^{1-s(1-(1-s)^{K-1})\dconnab+\delta}\}\cap \mathcal{F}} \right].
\end{equation}
Now look at $\p\left(E_i^* \, \middle| \, \mathcal{\pi}^*,\sigma^*,\bm{\mathcal{E}},F_b^* \right)$. Conditional on $\bm{\mathcal{E}},\sigma^*,\pi^*$, $\maj_{H_i}(i):\overset{d}{=}Y-Z$, where $Y,Z$ are independent with:
\[
Y\sim \Bin(|j\in M^*:\{i,j\}\in\mathcal{E}_{100\ldots0}\cap\mathcal{E}^+(\sigma^*)|,a\log n/n),
\]
\[
Z\sim \Bin(|j\in M^*:\{i,j\}\in\mathcal{E}_{100\ldots0}\cap\mathcal{E}^-(\sigma^*)|,b\log n/n).
\]
By the Definition~\ref{balance} of the event $\mathcal{F}$, we know that $|j\in M^*:\{i,j\}\in\mathcal{E}_{100\ldots0}\cap\mathcal{E}^-(\sigma^*)|=(1-o(1))s(1-s)^{K-1}n/2$ and $|j\in M^*:\{i,j\}\in\mathcal{E}_{100..0}\cap\mathcal{E}^+(\sigma^*)|=(1-o(1))s(1-s)^{K-1}n/2$.

Lemma~\ref{bin} implies that
\begin{equation*}
\p(E_i^*|\mathcal{\pi}^*,\sigma^*,\bm{\mathcal{E}},F_b^*)\bm{1}_{i\in F_b^*}\bm{1}_{\{|F_b^*|\leq n^{1-s(1-(1-s)^{K-1})\dconnab+\delta}\}\cap \mathcal{F}}
\leq n^{-s(1-s)^{K-1}\dchab+\epsilon'\log(a/b)/2+o(1)}.
\end{equation*}
Follow \eqref{bound2k} and take a union bound, we have that
\begin{align*}
&\sum_{i=1}^n\p(\{i\in F_b^*\}\cap E_i^*\cap \{F_b^*\leq n^{1-s(1-(1-s)^{K-1})\dconnab+\delta}\}\cap\mathcal{F})+o(1)\\\leq&n^{-s(1-s)^{K-1}\dchab+\epsilon'\log(a/b)/2+o(1)}\E[|F_b^*|\bm{1}_{F_b^*\leq n^{1-s(1-(1-s)^{K-1}}}]\\\leq&n^{1-s(1-(1-s)^{K-1})\dconnab-s(1-s)^{K-1}\dchab+\epsilon'\log(a/b)/2+\delta}.
\end{align*}
Under the condition $s(1-(1-s)^{K-1})\dconnab+s(1-s)^{K-1}\dchab>1$, we can choose $\epsilon',\delta$ small enough so that the right hand side is $o(1)$. $\maj_{H_i}(i)>\epsilon'\log n$ for $i \in F_b^*$, by Lemma~\ref{remark}, $\maj_{\widehat{H_i}}(i)>\epsilon'\log n$ for $i \in F_b$.

Note that $i$ cannot be matched for all $\widehat{\mu}_{1i},i\in \psi$. Hence $i$ has at most 12 neighbors in the graph
$(G_1\wedge_{\pi^*_{1i}} G_i)$. Therefore for any $i\in F_b$ has at least $\epsilon'\log n-12|\psi|$ majority in $G_1\setminus_{\widehat{\mu}_{1j},j\notin \psi} G_j\{ M\cup\{i\}\}$
with high probability. Hence, we can correctly label all vertices in $F_b$ with high probability.

Use the same arguments for all types of ``bad'' vertices, we can correctly label all ``bad'' vertices.
\end{proof}

\section{Proof of impossibility for $K$ graphs}\label{sec:proof4}

We study the MAP (maximum a posterior) estimator for the communities in $G_1$. Even with the additional information provided, including all the correct community labels in $G_2$, the true alignments $\pi^*_{23},\pi^*_{24},\ldots,\pi^*_{2K}$ and most of the true alignments $\pi^*_{12}$, the MAP estimator fails to exactly recovery communities with probability bounded away form 0 if the condition ~\eqref{im} holds. The proof can be derived by generalizing proof of impossibility for three graphs.
 The only difference is that we are considering $K$ correlated SBM $G_1,G_2,\ldots,G_K$. Since we know the true matching $\pi^*_{i,j},i,j\in\{2,3,\ldots,K\}$, we can consider $H:=G_2\vee G_3\ldots\vee G_K\sim\SBM(n,(1-(1-s)^{K-1})a\log n/n,(1-(1-s)^{K-1})b\log n/n)$. Denote $R_{ij}$ the singleton in $G_i\wedge G_j$. Then $R=R_{12}\wedge R_{13}\ldots\wedge R_{1K}$ is the singleton set in $G_1\wedge H$.
 The proof follows the same arguments with more involved notation,
 and hence we omit the details. Here we point out the differences of the proof for $K$ graphs.

Define $R_{\pi}:=R(\pi,A,B^2,...,B^{K}):=\{i\in[n]: \forall j\in[n], A_{i,j}D_{\pi(i)\pi(j)}=0, D=\max(B^2,....,B^{K})\}$, here $B^i$ is the adjacent matrix of $G_k$. Similar to Definition~\ref{S}, we can define $S_{\pi}=S(\pi,A,B^2,..,B^K)$. Let $G_{\delta}$ be the event that the following inequalities all hold:
\begin{equation*}
n^{1-s(1-(1-s)^{K-1})T_c(a,b)-\delta}
\leq |R^*\cap V_1^+|,|R^*\cap V_1^-|,|\bar{R}^*\cap V_1^+|,|\bar{R}^*\cap V_1^+|
\leq n^{1-s(1-(1-s)^{K-1})T_c(a,b)+\delta}.
\end{equation*}
We can prove similar versions of Lemma~\ref{l2} and~\ref{l3}, with $(2s^2-s^3)$ replaced by $s(1-(1-s)^{K-1})$ and $s(1-s)^2$ replaced by $s(1-s)^{K-1}$. We can have same versions of Lemma~\ref{inv}, Corollary~\ref{ssss}.
We can similarly define
$\mu^+(\pi)_{i_1 i_2\ldots i_K}$ and
$\mu^-(\pi)_{i_1 i_2\ldots i_K}$
for all $i_{1}, \ldots, i_{K} \in \{0,1\}$, and
$\nu^+(\pi)$ and $\nu^-(\pi)$.
When deriving the posterior distribution of $\pi^*_{12}$, similar to Lemma~\ref{pospi}, the information of $A,B^2,\ldots, B^{K},\sigma^*_2,S^*,\pi^*_{12}\{[n]\setminus S^*\},\bm{\pi}^*:=\{\pi^*_{ij},i,j\in\{2,3,\ldots,K\}\}$ are given.
Note that for $\pi\in\mathcal{A}^*$,
we have that
$\mu^+(\pi)_{1 i_2\ldots i_K}$ and
$\mu^-(\pi)_{1 i_2\ldots i_K}$
are constant for all $i_{2}, \ldots, i_{K} \in \{0,1\}$
except for
$\mu^+(\pi)_{10\ldots 0}$
and
$\mu^-(\pi)_{10\ldots 0}$.
We can derive an analogue of Lemma~\ref{pospi} with $p_{100}$ replaced by $p_{100\ldots 0}$, $p_{000}$ replaced by $p_{000\ldots 0}$ and similar for~$q$. Then we have analogous versions of Lemmas~\ref{bx},~\ref{ddd}, and~\ref{posl}, with $p_{100}$ replaced by $p_{100\ldots 0}$, $p_{000}$ replaced by $p_{000\ldots 0}$, and similar for $q$. Note that $\frac{p_{100\ldots 0}q_{000\ldots 0}}{p_{000\ldots 0}q_{100\ldots 0}}=(1+o(1))\frac{a}{b}$. The impossibility proof for Theorem~\ref{thm2} follows.

\section{Proofs for exact graph matching}\label{sec: exact graph}
\subsection{Exact graph matching for $K$ graphs}
\begin{proof}

Through the $13$-core matching in Algorithm~\ref{alg 1}, we obtain $\widetilde{\pi}:=\{\widehat{\mu}_{ij},i,j\in[K]\}$, where $\widehat{\mu}_{ij}$ is the $13$-core matching between the graph $G_i$ and $G_j$.

Recall Definition~\ref{uncon}, we can directly infer that the "good" vertices are those which can be matched for $K$ graphs through a path across $13$-core estimators $\widetilde{\pi}$. We can define a new estimator $\widehat{\pi}$ for those ``good'' vertices using the combination of $13$-core estimator through the path that connects all $K$ graphs. The path is defined as in Lemma~\ref{goodn}.  For any ``good'' vertex, such path exists and we can define the estimator $\widehat{\pi}$ for that vertex.

Note that, by Lemma~\ref{remark}, with high probability $\widehat{\mu}_{ij}=\pi^*_{ij}$. Hence if for all the matched vertices, they will be matched correctly.  If the number of "bad" vertices approaches zero, it indicates that all vertices are correctly matched. Consequently, exact graph matching can be achieved through the 13-core matching algorithm. By Lemma~\ref{fup2k},~\ref{lem:fup1k}, we can quantify the size of ``bad'' vertices: for every $\delta > 0$ we have that $|\cap_{1\leq i\leq L, L+1\leq j\leq K} F^*_{ij}|\leq n^{1-s(1-(1-s)^{K-1})\dconnab+\delta}$. When $1-s(1-(1-s)^{K-1})\dconnab > 1$, that is, when the condition~\eqref{eq:exact graph matching} holds, the number of ``bad'' vertices goes to zero when $n$ goes to infinity, with high probability. Thus all vertices can be correctly matched and exact graph matching for $K$ graphs is possible with high probability.
\end{proof}
\subsection{Impossibility of exact graph matching for $K$ graphs}
\begin{proof}
Now we consider the graph matching problem with additional information provided, including the true correspondences \( \pi^*_{23}, \ldots, \pi^*_{2K} \) and the community label $\sigma^*$. Then we obtain the union graph \( H:=G_2 \vee_{\pi^*_{23}} G_3 \vee \ldots \vee_{\pi^*_{2K}} G_{K} \). We now prove the impossibility by contradictory. Suppose that there exists an estimator $\widehat{\pi}$ which can exactly match $G_1,G_2,...,G_K$, note that $H\sim\SBM(n,\frac{(1-(1-s)^{K-1})a\log n}{n},\frac{(1-(1-s)^{K-1})b\log n}{n})$. One key point, is that, we can subsample $H_2',H_3',...,H_K'$ from $H$. To be more specific, consider the following parameter: $r_{i_1, i_2, \ldots, i_K}=\frac{s^{\sum_{j=1}^K i_j}(1-s)^{K-\sum_{j=1}^K i_j}}{1-(1-s)^{K-1}}$ where $i_1, i_2, \ldots, i_K \in\{0,1\}$ and $\sum_{j=1}^K i_j>0$. Here $\sum r_{i_1, i_2, \ldots, i_K}=1$. Then for any vertex pair $(i,j)$:
\begin{enumerate}
    \item If $(i,j)$ is not an edge in $H$, then $(i,j)$ is not an edge in $H_2',H_3',\ldots, H_K'$.
    \item If $(i,j)$ is an edge in $H$, with probability $r_{i_1,i_2,\ldots, i_K}$,  $(i,j)$ is an edge in the graphs $\{H_{i_j}'\}$ where $i_j=1$ in $r_{i_1,i_2,\ldots, i_K}$ and $(i,j)$ is not an edge in the graphs $\{H_{i_j}'\}$ where $i_j=0$ in $r_{i_1,i_2,\ldots, i_K}$.
\end{enumerate}

Following the subsampling described as above, we can simulate $H_2',\ldots, H_K'$ from $H$. Note that by construction, $(G_{1}, G_2',G_3',\ldots,G_K')$ has the same distribution as $(G_{1}, H_2',\ldots, H_K')$. Then after independent permutations, we can obtain $(H_3, H_4, \ldots, H_K)$ by relabeling the vertex index in $(H_3', H_4', \ldots, H_K')$. Note that $H_2=H_2'$.
Then $(G_1, G_2, \ldots, G_K)$ has the same distribution as $(G_1, H_2, H_3, \ldots, H_K)$. Since the estimator $\widehat{\pi}$ can exactly match $(G_1,G_2, \ldots, G_K)$ with high probability, it can also exactly match $(G_1,H_2,H_3, \ldots, H_k)$ with high probability. Naturally, it can exactly match vertices in $G_1$ and $H_2$, since $H$ and $H_2$ share the same vertex index then we can have an estimator that exactly match $G_1$ and $H$ given $G_1$ and $H$, where $G_1,H$ are correlated SBMs, independently subsampling from the parent graph $G$ with probability $s_1=s$ for $G_1$ and $s_2=1-(1-s)^{K-1}$ for $H$.
However, \cite[Theorem 1]{cullina2016simultaneous} proves that suppose $(G_1,G_2)\sim\CSBM(n,\frac{a\log n}{n},\frac{b\log n}{n},s_1,s_2)$ subsampling from $G\sim \SBM(n,\frac{a\log n}{n},\frac{b\log n}{n})$ with probability $s_1$ and $s_2$, respectively, if $s_1 s_2\dconnab<1$, then exact graph matching between $G_1$ and $G_2$ is impossible. Directly applying~\cite[Theorem 1]{cullina2016simultaneous} we have that exact graph matching between $G_1$ and $H$ is impossible if $s(1-(1-s)^{K-1})<1$. This is a contradiction, and hence exact graph matching for $G_1,\ldots,G_K$ is impossible.
\end{proof}


\section*{Acknowledgements}

We thank Taha Ameen, Julia Gaudio, Elchanan Mossel, and Anirudh Sridhar for helpful discussions. We also thank anonymous reviewers for constructive feedback.


\bibliographystyle{abbrv}
\bibliography{references}



\end{document}